\documentclass[letterpaper, 11pt]{article}

\usepackage[headheight=110pt,margin=1in]{geometry}

\usepackage{tikz-cd}

\usepackage{macros}

\usepackage{hyperref}

\usepackage{amsthm}

\usepackage[capitalise,nameinlink]{cleveref}

\newtheorem{theorem}{Theorem}[section]
\newtheorem{lemma}[theorem]{Lemma}
\newtheorem{corollary}[theorem]{Corollary}
\newtheorem{proposition}[theorem]{Proposition}

\theoremstyle{definition}
\newtheorem{definition}[theorem]{Definition}

\theoremstyle{remark}
\newtheorem{remark}[theorem]{Remark}

\usepackage{graphicx}
\usepackage{booktabs}
\usepackage{tabularx}
\usepackage{makecell}
\usepackage{pifont}
\usepackage[table]{xcolor}

\newcommand{\cmark}{\ding{51}}
\newcommand{\xmark}{\ding{55}}

\newcolumntype{C}[1]{>{\centering\arraybackslash}p{#1}}
\newcolumntype{L}[1]{>{\raggedright\arraybackslash}p{#1}}

\colorlet{thispaper}{blue!7}

\newcommand{\keywords}[1]{%
    \par\medskip
    \noindent\textbf{Keywords:} #1
}

\newcommand{\msc}[1]{%
    \par\medskip
    \noindent\textbf{2020 Mathematics Subject Classification:} #1
}

\hypersetup{bookmarksopen=true}

\usepackage{crossreftools}
\crefformat{equation}{\textup{#2(#1)#3}}
\crefrangeformat{equation}{\textup{#3(#1)#4--#5(#2)#6}}
\crefmultiformat{equation}{\textup{#2(#1)#3}}{ and \textup{#2(#1)#3}}
{, \textup{#2(#1)#3}}{, and \textup{#2(#1)#3}}
\crefrangemultiformat{equation}{\textup{#3(#1)#4--#5(#2)#6}}%
{ and \textup{#3(#1)#4--#5(#2)#6}}{, \textup{#3(#1)#4--#5(#2)#6}}{, and \textup{#3(#1)#4--#5(#2)#6}}

\Crefformat{equation}{#2Equation~\textup{(#1)}#3}
\Crefrangeformat{equation}{Equations~\textup{#3(#1)#4--#5(#2)#6}}
\Crefmultiformat{equation}{Equations~\textup{#2(#1)#3}}{ and \textup{#2(#1)#3}}
{, \textup{#2(#1)#3}}{, and \textup{#2(#1)#3}}
\Crefrangemultiformat{equation}{Equations~\textup{#3(#1)#4--#5(#2)#6}}%
{ and \textup{#3(#1)#4--#5(#2)#6}}{, \textup{#3(#1)#4--#5(#2)#6}}{, and \textup{#3(#1)#4--#5(#2)#6}}

\crefname{theorem}{Theorem}{Theorems}
\crefname{lemma}{Lemma}{Lemmas}
\crefname{corollary}{Corollary}{Corollaries}
\crefname{proposition}{Proposition}{Propositions}  %
\crefname{definition}{Definition}{Definitions}     %
\crefname{assumption}{Assumption}{Assumptions}
\crefname{example}{Example}{Examples}              %
\crefname{xca}{Exercise}{Exercises}                %
\crefname{remark}{Remark}{Remarks}
\crefname{figure}{Figure}{Figures}

\usepackage[inline]{enumitem}

\numberwithin{equation}{section}

\title{Representation Costs in Data Science: \\ Foundations and the Quasi-Banach Spaces of Deep Neural Networks}

\author{
Greg Ongie\\
{\normalsize Marquette University}\\
{\small\texttt{gregory.ongie@marquette.edu}}
\and
Rahul Parhi\\
{\normalsize University of California, San Diego}\\
{\small\texttt{rahul@ucsd.edu}}
}

\date{}

\begin{document}
\maketitle

\begin{abstract}

We develop a general framework for analyzing representation costs induced by parameter-space regularizers in data-fitting methods. For an arbitrary parametric method, we define its representation cost and native function space, prove existence, and identify conditions under which parameter-space and function-space problems have equal infimal values and minimizers transfer between them. This framework yields representer theorems and recovers classical formulations---including kernel methods and RKHSs, wavelets and Besov spaces, and shallow neural networks and variation spaces---as special cases. Our main new results concern depth-$L$ feedforward ReLU networks with weight-decay regularization. For these networks, we prove that the representation cost is a power of a quasi-seminorm and that, under suitable hypotheses, the native space is a quasi-Banach space with nonconvex unit ball when $L > 2$. These results identify a novel depth-dependent quasi-Banach geometry induced by weight decay.

    \keywords{representation costs; lower semicontinuity; quasi-Banach spaces; deep neural networks; inductive bias}

    \msc{Primary 46A16, 68T07; Secondary 46E15, 46B06, 41A46, 49J45.}
\end{abstract}

\tableofcontents

\newpage

\section{Introduction}
Understanding the \emph{inductive bias} of a data-fitting method is one of the principal outstanding problems in data science research. The goal of a data-fitting method is, given a collection of data $\curly{(x_i, y_i)}_{i=1}^N \subset \X \times \Y$, to construct an \emph{estimator} $f: \X \to \Y$ such that $f(x_i)$ is ``close'' to $y_i$, and, for prediction problems in particular, given a new sample $(x_\mathrm{new}, y_\mathrm{new}) \in \X \times \Y$, it holds that $f(x_\mathrm{new})$ is also ``close'' to $y_\mathrm{new}$. There are many different methods to solve this problem such as kernel methods, sparsity-promoting methods, and neural networks. Each of these methods has a different inductive bias. For example, kernel methods result in smooth estimators~\cite{WahbaSmoothingSplines1,WahbaSmoothingSplines2,WahbaSmoothingSplines3}, while sparsity-promoting methods, such as those implemented with wavelets, result in estimators that can adapt to unknown levels of smoothness~\cite{donoho1994ideal,donoho1995adapting,DonohoWaveletShrinkage}. Thus, a systematic approach to characterizing the inductive bias of a data-fitting method, i.e., properties of the \emph{end-to-end function} implemented by an estimator, is paramount for understanding, comparing, and contrasting different methods.

A fruitful area of research in this direction is to consider the \emph{representation cost} as opposed to the \emph{parameter cost} of a data-fitting method. The goal of this perspective is to be agnostic to the parameterization that realizes a given estimator and instead to consider the end-to-end function implemented by a given estimator. This perspective is crucial in modern deep learning. Indeed, consider the simple parametric model $f_{v, w}: \R^d \ni x \mapsto v(w^\T x)_+ \in \R$, with parameters $(v, w) \in \R \times \R^d$, that consists of a single rectified linear unit (ReLU) neuron. Given any $(v_0, w_0)$, we always have the equality $f_{v_0, w_0} = f_{v_0 / \gamma, \gamma w_0}$ for any $\gamma > 0$ due to the positive homogeneity of the ReLU. Thus, we see that there are \emph{uncountably many} parameterizations that implement the same function. Furthermore, the parameterizations can be of \emph{arbitrary size} in parameter space. Thus, only considering the parameters of a model does not correctly capture the complexity of the end-to-end function. This simple observation motivates the study of representation costs and the function-space perspective.

The notion of a ``representation cost'' can be traced back to classical work on regularization theory in Hilbert spaces dating back to the 1970s, particularly in reproducing kernel Hilbert spaces~(RKHSs)~\cite{theory-reproducing-kernels}. In that setting, the RKHS norm serves as the regularizer of a data-fitting problem and serves as the representation cost for smoothing splines~\cite{WahbaSmoothingSplines1,WahbaSmoothingSplines2,WahbaSmoothingSplines3,WahbaSplineModels} and kernel methods~\cite{ScholkopfKernels}. In the 1990s representation costs for shallow neural networks based on $L^1$-type norms of the Fourier transform were introduced~\cite{BarronApprox,BarronApproxEst}. In parallel, work on wavelets and Besov spaces introduced analogous interpretations in the context of nonlinear approximation and nonparametric function estimation where the representation cost corresponds to Besov (quasi-)norms based on sequence-space norms of wavelet coefficients~\cite{devore1998nonlinear,DeVoreLorentz1993Constructive,DeVoreBesovDomain,DonohoUnconditional,MeyerWaveletsOperators,Triebel1983Theory}. The development of AlexNet~\cite{krizhevsky2012imagenet} in 2012 marked a turning point and the deep learning era began~\cite{LeCunDeepLearning}. This led to norm-based complexity measures for neural networks, such as path norms~\cite{neyshabur2017exploring,neyshabur2015path,neyshabur2015norm} and a renewed interest in variation norms~\cite{BachConvexNN,kurkova2001bounds,kurkova2002comparison, mhaskar2004tractability}. These modern representation costs have further emphasized the function-space view induced by parameter-space regularization, especially due to the recent works~\cite{OngieFunctionSpace,parhi2021banach,parhi2025function,SavareseInfWidth,zeno2023minimum}.

Despite these commonalities, this line of research has largely evolved in a case-by-case fashion. Kernel methods, wavelets, and shallow neural networks are typically studied with distinct mathematical tools, leaving their unifying principles obscured. In particular, what is missing is a \emph{unified framework} that defines representation costs in generality for arbitrary parametric methods, and that systematically identifies the corresponding induced function spaces. Such a framework would place classical results into a single conceptual framework and make it possible to analyze new methods (e.g., deep neural networks) using the same mathematical tools. 

In this paper, we propose and develop such a framework. We define representation costs for arbitrary parametric methods directly through their parameter-space regularizers and show that these costs induce native function spaces that capture inherent properties of the method. Within this framework, we prove existence results and identify conditions under which the parameter-space and function-space problems have the same infimal value and minimizers transfer between them. In the concrete examples, these results combine with model-specific arguments to yield representer theorems. This connection provides a rigorous bridge between parametric methods and their nonparametric descriptions under sufficient overparameterization.

Beyond recovering classical cases---such as kernel methods and RKHSs, wavelets and Besov spaces, and shallow neural networks and variation spaces---our framework yields new insights into deep learning. For depth-$L$ feedforward ReLU neural networks with weight-decay regularization, we prove that the representation cost induces a quasi-seminorm and that the corresponding native space is a quasi-Banach space (\cref{thm:quasi-Banach-geometry}), a structural property that mirrors the familiar connection between deep linear neural networks and Schatten-$(2/L)$ regularization~\cite{dai2021representation,ShangSchatten,MertParallel}. More precisely, when $L>2$, the input and output dimensions are at least two, and the input domain has nonempty interior, the unit representation cost ball is \emph{nonconvex} (\cref{prop:deep-unit-ball-nonconvex}). Hence, the corresponding quasi-norm is not a \textit{bona fide} norm as it violates the triangle inequality. This provides mathematical justification for recent empirical observations from~\cite{razin2020implicit} that the inductive bias of deep neural networks may not be explainable by norms.

\subsection{Overview and Goals}
The setup of this paper will be that of \emph{explicit regularization} as we adopt a perspective that is agnostic to the optimization algorithm used to fit the data. Given a data set $\curly{(x_i, y_i)}_{i=1}^N \subset \X \times \Y$ and a \emph{parametric model} $f_\theta$ parameterized by a parameter $\theta$ that lies in the \emph{parameter space} $\Theta$, we focus on the data-fitting problem formulated as the optimization problem
\begin{equation} \label{eq:param_space_opt_old}
    \min_{\theta \in \Theta} \sum_{i=1}^N \Loss(y_i, f_\theta(x_i)) + \lambda C(\theta),
\end{equation}
where $\Loss: \Y \times \Y \to [0, +\infty]$ is a \emph{loss function} and $C: \Theta \to [0, +\infty)$ is the \emph{parameter cost}, which acts as the explicit regularizer for the optimization problem. More generally, we can consider data-fitting problems of the form
\begin{equation} \label{eq:param_space_opt}
    \min_{\theta \in \Theta} G\paren[\big]{f_\theta(x_1), \ldots, f_\theta(x_N)} + \lambda C(\theta),
\end{equation}
where $G: \Y^N \to [0, +\infty]$ plays the role of the loss function.

It is difficult to infer directly from \cref{eq:param_space_opt} what impact a given parametric model $f_\theta$ and parameter cost $C(\theta)$ will have on function space. For example, $\ell^2$-regularization (i.e., weight decay)  promotes parameter vectors $\theta$ with low norm, but how does this influence the resulting estimator $f_\theta$ in terms of function-space properties? To gain an understanding, we can instead consider the optimization problem \emph{posed over functions}
\begin{equation}\label{eq:function_space_opt}
\min_{f \in \F_\Theta} G\paren[\big]{f(x_1), \ldots, f(x_N)} + \lambda \Rcirc(f),
\end{equation}
where $\F_\Theta = \{ f_\theta \st \theta \in \Theta\}$ denotes the \emph{parametric model class} and $\Rcirc: \F_\Theta \to [0, +\infty)$ is the \emph{parametric representation cost} of a function, defined as
\begin{equation}
\Rcirc(f) = \inf_{\substack{\theta \in \Theta \\ f=f_\theta}} C(\theta), \quad f \in \F_\Theta.
\end{equation}

Although \cref{eq:function_space_opt,eq:param_space_opt} clearly have the same infimal value and minimizers of \cref{eq:param_space_opt} induce minimizers of \cref{eq:function_space_opt}, by considering \cref{eq:function_space_opt}, $\Rcirc$ can be thought of as the \emph{induced function-space regularizer} from the parametric model $f_\theta$ and parameter cost $\theta \mapsto C(\theta)$. However, from a functional-analytic or topological perspective, there are challenges in characterizing solutions to \cref{eq:function_space_opt} as $\F_\Theta$ is typically not closed in any reasonable topology~\cite{petersen2021topological}. In other words, for most $\F_\Theta$, there exist functions that can be approximated arbitrarily well (in many choices of topology) by members of $\F_\Theta$ that do not belong to $\F_\Theta$.

Thus, in order to establish sharp results on the function-space inductive bias for a parametric method, we must minimally ``extend'' $\Rcirc$ and ``close'' $\F_\Theta$ so that we can consider \textit{bona fide} function-space optimization problems of the form
\begin{equation} \label{eq:function_space_opt_closed}
    \min_{f \in \F} G\paren[\big]{f(x_1), \ldots, f(x_N)} + \lambda R(f),
\end{equation}
where the \emph{native space} $\F$ is the ``closure'' of $\F_\Theta$ and $R: \F \to [0, +\infty)$ is the \emph{extended} (or \emph{nonparametric}) representation cost, which captures the induced function-space regularizer. This paper rigorously characterizes the extension and closure procedures and gives conditions under which \cref{eq:param_space_opt,eq:function_space_opt,eq:function_space_opt_closed} have the same infimal value and their minimizers transfer between one another. The end result is a unified framework to study the function-space inductive bias of parametric data-fitting methods.

\begin{figure}[!htb]
\centering
\begingroup

\definecolor{rcInk}{HTML}{25313C}
\definecolor{rcGray}{HTML}{697586}
\definecolor{rcBorder}{HTML}{92A5B5}
\definecolor{rcNavy}{HTML}{31546D}
\definecolor{rcBlue}{HTML}{2F76A8}
\definecolor{rcTeal}{HTML}{2E8F84}
\definecolor{rcGold}{HTML}{D29223}
\definecolor{rcCoral}{HTML}{C96052}
\definecolor{rcPurple}{HTML}{7568A5}
\definecolor{rcBall}{HTML}{536F8F}

\newcommand{\DrawPBall}[4]{%
  \begin{scope}[shift={#1},scale=#4]
    \foreach \sx in {-1,1}{%
      \foreach \sy in {-1,1}{%
        \path[fill=#3!10]
          (0,0) --
          plot[domain=0:90,samples=90,smooth,variable=\t]
          ({\sx*pow(cos(\t),2/(#2))},{\sy*pow(sin(\t),2/(#2))})
          -- cycle;
      }%
    }%
    \draw[rcBorder!55,line width=0.28pt] (-1.12,0)--(1.12,0);
    \draw[rcBorder!55,line width=0.28pt] (0,-1.12)--(0,1.12);
    \foreach \sx in {-1,1}{%
      \foreach \sy in {-1,1}{%
        \draw[#3,line width=0.90pt]
          plot[domain=0:90,samples=90,smooth,variable=\t]
          ({\sx*pow(cos(\t),2/(#2))},{\sy*pow(sin(\t),2/(#2))});
      }%
    }%
  \end{scope}%
}

\begin{tikzpicture}[
  x=1cm,
  y=1cm,
  >=Latex,
  every node/.style={font=\normalsize,text=rcInk},
  panel/.style={
    draw=rcBorder,
    rounded corners=3pt,
    line width=0.65pt,
    fill=white
  },
  paneltitle/.style={
    font=\normalsize\bfseries,
    anchor=west,
    text=rcInk
  },
  panelmark/.style={
    font=\normalsize\bfseries,
    anchor=west
  },
  sectiontag/.style={
    font=\footnotesize\itshape,
    anchor=east,
    text=rcGray
  },
  stage/.style={
    draw,
    rounded corners=3pt,
    line width=0.75pt,
    align=center,
    inner xsep=6pt,
    inner ysep=6pt
  },
  objective/.style={
    draw=rcPurple!60,
    fill=rcPurple!4,
    rounded corners=3pt,
    line width=0.60pt
  },
  geometrybox/.style={
    draw=rcPurple!50,
    fill=rcPurple!3,
    rounded corners=3pt,
    line width=0.55pt,
    align=center,
    inner xsep=7pt,
    inner ysep=5pt
  },
  casecard/.style={
    draw,
    rounded corners=3pt,
    line width=0.60pt,
    fill=white
  },
  deepbox/.style={
    draw=rcCoral!72,
    fill=rcCoral!4,
    rounded corners=3pt,
    line width=0.65pt,
    align=center,
    inner xsep=7pt,
    inner ysep=5pt
  },
  smallresult/.style={
    rounded corners=3pt,
    line width=0.60pt,
    align=center,
    inner xsep=6pt,
    inner ysep=5pt,
    font=\footnotesize
  },
  arrow/.style={
    ->,
    line width=0.90pt
  },
  arrowlabel/.style={
    font=\footnotesize,
    align=center,
    text=rcGray
  },
  note/.style={
    font=\footnotesize,
    text=rcGray,
    align=center
  }
]

\path[use as bounding box] (0,0) rectangle (16.0,16.80);

\draw[panel] (0.10,8.50) rectangle (15.90,16.70);
\node[panelmark,text=rcNavy] at (0.42,16.30) {(a)};
\node[paneltitle] at (1.22,16.30) {From parameter costs to native spaces};
\node[sectiontag] at (15.58,16.30) {\crefrange{sec:background}{sec:Lip}};
\draw[rcNavy!55,line width=0.55pt] (0.42,15.90)--(15.58,15.90);

\node[
  stage,
  draw=rcBlue!82,
  fill=rcBlue!5,
  text width=3.05cm,
  minimum height=2.35cm
] (parameters) at (2.18,14.50)
  {{\bfseries Parameters}\\
   $\theta\in\Theta$\\
   $\theta\mapsto f_\theta$\\
   cost $C(\theta)$};

\node[
  stage,
  draw=rcGold!88,
  fill=rcGold!6,
  text width=4.50cm,
  minimum height=2.35cm
] (repcost) at (8.00,14.50)
  {{\bfseries Parametric model}\\
   $\mathcal F_\Theta=\{f_\theta\st\theta\in\Theta\}$\\
   $\displaystyle R_\Theta(f)=\inf_{\substack{\theta\in\Theta\\f_\theta=f}} C(\theta)$};

\node[
  stage,
  draw=rcTeal!86,
  fill=rcTeal!5,
  text width=3.05cm,
  minimum height=2.35cm
] (native) at (13.82,14.50)
  {{\bfseries Native space}\\
   l.s.c.\ reg.\ $R$\\
   of $R_\Theta$\\
   $\mathcal F=\operatorname{dom}R$};

\draw[arrow,draw=rcBlue!82] (parameters.east)--(repcost.west);
\node[arrowlabel,text width=1.40cm] at (4.72,13.88)
  {fiberwise\\infimum};

\draw[arrow,draw=rcTeal!84] (repcost.east)--(native.west);
\node[arrowlabel,text width=1.40cm] at (11.28,13.88)
  {l.s.c.\\reg.};

\draw[objective] (0.55,10.72) rectangle (15.45,12.62);
\node[font=\normalsize\bfseries] at (8.00,12.28)
  {Equivalent data-fitting problems};
\node[font=\footnotesize,align=center,text width=14.30cm] at (8.00,11.62)
  {$\displaystyle
    \inf_{\theta\in\Theta}
      G\big(f_\theta(x_1),\ldots,f_\theta(x_N)\big)+\lambda C(\theta)
    =
    \inf_{f\in\mathcal F}
      G\big(f(x_1),\ldots,f(x_N)\big)+\lambda R(f)$};
\node[note,font=\footnotesize\bfseries] at (8.00,11.06)
  {existence; equal infimal values; minimizer transfer};

\node[geometrybox,text width=6.55cm,minimum height=0.94cm] at (4.00,9.62)
  {homogeneity $\Longrightarrow$ gauge geometry};
\node[geometrybox,text width=6.55cm,minimum height=0.94cm] at (12.00,9.62)
  {subadditivity and homogeneity\\
   $\Longrightarrow$ (quasi-)Banach geometry};

\draw[panel] (0.10,0.10) rectangle (7.82,8.25);
\node[panelmark,text=rcTeal] at (0.42,7.86) {(b)};
\node[paneltitle] at (1.18,7.86) {Classical instances};
\node[sectiontag] at (7.54,7.86) {\crefrange{sec:RKHS}{sec:shallow}};
\draw[rcTeal!55,line width=0.55pt] (0.42,7.48)--(7.54,7.48);

\node[
  casecard,
  draw=rcBlue!72,
  fill=rcBlue!4,
  minimum width=3.48cm,
  minimum height=2.16cm
] at (2.05,6.16) {};
\begin{scope}[shift={(0.83,6.16)},scale=0.33]
  \draw[rcBlue!42,line width=0.35pt] (-1.12,-0.70)--(1.12,-0.70);
  \draw[rcBlue,line width=1.12pt,smooth]
    plot[domain=-1.08:1.08,samples=45] (\x,{1.05*exp(-2.4*\x*\x)-0.70});
  \fill[rcBlue] (-0.50,-0.44) circle (1.5pt);
  \fill[rcBlue] (0.48,-0.43) circle (1.5pt);
\end{scope}
\node[anchor=west,align=center,text width=2.22cm] at (1.32,6.16)
  {Kernel methods\\$\longmapsto$\\\textcolor{rcBlue}{\bfseries RKHSs}};

\node[
  casecard,
  draw=rcPurple!70,
  fill=rcPurple!4,
  minimum width=3.48cm,
  minimum height=2.16cm
] at (5.87,6.16) {};
\begin{scope}[shift={(4.65,6.16)},scale=0.33]
  \draw[rcPurple!42,line width=0.35pt] (-1.12,0)--(1.12,0);
  \draw[rcPurple,line width=1.08pt]
    (-1.05,0) .. controls (-0.78,0.02) and (-0.63,0.78) .. (-0.35,0.78)
    .. controls (-0.05,0.78) and (0.02,-0.82) .. (0.34,-0.82)
    .. controls (0.66,-0.82) and (0.77,-0.02) .. (1.05,0);
\end{scope}
\node[anchor=west,align=center,text width=2.22cm] at (5.14,6.16)
  {Wavelets\\$\longmapsto$\\\textcolor{rcPurple}{\bfseries Besov spaces}};

\node[
  casecard,
  draw=rcGold!82,
  fill=rcGold!5,
  minimum width=3.48cm,
  minimum height=2.16cm
] at (2.05,3.66) {};
\begin{scope}[shift={(0.83,3.66)},scale=0.33]
  \draw[rcGold!42,line width=0.35pt] (-1.12,-0.68)--(1.12,-0.68);
  \foreach \x/\h in {-0.72/0.45,-0.05/1.05,0.70/0.68}{
    \draw[rcGold,line width=1.08pt] (\x,-0.68)--(\x,\h-0.68);
    \fill[rcGold] (\x,\h-0.68) circle (1.7pt);
  }
\end{scope}
\node[anchor=west,align=center,text width=2.22cm] at (1.32,3.66)
  {Sparse kernels\\$\longmapsto$\\\textcolor{rcGold}{\bfseries I-RKBSs}};

\node[
  casecard,
  draw=rcCoral!75,
  fill=rcCoral!4,
  minimum width=3.48cm,
  minimum height=2.16cm
] at (5.87,3.66) {};
\begin{scope}[shift={(4.65,3.66)},scale=0.33]
  \draw[rcCoral!42,line width=0.35pt] (-1.12,-0.62)--(1.12,-0.62);
  \draw[rcCoral,line width=1.12pt]
    (-1.04,-0.62)--(-0.22,-0.62)--(1.02,0.78);
  \fill[rcCoral] (-0.22,-0.62) circle (1.7pt);
\end{scope}
\node[anchor=west,align=center,text width=2.22cm] at (5.14,3.66)
  {Shallow ReLU\\$\longmapsto$\\\textcolor{rcCoral}{\bfseries Radon BV$^2$}};

\node[note,text width=6.90cm] at (3.96,1.08)
  {One framework recovers classical native spaces\\
   and their representer theorems.};

\draw[panel] (8.06,0.10) rectangle (15.90,8.25);
\node[panelmark,text=rcCoral] at (8.38,7.86) {(c)};
\node[paneltitle] at (9.14,7.86) {Deep ReLU geometry};
\node[sectiontag] at (15.58,7.86) {\cref{sec:deep-nets}};
\draw[rcCoral!55,line width=0.55pt] (8.38,7.48)--(15.58,7.48);

\node[align=center,text width=7.10cm] at (11.98,6.82)
  {Depth $L$ induces $(\mathcal F_L,R_L)$\\
   with homogeneity exponent $p=2/L$.};

\draw[->,draw=rcGray!72,line width=0.60pt] (9.48,5.92)--(14.48,5.92);
\node[arrowlabel,fill=white,inner xsep=4pt] at (11.98,5.92)
  {increasing depth: $L\uparrow$, $p\downarrow$};

\DrawPBall{(9.60,5.00)}{1}{rcBall}{0.43}
\DrawPBall{(11.98,5.00)}{2/3}{rcBall}{0.43}
\DrawPBall{(14.36,5.00)}{1/2}{rcBall}{0.43}

\node[align=center,text width=2.12cm,font=\footnotesize] at (9.60,4.03)
  {$L=2$, $p=1$\\\textbf{convex}};
\node[align=center,text width=2.12cm,font=\footnotesize] at (11.98,4.03)
  {$L=3$, $p=2/3$\\\textbf{nonconvex}};
\node[align=center,text width=2.12cm,font=\footnotesize] at (14.36,4.03)
  {$L=4$, $p=1/2$\\\textbf{nonconvex}};

\node[
  deepbox,
  text width=6.95cm,
  minimum height=1.45cm
] at (11.98,2.80)
  {{\bfseries Quasi-Banach geometry}\\
   {\footnotesize $R_L^{L/2}$ is a quasi-seminorm, $\F_L$ is quasi-Banach.\\
   For $L>2$, the unit ball is nonconvex.}};

\node[
  smallresult,
  draw=rcGold!68,
  fill=rcGold!4,
  text width=3.12cm,
  minimum height=1.10cm
] at (9.99,1.18)
  {finite-width\\representer theorem\\hidden widths $\leq N^2$};

\node[
  smallresult,
  draw=rcTeal!68,
  fill=rcTeal!4,
  text width=3.12cm,
  minimum height=1.10cm
] at (13.99,1.18)
  {depth hierarchy\\$\mathcal F_L\hookrightarrow\mathcal F_{L+1}$\\$\mathcal F_2\subsetneq\mathcal F_3$};

\end{tikzpicture}
\endgroup

\caption{\textbf{From parameter costs to native spaces.}
\textup{(a)} Minimizing $C$ over all parameters that realize a function gives $R_\Theta$; its extension $R$ defines $\F=\operatorname{dom}R$. Under the hypotheses of \cref{sec:background,sec:rep-costs}, the parameter-space and function-space problems have equal infimal values and support minimizer transfer. \textup{(b)} The construction recovers the displayed classical native spaces and their representer theorems. \textup{(c)} For depth-$L$ ReLU networks with weight decay, $p=2/L$: $R_L^{L/2}$ is a quasi-seminorm, $\F_L$ is quasi-Banach, and the unit ball is nonconvex for $L>2$ under \cref{prop:deep-unit-ball-nonconvex}. The width and depth-hierarchy statements follow from \cref{thm:deep-relu-representer,thm:deep-native-space-nesting,prop:F2_F3_depth_separation}.}
\label{fig:representation-cost-framework}
\end{figure}

\subsection{Roadmap}
\label{sec:roadmap}

\Cref{fig:representation-cost-framework} summarizes the organization and main themes of the paper. \Crefrange{sec:background}{sec:Lip} develop the analytic foundations and the representation-cost framework. \Cref{sec:RKHS} recovers kernel methods and RKHSs, \Cref{sec:wavelet} treats wavelet methods and Besov spaces, \Cref{sec:I-RKBS} gives, to the best of our knowledge, a new perspective on sparse kernel methods through integral reproducing kernel Banach spaces, and \Cref{sec:shallow} treats shallow ReLU networks and Radon-domain bounded-variation spaces. In each case, the  framework also yields a corresponding representer theorem. Together, these sections show how methods normally treated disparately can be studied using a common set of functional-analytic tools via representation costs.

\Cref{sec:deep-nets} contains the principal new application of the framework. For deep ReLU neural networks with weight-decay regularization, we uncover a novel depth-dependent quasi-Banach geometry induced by the representation costs. The framework also yields a finite-width representer theorem in this setting. In particular, under suitable hypotheses, the unit ball of the representation cost is \emph{nonconvex} for depths $L>2$. We further show that the native spaces form a depth hierarchy, with strict separation already occurring between depths two and three.

\section{Preliminaries} \label{sec:background}
This section introduces the primary mathematical tools used in this paper. The central construction passes from a parametric representation cost, initially defined only on a model class $\F_\Theta$, to an extended representation cost on an ambient function space. This is accomplished by extending the cost by $+\infty$ outside $\F_\Theta$ and then taking its \emph{lower semicontinuous regularization} with respect to a chosen topology. We introduce the notation needed for this construction and record structural results that will be used repeatedly in this paper. The supporting results and proofs are collected in \cref{app:lsc-reg,app:coercive-mod-subspace-proof}; for pedagogical treatments of the underlying functional and variational analysis, see, e.g.,~\cite{brezis2011functional,buttazzo1989semicontinuity,conway2019course,nariciTopological2010}. Throughout the paper, $\eR\coloneqq\R\cup\{\pm\infty\}$ denotes the extended real line, and we adopt the convention that $\inf\varnothing=\min\varnothing=+\infty$.

\subsection{Lower Semicontinuity and Lower Semicontinuous Regularization}
Let $(\B,\tau)$ denote a topological space.

\begin{definition}
A function $K:\B\to\eR$ is called \emph{$\tau$-lower semicontinuous}
($\tau$-l.s.c.) if, for every $\alpha\in\R$, the sublevel set $\sls{K}{\alpha} \coloneqq \{f\in\B \st K(f)\leq\alpha\}$ is $\tau$-closed.
\end{definition}

\begin{definition}
The \emph{$\tau$-l.s.c.\ regularization} of a function $K:\B\to\eR$ is defined by
\begin{equation}
    \overline{K}^{\,\tau}(f) \coloneqq \sup\{J(f): J:\B\to\eR\text{ is $\tau$-l.s.c.\ and }J\leq K\}.
\end{equation}
Equivalently, $\overline{K}^{\,\tau}$ is the largest $\tau$-l.s.c.\ function bounded above by $K$. Thus, if $J$ is $\tau$-l.s.c.\ and $J\leq K$, then $J\leq \overline{K}^{\,\tau}\leq K$. We refer to this as the \emph{defining property of l.s.c.\ regularization}.
\end{definition}

\begin{definition}
The \emph{sequential $\tau$-l.s.c.\ regularization} of a function $K:\B\to\eR$ is defined by
\begin{align}\label{eq:seq_lsc_reg_def}
    \overline{K}^{\mathrm{seq},\tau}(f) \coloneqq \inf\left\{
        \liminf_{n\to\infty} K(f_n)
        \st
         (f_n)_{n\in\N}\subset\B,\ f_n \tauconverges f
    \right\}.
\end{align}
\end{definition}

In general, $\overline{K}^{\,\tau}\leq\overline{K}^{\mathrm{seq},\tau}\leq K$, and the first two functions need not coincide. We display the topology as a superscript whenever more than one topology is under consideration; when the topology is fixed or clear from context, we write simply $\overline K$ and $\seqreg K$. Equivalent descriptions, stability properties, and conditions under which the two regularizations coincide are developed in \cref{app:lsc-reg}.

\subsection{Constancy Spaces and Factorization}
Many representation costs are invariant along unpenalized directions. The following notions separate those directions from a complementary subspace on which the cost has better compactness or coercivity properties. In the sequel, suppose that $(B,\tau)$ is a topological vector space (TVS).

\begin{definition}\label{def:constancy_space}
Let $K:\B\to\eR$. The \emph{constancy space} of $K$, denoted $\const(K)$, is defined by
\begin{equation}
    \const(K) \coloneqq \{g\in\B \st K(f+\alpha g)=K(f)\text{ for every }f\in\B \text{ and every }\alpha\in\R\}.
\end{equation}
\end{definition}
By definition, $\const(K)$ is the largest subspace of directions along which $K$ is invariant under translation. Let $\Null\subset\B$ be a $\tau$-closed subspace. We say that $\Null$ is $\tau$-complemented if there exists a $\tau$-closed subspace $\M\subset\B$ and $\tau$-$\tau$-continuous linear projectors
\begin{equation}
    \P_{\Null}:\B\to\B
    \quad\text{and}\quad
    \P_{\M}:\B\to\B,
\end{equation}
such that $\P_{\Null}(\B)=\Null$, $\P_{\M}(\B)=\M$, and $\P_{\Null}+\P_{\M}=\Id$. In this case, $\B$ admits the topological direct-sum decomposition $\B=\M\oplus\Null$. In the locally convex spaces used throughout this paper, finite-dimensional subspaces are closed and complemented.

One of the key ingredients in this paper is the following factorization lemma. It says that if a function is constant along a complemented subspace, then its l.s.c.\ regularization can be computed entirely on any topological complement.

\begin{lemma}\label{lem:chatacterizing_R_mod_N}
Let $(\B,\tau)$ be a TVS and let $K:\B\to\eR$. Suppose $\Null\subset\const(K)$ is a $\tau$-closed subspace with $\tau$-complement $\M\subset\B$. Let $\P_{\M}:\B\to\M$ denote the corresponding $\tau$-$\tau$-continuous projector onto $\M$, and let $\tau_{\M}$ denote the subspace topology on $\M$. Then,
\begin{equation}
    \overline{K}^{\,\tau} = \overline{K|_{\M}}^{\,\tau_{\M}} \circ \P_{\M}.
\end{equation}
\end{lemma}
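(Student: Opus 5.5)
The plan is to show that $J\coloneqq \overline{K|_{\M}}^{\,\tau_{\M}}\circ \P_{\M}$ is $\tau$-l.s.c.\ and satisfies $J\leq K$. By the defining property of l.s.c.\ regularization this gives $J\leq\overline{K}^{\,\tau}$. For the reverse inequality, I would use the invariance of $K$ along $\Null$ to show that $\overline{K}^{\,\tau}$ is itself $\Null$-invariant, and that its restriction to $\M$ is $\tau_{\M}$-l.s.c.\ and bounded above by $K|_{\M}$.

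First, the upper bound $J\leq K$. For $f\in\B$ write $f=\P_{\M}f+\P_{\Null}f$ with $\P_{\Null}f\in\Null\subset\const(K)$, so $K(f)=K(\P_{\M}f)$. Hence
\[
J(f)=\overline{K|_{\M}}^{\,\tau_{\M}}(\P_{\M}f)\leq K|_{\M}(\P_{\M}f)=K(f).
\]
Next, lower semicontinuity of $J$. For $\alpha\in\R$ we have $\sls{J}{\alpha}=\P_{\M}^{-1}\bigl(\sls{\overline{K|_{\M}}}{\alpha}\bigr)$. The inner sublevel set is $\tau_{\M}$-closed in $\M$. Since $\M$ is $\tau$-closed, that set is also $\tau$-closed in $\B$. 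As $\P_{\M}$ is $\tau$-$\tau$-continuous, the preimage is $\tau$-closed. Thus $J$ is $\tau$-l.s.c., and $J\leq\overline{K}^{\,\tau}$.

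For the reverse inequality, set $H\coloneqq\overline{K}^{\,\tau}$. For fixed $g\in\Null$, the translate $f\mapsto H(f+g)$ is $\tau$-l.s.c., because translation is a homeomorphism of a TVS. It is also bounded above by $K(\cdot+g)=K$, so $H(\cdot+g)\leq H$. Applying the same argument with $-g$ gives equality, so $H$ is invariant under $\Null$-translations. Therefore $H(f)=H(\P_{\M}f)$ for all $f\in\B$.

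It then suffices to show $H|_{\M}\leq\overline{K|_{\M}}^{\,\tau_{\M}}$. The restriction $H|_{\M}$ is $\tau_{\M}$-l.s.c., since its sublevel sets are traces on $\M$ of $\tau$-closed sets, and it satisfies $H|_{\M}\leq K|_{\M}$. By the defining property of l.s.c.\ regularization on $(\M,\tau_{\M})$, we get $H|_{\M}\leq\overline{K|_{\M}}^{\,\tau_{\M}}$. Consequently $H=H|_{\M}\circ\P_{\M}\leq J$.

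Combining the two inequalities gives $H=J$. The only delicate step is the $\Null$-invariance of $H$. It depends on translations being homeomorphisms and on the regularization commuting with homeomorphisms, which the argument above handles directly. Everything else is routine manipulation of sublevel sets.
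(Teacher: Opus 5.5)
Your proof is correct and is essentially the paper's argument: both inequalities come from the defining property of l.s.c.\ regularization, and the $\Null$-invariance of $\overline{K}^{\,\tau}$ comes from the same translation argument that the paper isolates as \cref{lem:nesting_of_const_spaces}. The only cosmetic difference is that you get lower semicontinuity of $\overline{K|_{\M}}^{\,\tau_{\M}}\circ\P_{\M}$ from the $\tau$-closedness of $\M$, whereas the paper treats $\P_{\M}$ as a $\tau$-$\tau_{\M}$-continuous map; both routes are valid.
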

The proof is given in \cref{subsec:factorization-proof}.

\subsection{Dual Banach Spaces, Coercivity, and Compatible Topologies} \label{sec:dual-Banach}
We now specialize to the setting used throughout the paper for ambient function spaces. Let $(\B,\norm{\dummy}_{\B})$ be a Banach space with predual $(\A,\norm{\dummy}_{\A})$, i.e., $\B=\A'$ isometrically. For $f\in\B$ and $\mu\in\A$, we write $\ang{\mu,f}$ for the dual pairing and endow $\B$ with the \emph{weak$^*$ topology}
\begin{equation}
    w^*\coloneqq\sigma(\B,\A).
\end{equation}
The Banach--Alaoglu theorem states that the closed unit ball of $\B$ is $w^*$-compact. Consequently, a subset of $\B$ is $w^*$-compact if and only if it is $w^*$-closed and norm-bounded.

\begin{definition} \label{def:coercive}
A function $K:\B\to\eR$ is called \emph{coercive} if its sublevel sets are norm-bounded. That is, for every $\alpha\in\R$, there exists $\beta>0$ such that $\sls{K}{\alpha} \subset \sls{\norm{\dummy}_\B}{\beta}$.
\end{definition}

Many natural topologies on a dual Banach space agree with the weak$^*$ topology on norm-bounded subsets. The following definition isolates this property.
\begin{definition}\label{def:weakstar_compatible}
A Hausdorff vector-space topology $\tau$ on $\B$ is called \emph{weak$^*$-compatible} if it coincides with $w^*$ when restricted to any norm-bounded subset of $\B$.
\end{definition}

\begin{definition} \label[definition]{defn:coercive-mod-N}
Given a subspace $\Null \subset \B$, a function $K: \B \to \eR$ is said to be \emph{coercive modulo $\Null$} if, for every $\alpha \in \R$, there exists $\beta > 0$ such that for any $f \in \B$,
\begin{equation}
    K(f) \leq \alpha \quad\Rightarrow\quad \inf_{g \in \Null} \norm{f + g}_\B \leq \beta. \label{eq:coercive-mod-N}
\end{equation}
\end{definition}

\begin{proposition}\label{prop:coercive_mod_subspace_implies_seqlscreg_compatible}
Suppose $\A$ is separable, $\tau$ is a weak$^*$-compatible topology on $\B$, and $K:\B\rightarrow \eR$ is coercive modulo a $w^*$-complemented subspace $\Null \subset \const(K)$. Assume in addition that $\Null$ is sequentially $\tau$-closed. Then
\begin{equation}\label{eq:set1}
\overline{K}^{\,w^*}(f) = \overline{K}^{\mathrm{seq},\tau}(f)
  = \min\{\alpha \in \eR \st (f_n)_{n\in\N}\subset \dom(K),\
      f_n\tauconverges f,\ K(f_n) \rightarrow \alpha\}.
\end{equation}
\end{proposition}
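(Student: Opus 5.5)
We reduce to the coercive case via \cref{lem:chatacterizing_R_mod_N}, then use Banach--Alaoglu and metrizability of bounded sets to replace nets by sequences.

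\textbf{Step 1: reduction to the complement.} Let $\M$ be the $w^*$-complement of $\Null$ and $\P_{\M}$ the $w^*$-continuous projector. By \cref{lem:chatacterizing_R_mod_N} we have $\overline{K}^{\,w^*}=\overline{K|_{\M}}^{\,w^*_{\M}}\circ\P_{\M}$. On $\M$, coercivity modulo $\Null$ becomes genuine coercivity. Indeed, if $K(m)\le\alpha$ with $m\in\M$, choose $g\in\Null$ with $\|m+g\|\le\beta+1$. Then $\|m\| = \|\P_{\M}(m+g)\|\le \|\P_{\M}\|(\beta+1)$. Here we need $\P_{\M}$ norm-bounded; this follows from the closed graph theorem, since a $w^*$-continuous linear map has closed graph in norm.

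\textbf{Step 2: the trivial inequalities.} We always have $\overline{K}^{\,w^*}\le \overline{K}^{\,\mathrm{seq},w^*}$. Compatibility also gives $\overline{K}^{\,\mathrm{seq},w^*}\le\overline{K}^{\,\mathrm{seq},\tau}$: a $\tau$-convergent sequence with bounded $\liminf K$ can be passed to a subsequence on which $K$ is bounded. On that subsequence, apply $\P_{\M}$ and use $K(f_n)=K(\P_{\M}f_n)$; coercivity makes $(\P_{\M}f_n)$ norm-bounded. The $\Null$-components $\P_{\Null}f_n$ are then handled using that $\Null$ is sequentially $\tau$-closed. The cleanest route is to work throughout with $\P_{\M}f_n$ and the limit $\P_{\M}f$, since bounded sequences converge in $\tau$ iff they converge $w^*$.

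So the real content is the reverse inequality $\overline{K}^{\,\mathrm{seq},\tau}\le\overline{K}^{\,w^*}$, together with attainment of the minimum.

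\textbf{Step 3: sequential approximation.} Fix $f$ with $\alpha:=\overline{K}^{\,w^*}(f)<\infty$, and set $m=\P_{\M}f$, so $\overline{K|_\M}(m)=\alpha$. Use the standard net characterization $\overline{K}(m)=\sup_{U}\inf_{U} K$ over $w^*$-neighbourhoods $U$ of $m$ (recorded in \cref{app:lsc-reg}). Then for every $\varepsilon>0$ and every neighbourhood $U$ there is $h\in U\cap\M$ with $K(h)\le\alpha+\varepsilon$. All such $h$ lie in the norm-bounded set $S=\sls{K|_\M}{\alpha+1}$. Since $\A$ is separable, the $w^*$ topology on bounded sets is metrizable by a metric $d$.

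Take $U_n$ to be the $d$-ball of radius $1/n$ intersected with a large norm ball containing $m$ and $S$; this is a relative neighbourhood. This produces $h_n\in\dom K\cap\M$ with $d(h_n,m)<1/n$ and $K(h_n)\le\alpha+1/n$. Moreover $\liminf K(h_n)\ge \alpha$ by lower semicontinuity of $\overline K\le K$, so after passing to a subsequence $K(h_n)\to\alpha$. Set $f_n=h_n+\P_{\Null}f$. Then $K(f_n)=K(h_n)$ because $\Null\subset\const(K)$, and $f_n$ is a bounded sequence converging $w^*$ to $f$, hence $\tau$-converging to $f$ by compatibility.

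This yields a sequence attaining $\alpha$, which gives both the reverse inequality and the minimum in \cref{eq:set1}. The case $\alpha=+\infty$ is trivial. For the case $\alpha=-\infty$, replace $\alpha+1/n$ with $-n$ and use a level-set bound at level $0$.

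\textbf{Main obstacle.} The delicate step is the interplay between $\tau$ and $w^*$ when sequences are not a priori bounded, in the inequality $\overline{K}^{\,w^*}\le\overline{K}^{\,\mathrm{seq},\tau}$. One must show that if $f_n\to f$ in $\tau$ with $K(f_n)$ bounded, then $\P_{\M}f_n\to\P_{\M}f$ in $w^*$. I would first pass to the bounded sequence $\P_{\M}f_n$ and extract a $w^*$-convergent subsequence with limit $m'$; metrizability plus Banach--Alaoglu make this possible. By compatibility this subsequence also converges in $\tau$. Then $\P_{\Null}f_n=f_n-\P_{\M}f_n\to f-m'$ in $\tau$, so $f-m'\in\Null$ by sequential $\tau$-closedness, forcing $m'=\P_{\M}f$. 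Since every subsequence has a further subsequence with this same limit, the whole sequence converges. Lower semicontinuity of $\overline{K}^{\,w^*}$ along this sequence then finishes the argument.
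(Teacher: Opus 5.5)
Your proposal is correct and follows essentially the same route as the paper: you factorize through $\P_{\M}$ via \cref{lem:chatacterizing_R_mod_N}, use coercivity on $\M$ plus sequential $\tau$-closedness of $\Null$ to identify the $w^*$-limit of $\P_{\M}f_n$ as $\P_{\M}f$ (giving $\overline{K}^{\,w^*}\le\overline{K}^{\mathrm{seq},\tau}$), and obtain the attaining sequence by adding $\P_{\Null}f$ to a recovery sequence on $\M$. The only difference is that you build that recovery sequence by hand, whereas the paper obtains it by citing \cref{prop:lscreg_separable_predual}, which rests on \cref{prop:lscreg_seqlscreg_coincide} and \cref{lem:equiv_of_seq_lsc_reg_defs}; your construction works because a metric ball in the norm ball is the trace of a genuine $w^*$-neighbourhood, and every $h$ with $K(h)\le\alpha+1$ already lies in that norm ball.
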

The proof is given in \cref{subsec:recovery-sequence-proof}.

\section{Representation Costs and the Function-Space Perspective} \label{sec:rep-costs}

We now introduce the abstract representation-cost framework and the function-space perspective. The starting point is a parametric model class together with a parameter cost. The goal is to pass from this parameter-space description to a function-space description that is agnostic to the particular parameterization used to realize a function. Throughout this section, we work in the dual Banach space setting of \cref{sec:dual-Banach}. That is, $\B=\A'$ and $\B$ is endowed with the weak$^*$ topology $w^*=\sigma(\B,\A)$. Unless otherwise specified, all l.s.c.\ regularizations that follow are taken with respect to $w^*$. Thus, for a function $K:\B\to\eR$, we write $\overline K$ for $\overline K^{\,w^*}$.

Let $\Theta$ be a parameter space, let $\theta\mapsto f_\theta\in\B$ be a parametric model, and let
\begin{equation}
    \F_\Theta \coloneqq \{f_\theta:\theta\in\Theta\} \subset\B
\end{equation}
be the associated parametric model class. Given a parameter cost $C:\Theta\to[0,+\infty)$, the parametric representation cost is the function $\Rcirc:\F_\Theta\to[0,+\infty)$ defined by
\begin{equation}
\Rcirc(f) = \inf_{\substack{\theta \in \Theta \\ f=f_\theta}} C(\theta), \quad f \in \F_\Theta.
\end{equation}
Thus, $\Rcirc(f)$ is the smallest parameter cost among all parameterizations that realize the same function $f$. We identify $\Rcirc$ with its \emph{trivial extension} to $\B$, abusively denoted by $\Rcirc$, by setting
\begin{equation}
    \Rcirc(f) = \begin{cases}
        \Rcirc(f) & \text{if $f \in \F_\Theta$}, \\
        +\infty & \text{if $f \in \B \setminus \F_\Theta$}.
    \end{cases}
\end{equation}
Therefore, $\Rcirc: \B \to [0, +\infty]$ and $\dom (\Rcirc) = \F_\Theta$. The extended representation cost is defined by
\begin{equation}
    R (f) \coloneqq\overline{\Rcirc}(f) = \liminf_{g \to f} \Rcirc(g), \quad f \in \B.
\end{equation}
The associated native space is
\begin{equation}
    \F \coloneqq \dom(R) = \curly{f\in\B \st R(f)<+\infty}.
\end{equation}

The pair $(\F,R)$ is the function-space object induced by the parametric model and the parameter cost. The model class $\F_\Theta$ describes what is \emph{exactly} representable, while the native space $\F$ describes what is representable in a limiting sense with finite representation cost. In this sense, $(\F,R)$ captures the \emph{function-space inductive bias} induced by the parametric model \emph{and} the parameter cost.

The rest of this section develops the consequences of this construction. We first record structural interpretations of $R$, including its gauge formulation (\cref{sec:gauge}) and the quasi-normed structures that arise from homogeneous and subadditive costs (\cref{sec:quasi}). We then investigate how useful properties of $\Rcirc$ and $R$ can be verified directly from the parameter cost $C$ (\cref{sec:interplay}). Finally, we use these properties to prove existence of minimizers and to compare the parameter-space and function-space optimization problems (\cref{subsec:function-space-minimization}). Thus, we establish the sense in which the following equivalence holds:
\begin{equation}
    \min_{\theta \in \Theta} G\paren[\big]{f_\theta(x_1), \ldots, f_\theta(x_N)} + \lambda C(\theta) \quad\Leftrightarrow\quad \min_{f \in \F} G\paren[\big]{f(x_1), \ldots, f(x_N)} + \lambda R(f) \label{eq:equivalence}
\end{equation}
This equivalence is precisely the \emph{function-space perspective} of a data-fitting method.

\subsection{Properties and Interpretations of Representation Costs}
\label{subsec:representation-costs}

In this section, we collect structural properties of the extended representation cost. We show that $w^*$-l.s.c.\ regularization preserves basic algebraic and compactness properties. This allows for the identification of geometric and topological properties of a native space. In particular, homogeneous costs admit a \emph{gauge interpretation}, while suitable powers of subadditive homogeneous costs induce \emph{Banach} or \emph{quasi-Banach} native spaces.

\subsubsection{The Gauge Perspective} \label{sec:gauge}
The gauge interpretation is governed by homogeneity. Recall that a function $K:\B\to[0,+\infty]$ is called \emph{absolutely $p$-homogeneous}, for some $p>0$, if $K(0)=0$ and $K(\alpha f)=|\alpha|^p K(f)$ for every $\alpha\in\R\setminus\{0\}$ and $f\in\B$. This property is preserved under lower semicontinuous regularization whenever $\B$ is a TVS (\cref{prop:lscreg_preserves_properties}).

Assume that $\Rcirc$ is absolutely $p$-homogeneous for some $p>0$. Define the \emph{parametric unit ball}
\begin{equation}
    \U_\Theta \coloneqq \sls{\Rcirc}{1} = \curly{f\in\B \st \Rcirc(f)\leq1}
\end{equation}
and its $w^*$-closure
\begin{equation}
    \U
    \coloneqq
    \overline{\U_\Theta}.
\end{equation}
Since $R_\Theta$ is absolutely $p$-homogeneous, the set $\U_\Theta$ is \emph{star-shaped} with respect to the origin, i.e.,
\begin{equation}
t \, \U_\Theta \subset \U_\Theta, \quad 0\le t\le 1.
\end{equation}
By continuity of scalar multiplication, its $w^*$-closure $\U$ is also star-shaped. Thus, although $\U$ need not be convex, it is necessarily star-shaped. The following result says that $\U$ is exactly the unit ball of the extended representation cost.

\begin{lemma}
\label{lem:unit-ball-representation-cost}
If $\Rcirc$ is absolutely $p$-homogeneous for some $p>0$, then $\U = \sls{R}{1}$.
\end{lemma}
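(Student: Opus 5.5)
Two inclusions. For $\U\subset\sls{R}{1}$: $R\le\Rcirc$, so $\U_\Theta\subset\sls{R}{1}$. Since $R=\overline{\Rcirc}$ is $w^*$-l.s.c., its sublevel set $\sls{R}{1}$ is $w^*$-closed, hence contains the closure $\U$. Homogeneity is not needed here.

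For the reverse inclusion, I will show that the Minkowski-type functional
\begin{equation*}
    J(f) \coloneqq \inf\{t^p \st t> 0,\ f\in t\,\U\}
\end{equation*}
(with $J(0)=0$ and $\inf\varnothing=+\infty$) is $w^*$-l.s.c.\ and satisfies $J\le\Rcirc$. The defining property of l.s.c.\ regularization then gives $J\le R$, and a separate check gives $\sls{J}{1}=\U$.

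\emph{$J\le\Rcirc$.} Fix $f\in\F_\Theta$ and write $r=\Rcirc(f)$. If $r>0$, set $t=r^{1/p}$; absolute $p$-homogeneity gives $\Rcirc(f/t)=1$, so $f\in t\,\U_\Theta\subset t\,\U$ and $J(f)\le r$. If $r=0$, then $\Rcirc(f/t)=0$ for every $t>0$, so $f\in t\,\U$ for all $t>0$ and $J(f)=0$.

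\emph{Sublevel sets of $J$.} I claim that $\sls{J}{\alpha}=\alpha^{1/p}\,\U$ for every $\alpha>0$. Since $\U$ is star-shaped, $t\,\U\subset s\,\U$ whenever $t\le s$. Suppose $J(f)\le\alpha$. Then $f\in t_n\U$ for some $t_n\downarrow t_0\le\alpha^{1/p}$. If $t_0>0$, then $f/t_n\in\U$ and $f/t_n\to f/t_0$ in $w^*$ by continuity of scalar multiplication, so $f/t_0\in\U$ because $\U$ is closed; star-shapedness then gives $f\in\alpha^{1/p}\U$. If $t_0=0$, we have $f\in t\,\U$ for arbitrarily small $t$, hence for every $t>0$ by star-shapedness, in particular $t=\alpha^{1/p}$.

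For $\alpha=0$, the sublevel set is $\bigcap_{t>0} t\,\U$, an intersection of $w^*$-closed sets (each $t\,\U$ is closed because dilation is a homeomorphism of the TVS). For $\alpha<0$ the sublevel set is empty. So every sublevel set of $J$ is $w^*$-closed, i.e.\ $J$ is $w^*$-l.s.c.

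\emph{Conclusion.} Given $f$ with $R(f)\le1$, we have $J(f)\le R(f)\le1$, so $f\in\sls{J}{1}=\U$. Combined with the first inclusion, this gives $\U=\sls{R}{1}$.

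The step I expect to need the most care is the sublevel-set identity $\sls{J}{\alpha}=\alpha^{1/p}\U$. It has to handle the infimum not being attained and the degenerate case $t_0=0$, and it is exactly where closedness and star-shapedness of $\U$ are used.
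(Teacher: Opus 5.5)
Your proof is correct, and it takes a genuinely different route from the paper's.

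The paper argues pointwise. Given $f$ with $R(f)\le 1$, it uses the neighborhood formula $R(f)=\liminf_{g\to f}\Rcirc(g)$ to find points of $\U_\Theta$ in every neighborhood of $f$ when $R(f)<1$. It treats the boundary case $R(f)=1$ separately by shrinking $f$ to $tf$ with $t<1$. That step requires knowing that $R$ itself is absolutely $p$-homogeneous (via \cref{prop:lscreg_preserves_properties}), so that $R(tf)=t^p<1$.

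You instead exhibit an explicit $w^*$-l.s.c.\ minorant $J=G_\U^{\,p}$ of $\Rcirc$ and invoke only the maximality (defining) property of the l.s.c.\ regularization. This avoids the case split, the liminf characterization, and any homogeneity of $R$. You use homogeneity only of $\Rcirc$ (for $J\le\Rcirc$ and star-shapedness of $\U_\Theta$), together with two TVS facts: dilations are homeomorphisms, and scalar multiplication is continuous in the scalar. The price is the sublevel-set identity $\sls{J}{\alpha}=\alpha^{1/p}\U$, including non-attainment of the infimum and the degenerate case $t_0=0$, and you handle both correctly.

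A further benefit of your route is that it proves $G_\U^{\,p}\le R$ directly, which is one half of \cref{thm:gauge-representation-cost}. The other half, $R\le G_\U^{\,p}$, follows from $\U\subset\sls{R}{1}$ and homogeneity of $R$. So your argument essentially yields the gauge theorem at once, whereas the paper derives that theorem from the lemma.
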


\begin{proof}
Since $R\leq\Rcirc$, we have $\U_\Theta\subset\sls{R}{1}$. Since $R$ is $w^*$-l.s.c., $\sls{R}{1}$ is $w^*$-closed. Hence $\U\subset\sls{R}{1}$. Conversely, let $f\in\B$ satisfy $R(f)\leq1$. If $\Rcirc(f)\leq1$, then $f\in\U_\Theta\subset\U$. Hence, assume $\Rcirc(f)>1$. We shall show that $f$ is a $w^*$-limit point of $\U_\Theta$ (i.e., $f \in \U$). First, suppose $R(f)<1$. Then for every $w^*$-neighborhood $\mathcal O$ of $f$, we have that $\inf_{g \in \mathcal{O}} \Rcirc(g) < 1$. Hence, there exists $g\in\mathcal O$ with $\Rcirc(g)<1$. In particular, $g \in \U_\Theta$ and $g \neq f$ since $\Rcirc(f) > 1$. Thus $f\in\U$.

It remains to treat the case $R(f)=1$. Let $\mathcal O$ be any $w^*$-neighborhood of $f$. Since scalar multiplication is $w^*$-continuous, there exists $0<t<1$ arbitrarily close to $1$ (from below) such that $tf\in\mathcal O$. In particular, since $\Rcirc(f) > 1$, we can choose $t$ sufficiently close to $1$ such that  $\Rcirc(tf) = t^p \Rcirc(f) > 1$. By \cref{prop:lscreg_preserves_properties}~\ref{item:homo-lsc-reg}, $R$ is absolutely $p$-homogeneous, so $R(tf)=t^pR(f)=t^p<1$. Thus, we can apply the argument in the previous paragraph to find that $tf\in\U$. Since $\mathcal O$ is a $w^*$-open neighborhood of $tf$, it follows that $\mathcal O\cap\U_\Theta\neq\varnothing$. Thus, every $w^*$-neighborhood of $f$ intersects $\U_\Theta$, and hence $f\in\U$. Therefore, $\sls{R}{1}\subset\U$, which completes the proof.
\end{proof}

Recall that the \emph{gauge} of a set $S\subset\B$ is the function $G_S:\B\to[0,+\infty]$ defined by
\begin{equation}
    G_S(f) \coloneqq \inf\curly*{\lambda>0 \st f/\lambda\in S}.
\end{equation}
This is the same \emph{Minkowski function} that underlies many geometric formulations of regularization and inverse problems; see, e.g., \cite{chandrasekaran2012convex,rockafellarConvex1970}.

\begin{theorem}
\label{thm:gauge-representation-cost}
If $\Rcirc$ is absolutely $p$-homogeneous for some $p>0$, then
\begin{equation}
    R(f)^{1/p}=G_\U(f),
    \quad f\in\B.
\end{equation}
\end{theorem}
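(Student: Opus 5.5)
The plan is to combine \cref{lem:unit-ball-representation-cost}, which identifies $\U$ with the sublevel set $\sls{R}{1}$, with the absolute $p$-homogeneity of $R$ from \cref{prop:lscreg_preserves_properties}. For $\lambda>0$ and $f\in\B$, homogeneity gives $R(f/\lambda)=\lambda^{-p}R(f)$. Hence
\begin{equation*}
f/\lambda\in\U \iff R(f/\lambda)\le 1 \iff R(f)\le \lambda^p \iff \lambda\ge R(f)^{1/p},
\end{equation*}
with the convention $(+\infty)^{1/p}=+\infty$. Therefore the set $\{\lambda>0 \st f/\lambda\in\U\}$ equals $[R(f)^{1/p},+\infty)\cap(0,+\infty)$.

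Taking the infimum of this set gives $G_\U(f)=R(f)^{1/p}$ in each case.
\begin{itemize}
\item If $0<R(f)<+\infty$, the set is the ray $[R(f)^{1/p},+\infty)$, whose infimum is $R(f)^{1/p}$.
\item If $R(f)=0$, the set is all of $(0,+\infty)$, so the infimum is $0=R(f)^{1/p}$.
\item If $R(f)=+\infty$, the set is empty. By the convention $\inf\varnothing=+\infty$ we get $G_\U(f)=+\infty$.
\end{itemize}

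I would check two points carefully.

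First, the homogeneity identity $R(f/\lambda)=\lambda^{-p}R(f)$ must hold when $R(f)\in\{0,+\infty\}$. This follows directly from the definition of absolute $p$-homogeneity, since $\lambda^{-p}\cdot 0=0$ and $\lambda^{-p}\cdot(+\infty)=+\infty$.

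Second, the case $f=0$ should be handled separately. Here $R(0)=0$, and $0/\lambda=0\in\U$ for every $\lambda>0$, so $G_\U(0)=0$.

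The main obstacle, namely that $\U$ is exactly $\sls{R}{1}$ and not merely contained in it (the boundary case $R(f)=1$), is already handled by \cref{lem:unit-ball-representation-cost}. The remaining argument is routine bookkeeping.
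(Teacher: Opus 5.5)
Your proposal is correct and follows the paper's proof essentially verbatim: use the unit-ball lemma to identify the closed parametric unit ball with the sublevel set $\{R\le 1\}$, use the absolute $p$-homogeneity that $R$ inherits to get that $f/\lambda$ lies in this set if and only if $\lambda\ge R(f)^{1/p}$, and then take the infimum over $\lambda>0$. Your explicit handling of the cases $R(f)=0$ and $R(f)=+\infty$ (the latter via $\inf\varnothing=+\infty$) spells out what the paper leaves implicit.
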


\begin{proof}
By \cref{prop:lscreg_preserves_properties}~\ref{item:homo-lsc-reg}, $R$ is
absolutely $p$-homogeneous. By \cref{lem:unit-ball-representation-cost},
$\U=\sls{R}{1}$. Therefore,
\begin{equation}
    f/\lambda\in\U
    \quad\Leftrightarrow\quad
    R(f/\lambda)\leq1
    \quad\Leftrightarrow\quad
    \lambda^{-p}R(f)\leq1
    \quad\Leftrightarrow\quad
    \lambda\geq R(f)^{1/p}.
\end{equation}
Taking the infimum over $\lambda>0$ gives the result.
\end{proof}

\begin{remark}
The homogeneity assumption is necessary. Otherwise, the $w^*$-closure of the
parametric unit ball need not be the unit ball of the extended representation
cost. For example, suppose that $\Rcirc(0)=2$ and
\begin{equation}
    \Rcirc(f)
    =
    1+\norm{f}_{\B},
    \quad f\neq0.
\end{equation}
Then $\U_\Theta=\U=\varnothing$, while $\sls{R}{1}=\curly{0}$.
\end{remark}

\subsubsection{The (Quasi-)Norm Perspective and (Quasi-)Banach Native Spaces} \label{sec:quasi}

We next identify conditions under which the native space carries a natural Banach or quasi-Banach structure. The point is to allow representation costs that are powers of norms or quasi-norms such as squared Hilbert-space norms (\cref{sec:RKHS}) and the costs that arise in the context of shallow and deep neural networks, which are based on norms and quasi-norms, respectively (\cref{sec:shallow,sec:deep-nets}).

A function $K:\B\to[0,+\infty]$ is called \emph{subadditive} if $K(f+g)\leq K(f)+K(g)$ for every $f,g\in\B$. Together with homogeneity, subadditivity provides the bridge from the gauge perspective to quasi-normed geometry. Both properties are preserved under l.s.c.\ regularization by \cref{prop:lscreg_preserves_properties}.

\begin{definition}
A function $Q:\B\to[0,+\infty)$ is called a \emph{quasi-seminorm} if it is
absolutely $1$-homogeneous and there exists a constant $C>0$ such that $Q(f+g)\leq C\paren[\big]{Q(f)+Q(g)}$ for every $f,g\in\B$. If, in addition, $\ker Q \coloneqq \{f \in \B \st Q(f) = 0\} = \{0\}$, then $Q$ is called a \emph{quasi-norm}.
\end{definition}

\begin{definition}[{cf.~\cite[p.~319]{albiac2009lipschitz}}]
Let $0<p\leq 1$. A quasi-seminorm $Q: \B \to [0, +\infty)$ is called a \emph{$p$-seminorm} if $Q(f+g)^p\leq Q(f)^p+Q(g)^p$ for every $f,g\in\B$. If, in addition, $\ker Q=\{0\}$, then $Q$ is called a \emph{$p$-norm}. A quasi-normed space is called \emph{$p$-normable} if its topology is induced by an equivalent $p$-norm.
\end{definition}

The following elementary observation is the bridge between homogeneous
and subadditive functions and quasi-normed spaces.

\begin{proposition}
\label{prop:homogeneous-subadditive-quasinorm}
If $K:\B\to[0,+\infty]$ is absolutely $p$-homogeneous and subadditive for some $0<p\leq 1$, then $Q(f) \coloneqq K(f)^{1/p}$ defines a $p$-seminorm on $\dom(K) \coloneqq \{f\in\B:K(f)<+\infty\}$. In particular, $Q$ is a quasi-seminorm on $\dom(K)$.
\end{proposition}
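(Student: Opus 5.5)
The plan is to verify the three defining properties of a $p$-seminorm for $Q = K^{1/p}$ restricted to $\dom(K)$, in this order: well-definedness and finiteness, absolute $1$-homogeneity, and the $p$-triangle inequality.

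First, $\dom(K)$ must be a vector subspace, since otherwise $Q$ is not a function on a linear space in the required sense. By absolute $p$-homogeneity, $K(0)=0$, so $0\in\dom(K)$. Also $K(\alpha f)=|\alpha|^pK(f)<+\infty$ whenever $f\in\dom(K)$ and $\alpha\neq 0$. Subadditivity gives $K(f+g)\leq K(f)+K(g)<+\infty$ for $f,g\in\dom(K)$. On this subspace, $Q$ takes values in $[0,+\infty)$.

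Second, absolute $1$-homogeneity is immediate. We have $Q(0)=K(0)^{1/p}=0$. For $\alpha\neq0$,
\begin{equation*}
    Q(\alpha f)=\paren[\big]{|\alpha|^pK(f)}^{1/p}=|\alpha|\,Q(f).
\end{equation*}

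Third, the $p$-triangle inequality is just subadditivity rewritten:
\begin{equation*}
    Q(f+g)^p=K(f+g)\leq K(f)+K(g)=Q(f)^p+Q(g)^p .
\end{equation*}
So $Q$ is a $p$-seminorm, provided we also record that it is a quasi-seminorm, as the definition of $p$-seminorm requires.

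The only step with any content, and the one I expect to be the main obstacle, is producing the quasi-triangle constant. I plan to use concavity of $t\mapsto t^{1/p}$ in reverse. Since $1/p\geq1$, convexity of $t\mapsto t^{1/p}$ on $[0,\infty)$ gives
\begin{equation*}
    (a+b)^{1/p}\leq 2^{1/p-1}\paren[\big]{a^{1/p}+b^{1/p}}, \quad a,b\geq0 .
\end{equation*}
Take $a=Q(f)^p$ and $b=Q(g)^p$. The $p$-triangle inequality then yields
\begin{equation*}
    Q(f+g)\leq \paren[\big]{Q(f)^p+Q(g)^p}^{1/p}\leq 2^{1/p-1}\paren[\big]{Q(f)+Q(g)} .
\end{equation*}
Hence $Q$ is a quasi-seminorm with constant $C=2^{1/p-1}$, and $C=1$ (an honest seminorm) when $p=1$. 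This completes the proof.
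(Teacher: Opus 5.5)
Your proof is correct and follows the paper's argument essentially step for step: you show $\dom(K)$ is a subspace via homogeneity and subadditivity, check absolute $1$-homogeneity of $Q$, read off the $p$-triangle inequality directly from subadditivity, and obtain the quasi-triangle constant $C=2^{1/p-1}$. The only blemish is wording: the inequality $(a+b)^{1/p}\leq 2^{1/p-1}\bigl(a^{1/p}+b^{1/p}\bigr)$ follows from convexity (not concavity) of $t\mapsto t^{1/p}$ for $1/p\geq 1$, which is what you actually use.
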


\begin{proof}
First note that $\dom(K)$ is a vector space. Indeed, absolute
$p$-homogeneity implies that $\dom(K)$ is closed under scalar multiplication,
and subadditivity implies that it is closed under addition. The function $Q$ is nonnegative and absolutely $1$-homogeneous. Moreover, for
$f,g\in\dom(K)$, subadditivity gives
\begin{equation}
    Q(f+g)^p = K(f+g) \leq K(f)+K(g) = Q(f)^p+Q(g)^p.
\end{equation}
Thus, $Q$ is a $p$-seminorm on $\dom(K)$. Note that since $0<p\leq 1$, this also implies the quasi-triangle inequality with $C = 2^{1/p-1}$.
\end{proof}

Let $q>0$ and $0<p\leq1$. Suppose that there exists a function $Q_\Theta:\B\to[0,+\infty]$ such that $\Rcirc=Q_\Theta^q$, and such that $Q_\Theta$ is subadditive and absolutely $p$-homogeneous. We write $Q\coloneqq\overline{Q_\Theta}$ for its $w^*$-l.s.c.\ regularization. By \cref{prop:lscreg_properties}~\ref{item:cont-compose}, $R=Q^q$. Moreover, by \cref{prop:lscreg_preserves_properties}, the function $Q$ is subadditive and absolutely $p$-homogeneous. Thus, $Q^{1/p}=R^{1/(pq)}$ is a natural candidate for a (quasi-)seminorm on $\F$.

\begin{theorem}
\label{thm:representation-cost-quasi-banach}
Suppose $\B=\A'$ where $\A$ is separable. Let $q>0$ and $0<p\leq1$. Assume that $\Rcirc=Q_\Theta^q$, where $Q_\Theta:\B\to[0,+\infty]$ is subadditive and absolutely $p$-homogeneous. Assume further that $\Rcirc$ is coercive. Then $R^{1/(pq)}$ is a $p$-norm on $\F$. Moreover, $\F$ is complete with respect to the metric
\begin{equation}
    d(f,g) \coloneqq R(f-g)^{1/q}, \quad f,g\in\F.
\end{equation}
In particular, $\F$ is a $p$-normable quasi-Banach space. When $p=1$, $\F$ is
a Banach space.
\end{theorem}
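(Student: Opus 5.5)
The plan is to first establish the algebraic structure, then positive definiteness, and finally completeness.

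\emph{Algebraic structure.} As noted before the statement, \cref{prop:lscreg_properties} and \cref{prop:lscreg_preserves_properties} give $R=Q^q$ with $Q=\overline{Q_\Theta}$ subadditive and absolutely $p$-homogeneous. Since $\F=\dom(R)=\dom(Q)$, \cref{prop:homogeneous-subadditive-quasinorm} shows that $Q^{1/p}=R^{1/(pq)}$ is a $p$-seminorm on the vector space $\F$.

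\emph{Positive definiteness.} I would first transfer coercivity from $\Rcirc$ to $R$. Take $\alpha$ and the $\beta$ with $\sls{\Rcirc}{\alpha+1}\subset\sls{\norm{\dummy}_\B}{\beta}$. If $R(f)\le\alpha$, then by the liminf formula every $w^*$-neighborhood of $f$ contains a $g$ with $\Rcirc(g)<\alpha+1$, so $f$ lies in the $w^*$-closure of the ball of radius $\beta$. That ball is $w^*$-closed, so $\norm{f}_\B\le\beta$ and $R$ is coercive. Now suppose $R(f)=0$. Homogeneity gives $R(tf)=0\le 1$ for all $t>0$, so $t\norm{f}_\B\le\beta$ for all $t$, which forces $f=0$. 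Hence $R^{1/(pq)}$ is a $p$-norm.

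\emph{Metric.} Since $R^{1/q}=Q$ and $Q$ is subadditive, symmetric and definite, $d$ is a genuine metric. I also note that $d$ is a power of the $p$-norm: $d=(R^{1/(pq)})^p$. So $d$ induces the quasi-norm topology, and completeness in $d$ is exactly quasi-Banach completeness.

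\emph{Completeness.} This is the main step. Let $(f_n)$ be $d$-Cauchy. Then $Q(f_n)\le Q(f_1)+d(f_n,f_1)$ is bounded, so by coercivity $(f_n)$ is norm-bounded. By Banach--Alaoglu, with separability of $\A$ giving metrizability on bounded sets, a subsequence converges in $w^*$ to some $f\in\B$.

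For fixed $m$, the map $g\mapsto Q(g-f_m)$ is $w^*$-l.s.c., since it is a translate of the l.s.c. function $Q$. Therefore
\begin{equation}
Q(f-f_m)\le\liminf_k Q(f_{n_k}-f_m)\le\varepsilon
\end{equation}
for all $m$ beyond the Cauchy threshold. This gives $f-f_m\in\F$, hence $f\in\F$, and $d(f,f_m)\to0$. Since a Cauchy sequence with a convergent subsequence converges, $f_n\to f$ in $d$.

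For the final statement: when $p=1$, the $p$-norm is a norm, so $\F$ is a Banach space.

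The only delicate point is the coercivity transfer, specifically checking that the norm ball is $w^*$-closed so that the liminf argument applies. Everything else follows from the cited preservation results.
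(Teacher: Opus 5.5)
Your proof is correct and follows essentially the same route as the paper: the $p$-seminorm property comes from the subadditivity and homogeneity that $Q$ inherits, definiteness from coercivity of $R$ combined with homogeneity, and completeness from Banach--Alaoglu applied to a bounded Cauchy sequence together with weak$^*$-lower semicontinuity of translates of $Q$. The only difference is that you verify coercivity of $R$ directly, via the liminf formula and the weak$^*$-closedness of norm balls, whereas the paper simply cites \cref{prop:lscreg_is_coercive}.
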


\begin{proof}
Let $Q\coloneqq\overline{Q_\Theta}$. As above, $R=Q^q$, and $Q$ is
subadditive and absolutely $p$-homogeneous. By
\cref{prop:homogeneous-subadditive-quasinorm}, $Q^{1/p}=R^{1/(pq)}$ is a
$p$-seminorm on $\F=\dom(R)=\dom(Q)$.

Since $\Rcirc$ is coercive, by \Cref{prop:lscreg_is_coercive} so is $R$.
We claim that $Q^{1/p}$ has trivial null space. Indeed, if
$Q(f)=0$, then $R(\gamma f)=Q(\gamma f)^q=0$ for every $\gamma>0$. If
$f\neq0$, this contradicts the norm-boundedness of the sublevel set
$\curly{R\leq0}$. Hence $f=0$, and so $Q^{1/p}=R^{1/(pq)}$ is a $p$-norm.

It remains to prove completeness with respect to the metric $d(f,g) = R(f-g)^{1/q} = Q(f-g)$. Let $(f_n)_{n\in\N}\subset\F$ be Cauchy
with respect to $d$. Choose $N\in\N$ such that
\begin{equation}
    Q(f_n-f_m)\leq1,
    \quad n,m\geq N.
\end{equation}
Then, for all $n\geq N$,
\begin{equation}
    Q(f_n)
    \leq
    Q(f_n-f_N)+Q(f_N)
    \leq
    1+Q(f_N).
\end{equation}
This shows $\sup_n Q(f_n)<+\infty$, or equivalently, $\sup_n R(f_n)<+\infty$. Since $R$ is coercive,
$(f_n)_{n\in\N}$ is norm-bounded in $\B$. Since $\A$ is separable, $w^*$ is
metrizable on bounded subsets of $\B$. By Banach--Alaoglu, there exists a
subsequence $(f_{n_j})_{j\in\N}$ and some $f\in\B$ such that $f_{n_j}\starconverges f$.
Since $Q$ is $w^*$-l.s.c.,
\begin{equation}
    Q(f)
    \leq
    \liminf_{j\to\infty}Q(f_{n_j})
    <+\infty.
\end{equation}
This shows that $f\in\F$. We now claim that $d(f_n, f)\rightarrow 0$. Indeed, let $\varepsilon>0$. Since $(f_n)_{n\in\N}$ is Cauchy with respect to $d$, there exists $N\in\N$ such that $Q(f_n-f_m)\leq\varepsilon$, for all $n,m\geq N$. Fix $n\geq N$. Passing to the subsequence with $n_j\geq N$, we have $(f_n-f_{n_j})\starconverges (f_n-f)$. By $w^*$-lower semicontinuity of $Q$,
\begin{equation}
    Q(f_n-f)
    \leq
    \liminf_{j\to\infty}Q(f_n-f_{n_j})
    \leq
    \varepsilon.
\end{equation}
Thus $d(f_n,f)=Q(f_n-f)\to0$, and so $\F$ is complete with respect to $d$.
\end{proof}

\begin{remark}
The Aoki--Rolewicz theorem states that every quasi-normed space is $p$-normable for some $0<p\leq 1$~\cite{aoki1942locally,rolewicz1957metric}. Observe that in \cref{prop:homogeneous-subadditive-quasinorm}, the exponent is not obtained abstractly from the Aoki--Rolewicz theorem, but that it is inherited directly. Indeed, if $K$ is absolutely $p$-homogeneous and subadditive, then $K^{1/p}$ is a $p$-seminorm. This will be important in \cref{sec:deep-nets} when we identify explicit $p$-normed quasi-Banach native spaces associated with deep neural networks.
\end{remark}

The previous theorem assumes coercivity on all of $\B$. In many examples, the representation cost is invariant along an unpenalized subspace, such as constants, biases, or affine components. The correct substitute is coercivity modulo this subspace. If the subspace is complemented, one obtains a \textit{bona fide} quasi-norm by adding the ambient norm on the unpenalized component.

\begin{corollary}
\label{cor:F_is_quasi_banach}
Suppose $\B=\A'$ where $\A$ is separable. Let $q>0$ and $0<p\leq1$. Assume that $\Rcirc=Q_\Theta^q$, where $Q_\Theta:\B\to[0,+\infty]$ is subadditive and absolutely $p$-homogeneous. Let $\Null\subset\ker(\Rcirc)$ be a $w^*$-complemented subspace, and suppose that
$R_\Theta$
is coercive modulo $\Null$. Let $\P_\Null:\B\to\B$ be any $w^*$-$w^*$-continuous projector onto $\Null$. Then, $\F$ is a quasi-Banach space under the quasi-norm
\begin{equation}
    \norm{f}_{\F}
    \coloneqq
    \max\curly{
        R(f)^{1/(pq)},
        \norm{\P_\Null f}_{\B}
    },
    \quad f\in\F.
\end{equation}
More precisely, $\norm{\dummy}_{\F}$ is a $p$-norm. When $p=1$, $\F$ is a
Banach space.
\end{corollary}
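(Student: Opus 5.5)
The plan is to reduce to \cref{thm:representation-cost-quasi-banach} on the complement $\M$ of $\Null$, using the factorization \cref{lem:chatacterizing_R_mod_N}, and then to glue back the finite-cost component in $\Null$ with the ambient norm.

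\textbf{Step 1: $\Null$ lies in the constancy space.} First I will show that $\Null\subset\const(Q_\Theta)$, and hence $\Null\subset\const(\Rcirc)$.
\begin{itemize}
\item Let $g\in\Null$ and $\alpha\in\R$. By homogeneity, $Q_\Theta(\alpha g)=|\alpha|^pQ_\Theta(g)=0$.
\item Subadditivity then gives $Q_\Theta(f+\alpha g)\le Q_\Theta(f)$.
\item Writing $f=(f+\alpha g)+(-\alpha g)$ and applying subadditivity again gives the reverse inequality.
\end{itemize}
So \cref{lem:chatacterizing_R_mod_N} applies. With $\P_\M=\Id-\P_\Null$, it gives $R=\overline{\Rcirc|_\M}\circ\P_\M$. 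In particular, $R$ vanishes on $\Null$, and $R(f)=R(\P_\M f)$. Hence $\F=(\F\cap\M)\oplus\Null$, where $\F\cap\M=\dom(R|_\M)$.

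\textbf{Step 2: the setting on $\M$.} Next I will place $\M$ in the setting of \cref{thm:representation-cost-quasi-banach}.
\begin{itemize}
\item Since $\M$ is $w^*$-closed, $\M\cong(\A/\M_\perp)'$ isometrically, and its weak$^*$ topology is the subspace topology $\tau_\M$. The predual $\A/\M_\perp$ is separable as a quotient of the separable space $\A$.
\item The restriction $\Rcirc|_\M=(Q_\Theta|_\M)^q$ keeps subadditivity and $p$-homogeneity.
\end{itemize}
The key point is coercivity of $\Rcirc|_\M$ on $\M$.
\begin{itemize}
\item A $w^*$-$w^*$-continuous linear map on a dual Banach space is norm-bounded, for instance by the closed graph theorem, since $w^*$ is Hausdorff and coarser than the norm topology. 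So $\|\P_\M\|<\infty$.
\item For $m\in\M$ and any $g\in\Null$, we have $m=\P_\M(m+g)$, so $\|m\|_\B\le\|\P_\M\|\,\|m+g\|_\B$.
\item Taking the infimum over $g\in\Null$, coercivity modulo $\Null$ yields ordinary coercivity on $\M$.
\end{itemize}
By \cref{thm:representation-cost-quasi-banach}, $R|_\M^{1/(pq)}$ is then a $p$-norm on $\F\cap\M$. Moreover, $\F\cap\M$ is complete for $Q(f-g)=R(f-g)^{1/q}$.

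\textbf{Step 3: $\|\dummy\|_\F$ is a $p$-norm.}
\begin{itemize}
\item $R^{1/(pq)}=Q^{1/p}$ is a $p$-seminorm on $\F$ by \cref{prop:homogeneous-subadditive-quasinorm}.
\item $f\mapsto\|\P_\Null f\|_\B$ is a seminorm, hence a $p$-seminorm, since $(a+b)^p\le a^p+b^p$ for $a,b\ge0$.
\item A maximum of two $p$-seminorms is again a $p$-seminorm: $\max(a_1+b_1,a_2+b_2)\le\max(a_1,a_2)+\max(b_1,b_2)$, applied to the $p$-th powers.
\item Definiteness: if $\|f\|_\F=0$, then $\P_\Null f=0$, so $f\in\M$ with $R(f)=0$, and coercivity on $\M$ forces $f=0$.
\end{itemize}

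\textbf{Step 4: completeness.} Let $(f_n)$ be Cauchy in $\|\dummy\|_\F$.
\begin{itemize}
\item $(\P_\Null f_n)$ is norm-Cauchy in $\Null$. Since $\Null$ is $w^*$-closed, it is norm-closed, so $\P_\Null f_n\to h\in\Null$ in norm.
\item We have $R(\P_\M f_n-\P_\M f_m)=R(f_n-f_m)$, so $(\P_\M f_n)$ is Cauchy in the metric of Step 2 and converges to some $m\in\F\cap\M$.
\item Set $f=m+h$. Then $R(f_n-f)=R(\P_\M f_n-m)\to0$ and $\|\P_\Null(f_n-f)\|_\B\to0$, so $f_n\to f$ in $\|\dummy\|_\F$.
\end{itemize}
When $p=1$ all of the above are norms, so $\F$ is a Banach space.

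The main obstacle I expect is Step 2: verifying carefully that $(\M,\tau_\M)$ is a dual space with separable predual, and that the boundedness of $\P_\M$ converts coercivity modulo $\Null$ into genuine coercivity. Everything else is bookkeeping with $p$-seminorms.
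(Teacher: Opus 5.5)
Your proposal is correct and follows essentially the same route as the paper's proof. Both arguments go through the same steps:
\begin{enumerate}
\item place $\Null$ in $\const(Q_\Theta)$ (the paper cites $\ker(Q_\Theta)=\const(Q_\Theta)$, which is exactly your Step~1 computation);
\item factor the regularization through $\P_\M$ via \cref{lem:chatacterizing_R_mod_N};
\item turn coercivity modulo $\Null$ into genuine coercivity on $\M=(\A/\M_\perp)'$ using boundedness of $\P_\M$, and apply \cref{thm:representation-cost-quasi-banach} there;
\item recombine $\F=(\F\cap\M)\oplus\Null$ under the max quasi-norm.
\end{enumerate}
Your Steps~3--4 spell out the product-space $p$-norm and completeness check that the paper states in a single sentence.
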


\begin{proof}
Let $\M\coloneqq\ker(\P_\Null)$ and $\P_\M\coloneqq\Id-\P_\Null$. Since $\Rcirc=Q_\Theta^q$, we have $\ker(\Rcirc)=\ker(Q_\Theta)$. Next, since $Q_\Theta$ is subadditive and absolutely $p$-homogeneous, $\ker(Q_\Theta)=\const(Q_\Theta)$. Thus, $\Null\subset\const(Q_\Theta)$. Also, $Q_\Theta$ is coercive modulo $\Null$, since its level-sets are in one-to-one correspondence with $R_\Theta$.

Now define $Q\coloneqq\overline{Q_\Theta}$. Then by
\cref{lem:chatacterizing_R_mod_N}, we have
\begin{equation}
    Q=Q_\M\circ\P_\M,
    \label{eq:fact-M-root}
\end{equation}
where $Q_\M$ denotes the
$w^*$-l.s.c.\ regularization of
$Q_\Theta|_\M:\M\to[0,+\infty]$ with respect to the subspace topology on
$\M$. Consequently, $R=Q^q=(Q_\M\circ\P_\M)^q$.

By \cref{prop:coercive-mod-N-equivalences}~\ref{item:coercive_on_M}, $R_\Theta|_\M$ is coercive on $\M$.
Since $\M$ is a $w^*$-closed subspace of $\B=\A'$, it is canonically a dual Banach space. More precisely, if
\begin{equation}
    \M_\perp \coloneqq \curly{\mu\in\A \st \langle\mu,m\rangle=0\text{ for all }m\in\M},
\end{equation}
then $\M=(\A/\M_\perp)'$ isometrically, and the corresponding weak$^*$ topology is the subspace topology inherited from $\B$. The quotient $\A/\M_\perp$ is separable. Applying
\cref{thm:representation-cost-quasi-banach} on $\M$ shows that
\begin{equation}
    \F_\M
    \coloneqq
    \curly{f\in\M \st Q_\M(f)<+\infty}
\end{equation}
is quasi-Banach under the $p$-norm $Q_\M^{1/p}$. Moreover, because $\Null\subset\ker(\Rcirc)$, we have $\Null\subset\F$, and the factorization \cref{eq:fact-M-root} gives the topological direct-sum decomposition $\F=\F_\M\oplus\Null$. Under this decomposition, for every $f \in \F$, write
\begin{equation}
    \norm{f}_{\F} = \max\curly{ Q_\M(\P_\M f)^{1/p}, \norm{\P_\Null f}_{\B} }.
\end{equation}
This is the maximum $p$-norm on the product of the quasi-Banach space $\F_\M$ and the Banach space $\Null$. Hence $\F$ is quasi-Banach, and $\norm{\dummy}_{\F}$ is a $p$-norm.
\end{proof}

\begin{remark}
The case $p=1$ yields Banach native spaces. For example, $R$ being a squared Hilbert-space norm corresponds to $q=2$ and $p=1$. When $0<p<1$, the construction gives a $p$-normed quasi-Banach structure. In applications, a separate issue is whether the unit ball of the representation cost is convex. This distinction is familiar from finite-dimensional regularization by $x \mapsto \|x\|_p^p$, $0<p<1$, whose unit ball is nonconvex. A similar phenomenon occurs for depth-$L$ ReLU neural networks in \cref{sec:deep-nets}. In particular, under the hypotheses of \cref{prop:deep-unit-ball-nonconvex}, the representation cost has a \emph{nonconvex} unit ball.
\end{remark}

\subsubsection{Interplay Between Parameter Costs and Representation Costs} \label{sec:interplay}

The preceding results place assumptions directly on the parametric representation cost $\Rcirc$, or, equivalently, on $Q_\Theta$ that satisfies $\Rcirc=Q_\Theta^q$. In applications, these objects are induced by the parameter cost $C: \Theta \to [0, +\infty)$. We now record parameter-level conditions that make the preceding assumptions operational. The preservation of the relevant properties when passing from $\Rcirc$ to $R$ then follows from the l.s.c.\ regularization properties collected in \cref{app:lsc-reg}.

\begin{definition}
Let $q>0$. The parameter cost $C: \Theta \to [0, +\infty)$ is called \emph{parameter
$q$-subadditive} if, for every $\theta,\vartheta\in\Theta$, there exists
$\eta\in\Theta$ such that
\begin{equation}
    f_\eta=f_\theta+f_\vartheta \quad\text{and}\quad C(\eta)^{1/q} \leq C(\theta)^{1/q}+C(\vartheta)^{1/q}.
\end{equation}
\end{definition}

\begin{definition}
Let $q>0$ and $p>0$. The parameter cost $C: \Theta \to [0, +\infty)$ is called
\emph{parameter $(q,p)$-homogeneous} if there exists $\theta_0\in\Theta$ such
that
\begin{equation}
    f_{\theta_0}=0
    \quad\text{and}\quad
    C(\theta_0)=0,
\end{equation}
and, for every $\theta\in\Theta$ and every
$\alpha\in\R\setminus\curly{0}$, there exists
$\theta_\alpha\in\Theta$ such that
\begin{equation}
    f_{\theta_\alpha}=\alpha f_\theta \quad\text{and}\quad C(\theta_\alpha)^{1/q} \leq \abs{\alpha}^p C(\theta)^{1/q}.
\end{equation}
\end{definition}

\begin{proposition}
\label{prop:parameter-root-properties-transfer}
Let $q>0$ and $p>0$. Suppose that $C: \Theta \to [0, +\infty)$ is parameter $q$-subadditive and
parameter $(q,p)$-homogeneous. Define the ``rooted'' parametric representation cost
\begin{equation}
    Q_\Theta(f) \coloneqq \inf\curly{ C(\theta)^{1/q} \st \theta\in\Theta,\ f_\theta=f },
    \quad f\in\F_\Theta,
\end{equation}
and identify $Q_\Theta$ with its trivial extension to $\B$. Then $\Rcirc = Q_\Theta^q$. Moreover, $Q_\Theta$ is subadditive and absolutely $p$-homogeneous on $\B$. Consequently, if we write $Q\coloneqq\overline{Q_\Theta}$, then $Q$ is subadditive and absolutely $p$-homogeneous, and $R=Q^q$. Equivalently, $Q=R^{1/q}$.
\end{proposition}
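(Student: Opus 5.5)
The plan has three parts: verify the identity $\Rcirc=Q_\Theta^q$, then check subadditivity and absolute $p$-homogeneity of $Q_\Theta$ on $\B$, and finally transfer everything to the regularization using \cref{prop:lscreg_preserves_properties,prop:lscreg_properties}.

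\textbf{The identity.} For $f\in\F_\Theta$, the map $t\mapsto t^{1/q}$ is continuous and strictly increasing on $[0,+\infty)$. Hence it commutes with infima over the fiber $\{\theta: f_\theta=f\}$, which gives
\[
Q_\Theta(f)=\bigl(\inf C(\theta)\bigr)^{1/q}=\Rcirc(f)^{1/q}.
\]
For $f\notin\F_\Theta$, both sides equal $+\infty$, using $(+\infty)^q=+\infty$.

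\textbf{Subadditivity.} Let $f,g\in\B$. If either lies outside $\F_\Theta$, the right-hand side is $+\infty$ and there is nothing to prove. Otherwise, fix $\varepsilon>0$ and choose $\theta,\vartheta$ with $f_\theta=f$, $f_\vartheta=g$, $C(\theta)^{1/q}\le Q_\Theta(f)+\varepsilon$ and $C(\vartheta)^{1/q}\le Q_\Theta(g)+\varepsilon$. Parameter $q$-subadditivity supplies $\eta$ with $f_\eta=f+g$. This shows $f+g\in\F_\Theta$, and
\[
Q_\Theta(f+g)\le C(\eta)^{1/q}\le Q_\Theta(f)+Q_\Theta(g)+2\varepsilon.
\]
Letting $\varepsilon\to0$ gives subadditivity.

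\textbf{Absolute $p$-homogeneity.} First, $Q_\Theta(0)=0$, witnessed by $\theta_0$. For $\alpha\ne0$ and $f\in\F_\Theta$, take near-optimal $\theta$ and the parameter $\theta_\alpha$ from the definition. This gives $\alpha f\in\F_\Theta$ and $Q_\Theta(\alpha f)\le|\alpha|^pQ_\Theta(f)$. For the reverse inequality, apply the same bound to $\alpha f$ with the scalar $\alpha^{-1}$:
\[
Q_\Theta(f)=Q_\Theta(\alpha^{-1}\alpha f)\le|\alpha|^{-p}Q_\Theta(\alpha f).
\]
The case $f\notin\F_\Theta$ uses the same step. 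If $\alpha f$ were in $\F_\Theta$, then $f=\alpha^{-1}(\alpha f)$ would be in $\F_\Theta$ as well, a contradiction. So both sides are $+\infty$.

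\textbf{Transfer to $Q$ and the main obstacle.} The earlier propositions give that $Q=\overline{Q_\Theta}$ is subadditive and absolutely $p$-homogeneous. The identity $R=\overline{Q_\Theta^q}=\overline{Q_\Theta}^{\,q}$ follows from the continuous-increasing-composition property \cref{prop:lscreg_properties}~\ref{item:cont-compose}, applied to $t\mapsto t^q$ extended to $[0,+\infty]$. This step is the main obstacle, or at least the only non-routine one. One must check that $\phi(t)=t^q$ is a continuous, nondecreasing bijection of $[0,+\infty]$, so that $\overline{\phi\circ K}=\phi\circ\overline K$ holds. The reason is that sublevel sets satisfy $\{\phi\circ K\le\alpha\}=\{K\le\phi^{-1}(\alpha)\}$, and l.s.c.\ regularization commutes with such reparametrizations of levels. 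With that in hand, $Q=R^{1/q}$ follows immediately.
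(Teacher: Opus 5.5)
Your proposal is correct and follows the paper's proof essentially step for step. It obtains the identity $\Rcirc=Q_\Theta^q$ from monotonicity of $t\mapsto t^{1/q}$, and uses the $\varepsilon$-near-optimal parameter arguments for subadditivity and $p$-homogeneity, including the $\alpha^{-1}$ trick that also gives $f\in\F_\Theta\Leftrightarrow\alpha f\in\F_\Theta$. It then transfers everything to $Q$ via \cref{prop:lscreg_preserves_properties} and \cref{prop:lscreg_properties}~\ref{item:cont-compose}. The step you flag as the main obstacle is, in the paper, just a direct citation of that composition property for the continuous nondecreasing map $t\mapsto t^q$, so no separate sublevel-set argument is needed.
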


\begin{proof}
Since $t\mapsto t^{1/q}$ is increasing on $[0,+\infty)$, for every $f\in\F_\Theta$ we have
\begin{equation}
    Q_\Theta(f) = \inf\curly{ C(\theta)^{1/q} \st \theta\in\Theta,\ f_\theta=f } = \Rcirc(f)^{1/q}.
\end{equation}
Thus, $\Rcirc=Q_\Theta^q$ on $\F_\Theta$, and the identity also holds for the
trivial extensions to $\B$.

We now prove that $Q_\Theta$ is absolutely $p$-homogeneous. The condition in the definition of parameter $(q,p)$-homogeneity gives $Q_\Theta(0)=0$. Let $f\in\F_\Theta$ and $\alpha\in\R\setminus\curly{0}$. Given $\varepsilon>0$, choose $\theta\in\Theta$ such that $f_\theta=f$ and $C(\theta)^{1/q} \leq Q_\Theta(f)+\varepsilon$. By parameter $(q,p)$-homogeneity, there exists $\theta_\alpha\in\Theta$ such that $f_{\theta_\alpha}=\alpha f$ and $C(\theta_\alpha)^{1/q} \leq \abs{\alpha}^p C(\theta)^{1/q}$. Hence, $Q_\Theta(\alpha f) \leq \abs{\alpha}^p Q_\Theta(f)+\abs{\alpha}^p\varepsilon$. Taking $\varepsilon\to0$ gives
\begin{equation}
    Q_\Theta(\alpha f) \leq \abs{\alpha}^p Q_\Theta(f). \label{eq:homo-forward}
\end{equation}
Applying the same inequality to $h=\alpha f$ with scalar $\alpha^{-1}$ gives
\begin{equation}
    Q_\Theta(f) = Q_\Theta(\alpha^{-1}h) \leq \abs{\alpha}^{-p}Q_\Theta(h) = \abs{\alpha}^{-p}Q_\Theta(\alpha f). \label{eq:homo-backward}
\end{equation}
Combining \cref{eq:homo-forward,eq:homo-backward} yields $Q_\Theta(\alpha f) = \abs{\alpha}^p Q_\Theta(f)$. The same identity holds for the trivial extension to $\B$, since parameter
$(q,p)$-homogeneity implies
\begin{equation}
    f\in\F_\Theta
    \quad\Leftrightarrow\quad
    \alpha f\in\F_\Theta,
    \quad \alpha\neq0.
\end{equation}
Thus, $Q_\Theta$ is absolutely $p$-homogeneous on $\B$.

We next prove subadditivity. Let $f,g\in\F_\Theta$. Given $\varepsilon>0$,
choose $\theta,\vartheta\in\Theta$ such that $f_\theta=f$,
$f_\vartheta=g$,
\begin{equation}
    C(\theta)^{1/q} \leq Q_\Theta(f)+\varepsilon \quad\text{and}\quad C(\vartheta)^{1/q} \leq Q_\Theta(g)+\varepsilon.
\end{equation}
By parameter $q$-subadditivity, there exists $\eta\in\Theta$ such that
$f_\eta=f+g$ and
\begin{equation}
    C(\eta)^{1/q}
    \leq
    C(\theta)^{1/q}+C(\vartheta)^{1/q}.
\end{equation}
Therefore,
\begin{equation}
    Q_\Theta(f+g)
    \leq
    Q_\Theta(f)+Q_\Theta(g)+2\varepsilon.
\end{equation}
Taking $\varepsilon\to0$ proves subadditivity on $\F_\Theta$. The same
inequality holds for the trivial extension to $\B$, since it is automatic
whenever one of the two terms on the right-hand side is $+\infty$.

Finally, by \cref{prop:lscreg_preserves_properties}, the $w^*$-l.s.c.\
regularization $Q=\overline{Q_\Theta}$ is subadditive and absolutely
$p$-homogeneous. Since $t\mapsto t^q$ is continuous and nondecreasing on
$[0,+\infty]$, \cref{prop:lscreg_properties}~\ref{item:cont-compose} gives
\begin{equation}
    R = \overline{\Rcirc} = \overline{Q_\Theta^q} = \overline{Q_\Theta}^{\,q} = Q^q.
\end{equation}
This completes the proof.
\end{proof}

In particular, the subadditivity and homogeneity of the parameter cost $C: \Theta \to [0, +\infty)$ transfer to the representation cost. These are two key properties needed to apply \cref{thm:representation-cost-quasi-banach,cor:F_is_quasi_banach}. We next give a parameter-level condition that implies coercivity, or more generally coercivity modulo a subspace.

\begin{definition}
\label{defn:param-coercive}
Let $\Null\subset\B$ be a $w^*$-closed subspace. The parameter cost $C: \Theta \to [0, +\infty)$ is
called \emph{parameter coercive modulo $\Null$} if there exists a
nondecreasing continuous function $\gamma:[0,+\infty)\to[0,+\infty)$ such
that
\begin{equation}
    \lim_{t\to+\infty}\gamma(t)=+\infty
\end{equation}
and
\begin{equation}
    \norm{[f_\theta]}_{\B/\Null}
    =
    \inf_{g\in\Null}\norm{f_\theta+g}_{\B}
    \leq
    \gamma(C(\theta)),
    \quad \theta\in\Theta.
\end{equation}
\end{definition}

\begin{proposition}
\label{lem:suff_cond_R_coercive}
Let $\Null\subset\B$ be a $w^*$-closed subspace. If $C: \Theta \to [0, +\infty)$ is parameter coercive modulo
$\Null$, then both $\Rcirc$ and $R$ are coercive modulo $\Null$. In particular, if $\gamma$ is the nondecreasing continuous function as specified in \cref{defn:param-coercive}, then
\begin{equation}
\inf_{g \in \Null}\norm{f + g}_\B \leq \gamma(R(f)) \leq \gamma(\Rcirc(f)), \quad f\in\B, \label{eq:param-extend-coercive-est}
\end{equation}
with the convention that $\gamma(+\infty)=+\infty$.
\end{proposition}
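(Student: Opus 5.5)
The plan is to prove the two inequalities in \cref{eq:param-extend-coercive-est} separately; coercivity modulo $\Null$ of both $\Rcirc$ and $R$ then follows at once. Indeed, if $R(f)\leq\alpha$ (or $\Rcirc(f)\leq\alpha$), monotonicity of $\gamma$ gives $\inf_{g\in\Null}\norm{f+g}_\B\leq\gamma(\alpha)$, so we may take $\beta\coloneqq\max\{\gamma(\alpha),1\}$ in \cref{defn:coercive-mod-N}. The right inequality $\gamma(R(f))\leq\gamma(\Rcirc(f))$ is immediate from $R\leq\Rcirc$ and monotonicity of $\gamma$ (extended by $\gamma(+\infty)=+\infty$).

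First, I would establish the estimate for $\Rcirc$. If $f\notin\F_\Theta$, the right-hand side is $+\infty$ and there is nothing to prove. If $f\in\F_\Theta$, pick $\theta_n$ with $f_{\theta_n}=f$ and $C(\theta_n)\downarrow\Rcirc(f)$. Then $\norm{[f]}_{\B/\Null}\leq\gamma(C(\theta_n))$, and continuity of $\gamma$ gives $\norm{[f]}_{\B/\Null}\leq\gamma(\Rcirc(f))$.

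The main step is passing to $R=\overline{\Rcirc}$. I would define $J(f)\coloneqq\norm{[f]}_{\B/\Null}$ and show that $J$ is $w^*$-l.s.c. on $\B$. Since $\Null$ is $w^*$-closed, $\Null=(\Null_\perp)^\perp$, and the quotient $\B/\Null$ is isometrically the dual of the closed subspace $\Null_\perp\subset\A$. This gives
\begin{equation}
J(f)=\sup\{\abs{\ang{\mu,f}}\st \mu\in\Null_\perp,\ \norm{\mu}_\A\leq1\}.
\end{equation}
This identity is the one duality fact that needs checking; it follows from Hahn--Banach applied to $\Null_\perp$ together with $w^*$-closedness of $\Null$. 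As a supremum of $w^*$-continuous functions, $J$ is $w^*$-l.s.c.

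Next, I would let $\tilde\gamma$ be a generalized inverse of $\gamma$, namely $\tilde\gamma(s)\coloneqq\inf\{t\geq0\st\gamma(t)\geq s\}$, with $\inf\varnothing=+\infty$. This function is nondecreasing, and it is left-continuous because $\gamma$ is continuous. I would then argue that $\tilde\gamma\circ J$ is $w^*$-l.s.c. Its sublevel set $\{\tilde\gamma(J)\leq a\}$ equals $\{J\leq\sup\{s\st\tilde\gamma(s)\leq a\}\}$, which is closed when the endpoint is attained, and this holds by left-continuity. The case $a$ below $\tilde\gamma(0)$ gives the empty set.

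From the previous step, $\gamma(\Rcirc(f))\geq J(f)$, which gives $\Rcirc(f)\geq\tilde\gamma(J(f))$; this uses that $\gamma(t)\geq s$ implies $t\geq\tilde\gamma(s)$. By the defining property of l.s.c. regularization, $R\geq\tilde\gamma\circ J$. Finally, continuity of $\gamma$ gives $\gamma(\tilde\gamma(s))\geq s$ whenever $\tilde\gamma(s)<\infty$. Hence $\gamma(R(f))\geq\gamma(\tilde\gamma(J(f)))\geq J(f)$, and if $\tilde\gamma(J(f))=+\infty$, then $R(f)=+\infty$ and the bound is trivial.

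I expect the main obstacle to be the careful handling of $\tilde\gamma$ when $\gamma$ is not strictly increasing. If this bookkeeping becomes messy, an alternative is a direct net argument. For $R(f)<\infty$ and $\varepsilon>0$, there is a net $g_i\to f$ in $w^*$ with $\Rcirc(g_i)\leq R(f)+\varepsilon$; this follows from the liminf formula for $R$. Then $J(g_i)\leq\gamma(R(f)+\varepsilon)$, and $w^*$-lower semicontinuity of $J$ gives $J(f)\leq\gamma(R(f)+\varepsilon)$. Letting $\varepsilon\to0$ and using continuity of $\gamma$ finishes the proof. I would actually present this net argument, since it avoids the generalized inverse entirely.
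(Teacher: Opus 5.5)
Your proposal is correct and follows the paper's proof. Both use the same estimate $\norm{[f]}_{\B/\Null}\leq\gamma(\Rcirc(f))$, obtained from continuity of $\gamma$, and then pass to $R$ using $w^*$-lower semicontinuity of $f\mapsto\norm{[f]}_{\B/\Null}$. The difference is only in how that last step is executed: the paper combines the defining property of l.s.c.\ regularization with the identity $\overline{\gamma\circ\Rcirc}=\gamma\circ R$ from \cref{prop:lscreg_properties}, whereas your net argument does this step by hand. You also justify the $w^*$-lower semicontinuity of the quotient norm through $\B/\Null\cong(\Null_\perp)'$, which the paper only asserts.
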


\begin{proof}
Let $\gamma$ be as in \cref{defn:param-coercive}. We first show that
$\Rcirc$ is coercive modulo $\Null$. Fix $\alpha<+\infty$. If $\Rcirc(f)\leq\alpha$, then $f\in\F_\Theta$. By the definition of $\Rcirc$, for every $\varepsilon>0$ there exists $\theta\in\Theta$ such that $f_\theta=f$ and $C(\theta)\leq\alpha+\varepsilon$. Hence,
\begin{equation}
    \norm{[f]}_{\B/\Null} = \norm{[f_\theta]}_{\B/\Null} \leq \gamma(C(\theta)) \leq \gamma(\alpha+\varepsilon).
    \label{eq:quotient-estimate-cost}
\end{equation}
Taking $\varepsilon\to0$ gives $\norm{[f]}_{\B/\Null} \leq \gamma(\alpha)$. Thus, $\sls{\Rcirc}{\alpha}$ is bounded in $\B/\Null$, and so $\Rcirc$ is coercive modulo $\Null$ by \cref{prop:coercive-mod-N-equivalences} \ref{item:coerive_in_quotient}.

We now prove the corresponding statement for $R$. Define $q_\Null(f) \coloneqq \norm{[f]}_{\B/\Null}$, for $f\in\B$. Since $\Null$ is $w^*$-closed, the quotient seminorm $q_\Null$ is $w^*$-l.s.c. The estimate above implies
\begin{equation}
    q_\Null(f)
    \leq
    \gamma(\Rcirc(f)),
    \quad f\in\B, \label{eq:param123}
\end{equation}
where the inequality is trivial when $\Rcirc(f)=+\infty$. Since $q_\Null$ is
$w^*$-l.s.c., the defining property of l.s.c.\ regularization implies $q_\Null \leq \overline{\gamma\circ\Rcirc}$. By \cref{prop:lscreg_properties}~\ref{item:cont-compose},
\begin{equation}
    \overline{\gamma\circ\Rcirc} = \gamma\circ R. \label{eq:aaaaa}
\end{equation}
Therefore,
\begin{equation}
    \norm{[f]}_{\B/\Null}
    \leq
    \gamma(R(f)),
    \quad f\in\B. \label{eq:ext123}
\end{equation}
Consequently, if $R(f)\leq\alpha$, then $\norm{[f]}_{\B/\Null} \leq \gamma(\alpha)$. Thus, $R$ is coercive modulo $\Null$. \Cref{eq:param-extend-coercive-est} follows from \cref{eq:param123,eq:aaaaa,eq:ext123} after noting that $\inf_{g \in \Null}\norm{f + g}_\B = \norm{[f]}_{\B/\Null}$ and $R \leq \Rcirc$.
\end{proof}

\subsection{Function-Space Minimization: Existence and Transfer of Minimizers}
\label{subsec:function-space-minimization}

We now turn from the construction of representation costs to the associated function-space optimization problems. The goal is to make precise the sense in which the parametric problem
\begin{equation}
    \min_{\theta\in\Theta}
    G\paren[\big]{f_\theta(x_1),\ldots,f_\theta(x_N)}
    +\lambda C(\theta)
\end{equation}
is equivalent to the function-space problem
\begin{equation}
    \min_{f\in\F}
    G\paren[\big]{f(x_1),\ldots,f(x_N)}
    +\lambda R(f).
\end{equation}
There are three separate issues. First, one needs conditions under which the function-space problem admits minimizers. Second, one needs conditions under which the function-space problem and the original parametric problem have the same infimal value. Third, one needs conditions under which minimizers of one problem induce minimizers of the other. The latter two issues can be handled simultaneously.

\subsubsection{Existence of Minimizers}
\label{sec:existence-minimizers}

We begin with existence. We consider objectives of the form
\begin{equation}
    L(f)
    \coloneqq
    G(\AOp f)+\lambda R(f),
    \quad f\in\B,
    \label{eq:general-objective}
\end{equation}
where $\AOp:\B\to\R^M$ is a linear observation operator, $G:\R^M\to[0,+\infty]$ is a data-fidelity term, and $\lambda>0$. The finite data-fitting problem above is recovered by taking $\AOp$ to be an evaluation operator, for instance
\begin{equation}
    \AOp f = \paren[\big]{f(x_1),\ldots,f(x_N)}.
\end{equation}
If the functions are $\R^D$-valued, then in this example $M=DN$. Since $R(f)=+\infty$ for $f\notin\F$, the objective $L$ is finite only on the native space $\F$. Thus minimizing $L$ over $\B$ is equivalent to minimizing $L$ over $\F$.

\begin{theorem}
\label{thm:existence_of_min}
Consider the objective $L$ in \cref{eq:general-objective}. Suppose that there
exists a finite-dimensional subspace $\Null\subset\B$ such that
\begin{enumerate}[label=(\roman*)]
    \item $R$ is coercive modulo $\Null$ and
    \item $\AOp:\B\to\R^M$ is $w^*$-continuous and satisfies
    \begin{equation}
        \ker(\AOp)\cap\Null=\curly{0}.
    \end{equation}
\end{enumerate}
Assume also that $\lambda>0$, that $G:\R^M\to[0,+\infty]$ is l.s.c.\ and
coercive, and that $L$ is proper. Then there exists $f^\star\in\F$ such that
\begin{equation}
    L(f^\star)
    =
    \inf_{f\in\F}L(f).
\end{equation}
\end{theorem}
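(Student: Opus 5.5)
We use the direct method of the calculus of variations. The plan is to pick a minimizing sequence, use the translation freedom along $\Null$ to normalize it, show the normalized sequence is norm-bounded, and then extract a $w^*$-limit by Banach--Alaoglu. Lower semicontinuity of the objective finishes the argument.

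\emph{Setup and normalization.} Since $L$ is proper, $m\coloneqq\inf_{f\in\F}L(f)<+\infty$, and $m\ge 0$. Take a minimizing sequence $(f_n)\subset\F$ with $L(f_n)\to m$ and $L(f_n)\le m+1$. Because $G\ge0$ and $\lambda>0$, we get $R(f_n)\le (m+1)/\lambda$ for all $n$. Coercivity modulo $\Null$ then gives a $\beta>0$ and elements $g_n\in\Null$ with $\norm{f_n+g_n}_\B\le\beta+1$. Similarly $G(\AOp f_n)\le m+1$, and coercivity of $G$ shows that $(\AOp f_n)$ is bounded in $\R^M$.

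\emph{Main obstacle: bounding the $\Null$-components.} We cannot simply replace $f_n$ by $f_n+g_n$, because $G(\AOp(f_n+g_n))$ may differ from $G(\AOp f_n)$; $R$ need not be invariant along $\Null$ either. So we will show directly that $(g_n)$ is bounded. Since $\Null$ is finite-dimensional and $\ker(\AOp)\cap\Null=\{0\}$, the map $\AOp|_\Null$ is injective. Hence there is a $c>0$ with $\norm{g}_\B\le c\,\abs{\AOp g}$ for all $g\in\Null$. We then estimate
\[
\abs{\AOp g_n}\le \abs{\AOp(f_n+g_n)}+\abs{\AOp f_n}.
\]
The second term is bounded by the previous step. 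For the first term we need $\AOp$ to be norm-bounded. A $w^*$-continuous linear map into $\R^M$ has components in $\A$, so $\AOp$ is norm-continuous. Therefore $\abs{\AOp(f_n+g_n)}\le\norm{\AOp}(\beta+1)$. It follows that $(g_n)$ is bounded, and so is $f_n=(f_n+g_n)-g_n$.

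\emph{Compactness and lower semicontinuity.} By Banach--Alaoglu there is a subnet $f_{n_\alpha}\starconverges f^\star$; if $\A$ is separable we may take a subsequence. Lower semicontinuity of $L$ along this subnet comes from three facts:
\begin{itemize}
\item $R$ is $w^*$-l.s.c. by construction;
\item $\AOp f_{n_\alpha}\to\AOp f^\star$ in $\R^M$ by $w^*$-continuity of $\AOp$;
\item $G$ is l.s.c.
\end{itemize}
Hence $L(f^\star)\le\liminf L(f_{n_\alpha})=m<+\infty$. In particular $R(f^\star)<\infty$, so $f^\star\in\F$ and $L(f^\star)=m$. Using subnets avoids any separability assumption, since l.s.c. is a topological notion and the liminf along a subnet of a convergent sequence of reals still equals $m$.
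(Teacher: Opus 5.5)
Your proof is correct and follows essentially the same route as the paper: you bound the $\Null$-component using the injectivity of $\AOp|_\Null$, the norm-continuity of $\AOp$, the coercivity of $G$, and the coercivity of $R$ modulo $\Null$, which is exactly the paper's coercivity estimate for $L$; you then conclude with Banach--Alaoglu and $w^*$-lower semicontinuity. The only differences are cosmetic: you take near-optimal $g_n\in\Null$ directly from the quotient-norm definition instead of using the projectors $\P_\Null,\P_\M$ onto a $w^*$-complement, and you run the direct method on a minimizing sequence via subnets rather than citing that coercivity implies $w^*$-inf-compactness.
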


\begin{proof}
Since $R$ is $w^*$-l.s.c., $\AOp$ is $w^*$-continuous, and $G$ is l.s.c., the objective $L$ is $w^*$-l.s.c. It remains to show that $L$ is coercive. Since $\Null$ is finite-dimensional, it is $w^*$-closed and admits a $w^*$-closed complement $\M$. Let $\P_\Null$ and $\P_\M$ denote the associated $w^*$-$w^*$-continuous projectors. Since
\begin{equation}
    \ker(\AOp)\cap\Null=\curly{0},
\end{equation}
the restriction $\AOp|_\Null:\Null\to\R^M$ is injective. By finite
dimensionality, there exists a constant $c_1>0$ such that
\begin{equation}
    \norm{g}_{\B}
    \leq
    c_1\norm{\AOp g}_2,
    \quad g\in\Null.
\end{equation}
Moreover, since $\AOp$ is $w^*$-continuous, it is norm-continuous, so there
exists $c_2>0$ such that
\begin{equation}
    \norm{\AOp f}_2
    \leq
    c_2\norm{f}_{\B},
    \quad f\in\B.
\end{equation}
For $f\in\B$, write $f=\P_\Null f+\P_\M f$.
Then
\begin{equation}
\|f\|_\B \leq \|\P_\Null f\|_\B + \|\P_\M f\|_{\B}
\end{equation}
and
\begin{equation}
    \norm{\P_\Null f}_{\B}
    \leq
    c_1\norm{\AOp\P_\Null f}_2
    \leq
    c_1\norm{\AOp f}_2
    +
    c_1\norm{\AOp\P_\M f}_2
    \leq
    c_1\norm{\AOp f}_2
    +
    c_1c_2\norm{\P_\M f}_{\B}.
\end{equation}
Combining the previous estimates yields a constant $C>0$ such that
\begin{equation}
    \norm{f}_{\B}
    \leq
    C\paren[\big]{
        \norm{\AOp f}_2
        +
        \norm{\P_\M f}_{\B}
    },
    \quad f\in\B.
    \label{eq:existence-main-bound}
\end{equation}

We now show that $L$ is coercive. Fix $\alpha<+\infty$ and suppose
$L(f)\leq\alpha$. Since $G\geq0$, $R\geq0$, and $\lambda>0$, we have
\begin{equation}
    G(\AOp f)\leq\alpha
    \quad\text{and}\quad
    R(f)\leq\alpha/\lambda.
\end{equation}
By coercivity of $G$, the first inequality uniformly bounds
$\norm{\AOp f}_2$. By coercivity of $R$ modulo $\Null$, the second inequality uniformly
bounds $\|\P_\M f\|_\B$ by \cref{prop:coercive-mod-N-equivalences}~\ref{item:coercive_projector_equiv}. Hence \cref{eq:existence-main-bound} uniformly
bounds $\norm{f}_{\B}$. Thus $L$ is coercive.

Therefore $L$ is proper, $w^*$-l.s.c., and coercive. By
\cref{prop:coercive_implies_infcpt}, $L$ is $w^*$-inf-compact, and by
\cref{prop:lsc_infcpt_give_min}, $L$ attains its minimum on $\B$. Since
$L(f)<+\infty$ implies $R(f)<+\infty$, every minimizer belongs to $\F$.
\end{proof}

\begin{remark}
The theorem applies to equality-constrained problems. For instance, if $G=\iota_{\curly{y}}$ (indicator function) for some $y\in\R^M$, then minimizing $L$ is the same as minimizing $R$ subject to $\AOp f=y$. More generally, one may take $G=\iota_{\C}$ for any compact set $\C\subset\R^M$.
\end{remark}

\subsubsection{Equality of Infimal Values and Transfer of Minimizers}
\label{sec:equivalence-minimizers}

We now compare the parameter-space problem, the problem over the parametric model class, and the function-space problem. Let $\AOp:\B\to\R^M$ be a $w^*$-continuous linear operator, let $G:\R^M\to[0,+\infty]$, and let $\lambda>0$. We write
\begin{align}
    J_{\rm par}(\theta)
    &\coloneqq
    G(\AOp f_\theta)+\lambda C(\theta),
    \quad \theta\in\Theta, \\
    J_\Theta(f)
    &\coloneqq
    G(\AOp f)+\lambda\Rcirc(f),
    \quad f\in\B, \\
    J(f)
    &\coloneqq
    G(\AOp f)+\lambda R(f),
    \quad f\in\B.
\end{align}
Here $\Rcirc$ denotes the trivial extension of the parametric representation cost to $\B$. Thus, $\Rcirc(f)=+\infty$ for $f\notin\F_\Theta$, while $R(f)=+\infty$ for $f\notin\F$. Consequently, minimizing $J_\Theta$ over $\B$ is the same as minimizing it over $\F_\Theta$, and minimizing $J$ over $\B$ is the same as minimizing it over $\F$.

The parameter-space problem and the problem over $\F_\Theta$ always have the same infimal value. Indeed, by the definition of $\Rcirc$,
\begin{equation}
\label{eq:parametric-model-class-equality}
    \inf_{\theta\in\Theta} J_{\rm par}(\theta)
    =
    \inf_{f\in\F_\Theta} J_\Theta(f).
\end{equation}
Thus, the only nontrivial question is when the extension from $\Rcirc$ to $R=\overline{\Rcirc}$ preserves the infimal value. Once this is known, the transfer of minimizers between problems follows immediately, which is summarized next.

\begin{lemma}
\label{lem:minimizer-transfer}
Suppose that
\begin{equation}
\label{eq:common-infimal-value}
    \inf_{\theta\in\Theta} J_{\rm par}(\theta)
    =
    \inf_{f\in\F_\Theta} J_\Theta(f)
    =
    \inf_{f\in\F} J(f).
\end{equation}
Then the following minimizer transfer statements hold.
\begin{enumerate}[label=(\roman*)]
    \item If $\theta^\star\in\Theta$ minimizes the parameter-space problem,
    then $f_{\theta^\star}\in\F$ minimizes the function-space problem.

    \item \label{item:twoooo} If $f^\star\in\F$ minimizes the function-space problem,
    $f^\star\in\F_\Theta$, and $R(f^\star)=\Rcirc(f^\star)$, then
    $f^\star$ minimizes the problem over $\F_\Theta$.

    \item If, in addition to the hypotheses of \ref{item:twoooo}, there exists $\theta^\star\in\Theta$ such that
    \begin{equation}
        f_{\theta^\star}=f^\star
        \quad\text{and}\quad
        C(\theta^\star)=\Rcirc(f^\star),
    \end{equation}
    then $\theta^\star$ minimizes the parameter-space problem.
\end{enumerate}
\end{lemma}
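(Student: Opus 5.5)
The plan is to prove all three items by direct comparison of objective values. The only ingredients are the common infimal value \cref{eq:common-infimal-value}, denoted $m$ below, and the pointwise ordering $R\le\Rcirc$, which follows from the defining property of l.s.c.\ regularization. From this ordering we get $J\le J_\Theta$ on $\B$. We will also use, as in \cref{eq:parametric-model-class-equality}, that $\Rcirc(f_\theta)\le C(\theta)$ for every $\theta\in\Theta$, since $\theta$ is an admissible parameterization of $f_\theta$. This gives $J_\Theta(f_\theta)\le J_{\rm par}(\theta)$.

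For item (i), let $\theta^\star$ be a parameter-space minimizer. Chaining the two inequalities gives
\begin{equation}
J(f_{\theta^\star})\le J_\Theta(f_{\theta^\star})\le J_{\rm par}(\theta^\star)=m.
\end{equation}
Next I handle membership in $\F$. If $m<+\infty$, then $R(f_{\theta^\star})<+\infty$, so $f_{\theta^\star}\in\F$, and since $m=\inf_{\F}J$, equality holds and $f_{\theta^\star}$ is a minimizer. In fact $f_{\theta^\star}\in\F_\Theta\subset\F$ always holds, because $\Rcirc(f_{\theta^\star})\le C(\theta^\star)<+\infty$ and $R\le\Rcirc$. So in the degenerate case $m=+\infty$ every element of $\F$ is trivially a minimizer.

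For item (ii), let $f^\star\in\F_\Theta$ minimize $J$ with $R(f^\star)=\Rcirc(f^\star)$. Then $J_\Theta(f^\star)=J(f^\star)=m=\inf_{\F_\Theta}J_\Theta$, so $f^\star$ attains the infimum over $\F_\Theta$.

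For item (iii), take $\theta^\star$ with $f_{\theta^\star}=f^\star$ and $C(\theta^\star)=\Rcirc(f^\star)$. Then $J_{\rm par}(\theta^\star)=J_\Theta(f^\star)=m=\inf_\Theta J_{\rm par}$, so $\theta^\star$ minimizes the parameter-space problem.

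There is no genuine obstacle here. The only point requiring care is the bookkeeping of which functional is evaluated where, together with the possibility $m=+\infty$, which is resolved by the observation $f_{\theta}\in\F_\Theta\subset\F$ for every $\theta$.
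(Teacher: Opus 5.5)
Your proof is correct and matches the paper's: both compare objective values directly using $R\le\Rcirc$ and $\Rcirc(f_\theta)\le C(\theta)$, together with the common infimal value. Your explicit check that $f_{\theta^\star}\in\F$, which holds because $C$ is finite-valued, is a small detail the paper leaves implicit.
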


\begin{proof}
Let $v$ denote the common infimal value in \cref{eq:common-infimal-value}.
Suppose first that $\theta^\star$ minimizes the parameter-space problem.
Since $R\leq\Rcirc$ and $\Rcirc(f_{\theta^\star})\leq C(\theta^\star)$, we
have
\begin{equation}
    J(f_{\theta^\star})
    =
    G(\AOp f_{\theta^\star})+\lambda R(f_{\theta^\star})
    \le
    G(\AOp f_{\theta^\star})+\lambda C(\theta^\star)
    =
    v.
\end{equation}
Since $v=\inf_{f\in\F}J(f)$, this proves that $f_{\theta^\star}$ minimizes
the function-space problem.

Next, suppose $f^\star\in\F$ minimizes the function-space problem,
$f^\star\in\F_\Theta$, and $R(f^\star)=\Rcirc(f^\star)$. Then
\begin{equation}
    J_\Theta(f^\star)
    =
    G(\AOp f^\star)+\lambda\Rcirc(f^\star)
    =
    G(\AOp f^\star)+\lambda R(f^\star)
    =
    v.
\end{equation}
Since $v=\inf_{f\in\F_\Theta}J_\Theta(f)$, this proves that $f^\star$
minimizes the problem over $\F_\Theta$.

Finally, suppose in addition that $f^\star=f_{\theta^\star}$ and
$C(\theta^\star)=\Rcirc(f^\star)$. Then
\begin{equation}
    J_{\rm par}(\theta^\star)
    =
    G(\AOp f_{\theta^\star})+\lambda C(\theta^\star)
    =
    G(\AOp f^\star)+\lambda\Rcirc(f^\star)
    =
    v.
\end{equation}
Since $v=\inf_{\theta\in\Theta}J_{\rm par}(\theta)$, this proves that
$\theta^\star$ minimizes the parameter-space problem.
\end{proof}

We next investigate when \cref{eq:common-infimal-value} holds. We begin with the basic case in which the data-fidelity term is continuous. This covers the usual empirical-risk-minimization problems with continuous losses such as the least-squares loss.

\begin{theorem}
\label{thm:equiv-minimizer}
Suppose that $\AOp:\B\to\R^M$ is $w^*$-continuous, that $G:\R^M\to\R$ is continuous, and that $\lambda>0$. Then
\begin{equation}
\label{eq:equiv-minimizer-infima}
    \inf_{\theta\in\Theta}
    G(\AOp f_\theta)+\lambda C(\theta)
    =
    \inf_{f\in\F_\Theta}
    G(\AOp f)+\lambda\Rcirc(f)
    =
    \inf_{f\in\F}
    G(\AOp f)+\lambda R(f).
\end{equation}
Consequently, the minimizer transfer statements in \cref{lem:minimizer-transfer} apply.
\end{theorem}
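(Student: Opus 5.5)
The first equality is \cref{eq:parametric-model-class-equality}, so the plan is to prove
\begin{equation*}
\inf_{f\in\F_\Theta} J_\Theta(f)=\inf_{f\in\F}J(f).
\end{equation*}
We show the two inequalities separately. Since $R\le\Rcirc$, we have $J\le J_\Theta$ pointwise on $\B$. Moreover, both functionals take the value $+\infty$ off their respective domains. Hence
\begin{equation*}
\inf_\F J=\inf_\B J\le\inf_\B J_\Theta=\inf_{\F_\Theta}J_\Theta.
\end{equation*}
The substantive direction is the reverse inequality $\inf_{\F_\Theta}J_\Theta\le\inf_\F J$.

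For the reverse inequality, we fix $f\in\F$ and $\varepsilon>0$, and aim to produce $g\in\F_\Theta$ with $J_\Theta(g)\le J(f)+\varepsilon$. This uses the neighborhood description of the regularization, $R(f)=\liminf_{g\to f}\Rcirc(g)$ in the $w^*$ topology, as recorded in the definition of $R$ and \cref{app:lsc-reg}. It gives the following: for every $w^*$-neighborhood $\mathcal O$ of $f$ there exists $g\in\mathcal O$ with $\Rcirc(g)<R(f)+\varepsilon$. Because $R(f)<+\infty$, any such $g$ has finite cost and so lies in $\F_\Theta$.

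Next we choose $\mathcal O$ so that the data term is also controlled. The map $G\circ\AOp:\B\to\R$ is $w^*$-continuous, since $\AOp$ is $w^*$-continuous and $G$ is continuous and real-valued. Therefore the set
\begin{equation*}
\mathcal O\coloneqq\{g\in\B\st |G(\AOp g)-G(\AOp f)|<\varepsilon\}
\end{equation*}
is a $w^*$-open neighborhood of $f$. Picking $g\in\mathcal O$ as in the previous paragraph yields
\begin{equation*}
J_\Theta(g)=G(\AOp g)+\lambda\Rcirc(g)< G(\AOp f)+\varepsilon+\lambda(R(f)+\varepsilon)=J(f)+(1+\lambda)\varepsilon.
\end{equation*}
Letting $\varepsilon\to0$ and then taking the infimum over $f\in\F$ gives $\inf_{\F_\Theta}J_\Theta\le\inf_\F J$. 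This establishes \cref{eq:equiv-minimizer-infima}. The minimizer-transfer consequences then follow directly from \cref{lem:minimizer-transfer}.

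The only delicate point is the justification of the net/neighborhood formula $R(f)=\sup_{\mathcal O}\inf_{g\in\mathcal O}\Rcirc(g)$ for the $w^*$-regularization. We work with neighborhoods rather than sequences because $w^*$ need not be metrizable, so the sequential regularization $\seqreg{\Rcirc}$ may differ from $\overline{\Rcirc}$. To prove the formula, we show that the right-hand side is l.s.c., is bounded above by $\Rcirc$, and dominates every l.s.c.\ minorant of $\Rcirc$; it then coincides with the largest l.s.c.\ minorant, namely $\overline{\Rcirc}$. The continuity of $G$ is what is essential in this argument: for discontinuous $G$, for example indicator functions, the approximating $g$ could have a much larger data term, and equality of the infima can fail.
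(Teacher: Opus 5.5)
Your proof is correct and is essentially the paper's argument. The paper notes that $f\mapsto G(\AOp f)$ is $w^*$-continuous, so \cref{prop:lscreg_properties}~\ref{item:lsc-reg-cont-sum} gives $J=\overline{J_\Theta}$, and \cref{prop:lscreg_properties}~\ref{item:inf-preserve} then gives equal infima; you instead prove the one needed inequality directly by the neighborhood argument behind those two facts. The neighborhood formula you treat as the delicate point is already recorded in \cref{prop:lscreg_equivalents}~\ref{item:lsc-reg-filter-def}, so your sketch of its proof is correct but unnecessary.
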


\begin{proof}
The first equality is \cref{eq:parametric-model-class-equality}. For the second equality, the map $f\mapsto G(\AOp f)$ is $w^*$-continuous. Therefore, by \cref{prop:lscreg_properties}~\ref{item:lsc-reg-cont-sum}, $J=\overline{J_\Theta}$. Hence, by \cref{prop:lscreg_properties}~\ref{item:inf-preserve},
\begin{equation}
    \inf_{f\in\F}J(f)
    =
    \inf_{f\in\B}J(f)
    =
    \inf_{f\in\B}J_\Theta(f)
    =
    \inf_{f\in\F_\Theta}J_\Theta(f).
\end{equation}
Together with \cref{eq:parametric-model-class-equality}, this proves
\cref{eq:equiv-minimizer-infima}. The minimizer transfer statements then
follow from \cref{lem:minimizer-transfer}.
\end{proof}

The continuity assumption on $G$ excludes equality constraints and other discontinuous data-fidelity terms. We now remove this assumption under an additional structural condition on the representation cost. The result applies to representation costs that are powers of subadditive homogeneous functions. This includes, for example, (quasi-)Banach-norm costs and squared Hilbert-norm costs. The latter are not themselves subadditive, but their square roots are.
\begin{lemma}
\label{lem:fiberwise-equivalence}
Let $q>0$ and $p>0$. Assume that $\Rcirc=Q_\Theta^q$, where
$Q_\Theta:\B\to[0,+\infty]$ is subadditive and absolutely
$p$-homogeneous. Let $Q\coloneqq\overline{Q_\Theta}$, so that $R=Q^q$.
Let $\AOp:\B\to\R^M$ be a $w^*$-continuous linear operator such that
\begin{equation}
\label{eq:A-surjective-on-model-class}
    \AOp(\F_\Theta)=\R^M.
\end{equation}
Then, for every $y\in\R^M$,
\begin{equation}
\label{eq:fiberwise-equivalence}
    \inf_{\substack{\theta\in\Theta\\ \AOp f_\theta=y}} C(\theta)
    =
    \inf_{\substack{f\in\F_\Theta\\ \AOp f=y}} \Rcirc(f)
    =
    \inf_{\substack{f\in\F\\ \AOp f=y}} R(f).
\end{equation}
\end{lemma}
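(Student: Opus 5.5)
The first equality holds fiberwise directly from the definition of $\Rcirc$: the infimum over $\theta$ with $\AOp f_\theta=y$ splits as an infimum over $f\in\F_\Theta$ with $\AOp f=y$ of the infimum over $\theta$ with $f_\theta=f$. For the second equality, the inequality ``$\geq$'' is immediate, since $R\leq\Rcirc$ and $\F_\Theta\subset\F$ on the fiber. The real work is the reverse inequality: fix $y$ and $f\in\F$ with $\AOp f=y$ and $R(f)<+\infty$. We must produce $g\in\F_\Theta$ with $\AOp g=y$ and $\Rcirc(g)$ arbitrarily close to $R(f)$. Equivalently, working with $Q$, we want $Q_\Theta(g)\le Q(f)+\varepsilon$.

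\textbf{Step 1 (approximation).} By the definition of the $w^*$-l.s.c.\ regularization, $Q(f)=\liminf_{h\to f}Q_\Theta(h)$ in the net sense. Hence for every $w^*$-neighborhood $\mathcal O$ of $f$ and every $\varepsilon>0$ there is $h\in\mathcal O\cap\F_\Theta$ with $Q_\Theta(h)\le Q(f)+\varepsilon$. Since $\AOp$ is $w^*$-continuous into $\R^M$, we can choose $\mathcal O$ so that $\|\AOp h-y\|_2<\delta$ for any prescribed $\delta>0$. No sequences or coercivity are needed here; nets suffice.

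\textbf{Step 2 (correction).} We repair the residual $r\coloneqq y-\AOp h$ using the surjectivity hypothesis \cref{eq:A-surjective-on-model-class}. Let $e_1,\dots,e_M$ be the standard basis of $\R^M$ and choose $g_i^\pm\in\F_\Theta$ with $\AOp g_i^\pm=\pm e_i$. Write $r=\sum_i |r_i|\,(\operatorname{sgn} r_i)e_i$ and set $c\coloneqq\sum_i r_i g_i^{\operatorname{sgn}}$. Since $\F_\Theta=\dom Q_\Theta$ is closed under positive scalings and sums by homogeneity and subadditivity, $c\in\F_\Theta$ and $\AOp c=r$. By subadditivity and absolute $p$-homogeneity,
\begin{equation*}
Q_\Theta(c)\le \sum_i |r_i|^p\,Q_\Theta\bigl(g_i^{\pm}\bigr)\le \kappa \max_i |r_i|^p,
\end{equation*}
where $\kappa\coloneqq\sum_i\max\{Q_\Theta(g_i^+),Q_\Theta(g_i^-)\}<+\infty$. (Absolute homogeneity in fact lets us take $g_i^-=-g_i^+$.) The correction cost therefore tends to $0$ as $\delta\to0$, because $p>0$.

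\textbf{Step 3 (conclusion).} Put $g\coloneqq h+c\in\F_\Theta$. Then $\AOp g=y$ and
\begin{equation*}
Q_\Theta(g)\le Q_\Theta(h)+Q_\Theta(c)\le Q(f)+\varepsilon+\kappa\delta^p .
\end{equation*}
Raising to the power $q$, which is continuous and increasing, gives $\Rcirc(g)\le (Q(f)+\varepsilon+\kappa\delta^p)^q$. Letting $\varepsilon,\delta\to0$ yields the reverse inequality. If the right-hand fiber infimum is $+\infty$ there is nothing to prove. If it is finite, the approximation argument applies to each admissible $f$.

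The main obstacle is Step 2: controlling the cost of exactly restoring the constraint. This is precisely where subadditivity and homogeneity of $Q_\Theta$, rather than of $\Rcirc$ itself, are essential, together with the surjectivity of $\AOp$ on $\F_\Theta$. Without these, the $w^*$-approximants from Step 1 need not be repairable cheaply onto the fiber.
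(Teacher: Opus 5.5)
Your proposal is correct and follows essentially the same route as the paper: you pick $h\in\F_\Theta$ in the $w^*$-neighborhood $\AOp^{-1}(B(y,\delta))$ of $f$ with $Q_\Theta(h)\le Q(f)+\varepsilon$, then restore the constraint exactly by adding a combination of fixed preimages of the basis vectors, whose cost is $O(\delta^p)$ by subadditivity and absolute $p$-homogeneity. The one slip is in Step 2: $c=\sum_i r_i g_i^{\operatorname{sgn}}$ as written gives $\AOp c=\sum_i |r_i| e_i$ rather than $r$; since absolute homogeneity makes $\dom Q_\Theta$ closed under all nonzero scalings (as your parenthetical notes), you should simply take $c=\sum_i r_i g_i^{+}$, which is exactly the paper's correction $\widetilde g = g-\sum_i([\AOp g]_i-y_i)h_i$.
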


\begin{proof}
The first equality follows directly from the definition of $\Rcirc$. We prove
the second equality by first showing that
\begin{equation}
\label{eq:Q-fiberwise-equivalence}
    \inf_{\substack{f\in\F_\Theta\\ \AOp f=y}} Q_\Theta(f)
    =
    \inf_{\substack{f\in\F\\ \AOp f=y}} Q(f).
\end{equation}
The ``$\geq$'' direction follows from $Q\leq Q_\Theta$. For the reverse
direction, fix $f\in\F$ with $\AOp f=y$ and $Q(f)<+\infty$, and let
$\varepsilon>0$. By \cref{eq:A-surjective-on-model-class}, there exist
$h_1,\ldots,h_M\in\F_\Theta=\dom(Q_\Theta)$ such that
$\AOp h_i=\e_i$, $i=1,\ldots,M$, where $\{\e_i\}_{i=1}^M$ is the standard
basis of $\R^M$. Choose $\delta>0$ small enough so that
\begin{equation}
    \sum_{i=1}^M \delta^p Q_\Theta(h_i)<\varepsilon.
\end{equation}
The set $\mathcal O \coloneqq \curly{ g\in\B \st \abs{[\AOp g]_i-y_i}<\delta,\ i=1,\ldots,M }$ is a $w^*$-neighborhood of $f$. Since $Q=\overline{Q_\Theta}$, there exists $g\in\mathcal O$ such that $Q_\Theta(g)\leq Q(f)+\varepsilon$. In particular, $g\in\dom(Q_\Theta)=\F_\Theta$. Define
\begin{equation}
    \widetilde g
    \coloneqq
    g-\sum_{i=1}^M\paren[\big]{[\AOp g]_i-y_i}h_i.
\end{equation}
Since $\dom(Q_\Theta)$ is a vector space, $\widetilde g\in\F_\Theta$. Moreover,
\begin{equation}
    \AOp\widetilde g
    =
    \AOp g-\sum_{i=1}^M\paren[\big]{[\AOp g]_i-y_i}\e_i
    =
    y.
\end{equation}
By subadditivity and absolute $p$-homogeneity of $Q_\Theta$,
\begin{equation}
    Q_\Theta(\widetilde g)
    \leq
    Q_\Theta(g)
    +
    \sum_{i=1}^M
    \abs{[\AOp g]_i-y_i}^p Q_\Theta(h_i)
    \leq
    Q(f)+2\varepsilon.
\end{equation}
Therefore,
\begin{equation}
    \inf_{\substack{u\in\F_\Theta\\ \AOp u=y}} Q_\Theta(u)
    \leq
    Q(f)+2\varepsilon.
\end{equation}
Taking $\varepsilon\to0$ and then taking the infimum over all
$f\in\F$ with $\AOp f=y$ gives the ``$\leq$'' direction in
\cref{eq:Q-fiberwise-equivalence}. Finally, since $t\mapsto t^q$ is increasing
on $[0,+\infty]$,
\begin{equation}
    \inf_{\substack{f\in\F_\Theta\\ \AOp f=y}} \Rcirc(f)
    =
    \paren[\Bigg]{
        \inf_{\substack{f\in\F_\Theta\\ \AOp f=y}} Q_\Theta(f)
    }^q
    =
    \paren[\Bigg]{
        \inf_{\substack{f\in\F\\ \AOp f=y}} Q(f)
    }^q
    =
    \inf_{\substack{f\in\F\\ \AOp f=y}} R(f).
\end{equation}
This completes the proof.
\end{proof}

\begin{theorem}
\label{thm:equivalence_of_min_quasi_norm_case}
\label{cor:parametric-function-space-equivalence}
Let $q>0$ and $p>0$. Assume that $\Rcirc=Q_\Theta^q$, where
$Q_\Theta:\B\to[0,+\infty]$ is subadditive and absolutely
$p$-homogeneous. Let $\AOp:\B\to\R^M$ be a $w^*$-continuous linear operator
such that
\begin{equation}
    \AOp(\F_\Theta)=\R^M.
\end{equation}
Then, for every $G:\R^M\to[0,+\infty]$ and every $\lambda>0$,
\begin{equation}
\label{eq:equivalence_of_min_quasi_seminorm_case}
    \inf_{\theta\in\Theta}
    G(\AOp f_\theta)+\lambda C(\theta)
    =
    \inf_{f\in\F_\Theta}
    G(\AOp f)+\lambda\Rcirc(f)
    =
    \inf_{f\in\F}
    G(\AOp f)+\lambda R(f).
\end{equation}
Consequently, the minimizer transfer statements in
\cref{lem:minimizer-transfer} apply.
\end{theorem}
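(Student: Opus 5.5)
The plan is to reduce the statement to the fiberwise identity of \cref{lem:fiberwise-equivalence} by decomposing each infimum over the fibers $\{\AOp f = y\}$, $y\in\R^M$. The first equality in \cref{eq:equivalence_of_min_quasi_seminorm_case} is \cref{eq:parametric-model-class-equality}, which holds with no assumptions, so only the second equality needs work. The key point is that $G$ enters only through $y=\AOp f$, so no continuity of $G$ is required.

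Concretely, I will write
\begin{equation}
    \inf_{f\in\F_\Theta} G(\AOp f)+\lambda\Rcirc(f)
    =
    \inf_{y\in\R^M}\Big( G(y)+\lambda \inf_{\substack{f\in\F_\Theta\\ \AOp f=y}}\Rcirc(f)\Big),
\end{equation}
and similarly for the problem over $\F$ with $R$ in place of $\Rcirc$. This slicing is valid for any $G:\R^M\to[0,+\infty]$ because every $f$ lies in exactly one fiber. The conventions $\inf\varnothing=+\infty$ and $G(y)+\lambda\cdot(+\infty)=+\infty$ make it consistent even when a fiber is empty or $G(y)=+\infty$. I will also use that $\lambda>0$ and $G\ge 0$, so that $G(y)+\lambda t$ is nondecreasing in $t\in[0,+\infty]$ and pulling the inner infimum out is legitimate.

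Next, I invoke \cref{lem:fiberwise-equivalence}. Its hypotheses are exactly those assumed here: $\Rcirc=Q_\Theta^q$ with $Q_\Theta$ subadditive and absolutely $p$-homogeneous, $\AOp$ $w^*$-continuous, and $\AOp(\F_\Theta)=\R^M$. It gives, for every $y\in\R^M$,
\begin{equation}
    \inf_{\substack{f\in\F_\Theta\\ \AOp f=y}}\Rcirc(f)=\inf_{\substack{f\in\F\\ \AOp f=y}} R(f).
\end{equation}
Substituting this into the two sliced expressions makes the outer infima over $y$ identical term by term, which proves the second equality.

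The minimizer-transfer conclusion then follows directly from \cref{lem:minimizer-transfer}, since its only hypothesis \cref{eq:common-infimal-value} is what we have just established. The main obstacle was already handled in \cref{lem:fiberwise-equivalence}: perturbing a near-optimal element of $\F_\Theta$ back onto the exact fiber using the surjectivity of $\AOp$ on $\F_\Theta$, with error controlled by subadditivity and homogeneity. What remains here is routine bookkeeping of the extended-real conventions in the slicing step, in particular when $\F$ has points outside $\F_\Theta$ on a fiber. This case is covered because the lemma equates the fiber infima themselves, not merely the sets over which they are taken.
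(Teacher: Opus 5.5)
Your proposal is correct and matches the paper's proof: the first equality is \cref{eq:parametric-model-class-equality}, the second comes from slicing both infima over the fibers $\{\AOp f=y\}$ and substituting the fiberwise identity of \cref{lem:fiberwise-equivalence}, and minimizer transfer then follows from \cref{lem:minimizer-transfer}. One small precision: pulling the inner infimum out of $G(y)+\lambda t$ relies on this map being \emph{continuous} and nondecreasing in $t$, not merely nondecreasing; it is continuous, so the step holds.
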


\begin{proof}
The first equality is \cref{eq:parametric-model-class-equality}. For the
second equality, observe that
\begin{align}
    \inf_{f\in\F_\Theta}
    G(\AOp f)+\lambda\Rcirc(f)
    &=
    \inf_{y\in\R^M}
    \curly[\Big]{
        G(y)
        +
        \lambda
        \inf_{\substack{f\in\F_\Theta\\ \AOp f=y}}
        \Rcirc(f)
    } \nonumber\\
    &=
    \inf_{y\in\R^M}
    \curly[\Big]{
        G(y)
        +
        \lambda
        \inf_{\substack{f\in\F\\ \AOp f=y}}
        R(f)
    } \nonumber\\
    &=
    \inf_{f\in\F}
    G(\AOp f)+\lambda R(f),
\end{align}
where the middle equality follows from \cref{lem:fiberwise-equivalence}.
Thus the three infimal values agree, and the minimizer transfer statements
follow from \cref{lem:minimizer-transfer}.
\end{proof}

\begin{remark}
\label{rem:how-to-use-parametric-function-space-equivalence}
In \cref{sec:RKHS,sec:wavelet,sec:I-RKBS,sec:shallow,sec:deep-nets}, where we instantiate the representation-cost framework for various data-fitting methods, \cref{cor:parametric-function-space-equivalence} is the tool that allows us to pass between the parameter-space problem and the function-space problem. The hypotheses of \cref{thm:equivalence_of_min_quasi_norm_case} are typically verified through \cref{prop:parameter-root-properties-transfer}. This gives the precise sense in which the function-space problem is the \emph{nonparametric description} of the original parametric method, and establishes the equivalence \cref{eq:equivalence}.
\end{remark}

\subsection{Discussion}

The results in this section make precise the relationship between the parameter-space and function-space problems. The passage from $\Rcirc$ to $R$ is a closure operation. Indeed, we embed $\F_\Theta$ in an ambient dual Banach space $\B$ and replace the trivial extension of $\Rcirc$ by its $w^*$-l.s.c.\ regularization. The native space $\F=\dom(R)$ is the resulting collection of functions with finite extended representation cost. This can be thought of as a \emph{relaxation} of the parametric problem. Indeed, the problem over $\F$ is posed over a typically larger class of functions and uses $R$ instead of $\Rcirc$. The results in \cref{thm:equiv-minimizer,thm:equivalence_of_min_quasi_norm_case,cor:parametric-function-space-equivalence} show that, under the stated hypotheses, this relaxation is tight at the level of infimal values and that parameter-space minimizers induce function-space minimizers. The converse lifting statement requires the additional hypotheses in \cref{lem:minimizer-transfer}.

This passage from parameters to functions is related to the realization-space perspective of~\cite{elbrachter2019degenerate}, which studies the degeneracies of the \emph{realization map} $\theta \mapsto f_\theta$ and the extent to which optimization over parameters can be understood through optimization over realized functions. Our viewpoint is complementary: Rather than attempting to invert the realization map or remove its degeneracies, we quotient out the parameterization through the cost $\Rcirc$ and then pass to its $w^*$-l.s.c.\ regularization. Thus, the central object is the extended representation cost $R$ and its native space $\F$, and not the structure of the realization map itself.

It is important to note that the native space is defined \emph{before} any data-fitting problem is specified. Once the ambient dual Banach space $\B$, the model class $\F_\Theta$, and the parameter cost $C: \Theta \to [0, +\infty)$ are fixed, the construction
\begin{equation}
    \Rcirc \rightsquigarrow R=\overline{\Rcirc},
    \quad
    \F=\dom(R),
\end{equation}
does not depend on the data or the choice of loss function. Thus, the native space $\F$ is induced by the parametric model, parameter cost, and ambient function space.

Finally, our minimizer-transfer results resemble \emph{abstract representer theorems}~\cite{BoyerRepresenter,BrediesRepresenter,UnserUnifyingRepresenter}, but should also be distinguished. A representer theorem gives conditions under which a problem over a function space has a minimizer with a finite parametric representation. The results in this section do not by themselves give such a representation or a bound on the number of parameters. Rather, they provide the bridge between the parametric problem and the function-space problem. Interestingly, this bridge is also not restricted to convex regularizers, which is the setting of~\cite{BoyerRepresenter,BrediesRepresenter,UnserUnifyingRepresenter}. In particular, our results apply to representation costs with nonconvex unit balls, such as those arising for deep neural networks (\cref{prop:deep-unit-ball-nonconvex}), whose native spaces, as we shall see in \cref{sec:deep-nets}, are naturally quasi-Banach.

\section{Lipschitz Spaces and ``Universal'' Topologies} \label{sec:Lip}

The construction of an extended representation cost depends on the choice of an ambient function space and topology. In the preceding section, this topology was taken to be the weak$^*$ topology of a dual Banach space containing the parametric model class. For many models used in data science, a natural common ambient space is a Lipschitz space. The reason is not that Lipschitz regularity is always the main object of study, but rather that Lipschitz spaces provide a common dual Banach space in which many parametric model classes can be embedded.

The key feature of this choice is that, on norm-bounded subsets of Lipschitz spaces, the weak$^*$ topology coincides with pointwise convergence. We follow the presentation of Lipschitz spaces, Arens--Eells spaces, and their weak$^*$ topologies from~\cite[Chapters~2--3]{weaver2018lipschitz}. Thus, whenever the relevant representation cost controls the Lipschitz norm, the weak$^*$ closure of the model class can be understood in terms of pointwise limits. This is useful because pointwise convergence does not depend on the particular parameterization, architecture, or representation cost. It is therefore a natural ``universal'' topology for comparing different data-fitting methods in function space.

Throughout this section, $(\X,\rho)$ denotes a separable metric space with distinguished base point $\e\in\X$. We first treat scalar-valued functions and then pass to finite-dimensional vector-valued functions. Let $\Lip(\X)$ denote the space of Lipschitz functions $f:\X\to\R$, with
Lipschitz seminorm
\begin{equation}
    \abs{f}_{\Lip(\X)}
    \coloneqq
    \sup_{x\neq y}
    \frac{\abs{f(x)-f(y)}}{\rho(x,y)}.
\end{equation}
Let $\Lip_0(\X)\coloneqq\{f\in\Lip(\X):f(\e)=0\}$.
Then $\abs{\dummy}_{\Lip(\X)}$ is a norm on $\Lip_0(\X)$. We endow $\Lip(\X)$
with the norm
\begin{equation}
    \norm{f}_{\Lip(\X)}
    \coloneqq
    \max\{\abs{f}_{\Lip(\X)},\abs{f(\e)}\}.
\end{equation}
With this norm, $\Lip(\X)$ is isometrically identified with $\Lip_0(\X)\times\R$ via the map $f\mapsto (f-f(\e),f(\e))$, where $\Lip_0(\X)\times\R$ is endowed with the norm $\norm{(g,c)} \coloneqq \max\{\abs{g}_{\Lip(\X)},\abs{c}\}$.

The Arens--Eells space $\AE(\X)$, also called the Lipschitz-free space,
is a canonical predual of $\Lip_0(\X)$; see, e.g.,
\cite[Chapter~2]{weaver2018lipschitz}. Concretely, $\AE(\X)$ is the
completion of the linear span of the set of signed measures $\{\delta_x-\delta_y\st x,y\in\X\}$ under the norm
\begin{equation}
\norm{m}_{\AE(\X)}
  \coloneqq \inf\left\{\sum_i \abs{a_i}\rho(x_i,y_i)
   \st m = \sum_i a_i(\delta_{x_i}-\delta_{y_i})\right\}.
\end{equation}
In particular, $\delta_x-\delta_\e\in\AE(\X)$ for every $x\in\X$.
The duality $\Lip_0(\X)\simeq\AE(\X)'$ is isometric, so $\Lip(\X)$
is isometrically identified with the dual of $\AE(\X)\times\R$, where
$\AE(\X)\times\R$ is endowed with the norm
$\norm{(\mu,c)} \coloneqq \norm{\mu}_{\AE(\X)}+\abs{c}$. We denote the corresponding weak$^*$ topology by
\begin{equation}
    w^*
    \coloneqq
    \sigma\paren[\big]{\Lip(\X),\AE(\X)\times\R}.
\end{equation}
Since $\X$ is separable, $\AE(\X)$ is separable; see, e.g., \cite[p.~91]{godefroy2015survey}. Hence $\AE(\X)\times\R$ is separable.

The following proposition is the standard description of the weak$^*$ topology on bounded subsets of Lipschitz spaces.
\begin{proposition}
\label{prop:pointwise-is-wstar-compatible-lip}
The topology of pointwise convergence coincides with the weak$^*$ topology on norm-bounded subsets of
$\Lip(\X)$, i.e., it is weak$^*$-compatible (\cref{def:weakstar_compatible}).  In particular, $f_n\starconverges f$ in $\Lip(\X)$ if and only if
\begin{equation}
    \sup_{n\in\N}\|f_n\|_{\Lip(\X)}<+\infty\quad \text{and}\quad f_n(x)\to f(x) \quad\text{for every }x\in\X.
\end{equation}
\end{proposition}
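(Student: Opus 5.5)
The plan is to reduce everything to the identification $\Lip(\X)\cong\Lip_0(\X)\times\R$, with predual $\AE(\X)\times\R$, and then use the fact that the molecules $\delta_x-\delta_\e$ span a norm-dense subspace of $\AE(\X)$. The key observation is the pairing formula: for $f\in\Lip(\X)$ identified with $(f-f(\e),f(\e))$,
\begin{equation*}
    \ang{(\delta_x-\delta_\e,\,c),f} = f(x)-f(\e)+c\,f(\e).
\end{equation*}
Taking $c=1$ with the molecule $0$, and then $c=0$, shows that every point evaluation $f\mapsto f(x)$ is a $w^*$-continuous linear functional. Hence the pointwise topology is coarser than $w^*$ on all of $\Lip(\X)$, and in particular on bounded sets.

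For the reverse inclusion on a norm-bounded set $S\subset\sls{\norm{\dummy}_{\Lip(\X)}}{\beta}$, I will show that every basic $w^*$-neighborhood relative to $S$ contains a pointwise neighborhood relative to $S$. Fix $f\in S$, elements $(\mu_j,c_j)\in\AE(\X)\times\R$ for $j=1,\dots,k$, and $\varepsilon>0$. For each $j$, I will use the definition of $\AE(\X)$ as a completion to pick a finite combination $m_j=\sum_i a_{ij}(\delta_{x_{ij}}-\delta_{y_{ij}})$ with $\norm{\mu_j-m_j}_{\AE(\X)}<\varepsilon/(6\beta)$. For $g\in S$, we have $\abs{\ang{\mu_j-m_j,g-f}}\le 2\beta\cdot\varepsilon/(6\beta)=\varepsilon/3$, because the $\Lip_0$-norm of $g-f$ minus its base value is at most $2\beta$. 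The term $\ang{m_j,g-f}$ and the term $c_j(g(\e)-f(\e))$ are finite combinations of point evaluations, so they can be made smaller than $\varepsilon/3$ by requiring $\abs{g(z)-f(z)}<\delta$ at the finitely many points $z\in\{x_{ij},y_{ij},\e\}$. This yields the desired pointwise neighborhood. This $\varepsilon/3$ argument is the only substantive step, and it is where the boundedness of $S$ is essential. Both topologies are Hausdorff vector topologies, since point evaluations separate points, so the compatibility required in \cref{def:weakstar_compatible} holds.

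For the sequential characterization, the forward direction works as follows. A $w^*$-convergent sequence is norm-bounded by the uniform boundedness principle, since $\AE(\X)\times\R$ is a Banach space. It also converges pointwise by $w^*$-continuity of the evaluations. For the converse, suppose $\sup_n\norm{f_n}\le\beta$ and $f_n\to f$ pointwise. Then $f$ is Lipschitz with $\norm{f}\le\beta$, because the inequalities $\abs{f_n(x)-f_n(y)}\le\beta\rho(x,y)$ and $\abs{f_n(\e)}\le\beta$ pass to the limit. All of the $f_n$ and $f$ therefore lie in the bounded set $\sls{\norm{\dummy}_{\Lip(\X)}}{\beta}$, and the two topologies agree there by the previous paragraph, so $f_n\starconverges f$. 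I expect no real obstacle here. The only care needed is this last step, where I must check that the limit $f$ lies in the same bounded set before invoking the compatibility result.
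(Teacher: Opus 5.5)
Your proof is correct and has the same structure as the paper's: first the two topologies coincide on norm-bounded sets, then the uniform boundedness principle gives the sequential characterization, which is exactly the content of \cref{lem:seq_star_converge_equiv}. The only difference is that the paper cites Weaver's Theorem~3.3 and Corollary~3.4 for the bounded-set coincidence, whereas you prove it directly from the norm-density of finite molecule combinations in $\AE(\X)$ with an $\varepsilon/3$ estimate, which is the standard argument behind that citation.
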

\begin{proof}
The coincidence of the weak$^*$ and pointwise topologies on norm-bounded
sets is proved in \cite[Theorem~3.3 and Corollary~3.4]{weaver2018lipschitz}.
The result then follows
by \Cref{lem:seq_star_converge_equiv}.
\end{proof}

We now pass to finite-dimensional vector-valued functions. Let $D\in\N$ and endow $\R^D$ with its Euclidean norm $\norm{\dummy}_2$. Let $\Lip(\X;\R^D)$ denote the space of Lipschitz maps $f:\X\to\R^D$, with Lipschitz seminorm
\begin{equation}
    \abs{f}_{\Lip(\X;\R^D)}
    \coloneqq
    \sup_{x\neq y}
    \frac{\norm{f(x)-f(y)}_2}{\rho(x,y)}.
\end{equation}
We endow $\Lip(\X;\R^D)$ with the norm
\begin{equation}
    \norm{f}_{\Lip(\X;\R^D)}
    \coloneqq
    \max\{\abs{f}_{\Lip(\X;\R^D)},\norm{f(\e)}_2\}.
\end{equation}
Let $\Lip_0(\X;\R^D) \coloneqq \{f\in\Lip(\X;\R^D):f(\e)=0\}$.
By Weaver's universal property of the Arens--Eells space (see~\cite[Theorem~3.6 and p.~85]{weaver2018lipschitz}), for every Banach space $V$ there is an isometric identification of $\Lip_0(\X;V)$ with $\mathcal L(\AE(\X),V)$, the Banach space of bounded linear operators from $\AE(\X)$ to $V$ endowed with the operator norm.
Taking $V=\R^D$ and identifying $(\R^D)'$ with $\R^D$ by the Euclidean inner product gives the isometric identification
\begin{equation}
    \Lip_0(\X;\R^D)
    \simeq
    \mathcal L(\AE(\X),\R^D)
    \simeq
    \bigl(\AE(\X)\,\widehat\otimes_\pi\,\R^D\bigr)',
\end{equation}
where $\widehat\otimes_\pi$ denotes the projective tensor product. Since
\begin{equation}
    \Lip(\X;\R^D)
    \simeq
    \Lip_0(\X;\R^D)\oplus_\infty\R^D,
\end{equation}
it follows that $\Lip(\X;\R^D)$ is isometrically the dual of
\begin{equation}
    \bigl(\AE(\X)\,\widehat\otimes_\pi\,\R^D\bigr)\oplus_1\R^D.
\end{equation}
We denote the corresponding weak$^*$ topology by $w^*$. Since $\X$ is separable and $D<+\infty$, this predual is separable. Hence, $\Lip(\X;\R^D)$ has a separable predual.

\begin{remark} \label{rem:vv-lip-con}
The weak$^*$ topology $w^*$ on $\Lip(\X;\R^D)$ is equivalently described componentwise: $f_n \starconverges f$ in $\Lip(\X;\R^D)$ if and only if $f_n^{(i)} \starconverges f^{(i)}$ in $\Lip(\X)$ for each $i=1,\dots,D$. Thus, by \Cref{prop:pointwise-is-wstar-compatible-lip},
$f_n\starconverges f$ in $\Lip(\X;\R^D)$ if and only if
\begin{equation}
    \sup_{n\in\N}\|f_n\|_{\Lip(\X;\R^D)}<+\infty\quad \text{and}\quad f_n(x)\to f(x)~\text{in}~\R^D \quad\text{for every }x\in\X.
\end{equation}
In particular, the topology of pointwise convergence is weak$^*$-compatible on $\Lip(\X;\R^D)$.
\end{remark}

We now apply this observation to representation costs. Let $\F_\Theta\subset\Lip(\X;\R^D)$ and let $R_\Theta:\F_\Theta\to[0,+\infty]$ be a parametric representation cost. We identify $R_\Theta$ with its trivial extension to $\Lip(\X;\R^D)$ by setting $R_\Theta(f)=+\infty$ for $f\not\in\F_\Theta$, and define $R\coloneqq\overline{R_\Theta}$, the $w^*$-l.s.c.\ regularization. Recall that $\const(\Rcirc) \subset \Lip(\X;\R^D)$ denotes the constancy space of $\Rcirc$, i.e., the largest subspace of directions along which $\Rcirc$ is invariant under translation (see \cref{def:constancy_space}).

\begin{theorem} \label{thm:rep_cost_lip_spaces}
Assume $R_\Theta$ is coercive modulo a subspace $\Null \subset \const(R_\Theta)$ that is $w^*$-complemented and sequentially closed in the topology of pointwise convergence. Then, for every $f\in\Lip(\X;\R^D)$,
\begin{equation}\label{eq:lip_lsc_reg}
    R(f)
    =
    \min \{
        \alpha\in[0,+\infty]
        \st
        (f_n)_{n\in\N}\subset\F_\Theta,\ f_n\rightarrow f \text{ pointwise},\ \Rcirc(f_n)\rightarrow \alpha\}.
\end{equation}
\end{theorem}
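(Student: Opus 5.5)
This is a direct specialization of \cref{prop:coercive_mod_subspace_implies_seqlscreg_compatible}, so the plan is to check its hypotheses and then translate its conclusion into pointwise language. We take $\B=\Lip(\X;\R^D)$ with the separable predual $\A=\bigl(\AE(\X)\,\widehat\otimes_\pi\,\R^D\bigr)\oplus_1\R^D$, let $\tau$ be the topology of pointwise convergence, and set $K=\Rcirc$.

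The hypotheses are checked as follows. Separability of $\A$ was recorded above. Since $\X$ is separable, $\tau$ is a Hausdorff vector-space topology, and by \cref{rem:vv-lip-con} it is weak$^*$-compatible. The assumptions that $\Null\subset\const(\Rcirc)$ is $w^*$-complemented and sequentially $\tau$-closed, and that $\Rcirc$ is coercive modulo $\Null$, are exactly the remaining hypotheses of \cref{prop:coercive_mod_subspace_implies_seqlscreg_compatible}. That proposition then gives
\begin{equation*}
    R(f)=\overline{\Rcirc}^{\,w^*}(f)=\min\{\alpha\in\eR \st (f_n)_{n\in\N}\subset\dom(\Rcirc),\ f_n\tauconverges f,\ \Rcirc(f_n)\to\alpha\}.
\end{equation*}

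To reach \cref{eq:lip_lsc_reg}, three identifications remain. First, $\dom(\Rcirc)=\F_\Theta$ by the definition of the trivial extension. Second, $\tau$-convergence of a sequence is literally pointwise convergence $f_n(x)\to f(x)$ for every $x\in\X$. Third, since $\Rcirc\geq 0$, every admissible limit $\alpha$ lies in $[0,+\infty]$. A minor point is the case where no sequence in $\F_\Theta$ converges pointwise to $f$: the set is then empty and both sides equal $+\infty$ under the convention $\min\varnothing=+\infty$. This is consistent with the proposition's formula, which already covers that case.

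The only step needing care is the weak$^*$-compatibility of $\tau$. \Cref{def:weakstar_compatible} asks that $\tau$ coincide with $w^*$ on every norm-bounded set as a topology, not merely that convergent sequences agree. This is what Weaver's result \cite[Theorem~3.3]{weaver2018lipschitz} gives componentwise. Transferring it to $\R^D$-valued maps works because a norm-bounded set in $\Lip(\X;\R^D)$ has norm-bounded coordinate functions, and both topologies are product topologies of their coordinate versions; \cref{rem:vv-lip-con} records this. No further estimates are needed.
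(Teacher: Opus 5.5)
Your proposal is correct and follows the paper's proof: it checks that the predual of $\Lip(\X;\R^D)$ is separable, obtains weak$^*$-compatibility of pointwise convergence from \cref{rem:vv-lip-con}, and then applies \cref{prop:coercive_mod_subspace_implies_seqlscreg_compatible}. One small correction: the pointwise-convergence topology is a Hausdorff vector-space topology whether or not $\X$ is separable, since separability of $\X$ is used only to make the predual separable.
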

\begin{proof}
The predual of $\Lip(\X;\R^D)$ is separable because $\X$ is separable and
$D<+\infty$. By \cref{rem:vv-lip-con}, the topology of pointwise convergence is weak$^*$-compatible on $\Lip(\X;\R^D)$. Therefore, the result follows from \cref{prop:coercive_mod_subspace_implies_seqlscreg_compatible}.
\end{proof}

When the input space $\X$ is compact, the topology of pointwise convergence in
\cref{thm:rep_cost_lip_spaces} can be replaced by the topology of uniform
convergence.

\begin{theorem}
\label{cor:uniform-representation-cost}
Suppose $\X$ is compact. Assume $R_\Theta$ is coercive modulo a subspace $\Null \subset \const(R_\Theta)$ that is $w^*$-complemented and sequentially closed in the topology of uniform convergence. Then, for every $f\in\Lip(\X;\R^D)$,
\begin{align}\label{eq:uniform-representation-cost}
    R(f)
    =
    \min \{
        \alpha\in[0,+\infty]
        \st
        (f_n)_{n\in\N}\subset\F_\Theta,\ \norm{f_n-f}_{L^\infty(\X;\R^D)}\to 0,\ \Rcirc(f_n)\to \alpha\}.
\end{align}
\end{theorem}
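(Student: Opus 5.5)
The plan is to reduce the theorem to \cref{prop:coercive_mod_subspace_implies_seqlscreg_compatible}, exactly as in the proof of \cref{thm:rep_cost_lip_spaces}. We take $\tau$ to be the topology of uniform convergence on $\X$. This requires three facts: $\Lip(\X;\R^D)$ has a separable predual, $\tau$ is a Hausdorff vector-space topology that is weak$^*$-compatible, and $\Null$ is sequentially $\tau$-closed. The third fact is a hypothesis. The first was recorded above, since $\X$ is separable (compact metric spaces are separable) and $D<+\infty$.

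The main step is weak$^*$-compatibility: we must show that on any norm-bounded set $S\subset\Lip(\X;\R^D)$, uniform convergence and pointwise convergence define the same topology. By \cref{rem:vv-lip-con}, pointwise convergence agrees with $w^*$ on $S$. Uniform convergence is finer than pointwise convergence, so we only need the reverse inclusion on $S$.

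Suppose $\norm{f}_{\Lip(\X;\R^D)}\le M$ for all $f\in S$. Fix $f\in S$ and $\varepsilon>0$. By compactness, choose a finite $\varepsilon/(3M)$-net $\{z_1,\dots,z_k\}$ of $\X$. Consider the pointwise neighborhood
\[
\mathcal O=\{g\in S \st \norm{g(z_j)-f(z_j)}_2<\varepsilon/3,\ j=1,\dots,k\}.
\]
For $g\in\mathcal O$ and any $x\in\X$, pick $z_j$ with $\rho(x,z_j)<\varepsilon/(3M)$. The Lipschitz bound on $f$ and $g$ then gives $\norm{g(x)-f(x)}_2<\varepsilon$. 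So $\mathcal O$ lies in the uniform $\varepsilon$-ball around $f$ within $S$, and the two topologies coincide on $S$. This equicontinuity argument is the only new ingredient.

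It remains to check that $\tau$ is a Hausdorff vector-space topology on $\Lip(\X;\R^D)$. This holds because $\X$ is compact, so every Lipschitz map is bounded and $\norm{\dummy}_{L^\infty(\X;\R^D)}$ is a genuine norm on the space. With these facts, \cref{prop:coercive_mod_subspace_implies_seqlscreg_compatible} applies with $K=\Rcirc$. Since $\dom(\Rcirc)=\F_\Theta$ and $\tau$-convergence is $\norm{f_n-f}_{L^\infty(\X;\R^D)}\to0$, its conclusion is exactly \cref{eq:uniform-representation-cost}. The minimum is over $[0,+\infty]$ because $\Rcirc\ge0$.
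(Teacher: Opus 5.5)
Your proposal is correct and follows the paper's route. Both arguments reduce the statement to \cref{prop:coercive_mod_subspace_implies_seqlscreg_compatible} once uniform convergence is known to be weak$^*$-compatible on $\Lip(\X;\R^D)$. The only difference is in how that fact is obtained: you prove directly, via a finite-net equicontinuity argument, that uniform and pointwise convergence coincide on norm-bounded sets, and you also check the Hausdorff vector-topology requirement explicitly. The paper instead cites Weaver's Proposition~2.39 and Corollary~2.40 for the coincidence, applied componentwise.
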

\begin{proof}
On norm-bounded subsets of $\Lip(\X;\R^D)$, pointwise convergence and uniform convergence agree when $\X$ is compact; this follows componentwise from the scalar-valued result \cite[Proposition~2.39 and Corollary~2.40]{weaver2018lipschitz}. Therefore, the topology of uniform convergence is weak$^*$-compatible on $\Lip(\X;\R^D)$ (since the topology of pointwise convergence is). The result follows from \cref{prop:coercive_mod_subspace_implies_seqlscreg_compatible}.
\end{proof}

Alternatively, the next result shows we may express $R$ in terms of a limiting infimum of $\Rcirc$ over uniform neighborhoods of~$f$.
\begin{corollary}
\label{cor:uniform-envelope-coercive}
Under the assumptions of
\cref{cor:uniform-representation-cost},
for every $f\in\Lip(\X;\R^D)$ we have
\begin{align}\label{eq:lip_lsc_reg_metric_version}
    R(f)
    & =
    \lim_{\varepsilon\to0^+}
    \inf\left\{
        R_\Theta(g):
        g\in\F_\Theta,\
        \norm{f-g}_{L^\infty(\X;\R^D)}<\varepsilon
    \right\} \\
    & =  \lim_{\varepsilon\to0^+}
    \inf\left\{
        C(\theta):
        \theta\in\Theta,\
        \norm{f-f_\theta}_{L^\infty(\X;\R^D)}<\varepsilon
    \right\}.\label{eq:lip_lsc_reg_metric_version_param_version}
\end{align}
\end{corollary}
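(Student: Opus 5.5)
The plan is to combine \cref{cor:uniform-representation-cost} with the definition of $R$ as a $w^*$-l.s.c.\ regularization, and then pass from $\Rcirc$ to $C$. Fix $f\in\Lip(\X;\R^D)$ and set
\begin{equation*}
    m(\varepsilon)\coloneqq\inf\{\Rcirc(g): g\in\F_\Theta,\ \norm{f-g}_{L^\infty(\X;\R^D)}<\varepsilon\}.
\end{equation*}
The function $m$ is nonincreasing in $\varepsilon$, so $\lim_{\varepsilon\to0^+}m(\varepsilon)=\sup_{\varepsilon>0}m(\varepsilon)$ exists in $[0,+\infty]$. The second equality \cref{eq:lip_lsc_reg_metric_version_param_version} is immediate once the first is established. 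Indeed, for each $\varepsilon$,
\begin{equation*}
    \inf\{C(\theta):\norm{f-f_\theta}_{L^\infty}<\varepsilon\}=\inf_{g\in\F_\Theta,\,\norm{f-g}_{L^\infty}<\varepsilon}\ \inf_{f_\theta=g}C(\theta)=m(\varepsilon),
\end{equation*}
by the definition of $\Rcirc$ as a fiberwise infimum.

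\emph{Step 1: $\lim m(\varepsilon)\le R(f)$.} If $R(f)=+\infty$ there is nothing to prove. Otherwise, \cref{cor:uniform-representation-cost} supplies a recovery sequence $f_n\in\F_\Theta$ with $\norm{f_n-f}_{L^\infty}\to0$ and $\Rcirc(f_n)\to R(f)$; the minimum in \cref{eq:uniform-representation-cost} is attained, and this attainment is what we use. For fixed $\varepsilon>0$ we have $f_n$ in the $\varepsilon$-ball for all large $n$, so $m(\varepsilon)\le\Rcirc(f_n)$ eventually. Passing to the limit gives $m(\varepsilon)\le R(f)$ for every $\varepsilon$.

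\emph{Step 2: $R(f)\le\lim m(\varepsilon)$.} Let $\ell\coloneqq\sup_\varepsilon m(\varepsilon)$ and assume $\ell<+\infty$. For $\varepsilon=1/n$, choose $g_n\in\F_\Theta$ with $\norm{f-g_n}_{L^\infty}<1/n$ and $\Rcirc(g_n)\le m(1/n)+1/n\le\ell+1/n$.
\begin{itemize}
    \item The values $\Rcirc(g_n)$ are bounded, so we may pass to a subsequence along which $\Rcirc(g_n)\to\alpha\le\ell$ for some $\alpha$.
    \item This subsequence still converges uniformly to $f$, so it is admissible in \cref{eq:uniform-representation-cost}.
    \item Therefore $R(f)\le\alpha\le\ell$.
\end{itemize}

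The only delicate point is the one hidden inside \cref{cor:uniform-representation-cost}: a uniformly convergent sequence with bounded cost must be an admissible competitor. Boundedness of $\Rcirc(g_n)$ together with coercivity modulo $\Null$ does not by itself bound $\norm{g_n}_{\Lip}$, because of the $\Null$-component. We do not need to control this ourselves, however. In Step 2 we simply feed the sequence into the formula of \cref{cor:uniform-representation-cost}, whose admissibility condition is only uniform convergence plus convergence of costs. So Step 2 reduces entirely to that earlier result, and the main thing to verify carefully is that we are quoting \cref{eq:uniform-representation-cost} exactly as stated, with no implicit boundedness requirement.
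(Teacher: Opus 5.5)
Your proof is correct and follows essentially the paper's argument. The paper uses \cref{cor:uniform-representation-cost} to identify $R$ with the sequential l.s.c.\ regularization of $\Rcirc$ in the sup-metric, then invokes \cref{prop:lscreg_equivalents}~\ref{item:lsc_in_metric_space}, whose proof is exactly your Steps~1--2; the second equality follows from the fiberwise definition of $\Rcirc$, as you say. Your worry about the $\Null$-component is correctly set aside: the formula \cref{eq:uniform-representation-cost}, obtained via \cref{prop:coercive_mod_subspace_implies_seqlscreg_compatible} and the sequential closedness of $\Null$, imposes no norm-boundedness requirement on competitors, so your subsequence in Step~2 is admissible as stated.
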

\begin{proof}
\cref{cor:uniform-representation-cost} shows $R$ is equal to the sequential l.s.c.\ regularization of $\Rcirc$ in the topology of uniform convergence. Since this topology is metrizable
via the metric $d(f,g)=\norm{f-g}_{L^\infty(\X;\R^D)}$, we obtain \cref{eq:lip_lsc_reg_metric_version} by \cref{prop:lscreg_equivalents}~\ref{item:lsc_in_metric_space}. The equality between \cref{eq:lip_lsc_reg_metric_version} and \cref{eq:lip_lsc_reg_metric_version_param_version} is immediate from the definition of $R_\Theta$.
\end{proof}

\begin{remark}
If $\const(\Rcirc)$ is finite dimensional, then in the locally convex spaces considered in this paper, it is closed, topologically complemented, and sequentially closed and hence automatically satisfies the topological hypotheses of \cref{thm:rep_cost_lip_spaces,cor:uniform-representation-cost,cor:uniform-envelope-coercive}.
\end{remark}

\begin{remark}
The expression for the representation cost given in \cref{eq:lip_lsc_reg_metric_version_param_version} is the definition originally given in~\cite{SavareseInfWidth} in the context of shallow ReLU neural networks and the squared Euclidean-norm parameter cost. This definition was then used in~\cite{parkinson2024depth} for deep ReLU neural networks. Thus, our definition coincides with the definitions of~\cite{parkinson2024depth,SavareseInfWidth}, under the assumption that the input domain $\X$ is compact.  A slightly different definition for the representation cost was given in~\cite{OngieFunctionSpace} to account for the technicalities involved when considering the non-compact input domain $\X = \R^d$. In \cref{app:equivalence} we prove that this definition is also equal to the definition given in this paper.
\end{remark}

We conclude this section by characterizing $w^*$-continuous linear maps out of $\Lip(\X;\R^D)$. This criterion will be used in later sections to verify that data-fitting operators, such as point evaluations, are compatible with the weak$^*$ topology.

\begin{proposition}
\label{prop:lipschitz-wstar-continuous-linear-maps}
Let $\AOp:\Lip(\X; \R^D) \to\R^M$ be linear. Then $\AOp$ is $w^*$-continuous if
and only if, for every sequence $(f_n)_{n\in\N}\subset\Lip(\X;\R^D)$ and every
$f\in\Lip(\X;\R^D)$ such that $f_n \to f$ pointwise and $\sup_{n\in\N}\norm{f_n}_{\Lip(\X;\R^D)}<+\infty$, we have $\AOp f_n\to \AOp f$ (in $\R^M$). In particular, if $\AOp$ is sequentially continuous in the topology of pointwise convergence, then $\AOp$ is $w^*$-continuous.
\end{proposition}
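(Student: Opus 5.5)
The plan is to reduce both directions to a standard fact about linear functionals on a dual Banach space: a linear map into $\R^M$ is $w^*$-continuous if and only if each coordinate functional $f\mapsto[\AOp f]_i$ is $w^*$-continuous. By the Krein--Smulian theorem, a linear functional on a dual Banach space $\B=\A'$ is $w^*$-continuous exactly when its restriction to the closed unit ball (equivalently, to every norm-bounded set) is $w^*$-continuous. Since the predual $\bigl(\AE(\X)\,\widehat\otimes_\pi\,\R^D\bigr)\oplus_1\R^D$ is separable, the $w^*$ topology is metrizable on bounded sets. Hence $w^*$-continuity on bounded sets is equivalent to sequential $w^*$-continuity on bounded sets.

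For the forward direction, suppose $\AOp$ is $w^*$-continuous. Take $f_n\to f$ pointwise with $\sup_n\norm{f_n}_{\Lip(\X;\R^D)}<+\infty$. By \cref{rem:vv-lip-con}, this is precisely $f_n\starconverges f$. Continuity then gives $\AOp f_n\to\AOp f$; only sequential continuity of a continuous map is used here.

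For the converse, assume the sequential condition. Fix a coordinate $\ell=[\AOp\,\cdot\,]_i$ and the closed unit ball $\mathsf B$ of $\Lip(\X;\R^D)$. On $\mathsf B$ the $w^*$ topology is metrizable. By \cref{rem:vv-lip-con}, a sequence in $\mathsf B$ converges in $w^*$ to $f\in\mathsf B$ iff it converges pointwise. The hypothesis therefore says $\ell|_{\mathsf B}$ is sequentially $w^*$-continuous, hence $w^*$-continuous, because $\mathsf B$ with the $w^*$ topology is a metrizable space. The same holds on every ball $r\mathsf B$ by linearity. Krein--Smulian then applies: $\ker\ell\cap r\mathsf B=(\ell|_{r\mathsf B})^{-1}(0)$ is $w^*$-closed for every $r$, so $\ker\ell$ is $w^*$-closed. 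A linear functional with $w^*$-closed kernel is $w^*$-continuous, i.e.\ it is given by an element of the predual. Doing this for each $i$ gives $w^*$-continuity of $\AOp$.

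The final sentence of the statement then follows immediately. If $\AOp$ is sequentially continuous for pointwise convergence, it satisfies the sequential condition even without the boundedness assumption. The main obstacle is the passage from bounded-set continuity to global $w^*$-continuity. This genuinely needs Krein--Smulian, in its Banach--Dieudonné form for convex sets and subspaces, since the $w^*$ topology itself is not determined by sequences. I would cite it as a standard result rather than reprove it, and note that it requires completeness of the predual, which holds here.
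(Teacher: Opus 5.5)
Your proposal is correct and follows essentially the same route as the paper. The paper reduces to coordinate functionals, cites the Krein--Smulian corollary that a linear form on a dual space with separable predual is $w^*$-continuous iff it is $w^*$-sequentially continuous, and uses \cref{rem:vv-lip-con} to identify $w^*$-convergent sequences with norm-bounded pointwise-convergent ones; the only difference is that you unpack that corollary yourself (metrizability of balls, then Krein--Smulian applied to the kernel) where the paper simply cites it.
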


\begin{proof}
By a corollary of the Krein--Smulian theorem (see~\cite[Corollary~12.8]{conway2019course}), a linear form on $\Lip(\X;\R^D)$ is $w^*$-continuous if and only if it is $w^*$-sequentially continuous. Applying this componentwise to $\AOp$, and using the equivalence between $w^*$-convergence and pointwise convergence of norm-bounded sequences in  $\Lip(\X;\R^D)$ (\cref{prop:pointwise-is-wstar-compatible-lip,rem:vv-lip-con}), gives the result.
\end{proof}

\begin{remark}
\label{rem:w*-point-evaluation}
For any finite collection of points $\{x_j\}_{j=1}^N\subset\X$, the evaluation
operator
\begin{equation}
    \EOp:\Lip(\X;\R^D)\to(\R^D)^N,
    \quad
    \EOp f\coloneqq (f(x_1),\ldots,f(x_N)),
\end{equation}
is $w^*$-continuous. Indeed, if $(f_n)_{n\in\N}$ converges pointwise to $f$, then $\EOp f_n\to \EOp f$ in $(\R^D)^N$. The claim follows from \cref{prop:lipschitz-wstar-continuous-linear-maps}.
\end{remark}

\begin{remark}\label{rem:lip-sobolev}
If $\X$ is an open subset of $\R^d$ and $f \in \Lip(\X;\R^D)$, then  Rademacher's theorem guarantees that $f$ is differentiable almost everywhere,  and its classical derivative agrees a.e.\ with the weak derivative ${\D} f \in L^\infty(\X;\R^{D \times d})$. Moreover, if $\X$ is convex and endowed  with the Euclidean metric, then we have the identity
\begin{equation}\label{eq:equiv_lip_sobolev_norm}
|f|_{\Lip(\X;\R^D)} = \|{\D} f\|_{L^\infty(\X;\R^{D \times d})}
\coloneqq \esssup_{x\in\X} \|{\D} f(x)\|_{\op}.
\end{equation}
See, e.g.,  \cite[Theorem~4, p.~292]{evans2010partial}. When $\X$ is additionally bounded, the identity above implies that the space $\Lip(\X;\R^D)$ coincides with  the Sobolev space $W^{1,\infty}(\X;\R^D)$, with equivalent norms. Therefore, one could consider the  space $W^{1,\infty}(\X;\R^D)$ in place of $\Lip(\X;\R^D)$ on such domains.
\end{remark}

\section{Kernel Methods and RKHSs} \label{sec:RKHS}
Kernel methods and RKHSs  are arguably among the most classical and well-studied methods from the function-space perspective. In this section, we show that kernel methods and RKHSs are compatible with our abstract construction of representation costs and native spaces. We focus on real kernels and RKHSs defined on a set $\X$.
\begin{definition}
    A \emph{kernel} is a function $\kernel: \X \times \X \to \R$ such that $\kernel(x, z) = \kernel(z, x)$, for all $x, z \in \X$, and 
    \begin{equation}
         \sum_{i=1}^K \sum_{j=1}^K a_i a_j \kernel(x_i, x_j) \geq 0,
    \end{equation}
    for all $K \in \mathbb{N}$, $\curly{x_j}_{j=1}^K \subset \X$, and $\curly{a_j}_{j=1}^K \subset \R$.
\end{definition}
\begin{definition}
    A real Hilbert space $(\Hil, \ang{\dummy, \dummy})$ of functions $\X \to \R$ is said to be a \emph{reproducing kernel Hilbert space} (RKHS) if point evaluations are bounded, i.e., for every $x \in \X$, there exists a constant $C_x > 0$ such that
    \begin{equation}
        \abs{f(x)} \leq C_x \norm{f}_\Hil, \quad f \in \Hil,
    \end{equation}
    where $\norm{f}_\Hil^2 \coloneqq \ang{f, f}$.
\end{definition}
A fundamental fact is that every RKHS $\Hil$ on $\X$ has a unique kernel $\kernel: \X \times \X \to \R$ that satisfies the \emph{reproducing property}: For every $x \in \X$,
\begin{equation}
    \ang{\kernel(x, \dummy), f} = f(x), \quad f \in \Hil.
\end{equation}
The reverse is also true in that every kernel $\kernel:\X \times \X \to \R$ induces a unique RKHS $\Hil$ on $\X$ such that $\kernel(\dummy, \dummy)$ is the reproducing kernel for $\Hil$. This one-to-one correspondence between RKHSs and kernels can be seen from the Riesz representation theorem and the Moore--Aronszajn theorem.

\subsection{Parametric Models and the Construction of RKHSs} \label{sec:param-RKHS}
Fix a kernel $\kernel: \X \times \X \to \R$ and let $\Hil$ denote the corresponding RKHS with inner product $\ang{\dummy, \dummy}$ and norm $\norm{\dummy}_\Hil$. Let
\begin{equation}
\Theta \coloneqq \{(a_1,x_1),\ldots,(a_K,x_K)\st a_j \in \R, x_j \in \X, K\in \mathbb{N}\}
\end{equation}
denote the parameter space and for a parameter $\theta = ((a_1,x_1),\ldots,(a_K,x_K))\in\Theta$ define the parametric model
\begin{equation}
f_{\theta} = \sum_{j=1}^K a_j \kernel(\dummy, x_j). \label{eq:kernel-machine}
\end{equation}
Note that due to the nature of the definition of $\Theta$, \cref{eq:kernel-machine} is a \emph{kernel machine} with an \emph{arbitrary} number of kernels $K \in \mathbb{N}$ and \emph{arbitrary} kernel centers $\{x_j\}_{j=1}^K$. Thus, we see that the parametric model class $\F_\Theta$ coincides with the (unclosed) span of the kernels, i.e., $\F_\Theta = \spn\{\kernel(x, \dummy)\}_{x \in \X} \eqqcolon \Hil_0$, which is simply the set of all kernel machines with kernel $\kernel(\dummy, \dummy)$.

The parameter cost for training kernel machines in, e.g., kernel ridge regression, is
\begin{equation} \label{eq:kernel-cost}
C(\theta) = \sum_{i=1}^K\sum_{j=1}^K a_ia_j \kernel(x_i,x_j), \quad \theta \in \Theta.
\end{equation}
The parametric representation cost is then
\begin{equation}
\Rcirc(f) = \norm{f}_{\Hil_0}^2, \quad f \in \Hil_0, \label{eq:Rcirc-kernel}
\end{equation}
where $\norm{\dummy}_{\Hil_0}$ is the norm induced by the inner product
\begin{equation}
\ang*{\sum_{i=1}^K a_i \kernel(\dummy, x_i), \sum_{j=1}^J b_j \kernel(\dummy, z_j)}_{\Hil_0} \coloneqq \sum_{i=1}^K\sum_{j=1}^J a_ib_j \kernel(x_i, z_j), \label{eq:H0-ip}
\end{equation}
which immediately implies the reproducing property: For any $f \in \Hil_0$, $\ang{\kernel(x, \dummy), f}_{\Hil_0} = f(x)$ for all $x \in \X$. It is a standard exercise to show that \cref{eq:H0-ip} defines a \textit{bona fide} inner product on $\Hil_0$.\footnote{In particular, this exercise arises when proving the Moore--Aronszajn theorem (cf.~\cite{theory-reproducing-kernels}).} Let $(f_n)_{n\in \N} \subset \Hil_0$ be Cauchy with respect to $\norm{\dummy}_{\Hil_0}$. For every $x \in \X$ and $n, \ell \in \N$, we have that
\begin{equation}
    \abs{f_n(x) - f_\ell(x)} = \abs{\ang{\kernel(x, \dummy), f_n - f_\ell}_{\Hil_0}} \leq \norm{\kernel(x, \dummy)}_{\Hil_0} \norm{f_n - f_\ell}_{\Hil_0} = \sqrt{\kernel(x, x)} \norm{f_n - f_\ell}_{\Hil_0}. \label{eq:H0-Cauchy}
\end{equation}
Thus, we can identify the limit of a Cauchy sequence $(f_n)_{n\in \N} \subset \Hil_0$ by the pointwise limit
\begin{equation}
    f(x) \coloneqq \lim_{n \to \infty} f_n(x), \quad x \in \X, \label{eq:H0-pointwise}
\end{equation}
which is guaranteed to exist since $\R$ is complete. 

Recall that the RKHS $\Hil$ can also be \emph{defined as} the completion of $\Hil_0$ in $\norm{\dummy}_{\Hil_0}$ (see~\cite[Definition~2.9]{ScholkopfKernels}). Thanks to \cref{eq:H0-Cauchy,eq:H0-pointwise}, we thus have that $\Hil$ is given by the collection of all pointwise limits of Cauchy sequences in $\Hil_0$, where the norm $\norm{\dummy}_\Hil$ and inner product $\ang{\dummy, \dummy}$ can be equivalently defined via limits of $\norm{\dummy}_{\Hil_0}$ and $\ang{\dummy, \dummy}_{\Hil_0}$, respectively. An RKHS has a lot of structure with respect to modes of convergence, summarized in the next lemma.

\begin{lemma} \label{lemma:RKHS-convergence}
Let $\Hil$ be an RKHS on $\X$ and let $(f_n)_{n\in\mathbb N}\subset \Hil$ and $f\in \Hil$. Then,
\begin{enumerate}[label=(\roman*)]
\item $f_n$ weakly converges to $f$ if and only if $f_n(x)\to f(x)$ for all $x\in \X$ and $\sup_{n\in\mathbb N}\norm{f_n}_\Hil<+\infty$. \label{item:weak-convergence-H}
\item $f_n$ strongly converges to $f$ if and only if $f_n(x)\to f(x)$ for all $x\in \X$ and $\norm{f_n}_\Hil\to \norm{f}_\Hil$. \label{item:strong-convergence-H}
\end{enumerate}
\end{lemma}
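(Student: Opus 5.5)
The plan is to use the reproducing property $f(x)=\ang{\kernel(x,\dummy),f}$, which turns point evaluation into inner products with fixed elements of $\Hil$, together with the density of $\Hil_0=\spn\{\kernel(x,\dummy)\}_{x\in\X}$ in $\Hil$. Both parts reduce to standard Hilbert-space facts once pointwise convergence is identified with convergence against the total set $\{\kernel(x,\dummy)\}_{x\in\X}$.

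For \ref{item:weak-convergence-H}, the forward direction is immediate. If $f_n\rightharpoonup f$, then testing against $\kernel(x,\dummy)$ gives $f_n(x)=\ang{\kernel(x,\dummy),f_n}\to\ang{\kernel(x,\dummy),f}=f(x)$. The uniform boundedness principle shows that a weakly convergent sequence is norm-bounded.

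For the converse, suppose $M\coloneqq\sup_n\norm{f_n}_\Hil<+\infty$ and $f_n\to f$ pointwise. By linearity, $\ang{g,f_n}\to\ang{g,f}$ for every $g\in\Hil_0$. Given an arbitrary $h\in\Hil$ and $\varepsilon>0$, choose $g\in\Hil_0$ with $\norm{h-g}_\Hil<\varepsilon$, which is possible because $\Hil$ is the completion of $\Hil_0$. Then
\begin{equation}
\abs{\ang{h,f_n-f}}\leq \norm{h-g}_\Hil\,(M+\norm{f}_\Hil)+\abs{\ang{g,f_n-f}},
\end{equation}
so $\limsup_n\abs{\ang{h,f_n-f}}\leq\varepsilon(M+\norm{f}_\Hil)$. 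Letting $\varepsilon\to0$ gives weak convergence.

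For \ref{item:strong-convergence-H}, the forward direction follows from two continuity facts. Point evaluation is bounded, with $\abs{f_n(x)-f(x)}\leq\sqrt{\kernel(x,x)}\norm{f_n-f}_\Hil$, and the norm is continuous, so $\norm{f_n}_\Hil\to\norm{f}_\Hil$.

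For the converse, the convergence $\norm{f_n}_\Hil\to\norm{f}_\Hil$ implies the norms are bounded. Part \ref{item:weak-convergence-H} then gives $f_n\rightharpoonup f$. Expanding the norm,
\begin{equation}
\norm{f_n-f}_\Hil^2=\norm{f_n}_\Hil^2-2\ang{f_n,f}+\norm{f}_\Hil^2\to\norm{f}_\Hil^2-2\norm{f}_\Hil^2+\norm{f}_\Hil^2=0,
\end{equation}
which is the Radon--Riesz property of Hilbert spaces.

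The only step requiring care is the density approximation in the converse of \ref{item:weak-convergence-H}. It needs the uniform bound on $\norm{f_n}_\Hil$. Without that bound the claim fails: pointwise convergence alone does not imply weak convergence, so the boundedness hypothesis cannot be dropped.
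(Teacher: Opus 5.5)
Your proof is correct and follows the same route as the paper: the reproducing property plus density of $\Hil_0$ gives (i), and (i) combined with convergence of norms gives (ii). The only difference is that you prove inline the two standard facts the paper cites, namely that a bounded sequence converging on a dense set converges weakly, and the Radon--Riesz property. The paper obtains the latter from uniform convexity; you get it from the Hilbert-space norm expansion.
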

\begin{proof}
Let $\kernel: \X \times \X \to \R$ denote the reproducing kernel of $\Hil$ and
recall that $\Hil_0 = \spn\curly{\kernel(x,\dummy)}_{x \in \X}$ is dense in $\Hil$. By the reproducing property, $f_n(x)=\ang{f_n,\kernel(x, \dummy)}$ and $f(x)=\ang{f,\kernel(x, \dummy)}$ for all $x\in \X$. Hence, if $f_n(x) \to f(x)$ for all $x \in \X$, then $\ang{g, f_n} \to \ang{g, f}$ for every $g\in \Hil_0$ (since $g$ is a finite combination of the kernels $\kernel(x, \dummy)$). \Cref{item:weak-convergence-H} then follows from~\cite[Theorem~8.40]{hunter2001applied}.

If $f_n$ strongly converges to $f$, then $f_n(x) \to f(x)$ for all $x \in \X$ as
\begin{equation}
    \abs{f(x) - f_n(x)} = \abs{\ang{\kernel(x, \dummy), f - f_n}} \leq \norm{\kernel(x, \dummy)}_\Hil \norm{f - f_n}_\Hil = \sqrt{\kernel(x, x)} \norm{f - f_n}_\Hil.
\end{equation}
The fact that $\norm{f_n}_\Hil \to \norm{f}_\Hil$ is immediate by the reverse triangle inequality. Conversely, if $f_n(x) \to f(x)$ for all $x \in \X$ and $\norm{f_n}_\Hil \to \norm{f}_\Hil$, then $\sup_{n\in \N} \norm{f_n}_\Hil$ is bounded. By \cref{item:weak-convergence-H}, this implies that $f_n$ weakly converges to $f$. Combining weak convergence with the convergence of norms $\norm{f_n}_\Hil \to \norm{f}_\Hil$ and the fact that all Hilbert spaces are uniformly convex Banach spaces implies that $f_n$ strongly converges to $f$ by~\cite[Proposition~3.32]{brezis2011functional}. Hence, \ref{item:strong-convergence-H} holds.
\end{proof}

\subsection{Representation Costs and Representer Theorems}
We now establish that, given a kernel, the associated parametric model class (i.e., the set of kernel machines), and the usual parameter cost, the induced extended representation cost and native space coincide with the squared norm in the corresponding RKHS and the RKHS itself, respectively. This shows that kernel methods and RKHSs are a special case of the abstract representation-cost and function-space perspective developed in this paper. Finally, we show that the celebrated RKHS representer theorem is compatible with our abstract framework.

For compatibility with the universal topology introduced in \cref{sec:Lip}, we now restrict our attention to the class of \emph{Lipschitz kernels}. In particular, in the sequel, let $(\X,\rho)$ be a separable metric space and let $\e \in \X$ be some base point.

\begin{definition}
A kernel $\kernel:\X \times \X \to \R$ is said to be \emph{$(L, \rho)$-Lipschitz} if the map $x \mapsto \kernel(x, \dummy)$ is $L$-Lipschitz continuous with respect to the metric $\rho(\dummy, \dummy)$, i.e., 
\begin{equation}
\norm{\kernel(x, \dummy) - \kernel(z, \dummy)}_\Hil  \leq L \, \rho(x,z), \quad \text{for all } x,z\in \X.
\end{equation}
\end{definition}

\begin{lemma}\label{lem:lipschitz_kernels_embed}
If $\kernel: \X \times \X \to \R$ is an $(L, \rho)$-Lipschitz kernel, then its RKHS $\Hil$ continuously embeds into $\Lip(\X)$. In particular, $\norm{f}_{\Lip(\X)} \leq \max\curly{L, \sqrt{\kernel(\e, \e)}}\norm{f}_\Hil$, for all $f \in \Hil$.
\end{lemma}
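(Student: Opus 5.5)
The plan is to bound the two pieces of $\norm{f}_{\Lip(\X)}=\max\{\abs{f}_{\Lip(\X)},\abs{f(\e)}\}$ separately, each by the reproducing property and Cauchy--Schwarz in $\Hil$.

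First, the value at the base point. For $f\in\Hil$ we have $f(\e)=\ang{\kernel(\e,\dummy),f}$. Cauchy--Schwarz then gives
\begin{equation}
\abs{f(\e)}\leq\norm{\kernel(\e,\dummy)}_\Hil\norm{f}_\Hil=\sqrt{\kernel(\e,\e)}\,\norm{f}_\Hil,
\end{equation}
using $\norm{\kernel(\e,\dummy)}_\Hil^2=\kernel(\e,\e)$, which is again the reproducing property.

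Next, the Lipschitz seminorm. For $x,z\in\X$, linearity of the inner product gives
\begin{equation}
\abs{f(x)-f(z)}=\abs{\ang{\kernel(x,\dummy)-\kernel(z,\dummy),f}}\leq\norm{\kernel(x,\dummy)-\kernel(z,\dummy)}_\Hil\norm{f}_\Hil\leq L\,\rho(x,z)\,\norm{f}_\Hil,
\end{equation}
where the last step is exactly the $(L,\rho)$-Lipschitz hypothesis. Dividing by $\rho(x,z)$ and taking the supremum over $x\neq z$ yields $\abs{f}_{\Lip(\X)}\leq L\norm{f}_\Hil$.

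Taking the maximum of the two bounds gives $\norm{f}_{\Lip(\X)}\leq\max\{L,\sqrt{\kernel(\e,\e)}\}\norm{f}_\Hil$. This shows every $f\in\Hil$ is Lipschitz, so $\Hil\subset\Lip(\X)$ as sets of functions on $\X$. The inclusion map is linear, and the inequality makes it bounded, hence continuous.

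I expect no real obstacle. The only point to check is that the elements of $\Hil$ are genuine functions on $\X$, so that the inclusion into $\Lip(\X)$ is well-defined. This holds by the definition of an RKHS as a Hilbert space of functions on which the reproducing property holds pointwise.
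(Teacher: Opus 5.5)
Your proof is correct and follows the paper's argument essentially line for line. Both bound the Lipschitz seminorm via the reproducing property, Cauchy--Schwarz, and the $(L,\rho)$-Lipschitz hypothesis, bound $\abs{f(\e)}$ using $\norm{\kernel(\e,\dummy)}_\Hil=\sqrt{\kernel(\e,\e)}$, and then take the maximum.
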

\begin{proof}
For any $f \in \Hil$ and $x, z \in \X$, we have that
\begin{equation}
    \abs{f(x) - f(z)} = \abs{\ang{\kernel(x, \dummy) - \kernel(z, \dummy), f}} \leq \norm{\kernel(x, \dummy) - \kernel(z, \dummy)}_\Hil \norm{f}_\Hil \leq L \, \norm{f}_\Hil \, \rho(x, z).
\end{equation}
Therefore, $\abs{f}_{\Lip(\X)} \leq L \, \norm{f}_\Hil$. Next,
\begin{equation}
    \abs{f(\e)} = \abs{\ang{f, \kernel(\e, \dummy)}} \leq \norm{f}_\Hil \norm{\kernel(\e, \dummy)}_\Hil = \sqrt{\kernel(\e, \e)} \norm{f}_\Hil.
\end{equation}
Thus, for any $f \in \Hil$, we have $\norm{f}_{\Lip(\X)} = \max\curly{\abs{f(\e)}, \abs{f}_{\Lip(\X)}} \leq \max\curly{L, \sqrt{\kernel(\e, \e)}}\norm{f}_\Hil$.
\end{proof}
\begin{remark} \label{remark:RKHS-Lip-kernels}
    Many common kernels used in machine learning fall under the class of Lipschitz kernels. For example, if $\X \subset \R^d$ with the metric $\rho(x, z) \coloneqq \norm{x - z}_2$, then the Gaussian kernel given by
    \begin{equation}
        \kernel_\mathrm{Gauss}(x, z) \coloneqq \exp\paren*{-\frac{1}{2\sigma^2}\norm{x - z}_2^2}, \quad x, z \in \X, \quad \sigma > 0,
    \end{equation}
    is a Lipschitz kernel.
    On $\X \subset \R^d$, the $\ell^p$-Laplacian kernel, $1 \leq p \leq 2$, given by
    \begin{equation}
        \kernel_\mathrm{Laplace}(x, z) \coloneqq \exp\paren*{-\frac{1}{\sigma}\norm{x - z}_p},\quad x, z \in \X, \quad \sigma > 0,
    \end{equation}
    is also a Lipschitz kernel, but with respect to the metric $\rho(x, z) \coloneqq \norm{x - z}_p^{1/2}$.\footnote{Recall that for any metric $\rho(\dummy, \dummy)$, the bivariate function $\rho(\dummy, \dummy)^q$ for $0 < q \leq 1$ is also a metric. In fact, the two metric topologies coincide.}
\end{remark}

\begin{theorem} \label{thm:rep-cost-kernel-RKHS}
    Let $\kernel: \X \times \X \to \R$ be an $(L, \rho)$-Lipschitz kernel and let $(\Hil, \norm{\dummy}_\Hil)$ be the corresponding RKHS. With the parametric model setup as in \cref{sec:param-RKHS}, if we construct the extended representation cost with $\B = \Lip(\X)$, we have for any $f \in \Lip(\X)$ that
    \begin{equation}
        R(f) = \begin{cases}
            \norm{f}_\Hil^2, & \text{if $f \in \Hil$}, \\
            +\infty, & \text{if $f \in \Lip(\X) \setminus \Hil$}.
        \end{cases}
    \end{equation}
    In this case, the native space is
    \begin{equation}
        \F = \curly{f \in \Lip(\X) \st R(f) < +\infty} = \Hil.
    \end{equation}
\end{theorem}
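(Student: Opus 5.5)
Let $H$ denote the function equal to $\norm{f}_\Hil^2$ on $\Hil$ and $+\infty$ on $\Lip(\X)\setminus\Hil$. The plan is to show $R = H$ by proving the two inequalities $H \le R$ and $R \le H$ separately. Both rest on two facts. First, $\Rcirc = \norm{\dummy}_{\Hil_0}^2$ on $\Hil_0$ and $\Rcirc=+\infty$ elsewhere, by \cref{eq:Rcirc-kernel}. Second, $\Hil \hookrightarrow \Lip(\X)$ continuously, by \cref{lem:lipschitz_kernels_embed}; in particular $\Hil_0\subset \Hil\subset\Lip(\X)$, so the construction makes sense with $\B=\Lip(\X)$.

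\emph{Lower bound ($H\le R$).} By the defining property of l.s.c.\ regularization, it suffices to show that $H$ is $w^*$-l.s.c.\ on $\Lip(\X)$ and that $H\le \Rcirc$. The second claim is immediate, since $\norm{\dummy}_\Hil$ extends $\norm{\dummy}_{\Hil_0}$. For lower semicontinuity, I would show that each sublevel set $\{H\le\alpha\}=\{f\in\Hil\st\norm{f}_\Hil\le\sqrt\alpha\}$ is $w^*$-closed.
\begin{itemize}
\item By \cref{lem:lipschitz_kernels_embed}, this set is norm-bounded in $\Lip(\X)$.
\item Hence, by Krein--Smulian, it suffices to check $w^*$-sequential closedness.
\item By \cref{prop:pointwise-is-wstar-compatible-lip}, this amounts to closedness under pointwise limits of sequences with $\sup_n\norm{f_n}_\Hil\le\sqrt\alpha$.
\end{itemize}
I expect this pointwise-limit step to be the main obstacle, since the limit is only known to exist pointwise. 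Given such a sequence $f_n\to f$ pointwise, I would extract a weakly convergent subsequence in $\Hil$ with limit $g$. By \cref{lemma:RKHS-convergence}\ref{item:weak-convergence-H}, $g=f$ pointwise. Weak lower semicontinuity of the norm then gives $\norm{f}_\Hil\le\sqrt\alpha$, so $f$ lies in the sublevel set. A variant that avoids Krein--Smulian would use a net argument with weak compactness of $\Hil$-balls directly. It is also fine to invoke \cref{prop:coercive_mod_subspace_implies_seqlscreg_compatible}/\cref{thm:rep_cost_lip_spaces} with $\Null=\{0\}$, since $\Rcirc$ is coercive by the embedding bound.

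\emph{Upper bound ($R\le H$).} Fix $f\in\Hil$; the case $f\notin\Hil$ is trivial. Since $\Hil_0$ is norm-dense in $\Hil$, choose $f_n\in\Hil_0$ with $\norm{f_n-f}_\Hil\to0$. Then:
\begin{itemize}
\item $\norm{f_n}_\Hil$ is bounded, so by \cref{lem:lipschitz_kernels_embed} the sequence is bounded in $\Lip(\X)$;
\item $f_n\to f$ pointwise, by \cref{lemma:RKHS-convergence}\ref{item:strong-convergence-H};
\item hence $f_n\starconverges f$, by \cref{prop:pointwise-is-wstar-compatible-lip}.
\end{itemize}
Since $R$ is $w^*$-l.s.c.\ and $R\le\Rcirc$, we get $R(f)\le\liminf_n R(f_n)\le\lim_n\norm{f_n}_{\Hil_0}^2=\norm{f}_\Hil^2$.

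Combining the two bounds yields $R=H$, and therefore $\F=\dom(R)=\Hil$.
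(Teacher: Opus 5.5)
Your proof is correct, but it is organized differently from the paper's.

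\textbf{The paper's route.} It first applies \cref{thm:rep_cost_lip_spaces} (with $\Null=\{0\}$, using coercivity of $\Rcirc$ from \cref{lem:lipschitz_kernels_embed}) to write $R(f)$ as a \emph{minimum} over pointwise-convergent sequences in $\Hil_0$. It then argues entirely with sequences. For the lower bound it takes an optimal recovery sequence $f_n\to f$ with $\norm{f_n}_\Hil^2\to R(f)$, extracts a weakly convergent subsequence in $\Hil$, identifies the limit with $f$ through point evaluations, and uses weak lower semicontinuity of the norm.

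\textbf{Your route.} You instead sandwich $R$. Showing that $H$ is $w^*$-l.s.c.\ and $H\le\Rcirc$ gives $H\le R$ by maximality of the l.s.c.\ envelope. The reverse bound $R\le H$ follows from lower semicontinuity of $R$ along one $w^*$-convergent sequence. The analytic core is the same in both: density of $\Hil_0$, weak compactness of $\Hil$-balls, weak continuity of point evaluations, and weak l.s.c.\ of the Hilbert norm.

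\textbf{What each buys.} Your version does not need the min-attainment part of the sequential characterization. In the paper, that part rests on coercivity and separability through \cref{prop:coercive_mod_subspace_implies_seqlscreg_compatible}. Your net variant of the closedness step needs no separability at all. The upper bound can also be shortened: $\norm{f_n-f}_{\Lip(\X)}\lesssim\norm{f_n-f}_\Hil\to0$, and norm convergence implies weak$^*$ convergence. So your route in fact proves the theorem without assuming $\X$ separable. By contrast, the Krein--Smulian reduction to sequences as you stated it does use separability of the predual $\AE(\X)\times\R$, which is available here. The paper's route, in exchange, showcases the framework's sequential characterization. It gives a template that is reused almost verbatim for the Besov and I-RKBS native spaces.
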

\begin{proof}
    From \cref{eq:Rcirc-kernel}, the parametric representation cost $\Rcirc: \Lip(\X) \to [0, +\infty]$ is
    \begin{equation}
        \Rcirc(f) = \begin{cases}
            \norm{f}_{\Hil_0}^2 = \norm{f}_\Hil^2 & \text{if $f \in \F_\Theta = \Hil_0$}, \\
            +\infty & \text{if $f \in \Lip(\X) \setminus \Hil_0$}.
        \end{cases}
    \end{equation}
    By \cref{lem:lipschitz_kernels_embed}, $\Rcirc: \Lip(\X) \to [0, +\infty]$ is coercive. Hence, by \cref{thm:rep_cost_lip_spaces}, the extended representation cost is given by the sequential characterization
    \begin{equation}
        R(f) = \min \curly{\alpha \in [0,+\infty] \st (f_n)_{n \in \N} \subset \Hil_0,\ f_n \to f \text{ pointwise},\ \norm{f_n}^2_\Hil \rightarrow \alpha}.
    \end{equation}
    
    Fix $f \in \Hil$ and choose $(f_n)_{n \in \N} \subset \Hil_0$ such that $f_n \to f$ strongly (such a sequence always exists since $\Hil_0$ is dense in $\Hil$). Then, by \cref{lemma:RKHS-convergence}~\ref{item:strong-convergence-H}, $f_n \to f$ pointwise and $\norm{f_n}_\Hil \to \norm{f}_\Hil$. Therefore, $R(f) \leq \norm{f}_\Hil^2$.

    Conversely, suppose that $f \in \F$ (i.e., $R(f) < +\infty$). Then, there exists a sequence $(f_n)_{n \in \N} \subset \Hil_0$ such that $f_n \to f$ pointwise with $R(f) = \lim_{n \to \infty} \Rcirc(f_n) = \lim_{n \to \infty} \norm{f_n}_\Hil^2$. Since $\sup_{n\in\N}\norm{f_n}_\Hil<+\infty$ and $\Hil$ is reflexive,
    there exists a subsequence $(f_{n_j})_{j\in\N}$ and some $h\in \Hil$ such that $f_{n_j}$ weakly converges to $h$. For each $x\in\X$, point evaluation is weakly continuous on $\Hil$, so $f_{n_j}(x)\to h(x)$. Since $f_n\to f$ pointwise, we also have $f_{n_j}(x)\to f(x)$, and hence $h=f$ pointwise on $\X$. Thus $f\in \Hil$. 
    
    Since the Hilbert norm is weakly l.s.c., we have that
    \begin{equation}
    \norm{f}_\Hil^2
    \leq \liminf_{j\to\infty}\norm{f_{n_j}}_\Hil^2
    = \lim_{n\to\infty}\norm{f_n}_\Hil^2
    = R(f).
    \end{equation}
    Thus, $R(f) = \norm{f}_\Hil^2$ for all $f \in \Hil$ and $R(f) = +\infty$ for all $f \in \Lip(\X) \setminus \Hil$, in which case $\F = \Hil$.
\end{proof}

This theorem verifies that the inductive bias of kernel methods is exactly captured by the associated RKHS and that the representation cost is exactly captured by the squared RKHS norm. We can now investigate the celebrated RKHS representer theorem through the lens of our abstract framework.

\begin{theorem}\label{thm:rkhs-representer}
Let $\kernel:\X\times \X\to \R$ be an $(L,\rho)$-Lipschitz kernel with associated RKHS $\Hil$. Fix the distinct data sites $x_1,\dots,x_N\in \X$ and let $G:\R^N\to [0, +\infty]$ be proper, l.s.c., and coercive. For $\lambda>0$, consider the problem
\begin{equation}\label{eq:rkhs-prob}
\min_{f\in \Hil} G\big(f(x_1),\dots,f(x_N)\big) + \lambda \norm{f}_{\Hil}^2.
\end{equation}
Then, if the objective is proper, the solution set is nonempty and every minimizer $f^\star$ of \cref{eq:rkhs-prob} is of the form
\begin{equation}\label{eq:representer}
f^\star=\sum_{i=1}^N a_i \kernel(\dummy,x_i),
\end{equation}
for some $a_1, \ldots, a_N \in \R$. 
If we further suppose that the Gram matrix $K\in\R^{N\times N}$ with $K_{ij}=\kernel(x_i,x_j)$ is invertible, then the objective in \cref{eq:rkhs-prob} is always proper and the problem can be recast as the finite-dimensional optimization problem  (with nonempty solution set)
\begin{equation}\label{eq:finite-dim}
\min_{a\in\R^N} G(Ka) + \lambda a^\T K a,
\end{equation}
in the sense that any solution $a^\star \in \R^N$ to \cref{eq:finite-dim} induces a solution to \cref{eq:rkhs-prob} via the map $a^\star \mapsto \sum_{i=1}^N a^\star_i \kernel(\dummy, x_i)$. In particular, although the parametric model class $\F_\Theta = \Hil_0 = \spn\curly{\kernel(x, \dummy)}_{x \in \X}$ allows for an arbitrary number of kernels and arbitrary kernel centers, it suffices to have $N$ kernels with fixed centers at the data sites $\curly{x_i}_{i=1}^N$.
\end{theorem}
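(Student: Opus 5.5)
The plan has three parts: existence via the abstract framework, the representer form via orthogonal projection, and the finite-dimensional reduction via the Gram matrix.

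\emph{Existence.} We view \cref{eq:rkhs-prob} inside $\B=\Lip(\X)$. By \cref{thm:rep-cost-kernel-RKHS}, the problem is exactly minimizing $L(f)=G(\EOp f)+\lambda R(f)$ over $\B$, where $\EOp f=(f(x_1),\dots,f(x_N))$. We apply \cref{thm:existence_of_min} with $\Null=\{0\}$, which requires three checks. First, $R$ is coercive: this follows from \cref{lem:lipschitz_kernels_embed}, since $\|f\|_{\Lip(\X)}\le c\,R(f)^{1/2}$, or alternatively from \cref{lem:suff_cond_R_coercive} with $\gamma(t)=c\sqrt t$. Second, $\EOp$ is $w^*$-continuous by \cref{rem:w*-point-evaluation}, and $\ker\EOp\cap\{0\}=\{0\}$ holds trivially. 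Third, $G$ is l.s.c.\ and coercive by hypothesis. Under properness of the objective, this yields a minimizer $f^\star\in\F=\Hil$.

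\emph{Representer form.} Let $V=\spn\{\kernel(\dummy,x_i)\}_{i=1}^N$, which is a closed (finite-dimensional) subspace of $\Hil$. Decompose $f^\star=v+w$ with $v\in V$ and $w\perp V$. The reproducing property gives $w(x_i)=\ang{\kernel(x_i,\dummy),w}=0$, so $\EOp f^\star=\EOp v$, while $\|f^\star\|_\Hil^2=\|v\|_\Hil^2+\|w\|_\Hil^2$. If $w\neq0$, then $v$ attains a strictly smaller objective value than $f^\star$. This is a genuine contradiction because the objective at $f^\star$ is finite: it equals the infimum, which is finite by properness. Hence $w=0$ and $f^\star\in V$, which is \cref{eq:representer}.

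\emph{Finite-dimensional reduction.} Now suppose the Gram matrix $K$ is invertible. Properness of the objective follows because $G$ is proper, so there is some $y$ with $G(y)<\infty$, and $a=K^{-1}y$ gives $f=\sum_i a_i\kernel(\dummy,x_i)$ with $\EOp f=Ka=y$ and a finite objective. For $f=\sum_i a_i\kernel(\dummy,x_i)$, we have $\EOp f=Ka$ and $\|f\|_\Hil^2=a^\T Ka$ by \cref{eq:H0-ip}, so the objective of \cref{eq:rkhs-prob} restricted to $V$ is precisely \cref{eq:finite-dim}. By the previous step, the infimum over $\Hil$ equals the infimum over $V$, and that infimum is attained at some $f^\star\in V$ with coefficient vector $a^\star$. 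Hence \cref{eq:finite-dim} has a nonempty solution set. Moreover, any minimizer $a^\star$ of \cref{eq:finite-dim} gives a function in $V$ attaining the common infimum, and therefore solves \cref{eq:rkhs-prob}. Note that the coefficient vector is unique here, because $K$ invertible means the kernels $\kernel(\dummy,x_i)$ are linearly independent.

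The only subtle point I anticipate is the strictness argument in the representer step, namely ensuring the objective is finite at $f^\star$ so that removing $w$ gives a real decrease rather than an $\infty$ versus $\infty$ comparison. Properness handles this. As an alternative to the abstract route, existence can also be obtained directly in $V$ using $G$ l.s.c.\ and coercive together with $\lambda a^\T Ka$, but the framework route is the one intended here.
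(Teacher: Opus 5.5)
Your proof is correct. The existence step matches the paper's: it applies \cref{thm:existence_of_min} with the $w^*$-continuous evaluation operator, and you helpfully make explicit that $\Null=\{0\}$ suffices, since $R$ is outright coercive by \cref{lem:lipschitz_kernels_embed}.

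The other two steps take different routes.

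\emph{Representer form.} The paper simply cites the classical RKHS representer theorem. You supply its standard orthogonal-projection proof, and you correctly note that finiteness of the optimal value, coming from properness, is what makes the decrease strict.

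\emph{Reduction to \cref{eq:finite-dim}.} The paper invokes the abstract equivalence \cref{cor:parametric-function-space-equivalence} for the kernel-machine parameterization, with $q=2$, $p=1$ verified via \cref{prop:parameter-root-properties-transfer}. There, invertibility of $K$ is used to verify the surjectivity hypothesis $\EOp(\F_\Theta)=\R^N$, which is needed because $G$ is merely l.s.c. You instead combine the representer form with the identities $\EOp f=Ka$ and $\norm{f}_\Hil^2=a^\T Ka$ on $V$.

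What each route buys:
\begin{itemize}
\item Your route is more elementary and self-contained. It also shows that invertibility of $K$ is needed only to guarantee properness: whenever the objective is proper, the reduction holds.
\item The paper's route is chosen to exhibit the framework. It links the unrestricted parametric problem (arbitrarily many kernels at arbitrary centers) to the function-space problem and transfers its minimizers. However, passing from there to the fixed-center problem \cref{eq:finite-dim} still implicitly relies on the representer form, which your argument makes explicit.
\end{itemize}
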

\begin{proof}
    With $\B = \Lip(\X)$, we have by \cref{rem:w*-point-evaluation} that the evaluation operator $\EOp: \Lip(\X) \to \R^N$ given by
    \begin{equation}
        \EOp f = \big(f(x_1), \ldots, f(x_N)\big) \in \R^N
    \end{equation}
    is $w^*$-continuous. There exists at least one minimizer of \cref{eq:rkhs-prob} by \cref{thm:existence_of_min}. The form of the solution \cref{eq:representer} follows from the classical RKHS representer theorem~\cite{deBoorSmoothingSplines,WahbaSmoothingSplines3,scholkopf2001generalized,ScholkopfKernels,WahbaSplineModels,WendlandBook}. In particular, the RKHS representer theorem reveals that it suffices to optimize over $\spn\{\kernel(\dummy, x_i)\}_{i=1}^N$ as opposed to $\spn\{\kernel(\dummy, x)\}_{x \in \X}$.
    
    The reduction to \cref{eq:finite-dim} follows from \cref{cor:parametric-function-space-equivalence}, applied to the kernel-machine parameterization \cref{eq:kernel-machine} with parameter cost \cref{eq:kernel-cost}, where we note that the hypothesis $\EOp(\F_\Theta) = \R^N$ is satisfied thanks to the invertibility of the Gram matrix $K$. In this case, the square root of the representation cost is the RKHS norm, so the hypotheses are verified with $q=2$ and $p=1$ by \cref{prop:parameter-root-properties-transfer}. Thus, the parameter-space problem and the function-space problem have the same infimal value, and minimizers of the parameter-space problem induce minimizers of the function-space problem.
\end{proof}

\section{Wavelets and Besov Spaces} \label{sec:wavelet}
Wavelet methods provide a canonical sequence-space example of the function-space perspective. In contrast to kernel methods and RKHSs (\cref{sec:RKHS}), where the representation cost is quadratic, wavelet methods naturally lead to weighted $\ell^1$-penalties on coefficients across scales. These penalties capture Besov regularity. Besov spaces attracted substantial interest in signal processing and statistics due to their tight connections with wavelets. More specifically, the key finding was that wavelet-thresholding/shrinkage methods automatically adapt to unknown smoothness in a minimax sense, where smoothness was quantified by Besov regularity~\cite{donoho1994ideal,donoho1995adapting,DonohoWaveletShrinkage}. These preliminary results led to the ``wavelet boom'' in the 1990s and early 2000s. There are now standard references on wavelets and their connections to application areas~\cite{daubechies1992ten,Mallat2008WaveletTour,MeyerWaveletsOperators,strang1996wavelets}. Part of the reason is that Besov regularity is able to model real-world signals (at least low-dimensional ones). In this section, we show that wavelet thresholding/shrinkage methods and Besov spaces are compatible with our abstract construction of representation costs and native spaces.

To avoid technicalities related to bounded domains and boundary-adapted wavelets, we work on $\R^d$ and consider a real Meyer wavelet system \cite{LemarieOndelettes}. In particular, Meyer wavelets have infinitely many vanishing moments and form an orthonormal basis of $L^2(\R^d)$. The Besov space $B^s_{p,q}(\R^d)$ is the space of functions on $\R^d$ of smoothness order $s$ in $L^p(\R^d)$, where the second index $q$ provides finer control of regularity. In this section, we focus on Besov spaces with $p = q = 1$ due to their tight connections with sparse wavelet expansions.

\subsection{Parametric Models and the Construction of Besov Spaces} \label{sec:param-wav}

We recall the classical definition of $B^s_{1,1}(\mathbb{R}^d)$ based on finite differences. We refer the reader to the books of DeVore \& Lorentz~\cite{DeVoreLorentz1993Constructive} and Triebel~\cite{Triebel1983Theory} for more details. Fix $s>0$ and a positive integer $r$. For $h\in\mathbb{R}^d$, define the finite-difference operators
\begin{equation}
\Delta_h f(x) \coloneqq f(x+h)-f(x)
\quad\text{and}\quad
\Delta_h^r \coloneqq \underbrace{\Delta_h\circ\cdots\circ\Delta_h}_{r\text{ times}}.
\end{equation}
The $L^1$-modulus of smoothness is given by
\begin{equation}
\omega_r(f,t)_{L^1} \coloneqq \sup_{\|h\|_2\le t}\norm{\Delta_h^r f}_{L^1},
\quad t>0.
\end{equation}
The \emph{inhomogeneous} Besov space $B^s_{1,1}(\mathbb{R}^d)$ is the subspace of (measurable) functions $f: \R^d \to \R$ such that
\begin{equation}
\norm{f}_{B^s_{1,1}} \coloneqq \norm{f}_{L^1(\mathbb{R}^d)} + \int_0^1 t^{-s}\omega_r(f,t)_{L^1} \, \frac{\dsym t}{t} < +\infty,
\end{equation}
where $r > s$ is any integer (any such choice would yield an equivalent norm~\cite{DeVoreLorentz1993Constructive}). In particular $(B^s_{1,1}(\R^d), \norm{\dummy}_{B^s_{1,1}})$ is a Banach space.

Crucially, the wavelet boom hinged on the fact that Besov norms have equivalent characterizations via \emph{sequence-space norms on wavelet coefficients}~\cite[Chapter~6]{MeyerWaveletsOperators}. Let $\phi$ be the Meyer scaling function on $\R$, and let $\psi$ be the associated Meyer
wavelet. To construct the $d$-dimensional tensor-product system, define
\begin{equation}
\eta_0 \coloneqq \phi \quad\text{and}\quad \eta_1 \coloneqq\psi.
\end{equation}
For each binary multi-index $\varepsilon=(\varepsilon_1,\dots,\varepsilon_d)\in\{0,1\}^d$, define
\begin{equation}
\psi^\varepsilon(x) \coloneqq \prod_{i=1}^d \eta_{\varepsilon_i}(x_i),
\quad x=(x_1,\ldots,x_d)\in\R^d.
\end{equation}
Thus, $\psi^{(0,\dots,0)} \eqqcolon \phi^{(d)}$ is the $d$-dimensional tensor-product scaling function and the remaining $2^d-1$ functions $\curly{\psi^\varepsilon \st \varepsilon\in\{0,1\}^d\setminus\{0\}}$ are the associated tensor-product wavelets. For $k\in\Z^d$, $j\in\mathbb{N}_0$, and $\varepsilon\in\{0,1\}^d\setminus\{0\}$, define
\begin{equation}
\phi_k(x) \coloneqq \phi^{(d)}(x-k)
\quad\text{and}\quad
\psi^\varepsilon_{j,k}(x) \coloneqq 2^{jd/2}\psi^\varepsilon(2^j x-k).
\end{equation}
Furthermore, let
\begin{equation}
\Lambda_0 \coloneqq \Z^d,
\quad
\Lambda_+ \coloneqq \mathbb{N}_0\times\Z^d\times\paren[\big]{\{0,1\}^d\setminus\{0\}},
\quad\text{and}\quad
\Lambda \coloneqq \Lambda_0\cup\Lambda_+.
\end{equation}
The $d$-dimensional Meyer wavelet system is then the union of the scaling functions and the wavelet functions: $\curly{\phi_k}_{k \in \Lambda_0} \cup \curly{\psi^\varepsilon_{j,k}}_{(j, k, \varepsilon) \in \Lambda_+}$.

Next, define the weighted wavelet atoms by
\begin{equation}
\kernel_\lambda \coloneqq \begin{cases}
\phi_k, & \lambda=k\in\Lambda_0,\\
2^{-j(s-d/2)}\psi^\varepsilon_{j,k},
& \lambda=(j,k,\varepsilon)\in\Lambda_+.
\end{cases} \label{eq:weighted-Meyer-wavelets}
\end{equation}
Since $\phi, \psi \in \Sch(\R)$ (the Schwartz space of smooth and rapidly decreasing functions), we have for every $a=(a_\lambda)_{\lambda\in\Lambda}\in\ell^1(\Lambda)$ the series $\sum_{\lambda\in\Lambda} a_\lambda \kernel_\lambda$ converges in the space of tempered distributions $\Sch'(\R^d)$. Define the operator
\begin{equation}
\TOp:\ell^1(\Lambda)\to\Sch'(\R^d): a \mapsto \sum_{\lambda\in\Lambda} a_\lambda \kernel_\lambda \label{eq:wavelet-synthesis}
\end{equation}
and define the set
\begin{equation}
\U \coloneqq \curly{\TOp(a) \st a \in \ell^1(\Lambda)}.
\end{equation}
By construction, $\TOp:\ell^1(\Lambda)\to\U$ is linear and surjective. It is also injective. Indeed, if $\TOp a=0$ in $\Sch'(\R^d)$, pair the distributional series with any Meyer scaling function or wavelet. Since these test functions belong to $\Sch(\R^d)$ and the unweighted Meyer system is orthonormal in $L^2(\R^d)$, the pairing isolates the corresponding coefficient multiplied by the nonzero weight in \cref{eq:weighted-Meyer-wavelets}. Hence every coefficient of $a$ vanishes. Thus, there exists an inverse $\TOp^{-1}:\U\to\ell^1(\Lambda)$. Hence, if we endow $\U$ with the norm
\begin{equation}
    \norm{f}_\U = \norm{\TOp^{-1} f}_{\ell^1}, \quad f \in \U,
\end{equation}
we have that $\ell^1(\Lambda)$ and $\U$ are isometrically isomorphic and, in particular, $\U$ is a Banach space. Furthermore, every $f \in \U$ admits a \emph{wavelet series representation}
\begin{equation}
    f = \sum_{\lambda \in \Lambda} a_\lambda \kernel_\lambda, \quad a = \TOp^{-1} f.
\end{equation}

From~\cite[Chapter~6]{MeyerWaveletsOperators}, every $f \in B^s_{1,1}(\R^d)$ admits a wavelet series representation of the form
\begin{equation}
f = \sum_{k\in\Z^d} c_k \phi_k + \sum_{\varepsilon\in\{0,1\}^d\setminus\{0\}} \sum_{j \in \mathbb{N}_0} \sum_{k\in\Z^d} d^\varepsilon_{j,k}\psi^\varepsilon_{j,k}
\end{equation}
with the property that
\begin{equation}
\norm{f}_{B^s_{1,1}} \asymp \norm{(c_k)_{k\in\Z^d}}_{\ell^1} + \sum_{\varepsilon\in\{0,1\}^d\setminus\{0\}}
\sum_{j \in \mathbb{N}_0} 2^{j(s-d/2)} \norm{(d^\varepsilon_{j,k})_{k\in\Z^d}}_{\ell^1}.
\end{equation}
This implies that $\U = B^s_{1,1}(\R^d)$ (as sets) and that $\norm{\dummy}_\U$ is an equivalent norm to $\norm{\dummy}_{B^s_{1,1}}$.

This motivates the wavelet parametric model. Let
\begin{equation}
\Theta \coloneqq \{(a_1,\lambda_1),\ldots,(a_K,\lambda_K)\st a_j \in \R, \lambda_j \in \Lambda, K\in \mathbb{N}\}
\end{equation}
denote the parameter space and for a parameter $\theta = ((a_1,\lambda_1),\ldots,(a_K,\lambda_K))\in\Theta$ define the parametric model
\begin{equation}
f_{\theta} = \sum_{j=1}^K a_j \kernel_{\lambda_j}. \label{eq:wavelet-model}
\end{equation}
Note that due to the nature of the definition of $\Theta$, \cref{eq:wavelet-model} is a \emph{wavelet series} with an \emph{arbitrary} number of wavelets $K \in \mathbb{N}$ and \emph{arbitrary} wavelet parameters $\{\lambda_j\}_{j=1}^K$. Thus, we see that the parametric model class $\F_\Theta$ can be written as
\begin{equation}
    \F_\Theta = \curly*{ \sum_{\lambda \in \Lambda} a_\lambda \kernel_\lambda \st a \in c_{00}(\Lambda)},
\end{equation}
where $c_{00}(\Lambda)$ is the space of finitely supported sequences on $\Lambda$. The parameter cost that corresponds to wavelet thresholding/shrinkage is
\begin{equation}
    C(\theta) = \sum_{j=1}^K \abs{a_j}, \quad \theta \in \Theta,
\end{equation}
which is the $\ell^1$-norm on the coefficients of the (weighted) wavelet system atoms.

\subsection{Representation Costs and Representer Theorems}

We now establish that, given the weighted Meyer wavelet system, the associated parametric model class, and the $\ell^1$-norm parameter cost, the induced extended representation cost and native space coincide with a norm on a Besov space and the
Besov space itself, respectively. This shows that wavelet-thresholding methods and Besov spaces are
a special case of the abstract representation-cost and function-space perspective developed in this
paper. Finally, we show that sparse representer theorems for wavelet thresholding/shrinkage are compatible
with our abstract framework.

For compatibility with the universal topology introduced in \cref{sec:Lip}, we restrict attention to
the smoothness regime
\begin{equation}
s>d+1. \label{eq:Besov-Lip-embedding}
\end{equation}
This ensures that the weighted wavelet atoms are uniformly Lipschitz and, in particular, $B^s_{1,1}(\R^d)$ embeds continuously into $\Lip(\R^d)$, where we take the base point $\e = 0 \in \R^d$ and endow $\R^d$ with the Euclidean metric.

\begin{lemma} \label{lemma:Besov-Lip-embed}
Let $s>d+1$. Then,
$\norm{f}_{\Lip(\mathbb{R}^d)}\lesssim \norm{f}_{B_{1,1}^s}$, for all $f \in B^s_{1,1}(\R^d)$.
\end{lemma}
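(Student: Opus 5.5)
The plan is to work entirely through the wavelet series representation. Since $\norm{\dummy}_\U$ is equivalent to $\norm{\dummy}_{B^s_{1,1}}$, it suffices to show $\norm{f}_{\Lip(\R^d)}\lesssim \norm{\TOp^{-1}f}_{\ell^1}$ for every $f\in\U$. By the triangle inequality for the Lipschitz norm, this reduces to a uniform bound $\sup_{\lambda\in\Lambda}\norm{\kernel_\lambda}_{\Lip(\R^d)}<+\infty$ on the weighted atoms \cref{eq:weighted-Meyer-wavelets}. We then pass to infinite series by a convergence argument.

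\textbf{Step 1 (uniform atom bounds).} For $\lambda=k\in\Lambda_0$, $\phi_k$ is a translate of the Schwartz function $\phi^{(d)}$. Hence $\norm{\phi_k}_{L^\infty}$ and $\norm{\nabla\phi_k}_{L^\infty}$ are bounded independently of $k$, which gives a uniform bound on $|\phi_k(0)|$ and on $|\phi_k|_{\Lip}=\norm{\nabla\phi_k}_{L^\infty}$ (the latter via \cref{rem:lip-sobolev} on the convex domain $\R^d$).

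For $\lambda=(j,k,\varepsilon)\in\Lambda_+$, we use $\nabla\psi^\varepsilon_{j,k}(x)=2^{jd/2}2^{j}(\nabla\psi^\varepsilon)(2^jx-k)$. This gives
\begin{equation}
\norm{\kernel_\lambda}_{L^\infty}\le 2^{-j(s-d)}\norm{\psi^\varepsilon}_{L^\infty}
\quad\text{and}\quad
|\kernel_\lambda|_{\Lip}\le 2^{-j(s-d-1)}\norm{\nabla\psi^\varepsilon}_{L^\infty}.
\end{equation}
Both factors are at most $1$ since $s>d+1$, and there are finitely many $\varepsilon$. Thus $\norm{\kernel_\lambda}_{\Lip(\R^d)}\le M$ uniformly in $\lambda$. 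In fact the hypothesis $s>d+1$ yields geometric decay in $j$, although only boundedness is needed.

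\textbf{Step 2 (passing to the series).} For $a\in\ell^1(\Lambda)$, the partial sums $S_n=\sum_{\lambda\in F_n}a_\lambda\kernel_\lambda$ over an exhausting sequence of finite sets $F_n$ satisfy $\norm{S_n-S_m}_{\Lip}\le M\sum_{\lambda\in F_n\triangle F_m}|a_\lambda|$. They are therefore Cauchy in the Banach space $\Lip(\R^d)$ and converge there to some $g$ with $\norm{g}_{\Lip}\le M\norm{a}_{\ell^1}$. Lipschitz-norm convergence implies locally uniform convergence, using $|h(x)|\le|h(0)|+|h|_{\Lip}\|x\|_2$. Hence $S_n\to g$ in $\Sch'(\R^d)$, because $g$ and the $S_n$ grow at most linearly and test functions decay rapidly. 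But $S_n\to\TOp a=f$ in $\Sch'$, so $f=g$ as distributions, and $f$ has the Lipschitz representative $g$. Combining with the norm equivalence gives $\norm{f}_{\Lip}\le M\norm{\TOp^{-1}f}_{\ell^1}\lesssim\norm{f}_{B^s_{1,1}}$.

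\textbf{Main obstacle.} The main obstacle is Step 2: identifying the distributional limit with the Lipschitz limit. Dominated convergence against Schwartz test functions, with the linear-growth bound uniform in $n$, should handle it; the atom estimates themselves are routine scaling.
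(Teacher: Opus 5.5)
Your proposal is correct and follows the same route as the paper: a scaling argument gives $\sup_{\lambda\in\Lambda}\|\kappa_\lambda\|_{\mathrm{Lip}(\mathbb{R}^d)}<+\infty$ for the weighted atoms, and the triangle inequality over the $\ell^1$ wavelet coefficients, combined with the wavelet characterization of the Besov norm, finishes the proof. Your Step 2 identifies the Lipschitz-norm limit of the partial sums with the tempered-distribution limit that defines $f$, which makes explicit a convergence step that the paper's one-line estimate $\|f\|_{\mathrm{Lip}(\mathbb{R}^d)}\le\sum_{\lambda}|a_\lambda|\,\|\kappa_\lambda\|_{\mathrm{Lip}(\mathbb{R}^d)}$ takes for granted.
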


\begin{proof}
Every $f\in B^s_{1,1}(\R^d)$ admits a wavelet series representation
\begin{equation}
f=\sum_{\lambda\in\Lambda} a_\lambda \kernel_\lambda, \quad (a_\lambda)_{\lambda\in\Lambda}\in \ell^1(\Lambda), \label{eq:wavelet-rep-lemma}
\end{equation}
with $\norm{f}_\U = \norm{a}_{\ell^1}$ as an equivalent norm to $\norm{f}_{B^s_{1,1}}$. For $\lambda=(j,k,\varepsilon)\in\Lambda_+$, we have that
\begin{equation}
\norm{\kernel_\lambda}_{L^\infty}
\lesssim
2^{-j(s-d)}
\quad\text{and}\quad
\abs{\kernel_\lambda}_{\Lip(\mathbb{R}^d)}
\lesssim
2^{-j(s-d-1)}.
\end{equation}
Next, the scaling
functions satisfy
\begin{equation}
\sup_{k\in\mathbb{Z}^d}\norm{\phi_k}_{\Lip(\mathbb{R}^d)}<+\infty.
\end{equation}
Hence,
\begin{equation}
C_s \coloneqq \sup_{\lambda\in\Lambda}\norm{\kernel_\lambda}_{\Lip(\mathbb{R}^d)}<+\infty.
\end{equation}
For $f \in B^s_{1,1}(\R^d)$ with the representation \cref{eq:wavelet-rep-lemma}, we have that
\begin{equation}
\norm{f}_{\Lip(\R^d)}
\leq
\sum_{\lambda\in\Lambda}|a_\lambda|\,\|\kernel_\lambda\|_{\Lip(\mathbb{R}^d)}
\leq
C_s \|(a_\lambda)_{\lambda\in\Lambda}\|_{\ell^1} \lesssim \norm{f}_{B^s_{1,1}},
\end{equation}
which completes the proof.
\end{proof}

\begin{theorem}
Let $s > d+1$. With the parametric model setup as in \cref{sec:param-wav}, if we construct the extended representation cost with $\B = \Lip(\R^d)$, we have for any $f \in \Lip(\R^d)$ that
\begin{equation}
    R(f) = \begin{cases}
        \norm{f}_\U, & \text{if $f \in B^s_{1,1}(\R^d)$}, \\
        +\infty, & \text{if $f \in \Lip(\R^d) \setminus B^s_{1,1}(\R^d)$}.
    \end{cases}
\end{equation}
In this case, the native space is
\begin{equation}
    \F = \curly{f \in \Lip(\R^d) \st R(f) < +\infty} = B^s_{1,1}(\R^d)
\end{equation}
and
\begin{equation}
    R(f) \asymp \norm{f}_{B^s_{1,1}}, \quad f \in \F.
\end{equation}
\end{theorem}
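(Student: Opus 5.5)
The plan is to mirror the proof of \cref{thm:rep-cost-kernel-RKHS}, with $\ell^1$ duality replacing Hilbert-space reflexivity. First I identify $\Rcirc$. For $f\in\F_\Theta$, every representation $f=\sum_{j=1}^K a_j\kernel_{\lambda_j}$ can be merged over repeated indices into a finitely supported coefficient sequence $b\in c_{00}(\Lambda)$ with $\norm{b}_{\ell^1}\le\sum_j\abs{a_j}$. Injectivity of $\TOp$ then forces $b=\TOp^{-1}f$. Hence $\Rcirc(f)=\norm{f}_\U$ on $\F_\Theta$, and $\Rcirc=+\infty$ elsewhere. By \cref{lemma:Besov-Lip-embed}, $\norm{f}_{\Lip(\R^d)}\le C_s\Rcirc(f)$, so $\Rcirc$ is coercive on $\Lip(\R^d)$, which has a separable predual. \Cref{thm:rep_cost_lip_spaces} with $\Null=\{0\}$ therefore gives
\begin{equation}
R(f)=\min\curly{\alpha \st (f_n)\subset\F_\Theta,\ f_n\to f\text{ pointwise},\ \norm{f_n}_\U\to\alpha}.
\end{equation}

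For the upper bound, take $f\in B^s_{1,1}(\R^d)$ with $a=\TOp^{-1}f\in\ell^1(\Lambda)$. Let $f_n$ be the truncation of the series to a finite set $F_n\uparrow\Lambda$. Then $\norm{f_n}_\U\le\norm{a}_{\ell^1}$, and $\norm{f-f_n}_{\Lip}\le C_s\sum_{\lambda\notin F_n}\abs{a_\lambda}\to0$, so $f_n\to f$ pointwise. Here I must check that the $\Sch'$-limit $\TOp a$ agrees with the pointwise (absolutely uniformly convergent) sum, which holds because both define the same distribution. Hence $R(f)\le\norm{f}_\U$.

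For the lower bound, suppose $R(f)<+\infty$. Pick $f_n=\TOp a^{(n)}$ with $a^{(n)}\in c_{00}(\Lambda)$, $f_n\to f$ pointwise, and $\norm{a^{(n)}}_{\ell^1}\to R(f)$. Since $\ell^1(\Lambda)=c_0(\Lambda)'$ with $c_0(\Lambda)$ separable, Banach--Alaoglu gives a subsequence with $a^{(n_j)}\to a$ weak$^*$. In particular the convergence is coordinatewise, and $\norm{a}_{\ell^1}\le\liminf_j\norm{a^{(n_j)}}_{\ell^1}=R(f)$.

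The main obstacle is showing $\TOp a=f$, since weak$^*$ convergence in $\ell^1$ does not control the tails of the series pointwise. I would instead test against the Meyer system. For each $\mu\in\Lambda$, let $g_\mu\in\Sch(\R^d)$ be the corresponding unweighted scaling function or wavelet, and let $w_\mu>0$ be its weight. Then $\ang{f_n,g_\mu}=w_\mu a^{(n)}_\mu$. I need $\ang{f_{n_j},g_\mu}\to\ang{f,g_\mu}$. This follows from dominated convergence: $f_{n_j}\to f$ pointwise, and $\abs{f_{n_j}(x)}\le C(1+\norm{x}_2)$ uniformly in $j$ by the uniform Lipschitz bound, while $g_\mu$ decays rapidly. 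Hence $\ang{f,g_\mu}=w_\mu a_\mu=\ang{\TOp a,g_\mu}$ for all $\mu$. Now $f-\TOp a$ is a tempered distribution of polynomial growth all of whose Meyer coefficients vanish. To conclude it is zero, I would use that the Meyer system is an unconditional basis of, say, $B^{-d-2}_{\infty,\infty}$ or a weighted $L^\infty$ space containing such growth functions, so that vanishing coefficients imply vanishing; this is the step I expect to require care, since Meyer's characterizations cover these spaces. Granting it, $f=\TOp a\in\U=B^s_{1,1}(\R^d)$ and $\norm{f}_\U\le R(f)$. Combining with the upper bound gives $R=\norm{\dummy}_\U$ on $B^s_{1,1}(\R^d)$ and $R=+\infty$ off it, so $\F=B^s_{1,1}(\R^d)$. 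Finally, $R\asymp\norm{\dummy}_{B^s_{1,1}}$ is the norm equivalence from \cite[Chapter~6]{MeyerWaveletsOperators}.
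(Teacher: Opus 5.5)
Your proof follows the paper's up to the Banach--Alaoglu extraction of $a^{(n_j)}\to a$ weak$^*$ in $\ell^1(\Lambda)=c_0(\Lambda)'$. That includes the identification of $\Rcirc$, the coercivity and sequential characterization, and the truncation argument for $R(f)\le\norm{f}_\U$. The gap is the step $f=\TOp a$, which you leave as ``granting it.'' Your dominated-convergence argument correctly shows that all Meyer coefficients of $f-\TOp a$ vanish. You still need a uniqueness theorem for Meyer expansions, and you do not prove it. The space you first suggest also does not fit the growth class you describe. The wavelet characterization of inhomogeneous $B^{-d-2}_{\infty,\infty}(\R^d)$ requires bounded scaling-function coefficients, so it excludes unbounded Lipschitz functions; for example, $\langle x_1,\phi_k\rangle$ is unbounded in $k$. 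Your route can be repaired. Since $\sup_\lambda\norm{\kernel_\lambda}_{L^\infty}<+\infty$, the $f_n$, and hence $f$, are uniformly bounded. So $f-\TOp a\in L^\infty(\R^d)\subset B^{-d-2}_{\infty,\infty}(\R^d)$, and Meyer's characterization there gives injectivity of the coefficient map. But that is extra machinery you did not supply.

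The missing idea, which makes the detour unnecessary, is this. Your remark that weak$^*$ convergence in $\ell^1$ ``does not control the tails of the series pointwise'' is false here, because for each fixed $x\in\R^d$ the sequence $(\kernel_\lambda(x))_{\lambda\in\Lambda}$ belongs to $c_0(\Lambda)$. For $\lambda=(j,k,\varepsilon)\in\Lambda_+$ you have $\abs{\kernel_\lambda(x)}\le\norm{\kernel_\lambda}_{L^\infty}\lesssim 2^{-j(s-d)}$, which tends to $0$ as $j\to\infty$ uniformly in $(k,\varepsilon)$ because $s>d$; this is the same estimate as in \cref{lemma:Besov-Lip-embed}. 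At each of the finitely many remaining scales, and for the scaling functions, Schwartz decay gives $\kernel_\lambda(x)\to0$ as $\norm{k}_2\to\infty$. Hence the point evaluation $a\mapsto\sum_\lambda a_\lambda\kernel_\lambda(x)=(\TOp a)(x)$ is a weak$^*$-continuous functional on $\ell^1(\Lambda)$. It follows that $f_{n_j}(x)\to(\TOp a)(x)$ for every $x$, and uniqueness of pointwise limits gives $f=\TOp a\in\U$ directly. This is how the paper concludes, with no appeal to wavelet characterizations of negative-smoothness or weighted spaces.
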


\begin{proof}
From \cref{sec:param-wav}, the parametric representation cost is
\begin{equation}
\Rcirc(f)=
\begin{cases}
\norm{\TOp^{-1} f}_{\ell^1} = \norm{f}_{\U} & \text{if } f\in \F_\Theta, \\
+\infty & \text{if } f\in \Lip(\R^d) \setminus \F_\Theta,
\end{cases}
\end{equation}
where
\begin{equation}
    \F_\Theta = \curly*{ \sum_{\lambda \in \Lambda} a_\lambda \kernel_\lambda \st a \in c_{00}(\Lambda)}.
\end{equation}
By \cref{lemma:Besov-Lip-embed}, $\Rcirc:\Lip(\R^d)\to [0,+\infty]$ is coercive. Hence, by \cref{thm:rep_cost_lip_spaces}, the extended representation cost is given by the sequential characterization
\begin{equation}
    R(f) = \min \curly{\alpha \in [0,+\infty] \st (f_n)_{n \in \N} \subset \F_\Theta,\      f_n \to f \text{ pointwise},\ \norm{f_n}_\U \rightarrow \alpha}.
\end{equation}

Fix $f\in B^s_{1,1}(\R^d) = \U$ and set $a \coloneqq \TOp^{-1}f \in \ell^1(\Lambda)$ so that $\norm{f}_\U = \norm{a}_{\ell^1}$. Since $c_{00}(\Lambda)$ is dense in $\ell^1(\Lambda)$, there exists a sequence
$(a^{(n)})_{n\in\mathbb{N}}\subset c_{00}(\Lambda)$ such that $\norm{a^{(n)} - a}_{\ell^1} \to 0$. Define $f_n \coloneqq \TOp a^{(n)}$. Since $\U$ is isometrically isomorphic to $\ell^1(\Lambda)$, we have that
\begin{equation}
    \norm{f_n - f}_\U = \norm{a^{(n)} - a}_{\ell^1} \to 0. \label{eq:U-ell1-converge}
\end{equation}
By \cref{lemma:Besov-Lip-embed}, we have that $\norm{f_n - f}_{\Lip(\R^d)} \lesssim \norm{f_n - f}_\U \to 0$. Moreover, for any $x \in \R^d$, we have that
\begin{equation}
    \abs{f_n(x) - f(x)} \leq \abs{f_n(0) - f(0)} + \abs{f_n - f}_{\Lip(\R^d)} \norm{x}_2 \leq (1 + \norm{x}_2) \norm{f_n - f}_{\Lip(\R^d)}.
\end{equation}
Hence, $f_n(x) \to f(x)$ for all $x \in \R^d$. The reverse triangle inequality on \cref{eq:U-ell1-converge} yields that $\norm{f_n}_\U \to \norm{f}_\U$. Thus, $R(f) \leq \lim_{n \to \infty} \norm{f_n}_\U = \norm{f}_\U$.

Conversely, suppose that $f \in \F$ (i.e., $R(f) < +\infty$). Then, by the sequential characterization, there exists a sequence $(f_n)_{n \in \N} \subset \F_\Theta$ such that $f_n \to f$ pointwise and $\norm{f_n}_\U \to R(f)$. Write $a^{(n)} \coloneqq \TOp^{-1} f_n$, so that $a^{(n)} \in c_{00}(\Lambda)$. Then, $\sup_{n \in \N} \norm{a^{(n)}}_{\ell^1} < +\infty$. Let $c_0(\Lambda)$ denote the space of sequences tending to $0$ on $\Lambda$ endowed with the $\ell^\infty(\Lambda)$-norm.
Since $\Lambda$ is countable, $c_0(\Lambda)$ is separable. Hence closed and bounded subsets of
$\ell^1(\Lambda)=c_0(\Lambda)'$ are weak$^*$ compact and metrizable. The Banach--Alaoglu theorem then guarantees the existence of a subsequence (which we do not relabel) such that $a^{(n)}$ weak$^*$ converges to $a$ for some $a \in \ell^1(\Lambda)$. For every $x\in\R^d$, we have $(\kernel_\lambda(x))_{\lambda\in\Lambda}\in c_0(\Lambda)$. Indeed, for $\lambda=(j,k,\varepsilon)\in\Lambda_+$,
\begin{equation}
    \sup_{k,\varepsilon}|\kernel_{j,k,\varepsilon}(x)|
    \leq \sup_{k,\varepsilon}\|\kernel_{j,k,\varepsilon}\|_{L^\infty}
    \lesssim 2^{-j(s-d)}\to 0
\end{equation}
as $j\to\infty$. For the finitely many remaining scales, and for the scaling functions, Schwartz decay implies convergence to zero as $\|k\|_2\to\infty$. Thus,
\begin{equation}
    f_n(x) = \sum_{\lambda \in \Lambda} a_\lambda^{(n)} \kernel_\lambda(x) \to \sum_{\lambda \in \Lambda} a_\lambda \kernel_\lambda(x) = (\TOp a)(x).
\end{equation}
Since $f_n \to f$ pointwise, we immediately see that $f = \TOp a \in \U$. Since all dual Banach norms are weak$^*$ l.s.c., we have that
\begin{equation}
    \norm{f}_\U = \norm{a}_{\ell^1} \leq \liminf_{n \to \infty} \norm{a^{(n)}}_{\ell^1} = \liminf_{n \to \infty} \norm{f_n}_\U = \lim_{n \to \infty} \norm{f_n}_\U = R(f).
\end{equation}
Thus, $R(f) = \norm{f}_\U \asymp \norm{f}_{B^s_{1,1}}$ for all $f \in B^s_{1,1}(\R^d) = \U$ and $R(f) = +\infty$ for all $f \in \Lip(\R^d) \setminus B^s_{1,1}(\R^d)$, in which case $\F = B^s_{1,1}(\R^d)$.
\end{proof}

This theorem verifies that the inductive bias of weighted wavelet thresholding/shrinkage methods is exactly captured by the associated Besov space and that the representation cost is exactly captured by the associated $\ell^1$-wavelet norm. We can now investigate representer theorems for Besov regularization through the lens of our abstract framework.

\begin{lemma} \label{lemma:wavelet-interpolation}
Let $s>d+1$, and let $x_1,\ldots,x_N\in\R^d$ be distinct. Then $\EOp(\F_\Theta)=\R^N$ where $\EOp f \coloneqq \bigl(f(x_1),\ldots,f(x_N)\bigr)$.
\end{lemma}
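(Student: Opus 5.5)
Since $\F_\Theta$ is a linear space (finite linear combinations of the atoms $\kernel_\lambda$) and $\EOp$ is linear, $\EOp(\F_\Theta)$ is a subspace of $\R^N$. It suffices to show that it is not contained in any proper hyperplane. Equivalently, if $c\in\R^N$ satisfies $\sum_{i=1}^N c_i f(x_i)=0$ for every $f\in\F_\Theta$, then $c=0$. Testing against the atoms, this condition says
\begin{equation}
    \sum_{i=1}^N c_i \kernel_\lambda(x_i)=0 \quad\text{for every } \lambda\in\Lambda .
\end{equation}
Our plan is to use the completeness of the wavelet system to produce, for each data site, a function in the closure of $\F_\Theta$ that isolates that site, and then pass to the limit.

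First, because the $x_i$ are distinct, we can choose a Schwartz function (in fact a $C_c^\infty$ bump) $g_j$ with $g_j(x_i)=\delta_{ij}$. Every Schwartz function lies in $B^s_{1,1}(\R^d)=\U$, so $g_j=\TOp a$ for some $a\in\ell^1(\Lambda)$. Next, truncate: take $a^{(n)}\in c_{00}(\Lambda)$ with $a^{(n)}\to a$ in $\ell^1$. By \cref{lemma:Besov-Lip-embed}, $\TOp a^{(n)}\to g_j$ in $\Lip(\R^d)$, and hence pointwise, exactly as in the proof of the preceding theorem. Since $\TOp a^{(n)}\in\F_\Theta$, the vanishing condition gives $\sum_i c_i (\TOp a^{(n)})(x_i)=0$ for every $n$. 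Passing to the limit yields $\sum_i c_i g_j(x_i)=c_j=0$. Running this over $j=1,\ldots,N$ gives $c=0$, so $\EOp(\F_\Theta)=\R^N$.

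The one step needing care is the pointwise convergence of the series $\sum_\lambda a_\lambda \kernel_\lambda(x)$. This is justified by the uniform bound $\sup_\lambda\norm{\kernel_\lambda}_{L^\infty}<+\infty$, which holds because $s>d$ (implied by $s>d+1$). That bound ensures the pointwise value of $\TOp a$ agrees with the Besov/distributional identification of $g_j$. A more hands-on alternative would construct the interpolant directly from fine-scale atoms localized near each $x_i$, but the density argument above avoids any explicit localization estimates.
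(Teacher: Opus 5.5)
Your proof is correct and is essentially the paper's argument. Both take $C_c^\infty$ bumps with $h_i(x_m)=\delta_{im}$ in $B^s_{1,1}(\R^d)$, approximate them by finite wavelet sums using density of $c_{00}(\Lambda)$ in $\ell^1(\Lambda)$, and pass to the limit via continuity of point evaluation from \cref{lemma:Besov-Lip-embed}. The only differences are that you phrase the conclusion dually (a subspace of $\R^N$ with trivial annihilator is all of $\R^N$) where the paper shows $\EOp(\F_\Theta)$ is dense and hence closed, and that you make explicit the pointwise identification of $\TOp a$ with $g_j$, which the paper leaves implicit.
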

\begin{proof}
Let $C_c^\infty(\R^d)$ denote the space of smooth and compactly supported functions on $\R^d$. Since the data sites are distinct, for each $i=1,\ldots,N$ there exists $h_i\in C_c^\infty(\R^d)$ such that $h_i(x_m)=\delta_{im}$ (Kronecker delta), for $m=1,\ldots,N$. Moreover, $C_c^\infty(\R^d)\subset B^s_{1,1}(\R^d)$. By the wavelet characterization of $B^s_{1,1}(\R^d)$ and the density of $c_{00}(\Lambda)$ in $\ell^1(\Lambda)$, the finite wavelet sums in $\F_\Theta$ are dense in $B^s_{1,1}(\R^d)$. Since $s>d+1$, the embedding $B^s_{1,1}(\R^d)\hookrightarrow \Lip(\R^d)$ is continuous, and therefore $\EOp:\F\to\R^N$ is continuous. It follows that each canonical basis vector $\e_i\in\R^N$ belongs to the closure of $\EOp(\F_\Theta)$, because $h_i$ can be approximated in $B^s_{1,1}(\R^d)$ by finite wavelet sums. Hence $\EOp(\F_\Theta)$ is dense in $\R^N$. But $\EOp(\F_\Theta)$ is a subspace of the finite-dimensional space $\R^N$, and is therefore closed. Consequently, $\EOp(\F_\Theta)=\R^N$.
\end{proof}

\begin{theorem}
Let $s>d+1$ and let $\kernel_\lambda$, $\lambda \in \Lambda$ denote the weighted Meyer wavelet atoms \cref{eq:weighted-Meyer-wavelets}. Fix distinct data
sites $x_1,\dots,x_N\in\R^d$ and let $G:\R^N\to [0,+\infty]$ be proper, l.s.c., and
coercive. For $\lambda>0$, consider the problem
\begin{equation}
\min_{f\in B^s_{1,1}(\R^d)}
G\paren[\big]{f(x_1),\dots,f(x_N)}+\lambda \norm{f}_\U. \label{eq:Besov-prob}
\end{equation}
Then, the solution set is nonempty and there exists a minimizer $f^\star$ of \cref{eq:Besov-prob} of the form
\begin{equation}
f^\star=\sum_{j=1}^K a_j \kernel_{\lambda_j},
\quad
K\leq N, \label{eq:wavelet-form}
\end{equation}
for some $a_1,\dots,a_K\in\R$ and (adaptive) wavelet parameters
$\lambda_1,\dots,\lambda_K\in\Lambda$. The problem in \cref{eq:Besov-prob} can be recast as the finite-atom optimization problem (with nonempty solution set)
\begin{equation} \label{eq:wavelet-finite-dim}
\min_{\substack{a_1,\ldots,a_K\in\R\\ \lambda_1,\dots,\lambda_K\in\Lambda}}
G \paren*{\sum_{j=1}^K a_j\kernel_{\lambda_j}(x_1),\ldots, \sum_{j=1}^K a_j\kernel_{\lambda_j}(x_N)} + \lambda \sum_{j=1}^K \abs{a_j}, \quad K \geq N,
\end{equation}
in the sense that any solution $(a^\star, \lambda^\star) \in \R^K \times \Lambda^K$ induces a solution to \cref{eq:Besov-prob} via the map  $(a^\star, \lambda^\star) \mapsto \sum_{j=1}^K a_j^\star \kernel_{\lambda_j^\star}$. In
particular, although the parametric model class $\F_\Theta$ allows for an arbitrary number of wavelet atoms
at arbitrary locations and scales, it suffices to have at most $N$ active wavelet atoms.
\end{theorem}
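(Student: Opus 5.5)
The plan mirrors the proof of \cref{thm:rkhs-representer}, working in $\B=\Lip(\R^d)$ with the evaluation operator $\EOp f=(f(x_1),\ldots,f(x_N))$.

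\textbf{Existence.} By \cref{rem:w*-point-evaluation}, $\EOp$ is $w^*$-continuous. By the preceding theorem, $R=\norm{\dummy}_\U$ on $B^s_{1,1}(\R^d)=\F$ and $R=+\infty$ elsewhere, so \cref{eq:Besov-prob} is exactly $\min_{f\in\B} G(\EOp f)+\lambda R(f)$. By \cref{lemma:Besov-Lip-embed}, $R$ is coercive, so we may take $\Null=\{0\}$ in \cref{thm:existence_of_min}. For properness, use \cref{lemma:wavelet-interpolation}: since $G$ is proper, pick $y$ with $G(y)<+\infty$ and an $f\in\F_\Theta$ with $\EOp f=y$. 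This gives a minimizer.

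\textbf{Equivalence with the parametric problem.} The parameter cost $C(\theta)=\sum_j\abs{a_j}$ is parameter $1$-subadditive: concatenating two parameter lists gives $f_\theta+f_\vartheta$ with cost equal to the sum. It is also parameter $(1,1)$-homogeneous: scale all $a_j$ by $\alpha$, and the empty or zero-coefficient list gives $0$ at zero cost. Hence \cref{prop:parameter-root-properties-transfer} applies with $q=p=1$. Together with $\EOp(\F_\Theta)=\R^N$ from \cref{lemma:wavelet-interpolation}, \cref{cor:parametric-function-space-equivalence} shows that the parametric problem over $\Theta$ (arbitrary $K$) and \cref{eq:Besov-prob} have equal infimal values. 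Moreover, any parametric minimizer induces a function-space minimizer.

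\textbf{Sparse minimizer (main obstacle).} We must show some minimizer has at most $N$ atoms. Take any minimizer $f^\circ$ and set $y^\star=\EOp f^\circ$. Then $f^\circ$ must minimize $\norm{a}_{\ell^1}$ subject to $\sum_\lambda a_\lambda \kernel_\lambda(x_i)=y^\star_i$ for $i=1,\ldots,N$, over $a\in\ell^1(\Lambda)$. This is an $\ell^1$ minimization with $N$ linear constraints, given by the columns $v_\lambda=(\kernel_\lambda(x_i))_i\in\R^N$. These columns are uniformly bounded and tend to zero, i.e. $(\kernel_\lambda(x))_\lambda\in c_0(\Lambda)$ as shown in the previous proof.

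The minimal value is the gauge of $y^\star$ with respect to $\overline{\mathrm{conv}}\{\pm v_\lambda\}$. Because the $v_\lambda$ accumulate only at $0$, the set $\{\pm v_\lambda\}\cup\{0\}$ is compact. Its convex hull is therefore compact and equals the convex hull of the points themselves, with no closure needed. The point $y^\star/\norm{a^\circ}_{\ell^1}$ lies on the boundary of this hull, so by Carathéodory's theorem it is a convex combination of at most $N$ points $\pm v_\lambda$. The boundary position is what brings the count down from $N+1$ to $N$, and the point $0$ can be discarded since it only lowers the cost. This produces $a^\star$ with at most $N$ nonzero entries, the same data $y^\star$, and $\ell^1$-norm no larger. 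The resulting $f^\star$ is a minimizer of the form \cref{eq:wavelet-form}. The delicate points are checking that the infimum is attained and that the boundary Carathéodory argument applies; alternatively, one can invoke a standard extreme-point representer theorem for $\ell^1$ with finitely many constraints.

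\textbf{Finite-atom problem.} For $K\ge N$, pad $f^\star$ with zero coefficients so it is feasible for \cref{eq:wavelet-finite-dim}. Every feasible point of \cref{eq:wavelet-finite-dim} satisfies $\norm{f_\theta}_\U\le\sum\abs{a_j}$, so its objective is at least the optimum of \cref{eq:Besov-prob}. Hence the optimal values coincide, the padded $f^\star$ attains it, and any solution of \cref{eq:wavelet-finite-dim} maps to a solution of \cref{eq:Besov-prob}.
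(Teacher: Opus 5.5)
Your proposal is correct. Its skeleton matches the paper's proof:
\begin{itemize}
\item existence from \cref{thm:existence_of_min}, with properness from \cref{lemma:wavelet-interpolation};
\item the observation that the coefficient sequence of any minimizer solves an $\ell^1$ problem with $N$ linear constraints;
\item the equality of infima from \cref{cor:parametric-function-space-equivalence}, via \cref{prop:parameter-root-properties-transfer} with $q=p=1$.
\end{itemize}
The route differs in two steps.

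\textbf{Sparsity step.} The paper uses $(\kernel_\lambda(x_i))_{\lambda\in\Lambda}\in c_0(\Lambda)$ only to certify that the constraints are weak$^*$-continuous on $\ell^1(\Lambda)=c_0(\Lambda)'$. It then cites the abstract $\ell^1$ representer theorem. You use the same decay fact differently: it makes $\{\pm v_\lambda\}\cup\{0\}\subset\R^N$ compact, so the image of the $\ell^1$ unit ball is the compact set $\mathrm{conv}\{\pm v_\lambda\}$. You then apply Carath\'eodory's theorem inside a supporting hyperplane at $y^\star/\norm{a^\circ}_{\ell^1}$. This is an elementary, self-contained proof of exactly the finite-dimensional fact the cited theorems supply. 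Two small points close it off. The point $0$ never actually enters the combination: by \cref{lemma:wavelet-interpolation} the $v_\lambda$ span $\R^N$, so $0$ is interior to the hull and cannot lie on a supporting hyperplane. The case $y^\star=0$ (where $f^\circ=0$) is trivial. The paper's citation is shorter and keeps the step inside the general Banach-space representer theory it uses throughout.

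\textbf{Finite-atom problem.} The paper appeals to \cref{cor:parametric-function-space-equivalence}, which concerns the parameter space with an arbitrary number of atoms. Your direct argument pads the sparse minimizer with zero coefficients and uses $R(f_\theta)\le\Rcirc(f_\theta)\le C(\theta)$. This makes explicit why the fixed-$K$ problem with $K\geq N$ has a solution and the same optimal value, which the paper leaves implicit. It also means your own appeal to the corollary is not needed for that conclusion.
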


\begin{proof}
With $\B = \Lip(\R^d)$, we have by \cref{rem:w*-point-evaluation} that the evaluation operator $\EOp: \Lip(\R^d) \to \R^N$ given by
\begin{equation}
    \EOp f = \big(f(x_1), \ldots, f(x_N)\big) \in \R^N
\end{equation}
is $w^*$-continuous. Furthermore, by \cref{lemma:wavelet-interpolation}, the objective in \cref{eq:Besov-prob} is proper. There exists at least one minimizer of \cref{eq:Besov-prob} by \cref{thm:existence_of_min}. Let $\bar{f} \in B^s_{1,1}(\R^d)$ be any minimizer of \cref{eq:Besov-prob} and write $\bar{a} \coloneqq \TOp^{-1} \bar{f} \in \ell^1(\Lambda)$. Then,
\begin{equation}
     \bar{a} \in \argmin_{a \in \ell^1(\Lambda)} \norm{a}_{\ell^1} \quad\subj\quad z_i = \sum_{\lambda \in \Lambda} a_\lambda \kernel_\lambda(x_i), \quad i = 1, \ldots, N, \label{eq:ell1-norm-prob}
\end{equation}
where $z_i = \bar{f}(x_i)$, $i = 1, \ldots, N$. Indeed, if this were not the case, it would contradict the optimality of $\bar{f}$. In particular, any solution $\tilde{a} \in \ell^1(\Lambda)$ to \cref{eq:ell1-norm-prob} induces a solution to \cref{eq:Besov-prob} via the map $\tilde{a} \mapsto \TOp \tilde{a}$. Since $(\kernel_\lambda(x_i))_{\lambda \in \Lambda} \in c_0(\Lambda)$, $i = 1, \ldots, N$, the problem \cref{eq:ell1-norm-prob} is a special case of the classical $\ell^1$-norm minimization problem with finitely many weak$^*$ continuous linear constraints. This problem has a representer theorem~\cite{BoyerRepresenter,BrediesRepresenter,unser2016representer} that implies that there exists a solution of the form $a^\star \in \ell^1(\Lambda)$ such that $\abs{\spt(a^\star)} \leq N$. Thus, the sequence $a^\star$ induces a solution to \cref{eq:Besov-prob} of the form \cref{eq:wavelet-form}.

The reduction to \cref{eq:wavelet-finite-dim} follows from \cref{cor:parametric-function-space-equivalence}, applied to the wavelet dictionary parameterization $(a_j,\lambda_j)_{j=1}^K \mapsto \sum_{j=1}^K a_j\kappa_{\lambda_j}$ with parameter cost $\sum_{j=1}^K\abs{a_j}$, where we note that the hypothesis $\EOp(\F_\Theta) = \R^N$ is satisfied by \cref{lemma:wavelet-interpolation} since the $x_1, \ldots, x_N$ are distinct. The hypotheses are verified with $q=p=1$ by \cref{prop:parameter-root-properties-transfer}. Thus, the parameter-space problem and the function-space problem have the same infimal value, and minimizers of the parameter-space problem induce minimizers of the function-space problem.
\end{proof}

\begin{remark}
In the wavelet-coefficient domain, the regularizer in \cref{eq:Besov-prob} is a weighted $\ell^1$-penalty. In iterative methods, the associated proximal step is coefficient-wise soft-thresholding. Equivalently, if one solves the corresponding penalized problem by a proximal or forward--backward method, then each wavelet coefficient is updated by a shrinkage rule with threshold proportional to its weight; in particular, because the weights depend on the scale $j$, this yields scale-dependent thresholding. This is precisely what underlies classical wavelet shrinkage methods for Besov-regularized estimation~\cite{donoho1994ideal,donoho1995adapting,DonohoWaveletShrinkage}. See also~\cite{DaubechiesDefriseDeMol2004} for iterative thresholding algorithms with weighted
coefficient penalties, and~\cite[Section~6.5]{ParikhBoyd2014} for the proximal-operator perspective.
\end{remark}

\section{Sparse Kernel Methods and RKBSs Parameterized by Measures} \label{sec:I-RKBS}
Compared to Hilbert spaces, Banach spaces possess much richer \emph{geometric} structure. This can be seen, for example, from the fact that any two Hilbert spaces of the same dimension are isometrically isomorphic. Thus, once the dimension is fixed, there is essentially only one Hilbert space. This line of work led to the development of RKBSs~\cite{lin2022reproducing,zhang2009reproducing}. Furthermore, the study of learning methods over Banach spaces isometrically isomorphic to the sequence space $\ell^1(\Z)$ or the space of Radon measures $\M(\Xi)$, where $\Xi$ is a Hausdorff space, gained significant popularity due to the \emph{sparsity-inducing} effect of these norms~\cite{bach2025optimization,wang2024sparse}. Such norms also have tight connections to the representation costs of shallow neural networks, as we shall see in \cref{sec:shallow}. In this section, we show that \emph{sparse} kernel methods and RKBSs parameterized by Radon measures are compatible with our abstract construction of representation costs and native spaces. We focus on real RKBSs defined on a set $\X$.

\begin{definition}
    A real Banach space $(\U, \norm{\dummy}_\U)$ of functions $\X \to \R$ is said to be a \emph{reproducing kernel Banach space} (RKBS) if point evaluations are bounded, i.e., for every $x \in \X$, there exists a constant $C_x > 0$ such that
    \begin{equation}
        \abs{f(x)} \leq C_x \norm{f}_\U, \quad f \in \U.
    \end{equation}
\end{definition}

In this section, we focus exclusively on real RKBSs parameterized by Radon measures (cf.~\cite{BartolucciRKBS,lin2022reproducing,spek2025duality,wang2024sparse}).  Let $\Xi$ be a \emph{locally compact} Hausdorff space. Then, these are real RKBSs of functions $\X \to \R$ that are isometrically isomorphic to $\M(\Xi)$, where the isometry is specified by an integral operator with a kernel $\kernel: \X \times \Xi \to \R$. Such RKBSs are called \emph{integral RKBSs} (I-RKBSs). Crucial to this setup is the \emph{Riesz--Markov--Kakutani representation theorem}, which states that the topological dual of the Banach space of continuous functions (vanishing at infinity if $\Xi$ is not compact), denoted by $C_0(\Xi)$ (endowed with the $L^\infty$-norm) is the space of Radon measures $\M(\Xi)$. In particular, the \emph{variation norm} of $\nu \in \M(\Xi)$ is
\begin{equation}
    \norm{\nu}_\M = \sup_{\substack{\varphi \in C_0(\Xi) \\ \norm{\varphi}_{L^\infty} = 1}} \abs{\ang{\varphi, \nu}} = \sup_{\substack{\varphi \in C_0(\Xi) \\ \norm{\varphi}_{L^\infty} = 1}} \abs*{\int_{\Xi} \varphi(\xi) \dd\nu(\xi)}. 
\end{equation}

\begin{definition} \label{defn:I-RKBS-kernel}
    An \emph{I-RKBS kernel} is a function $\kernel: \X \times \Xi \to \R$ such that $\kernel(x, \dummy) \in C_0(\Xi)$ for all $x \in \X$ and $\spn\{\kernel(x, \dummy)\}_{x \in \X}$ is dense in $C_0(\Xi)$, i.e., the $L^\infty$-closure of the span is $C_0(\Xi)$.
\end{definition}

Kernels that satisfy this density assumption are related to the concept of \emph{universal kernels}~\cite{micchelli2006universal}, with the key twist that $\kernel(x, \dummy)$ is a function defined on $\Xi$ and not $\X$. The next lemma provides ways to check if a kernel is an I-RKBS kernel and bears resemblance to the injectivity conditions for the embedding of Radon measures into Hilbert~\cite{sriperumbudur2011universality} and Banach~\cite{fukumizu2011learning} spaces.

\begin{lemma} \label{lemma:I-RKBS}
    Let $\X$ be a set and $\Xi$ be a locally compact Hausdorff space. Let $\kernel: \X \times \Xi \to \R$ be a function such that $\kernel(x, \dummy) \in C_0(\Xi)$ for every $x \in \X$ and define the integral operator
    \begin{equation}
        \TOp_\kernel: \M(\Xi) \to \R^\X: \nu \mapsto \int_{\Xi} \kernel(\dummy, \xi) \dd\nu(\xi). \label{eq:integral-op}
    \end{equation}
    Then, the following are equivalent:
    \begin{enumerate}[label=(\roman*)]
        \item $\kernel: \X \times \Xi \to \R$ is an I-RKBS kernel. \label{item:I-RKBS-kernel}
        \item $\spn\{\kernel(x, \dummy)\}_{x \in \X}$ is dense in $C_0(\Xi)$. \label{item:dense-C0}
        \item $\TOp_\kernel$ is an injection. \label{item:integral-op-injective}
    \end{enumerate}
\end{lemma}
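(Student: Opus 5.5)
(i)$\Leftrightarrow$(ii) holds by \cref{defn:I-RKBS-kernel}: given that $\kernel(x,\dummy)\in C_0(\Xi)$ for every $x$, being an I-RKBS kernel means exactly that the span is dense. So the only real content is (ii)$\Leftrightarrow$(iii). The plan is to reduce it to the standard Hahn--Banach annihilator criterion for density, using the Riesz--Markov--Kakutani identification $C_0(\Xi)'=\M(\Xi)$.

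First I record the key identity. For every $\nu\in\M(\Xi)$ and $x\in\X$,
\begin{equation}
(\TOp_\kernel\nu)(x)=\int_\Xi \kernel(x,\xi)\dd\nu(\xi)=\ang{\kernel(x,\dummy),\nu}.
\end{equation}
This is well defined because $\kernel(x,\dummy)\in C_0(\Xi)$ is bounded and continuous, hence integrable against any finite Radon measure. Linearity of $\TOp_\kernel$ is immediate. It follows that $\ker\TOp_\kernel$ is exactly the annihilator $S^\perp\subset\M(\Xi)$ of $S\coloneqq\spn\{\kernel(x,\dummy)\}_{x\in\X}$. Indeed, $\nu$ vanishes on every generator $\kernel(x,\dummy)$ if and only if it vanishes on all of $S$, by linearity.

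Next I invoke the Hahn--Banach corollary that, in a normed space $E$, a subspace $S$ is dense if and only if the only $\ell\in E'$ vanishing on $S$ is $\ell=0$. For (ii)$\Rightarrow$(iii): if $\TOp_\kernel\nu=0$, then $\nu$ annihilates $S$, hence by continuity annihilates $\overline{S}=C_0(\Xi)$, so $\nu=0$ by Riesz--Markov--Kakutani. For (iii)$\Rightarrow$(ii): suppose $\overline{S}\neq C_0(\Xi)$. Hahn--Banach then gives a nonzero continuous functional vanishing on $\overline S$. Riesz--Markov--Kakutani represents it by a nonzero $\nu\in\M(\Xi)$, and this $\nu$ satisfies $\TOp_\kernel\nu=0$, contradicting injectivity.

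None of these steps is hard. The only point requiring care is applying Riesz--Markov--Kakutani correctly when $\Xi$ is locally compact but not compact. Its dual is then that of $C_0(\Xi)$ with the sup-norm, not of $C_b(\Xi)$. The hypothesis $\kernel(x,\dummy)\in C_0(\Xi)$ is exactly what makes the pairing identity above legitimate.
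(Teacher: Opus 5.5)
Your proposal is correct and follows essentially the same route as the paper: (i)$\Leftrightarrow$(ii) holds by definition, and (ii)$\Leftrightarrow$(iii) follows by identifying $\ker\TOp_\kernel$ with the annihilator of $\spn\{\kernel(x,\dummy)\}_{x\in\X}$ in $\M(\Xi)=C_0(\Xi)'$ and then applying the Hahn--Banach density criterion. The only difference is that you spell out both directions of that criterion explicitly, whereas the paper simply cites it.
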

\begin{proof}
    \ref{item:I-RKBS-kernel} $\Leftrightarrow$ \ref{item:dense-C0} is by definition. Given $\nu \in \M(\Xi)$, define
    \begin{equation}
        \Lambda_\nu: C_0(\Xi) \to \R: \varphi \mapsto \int_{\Xi} \varphi(\xi) \dd\nu(\xi).
    \end{equation}
    Then, the annihilator of $V \coloneqq \spn\{\kernel(x, \dummy)\}_{x \in \X}$ with respect to the dual pair $(C_0(\Xi), \M(\Xi))$ is
    \begin{equation}
        V^\perp = \curly{\nu \in \M(\Xi) \st \Lambda_\nu(\varphi) = 0 \text{ for all } \varphi \in V}.
    \end{equation}
    Recall that $V$ is dense in $C_0(\Xi)$ if and only if $V^\perp = \{0\}$. Since $V$ is the linear span of $\{\kernel(x, \dummy)\}_{x \in \X}$, we have that $V^\perp = \ker \TOp_\kernel$. Hence, we have the chain of equivalences
    \begin{equation}
        V \text{ is dense in } C_0(\Xi) \quad\Leftrightarrow\quad \ker \TOp_\kernel = \{0\} \quad\Leftrightarrow\quad \TOp_\kernel \text{ is injective},
    \end{equation}
    in which case \ref{item:dense-C0} $\Leftrightarrow$ \ref{item:integral-op-injective}, which completes the proof.
\end{proof}

Given an I-RKBS kernel $\kernel: \X \times \Xi \to \R$, define the set
\begin{equation}
    \U \coloneqq \curly{\TOp_\kernel(\nu): \nu \in \M(\Xi)},
\end{equation}
with $\TOp_\kernel$ defined as in \cref{eq:integral-op}. By construction, we have that $\TOp_\kernel: \M(\Xi) \to \U$ is linear and surjective. By \cref{lemma:I-RKBS}~\ref{item:integral-op-injective}, it is also injective. Thus, there exists an inverse $\TOp_\kernel^{-1}: \U \to \M(\Xi)$. Hence, if we endow $\U$ with the norm
\begin{equation} \label{eq:UUUUUU}
    \norm{f}_\U = \norm{\TOp_\kernel^{-1} f}_\M, \quad f \in \U,
\end{equation}
we have that $\M(\Xi)$ and $\U$ are isometrically isomorphic and, in particular, $\U$ is a Banach space. Going one step further, we see that every $f \in \U$ admits an \emph{integral representation}
\begin{equation} \label{eq:integral-rep}
    f = \int_\Xi \kernel(\dummy, \xi) \dd\nu(\xi), \quad \nu = \TOp_\kernel^{-1}f,
\end{equation}
and $\U$ is an RKBS since
\begin{equation}
    \abs{f(x)} = \abs{\ang{\kernel(x, \dummy), \TOp_\kernel^{-1} f}} \leq \norm{\kernel(x, \dummy)}_{L^\infty} \norm{\TOp_\kernel^{-1}f}_\M = \norm{\kernel(x, \dummy)}_{L^\infty} \norm{f}_\U. \label{eq:RKBS-CS-proof}
\end{equation}
This motivates the definition of an I-RKBS.
\begin{definition}
    A real Banach space $(\U, \norm{\dummy}_\U)$ of functions $\X \to \R$ is called an \emph{integral reproducing kernel Banach space} (I-RKBS) if there exists an I-RKBS kernel $\kernel:\X \times \Xi \to \R$ such that
    \begin{equation}
        \U = \curly*{\int_\Xi \kernel(\dummy, \xi) \dd\nu(\xi): \nu \in \M(\Xi)} \quad\text{and}\quad \norm*{\int_\Xi \kernel(\dummy, \xi) \dd\nu(\xi)}_\U = \norm{\nu}_\M.
    \end{equation}
\end{definition}
\begin{remark}
    Many works that study I-RKBSs define the norm as an infimum over all representations (see~\cite{BachConvexNN,BartolucciRKBS,spek2025duality}), i.e., they consider the norm
    \begin{equation}
        \norm*{f}_\U = \inf_{\nu \in \M(\Xi)} \norm{\nu}_\M \quad\subj\quad f = \int_\Xi \kernel(\dummy, \xi) \dd\nu(\xi) = \TOp_\kernel\nu.
    \end{equation}
    We do not need an infimum thanks to the density assumption in \cref{defn:I-RKBS-kernel}, which consequently ensures that $\TOp_\kernel: \M(\Xi) \to \U$ is a bijection.
\end{remark}
Parallel to RKHS theory, every I-RKBS $\U$ on $\X$ with kernel $\kernel: \X \times \Xi \to \R$ satisfies a \emph{reproducing property}. If we define the pairing $[\dummy, \dummy]: C_0(\Xi) \times \U \to \R$ by
\begin{equation}
    [\varphi, f] \coloneqq \ang{\varphi, \TOp_\kernel^{-1} f}, \quad (\varphi, f) \in C_0(\Xi) \times \U, \label{eq:RKBS-pairing}
\end{equation}
where $\ang{\dummy, \dummy}$ is the canonical pairing between $C_0(\Xi)$ and $\M(\Xi)$, we have the following reproducing property: For every $x \in \X$,
\begin{equation} \label{eq:I-RKBS-reproducing}
    [\kernel(x, \dummy), f] = f(x), \quad f \in \U,
\end{equation}
and, by \cref{eq:UUUUUU}, we have the Cauchy--Schwarz-type inequality
\begin{equation}
    \abs{[\kernel(x, \dummy), f]} \leq \norm{\kernel(x, \dummy)}_{L^\infty} \norm{f}_\U.  \label{eq:RKBS-CS}
\end{equation}

\subsection{Parametric Models and the Construction of I-RKBSs} \label{sec:RKBS-param}
Fix an I-RKBS kernel $\kernel: \X \times \Xi \to \R$ and let $\U$ denote the corresponding I-RKBS with norm $\norm{\dummy}_\U$. Let
\begin{equation}
\Theta \coloneqq \{(a_1,\xi_1),\ldots,(a_K,\xi_K)\st a_j \in \R, \xi_j \in \Xi, K\in \mathbb{N}\}
\end{equation}
denote the parameter space and for a parameter $\theta = ((a_1,\xi_1),\ldots,(a_K,\xi_K))\in\Theta$ define the parametric model
\begin{equation}
f_{\theta} = \sum_{j=1}^K a_j \kernel(\dummy,\xi_j). \label{eq:Banach-kernel-machine}
\end{equation}
Note that due to the nature of the definition of $\Theta$, \cref{eq:Banach-kernel-machine} is a \emph{kernel machine} with an \emph{arbitrary} number of kernels $K \in \mathbb{N}$ and \emph{arbitrary} kernel parameters $\{\xi_j\}_{j=1}^K$. Thus, we see that the parametric model class $\F_\Theta$ coincides with the (unclosed) span of the kernels, i.e., $\F_\Theta = \spn\{\kernel(\dummy, \xi)\}_{\xi \in \Xi} \eqqcolon \U_0$, which is simply the set of all kernel machines with kernel $\kernel(\dummy, \dummy)$. This setup departs from the RKHS setting since the parameters need not be ``centers'' that lie in the same space as the function input.

Another key difference is in the parameter cost for training \emph{sparse} kernel methods, which takes the form
\begin{equation}
    C(\theta) = \sum_{j=1}^K \abs{a_j}, \quad \theta \in \Theta.
\end{equation}
Observe that this is the $\ell^1$-norm of the kernel coefficients and therefore promotes sparsity on the number of active kernels. The parametric representation cost is then
\begin{equation}
    \Rcirc(f) = \norm{\TOp_\kernel^{-1}f}_\M = \norm*{\sum_{j=1}^K a_j \delta_{\xi_j}}_\M = \sum_{j=1}^K \abs{a_j}, \quad f \in \U_0, \label{eq:I-RKBS-param-rep-cost}
\end{equation}
where $\sum_{j=1}^K a_j \delta_{\xi_j} = \TOp_\kernel^{-1}f$ is the \emph{unique} measure such that $f$ has the integral representation \cref{eq:integral-rep}, where we assume that the $\xi_j$ are pairwise distinct without loss of generality (since otherwise, terms could be combined by modifying the coefficients).

Since $\TOp_\kernel:\M(\Xi)=C_0(\Xi)'\to\U$ is an isometric isomorphism, we transport the canonical duality via the pairing $[\dummy, \dummy]$ from \cref{eq:RKBS-pairing}. Under this pairing, $\U=C_0(\Xi)'$ isometrically. We endow $\U$ with the corresponding weak$^*$ topology $\sigma(\U,C_0(\Xi))$. Similar to the RKHS setting (see \cref{lemma:RKHS-convergence}), an I-RKBS has a useful characterization of weak$^*$-convergent sequences.

\begin{lemma} \label{lemma:I-RKBS-convergence}
Let $\U$ be an I-RKBS on $\X$ and let $(f_n)_{n\in\mathbb N}\subset \U$ and $f\in \U$. Then, $f_n$ weak$^*$ converges to $f$ if and only if $f_n(x)\to f(x)$ for all $x\in \X$ and $\sup_{n\in\mathbb N}\norm{f_n}_\U<+\infty$.
\end{lemma}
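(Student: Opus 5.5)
The plan is to transport everything to $\M(\Xi)=C_0(\Xi)'$ through the isometric isomorphism $\TOp_\kernel$. By construction of the pairing \cref{eq:RKBS-pairing}, $f_n \to f$ weak$^*$ in $\U$ exactly when $\nu_n \coloneqq \TOp_\kernel^{-1} f_n$ converges weak$^*$ to $\nu \coloneqq \TOp_\kernel^{-1} f$ in $\M(\Xi)$. Also, $\norm{f_n}_\U = \norm{\nu_n}_\M$ by \cref{eq:UUUUUU}. The reproducing property \cref{eq:I-RKBS-reproducing} then turns point evaluation into pairing against $\kernel(x,\dummy)$:
\begin{equation}
    f_n(x) = \ang{\kernel(x,\dummy),\nu_n}.
\end{equation}
So the lemma is the statement that weak$^*$ convergence of measures is equivalent to boundedness together with convergence on the set $\{\kernel(x,\dummy)\}_{x\in\X}$, whose span is dense in $C_0(\Xi)$. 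This mirrors the proof of \cref{lemma:RKHS-convergence}~\ref{item:weak-convergence-H}.

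For the forward direction, suppose $f_n \to f$ weak$^*$. Since $\kernel(x,\dummy)\in C_0(\Xi)$, we get $f_n(x) = [\kernel(x,\dummy),f_n] \to [\kernel(x,\dummy),f] = f(x)$ for every $x\in\X$. Boundedness follows from the uniform boundedness principle: $C_0(\Xi)$ is a Banach space, and a weak$^*$ convergent sequence in its dual is pointwise bounded on $C_0(\Xi)$, hence norm bounded. Therefore $\sup_n \norm{\nu_n}_\M = \sup_n \norm{f_n}_\U < +\infty$.

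For the converse, suppose $M \coloneqq \sup_n \norm{f_n}_\U < +\infty$ and $f_n \to f$ pointwise. By linearity, $\ang{\varphi,\nu_n} \to \ang{\varphi,\nu}$ for every $\varphi$ in $V \coloneqq \spn\{\kernel(x,\dummy)\}_{x\in\X}$. Given an arbitrary $\varphi\in C_0(\Xi)$ and $\varepsilon>0$, the density in \cref{defn:I-RKBS-kernel} gives $\psi\in V$ with $\norm{\varphi-\psi}_{L^\infty}<\varepsilon$. The standard $\varepsilon/3$ estimate then gives
\begin{equation}
    \abs{\ang{\varphi,\nu_n-\nu}} \leq \norm{\varphi-\psi}_{L^\infty}\bigl(\norm{\nu_n}_\M+\norm{\nu}_\M\bigr) + \abs{\ang{\psi,\nu_n-\nu}} \leq \varepsilon\,(M+\norm{f}_\U) + \abs{\ang{\psi,\nu_n-\nu}}.
\end{equation}
The last term tends to zero, so $\limsup_n \abs{\ang{\varphi,\nu_n-\nu}} \leq \varepsilon(M+\norm{f}_\U)$. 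Letting $\varepsilon\to 0$ yields $\nu_n\to\nu$ weak$^*$, i.e., $f_n\to f$ weak$^*$ in $\U$.

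The only step needing care is the converse, where uniform boundedness is essential for passing from the dense subspace $V$ to all of $C_0(\Xi)$. Since the limit $f$ is assumed to lie in $\U$, no compactness argument is required, and the density hypothesis on the kernel does the rest.
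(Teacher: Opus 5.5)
Your proposal is correct and follows the same route as the paper's proof: the forward direction pairs against $\kernel(x,\dummy)\in C_0(\Xi)$ and uses norm-boundedness of weak$^*$-convergent sequences, and the converse extends convergence from the dense span $\spn\{\kernel(x,\dummy)\}_{x\in\X}$ to all of $C_0(\Xi)$ using the uniform bound. The only difference is that you write out the $\varepsilon$-estimate that the paper calls a ``standard density argument.''
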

\begin{proof}
Let $\kernel:\X\times\Xi\to\R$ be an I-RKBS kernel for $\U$. If $f_n$ weak$^*$ converges to $f$, then
\begin{equation}
    f_n(x)=[\kernel(x,\dummy),f_n]
    \to
    [\kernel(x,\dummy),f]=f(x)
\end{equation}
for every $x\in\X$, and every weak$^*$-convergent sequence is norm-bounded.

Conversely, suppose that $f_n(x)\to f(x)$ for all $x\in\X$ and that $\sup_n\|f_n\|_\U<+\infty$. Pointwise convergence gives
\begin{equation}
    [\varphi,f_n]\to[\varphi,f]
\end{equation}
for every $\varphi\in\spn\{\kernel(x,\dummy):x\in\X\}$. This span is dense in $C_0(\Xi)$ by the definition of an I-RKBS kernel. The uniform norm bound and a standard density argument therefore extend the convergence to every $\varphi\in C_0(\Xi)$. Hence $f_n$ weak$^*$ converges to $f$ in $\U$.
\end{proof}

\subsection{Representation Costs and Representer Theorems}
We now establish that, given an I-RKBS kernel, the associated parametric model class (i.e., the set of kernel machines), and the $\ell^1$-norm parameter cost, the induced extended representation cost and native space coincide with the norm in the corresponding I-RKBS and the I-RKBS itself, respectively. This shows that sparse kernel methods and I-RKBSs are a special case of the abstract representation-cost and function-space perspective developed in this paper. Finally, we show that the sparse representer theorems for I-RKBS-norm regularization are compatible with our abstract framework.

For compatibility with the universal topology introduced in \cref{sec:Lip}, we now restrict our attention to the class of \emph{Lipschitz I-RKBS kernels}. In particular, in the sequel, let $(\X,\rho)$ be a separable metric space and let $\e \in \X$ be some base point. We also assume that $\Xi$ is a \emph{second-countable} locally compact Hausdorff space.

\begin{definition}
An I-RKBS kernel $\kernel:\X \times \Xi \to \R$ is said to be \emph{$(L, \rho)$-Lipschitz} if the map $x \mapsto \kernel(x, \dummy)$ is $L$-Lipschitz continuous with respect to the metric $\rho(\dummy, \dummy)$, i.e., 
\begin{equation}
\norm{\kernel(x, \dummy) - \kernel(z, \dummy)}_{L^\infty}  \leq L \, \rho(x,z), \quad \text{for all } x,z\in \X.
\end{equation}
\end{definition}

\begin{lemma}\label{lem:lipschitz_kernels_embed-RKBS}
If $\kernel: \X \times \Xi \to \R$ is an $(L, \rho)$-Lipschitz I-RKBS kernel, then its I-RKBS $\U$ continuously embeds into $\Lip(\X)$. In particular, $\norm{f}_{\Lip(\X)} \leq \max\curly{L, \norm{\kernel(\e, \dummy)}_{L^\infty}}\norm{f}_\U$, for all $f \in \U$.
\end{lemma}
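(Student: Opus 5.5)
The plan mirrors the proof of \cref{lem:lipschitz_kernels_embed}, with the Hilbert-space Cauchy--Schwarz inequality replaced by the duality between $C_0(\Xi)$ and $\M(\Xi)$, i.e., the Cauchy--Schwarz-type inequality \cref{eq:RKBS-CS}. Fix $f\in\U$ and write $\nu\coloneqq\TOp_\kernel^{-1}f\in\M(\Xi)$. By the reproducing property \cref{eq:I-RKBS-reproducing}, we have $f(x)=\ang{\kernel(x,\dummy),\nu}$ for every $x\in\X$.

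First, the Lipschitz seminorm. For $x,z\in\X$, linearity of the pairing in its first argument gives
\begin{equation}
    f(x)-f(z)=\ang{\kernel(x,\dummy)-\kernel(z,\dummy),\nu}.
\end{equation}
Note that $\kernel(x,\dummy)-\kernel(z,\dummy)\in C_0(\Xi)$. By the definition of the variation norm as a dual norm, we get
\begin{equation}
    \abs{f(x)-f(z)}\leq\norm{\kernel(x,\dummy)-\kernel(z,\dummy)}_{L^\infty}\norm{\nu}_\M\leq L\,\rho(x,z)\,\norm{f}_\U .
\end{equation}
Here the last step uses the $(L,\rho)$-Lipschitz hypothesis together with the isometry \cref{eq:UUUUUU}. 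Hence $\abs{f}_{\Lip(\X)}\leq L\norm{f}_\U$, and in particular $f$ is Lipschitz, so $f\in\Lip(\X)$.

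Second, the base-point value. Applying \cref{eq:RKBS-CS} (equivalently \cref{eq:RKBS-CS-proof}) at $x=\e$ gives $\abs{f(\e)}\leq\norm{\kernel(\e,\dummy)}_{L^\infty}\norm{f}_\U$.

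Finally, taking the maximum of the two bounds in the definition $\norm{f}_{\Lip(\X)}=\max\{\abs{f}_{\Lip(\X)},\abs{f(\e)}\}$ yields the stated constant. Since the inclusion $\U\hookrightarrow\Lip(\X)$ is linear and bounded by this constant, it is continuous. There is no real obstacle in this argument. The only point needing care is that the pairing $\ang{\dummy,\nu}$ is linear in its $C_0(\Xi)$ argument and is bounded by the product of the sup-norm and the total variation norm. Both facts follow directly from the Riesz--Markov--Kakutani duality recalled above.
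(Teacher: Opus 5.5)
Your proposal is correct and matches the paper's proof essentially line by line. Both bound $\abs{f(x)-f(z)}$ via the $C_0(\Xi)$--$\M(\Xi)$ duality applied to $\kernel(x,\cdot)-\kernel(z,\cdot)$, bound $\abs{f(\e)}$ via \cref{eq:RKBS-CS}, and take the maximum in the definition of $\norm{\cdot}_{\Lip(\X)}$.
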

\begin{proof}
For any $f \in \U$ and $x, z \in \X$, by \cref{eq:RKBS-CS}, we have that
\begin{equation}
    \abs{f(x) - f(z)} = \abs{[\kernel(x, \dummy) - \kernel(z, \dummy), f]} \leq \norm{\kernel(x, \dummy) - \kernel(z, \dummy)}_{L^\infty} \norm{f}_\U  \leq L \, \norm{f}_\U \, \rho(x, z).
\end{equation}
Therefore, $\abs{f}_{\Lip(\X)} \leq L \, \norm{f}_\U$. Next, again by \cref{eq:RKBS-CS},
\begin{equation}
    \abs{f(\e)} = \abs{[\kernel(\e, \dummy), f]} \leq \norm{\kernel(\e, \dummy)}_{L^\infty} \norm{f}_\U.
\end{equation}
Thus, for any $f \in \U$, we have $\norm{f}_{\Lip(\X)} = \max\curly{\abs{f(\e)}, \abs{f}_{\Lip(\X)}} \leq \max\curly{L, \norm{\kernel(\e, \dummy)}_{L^\infty}}\norm{f}_\U$.
\end{proof}
\begin{remark}
    Many common kernels used in machine learning fall under the class of Lipschitz I-RKBS kernels, including the RKHS kernels such as the Gaussian and Laplacian kernels discussed in \cref{remark:RKHS-Lip-kernels}, where $\Xi = \X = \R^d$. Allowing $\X$ and $\Xi$ to be different spaces also permits kernels that take the form of neurons, as we shall see in \cref{sec:shallow}.
\end{remark}

\begin{theorem} \label{thm:I-RKBS-rep-cost}
    Let $\Xi$ be a second-countable locally compact Hausdorff space, let $\kernel: \X \times \Xi \to \R$ be an $(L, \rho)$-Lipschitz I-RKBS kernel, and let $(\U, \norm{\dummy}_\U)$ be the corresponding I-RKBS. With the parametric model setup as in \cref{sec:RKBS-param}, if we construct the extended representation cost with $\B = \Lip(\X)$, we have for any $f \in \Lip(\X)$ that
    \begin{equation}
        R(f) = \begin{cases}
            \norm{f}_\U, & \text{if $f \in \U$}, \\
            +\infty, & \text{if $f \in \Lip(\X) \setminus \U$}.
        \end{cases}
    \end{equation}
    In this case, the native space is
    \begin{equation}
        \F = \curly{f \in \Lip(\X) \st R(f) < +\infty} = \U.
    \end{equation}
\end{theorem}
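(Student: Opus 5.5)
The argument follows the template of \cref{thm:rep-cost-kernel-RKHS} and the Besov theorem. First, by \cref{eq:I-RKBS-param-rep-cost}, the trivial extension satisfies $\Rcirc(f)=\norm{f}_\U$ for $f\in\U_0=\F_\Theta$ and $\Rcirc(f)=+\infty$ otherwise. \Cref{lem:lipschitz_kernels_embed-RKBS} gives $\norm{f}_{\Lip(\X)}\le \max\{L,\norm{\kernel(\e,\dummy)}_{L^\infty}\}\,\Rcirc(f)$, so $\Rcirc$ is coercive on $\Lip(\X)$. We may therefore take $\Null=\{0\}$ in \cref{thm:rep_cost_lip_spaces}, which yields the sequential characterization
\begin{equation*}
R(f)=\min\curly{\alpha\in[0,+\infty] \st (f_n)_{n\in\N}\subset\U_0,\ f_n\to f \text{ pointwise},\ \norm{f_n}_\U\to\alpha}.
\end{equation*}

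\textbf{Upper bound.} Fix $f\in\U$ and let $\nu=\TOp_\kernel^{-1}f$. Norm-density of finitely supported measures in $\M(\Xi)$ fails, so the Besov argument does not transfer verbatim. Instead, since $\Xi$ is second-countable locally compact Hausdorff, $C_0(\Xi)$ is separable. We choose finitely supported measures $\nu_n=\sum_j a_j^{(n)}\delta_{\xi_j^{(n)}}$ with $\nu_n\to\nu$ weak$^*$ and $\norm{\nu_n}_\M\to\norm{\nu}_\M$. This can be done by partitioning a large compact set into small Borel pieces and placing atoms of mass $\nu(\text{piece})$ at chosen points, so that $\norm{\nu_n}_\M\le\norm{\nu}_\M$. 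Alternatively, one can appeal to the weak$^*$ density of the set $\{\mu \st \mu \text{ finitely supported},\ \norm{\mu}_\M\le\norm{\nu}_\M\}$ in the ball of radius $\norm{\nu}_\M$, which holds because the extreme points $\pm\delta_\xi$ of the unit ball, by Krein--Milman and the bipolar theorem, generate a weak$^*$-dense convex hull. Setting $f_n=\TOp_\kernel\nu_n\in\U_0$, we get $f_n(x)=\ang{\kernel(x,\dummy),\nu_n}\to f(x)$ because $\kernel(x,\dummy)\in C_0(\Xi)$. The bound $\norm{f_n}_\U\le\norm{f}_\U$ combined with weak$^*$ lower semicontinuity of the norm gives $\norm{f_n}_\U\to\norm{f}_\U$. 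Hence $R(f)\le\norm{f}_\U$.

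\textbf{Lower bound and identification of the native space.} Let $R(f)<+\infty$, and take a recovery sequence $f_n\in\U_0$ with $f_n\to f$ pointwise and $\norm{f_n}_\U\to R(f)$. The measures $\nu_n=\TOp_\kernel^{-1}f_n$ are bounded in $\M(\Xi)=C_0(\Xi)'$. Because the predual is separable, Banach--Alaoglu gives a subsequence with $\nu_n\to\nu$ weak$^*$. Pairing with $\kernel(x,\dummy)\in C_0(\Xi)$ shows $f_n(x)\to(\TOp_\kernel\nu)(x)$, so $f=\TOp_\kernel\nu\in\U$. Weak$^*$ lower semicontinuity of $\norm{\dummy}_\M$ then gives $\norm{f}_\U=\norm{\nu}_\M\le\liminf\norm{\nu_n}_\M=R(f)$. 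Together with the upper bound this yields $R=\norm{\dummy}_\U$ on $\U$ and $R=+\infty$ off $\U$, and hence $\F=\U$. Equivalently, \cref{lemma:I-RKBS-convergence} can be used in place of the explicit pairing step.

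\textbf{Main obstacle.} The delicate step is the upper bound: producing finitely supported approximants with convergent norms, since atomic measures are not norm-dense in $\M(\Xi)$. I would rely on the weak$^*$ density of the atomic measures in the norm ball. This follows from the bipolar theorem, since the absolutely convex hull of $\{\pm\delta_\xi\}$ has polar equal to the unit ball of $C_0(\Xi)$. Metrizability of the weak$^*$ topology on bounded sets then lets us extract a sequence.
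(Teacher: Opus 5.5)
Your proposal is correct and follows the paper's proof essentially step for step: coercivity via \cref{lem:lipschitz_kernels_embed-RKBS}, the sequential characterization from \cref{thm:rep_cost_lip_spaces}, an upper bound from finitely supported approximants $\nu_n \overset{*}{\rightharpoonup} \nu$ with $\norm{\nu_n}_\M \to \norm{\nu}_\M$, and a lower bound by Banach--Alaoglu extraction plus weak$^*$ lower semicontinuity of the norm. The only difference is how you justify the atomic approximation. The paper's \cref{lem:approx_by_discrete_measures} approximates the Jordan components narrowly, citing Malliavin, and then gets norm convergence from mutual singularity. Your bipolar or partition argument instead gives $\norm{\nu_n}_\M \le \norm{\nu}_\M$ directly, which makes the norm convergence immediate and is equally valid.
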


Before proving the theorem, we will need the following result regarding the convergence of atomic measures to arbitrary measures in $\M(\Xi)$. 
\begin{lemma}\label{lem:approx_by_discrete_measures}
Let $\Xi$ be a second-countable locally compact Hausdorff space. For every $\nu \in \M(\Xi)$, there exists a sequence of atomic measures $(\nu_n)_{n \in \N}$ (i.e., each $\nu_n \in \spn\curly{\delta_\xi}_{\xi \in \Xi}$) such that $\nu_n$ weak$^*$~converges to $\nu$ and $\norm{\nu_n}_\M \to \norm{\nu}_\M$.
\end{lemma}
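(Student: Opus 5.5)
We plan to build the atomic approximants in two stages: first reduce to a compactly supported measure, then discretize it using a fine finite partition of its support. By the Jordan decomposition we may write $\nu=\nu^+-\nu^-$ with $\norm{\nu}_\M=\nu^+(\Xi)+\nu^-(\Xi)$. A measure on the dual of $C_0(\Xi)$ is a finite Radon measure, so it is inner regular. Hence there are compact sets $K_m$ with $\abs{\nu}(\Xi\setminus K_m)<1/m$. Replacing $\nu$ by $\nu|_{K_m}$ changes it by at most $1/m$ in total variation, and therefore also in weak$^*$ pairing against any $\varphi$ with $\norm{\varphi}_{L^\infty}\le 1$. So it suffices to approximate measures supported on a compact set $K$, and then to diagonalize.

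\textbf{Discretizing on a compact set.} Fix compact $K$ and $\varepsilon>0$. Every $\varphi\in C_0(\Xi)$ is uniformly continuous on $K$. Since $\Xi$ is second countable and locally compact Hausdorff, it is metrizable (Urysohn). Fix a metric $d$ compatible with the topology; $K$ is then compact in $d$. Partition $K$ into finitely many Borel sets $A_1,\dots,A_J$ of $d$-diameter less than $\varepsilon$, and choose $\xi_i\in A_i$. Define
\begin{equation}
    \nu_\varepsilon\coloneqq\sum_{i=1}^J \nu(A_i)\,\delta_{\xi_i}.
\end{equation}
This gives two estimates:
\begin{enumerate}[label=(\roman*)]
    \item $\norm{\nu_\varepsilon}_\M=\sum_i\abs{\nu(A_i)}\le\abs{\nu}(K)\le\norm{\nu}_\M$;
    \item $\abs{\ang{\varphi,\nu_\varepsilon-\nu}}\le\sum_i\int_{A_i}\abs{\varphi(\xi_i)-\varphi(\xi)}\,\dd\abs{\nu}(\xi)\le \omega_{\varphi,K}(\varepsilon)\norm{\nu}_\M$, where $\omega_{\varphi,K}$ is the $d$-modulus of continuity of $\varphi$ on $K$. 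This tends to $0$ as $\varepsilon\to0$.
\end{enumerate}

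\textbf{Assembling the sequence.} Take $\nu_n\coloneqq(\nu|_{K_n})_{\varepsilon_n}$ with $\varepsilon_n\to0$. For each fixed $\varphi$, we get $\ang{\varphi,\nu_n}\to\ang{\varphi,\nu}$ by the two error bounds, so $\nu_n\to\nu$ weak$^*$. For the norms, (i) gives $\limsup_n\norm{\nu_n}_\M\le\norm{\nu}_\M$. Weak$^*$ lower semicontinuity of the dual norm gives $\norm{\nu}_\M\le\liminf_n\norm{\nu_n}_\M$. Together these yield $\norm{\nu_n}_\M\to\norm{\nu}_\M$.

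The main delicate point is ensuring uniform control over all test functions at once. This is avoided because weak$^*$ convergence is only needed pointwise in $\varphi$. The remaining concern is metrizability/uniform continuity on $K$, which is where second countability enters. Alternatively, one could cover $K$ by finitely many open sets on which $\varphi$ oscillates little, but that cover depends on $\varphi$, and the metric approach avoids this.
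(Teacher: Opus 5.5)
Your route differs from the paper's. The paper splits $\nu=\nu^+-\nu^-$ and cites Malliavin (Thm.~6.9), which says positive finite measures on a second-countable space are narrow limits of positive atomic measures. It then uses that narrow convergence of positive measures carries total masses along. Finally it squeezes $\norm{\nu_n}_\M$ between weak$^*$ lower semicontinuity and the triangle inequality $\norm{\nu_n}_\M\le\norm{\nu_n^+}_\M+\norm{\nu_n^-}_\M$. You instead build the approximants by hand, via tightness plus a partition into cells of small $d$-diameter. This is self-contained and works directly with the signed measure. It also gives the upper bound $\norm{\nu_n}_\M\le\norm{\nu}_\M$ for free from $\abs{\nu(A_i)}\le\abs{\nu}(A_i)$, so the Jordan splitting is not needed. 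In exchange, the paper's citation is shorter and yields the stronger narrow convergence. Your final squeeze for the norms is the same as the paper's.

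One step needs an extra argument. Estimate (ii) gives $\omega_{\varphi,K}(\varepsilon)\to0$ only for a \emph{fixed} compact $K$. In the diagonal sequence $(\nu|_{K_n})_{\varepsilon_n}$ the compact set changes with $n$, so what you actually need is $\omega_{\varphi,K_n}(\varepsilon_n)\to0$. Since the $K_n$ typically grow, the moduli on $K_n$ are not a priori uniformly small, and ``by the two error bounds'' does not cover this.

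The fix is that every $\varphi\in C_0(\Xi)$ is uniformly continuous on all of $\Xi$ for any compatible metric $d$. Suppose $d(x_n,y_n)\to0$ but $\abs{\varphi(x_n)-\varphi(y_n)}\ge\delta$. After swapping and passing to a subsequence, all $x_n$ lie in the compact set $\{\abs{\varphi}\ge\delta/2\}$. A further subsequence converges to some $x$, and the corresponding $y_n$ converge to $x$ as well. Continuity of $\varphi$ then gives a contradiction. Hence $\omega_{\varphi,K_n}(\varepsilon_n)\le\omega_{\varphi,\Xi}(\varepsilon_n)\to0$, and your argument goes through.

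Alternatively, you can use separability of $C_0(\Xi)$. Choose $\varepsilon_n$ so that the first $n$ elements of a countable dense set oscillate by less than $1/n$ on the cells of $K_n$. Then pass to general $\varphi$ using density and the uniform bound $\norm{\nu_n}_\M\le\norm{\nu}_\M$.

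Your closing remark names uniformity over $\varphi$ as the delicate point, but the real one is uniformity over $n$ for fixed $\varphi$.
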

\begin{proof}
Let $\nu = \nu^+ - \nu^-$ be the Jordan decomposition of $\nu$. Since $\Xi$ is second-countable, by
\cite[Theorem~6.9, p.~99]{malliavin1995integration}, there exist sequences $(\nu_n^+)_{n\in\N}$,
$(\nu_n^-)_{n\in\N}$ of atomic measures converging
narrowly\footnote{Recall that a sequence $(\mu_n)_{n\in\N} \subset \M(\Xi)$ is said to \emph{converge narrowly} to $\mu \in \M(\Xi)$ if $\int_\Xi \phi\dd \mu_n \rightarrow \int_\Xi\phi\dd\mu$ for all continuous and bounded functions $\phi:\Xi\rightarrow\R$.} to $\nu^+$ and $\nu^-$, respectively. Set $\nu_n = \nu_n^+ - \nu_n^-$. Then  $\nu_n$ converges narrowly to $\nu$, and since narrow convergence implies weak$^*$ convergence in $\M(\Xi) = C_0(\Xi)'$, $\nu_n$ weak$^*$ converges to $\nu$, as well. Also, since $\nu^\pm$ are positive measures, narrow
convergence gives $\|\nu_n^\pm\|_\M = \nu_n^\pm(\Xi) \to \nu^\pm(\Xi) = \|\nu^\pm\|_\M$. Therefore, by the fact that the $\M$-norm is weak$^*$ l.s.c.\ and using the triangle inequality, we have
\[
  \|\nu\|_\M
  \leq \liminf_{n\to\infty} \|\nu_n\|_\M
  \leq \limsup_{n\to\infty} \|\nu_n\|_\M
  \leq \lim_{n\to\infty} \bigl(\|\nu_n^+\|_\M + \|\nu_n^-\|_\M\bigr)
  = \|\nu^+\|_\M + \|\nu^-\|_\M
  = \|\nu\|_\M,
\]
where the last equality holds because $\nu^+$ and $\nu^-$ are mutually singular.
\end{proof}

\begin{proof}[Proof of \Cref{thm:I-RKBS-rep-cost}]
    From \cref{eq:I-RKBS-param-rep-cost}, the parametric representation cost $\Rcirc: \Lip(\X) \to [0, +\infty]$ is
    \begin{equation}
        \Rcirc(f) = \begin{cases}
            \norm{\TOp_\kernel^{-1} f}_\M = \norm{f}_\U & \text{if $f \in \F_\Theta = \U_0$}, \\
            +\infty & \text{if $f \in \Lip(\X) \setminus \U_0$}.
        \end{cases}
    \end{equation}
    By \cref{lem:lipschitz_kernels_embed-RKBS}, $\Rcirc: \Lip(\X) \to [0, +\infty]$ is coercive. Hence, by \cref{thm:rep_cost_lip_spaces}, the extended representation cost is given by the sequential characterization
    \begin{equation}
        R(f) = \min \curly{\alpha \in [0,+\infty] \st (f_n)_{n \in \N} \subset \U_0,\ f_n \to f \text{ pointwise},\ \norm{f_n}_\U \rightarrow \alpha}.
    \end{equation}

    Fix $f \in \U$ and set $\nu \coloneqq \TOp_\kernel^{-1}f \in \M(\Xi)$ so that $\norm{f}_\U = \norm{\nu}_\M$. By \cref{lem:approx_by_discrete_measures}, there exists a sequence of atomic measures $(\nu_n)_{n \in \N}$ such that $\nu_n$ weak$^*$ converges to $\nu$ and
    $\norm{\nu_n}_\M \to \norm{\nu}_\M$. Define $f_n \coloneqq \TOp_\kernel \nu_n$. Since $\U$ is isometrically isomorphic to $\M(\Xi)$, we have that $f_n$ weak$^*$ converges to $f$ and that $\norm{f_n}_\U \to \norm{f}_\U$. By \cref{lemma:I-RKBS-convergence}, we have that $f_n \to f$ pointwise and $\sup_{n \in \N} \norm{f_n}_\U < +\infty$.
    Thus, $R(f) \leq \lim_{n \to \infty} \norm{f_n}_\U = \norm{f}_\U$.

    Conversely, suppose that $f \in \F$ (i.e., $R(f) < +\infty$). Then, there exists a sequence $(f_n)_{n \in \N} \subset \U_0$ such that $f_n \to f$ pointwise with $R(f) = \lim_{n \to \infty} \Rcirc(f_n) = \lim_{n \to \infty} \norm{f_n}_\U$.  We have that $\sup_{n\in\N}\norm{f_n}_U<+\infty$. Since $\Xi$ is second-countable, $C_0(\Xi)$ is separable. Therefore, since $\U$ is isometrically isomorphic to $\M(\Xi)=C_0(\Xi)'$, closed and bounded subsets of $\U$ are weak$^*$-compact and metrizable. Hence, after passing to a subsequence, there exists $h\in \U$ such that $f_{n_j}$ weak$^*$ converges to $h$ in $\U$. For each $x\in\X$, point evaluation is weak$^*$-continuous on $\U$, so $f_{n_j}(x)\to h(x)$. Since $f_n\to f$ pointwise, we also have $f_{n_j}(x)\to f(x)$, and hence $h=f$ pointwise on $\X$. Thus $f\in \U$.
    
    Since the dual norm is weak$^*$ l.s.c., we have that
    \begin{equation}
      \norm{f}_\U \leq \liminf_{j\to\infty}\norm{f_{n_j}}_\U
= \lim_{n\to\infty}\norm{f_n}_\U = R(f).
    \end{equation}
    Thus, $R(f) = \norm{f}_\U$ for all $f \in \U$ and $R(f) =+\infty$ for all $f \in \Lip(\X) \setminus \U$, in which case $\F = \U$.
\end{proof}

This theorem verifies that the inductive bias of sparse kernel methods is exactly captured by the associated I-RKBS and that the representation cost is exactly captured by the I-RKBS norm. We can now investigate representer theorems for I-RKBSs through the lens of our abstract framework.

\begin{theorem} \label{thm:I-RKBS-rep-thm}
    Let $\Xi$ be a second-countable locally compact Hausdorff space, let $\kernel: \X \times \Xi \to \R$ be an $(L, \rho)$-Lipschitz I-RKBS kernel, and let $(\U, \norm{\dummy}_\U)$ be the corresponding I-RKBS. Fix the distinct data sites $x_1, \ldots, x_N \in \X$ and let $G: \R^N \to [0, +\infty]$ be proper, l.s.c., and coercive. For $\lambda > 0$, consider the problem
    \begin{equation}
        \min_{f \in \U} G\paren[\big]{f(x_1), \ldots, f(x_N)} + \lambda \norm{f}_\U. \label{eq:I-RKBS-prob}
    \end{equation}
    Then, if the objective is proper, the solution set is nonempty and there exists a minimizer $f^\star$ of \cref{eq:I-RKBS-prob} of the form
    \begin{equation}
        f^\star = \sum_{j=1}^K a_j \kernel(\dummy, \xi_j), \quad K \leq N, \label{eq:representer-I-RKBS}
    \end{equation}
    for some $a_1, \ldots, a_K \in \R$ and (adaptive) kernel parameters $\xi_1, \ldots, \xi_K \in \Xi$. 
    If we further suppose that
    \begin{equation}
        {\spn}\big\{(\kernel(x_1, \xi), \ldots, \kernel(x_N, \xi)) \st \xi \in \Xi\big\} = \R^N, \label{eq:interpolation-assump-rkbs}
    \end{equation}
    then the objective in \cref{eq:I-RKBS-prob} is always proper and the problem can be recast as the finite-atom optimization problem  (with nonempty solution set)
    \begin{equation}
        \min_{\substack{a_1, \ldots, a_K \in \R \\ \xi_1, \ldots, \xi_K \in \Xi}} G\paren*{\sum_{j=1}^K a_j \kernel(x_1, \xi_j), \ldots, \sum_{j=1}^K a_j \kernel(x_N, \xi_j)} + \lambda \sum_{j=1}^K \abs{a_j}, \quad K \geq N, \label{eq:finite-dim-I-RKBS}
    \end{equation}
    in the sense that any solution $(a^\star, \xi^\star) \in \R^K \times \Xi^K$ induces a solution to \cref{eq:I-RKBS-prob} via the map $(a^\star, \xi^\star) \mapsto \sum_{j=1}^K a^\star_j \kernel(\dummy, \xi_j^\star)$. In particular, although the parametric model class $\F_\Theta = \U_0 = \spn\{\kernel(\dummy,\xi)\}_{\xi \in \Xi}$ allows for an arbitrary number of kernels and arbitrary kernel parameters, it suffices to use at most $N$ active kernels.
\end{theorem}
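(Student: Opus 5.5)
The proof follows the template of the Besov representer theorem, working in $\B=\Lip(\X)$.

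\emph{Step 1 (existence).} By \cref{rem:w*-point-evaluation}, the evaluation operator $\EOp f=(f(x_1),\ldots,f(x_N))$ is $w^*$-continuous on $\Lip(\X)$. By \cref{thm:I-RKBS-rep-cost}, the objective of \cref{eq:I-RKBS-prob} equals $G(\EOp f)+\lambda R(f)$ with $R=\norm{\dummy}_\U$ on $\U$ and $+\infty$ elsewhere. By \cref{lem:lipschitz_kernels_embed-RKBS}, $R$ is coercive, so we may take $\Null=\{0\}$ in \cref{thm:existence_of_min}. Hence, when the objective is proper, a minimizer $\bar f\in\U$ exists.

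\emph{Step 2 (sparse form).} Set $\bar\nu\coloneqq\TOp_\kernel^{-1}\bar f$ and $z_i\coloneqq\bar f(x_i)$. Optimality of $\bar f$ forces $\bar\nu$ to solve
\begin{equation*}
\min_{\nu\in\M(\Xi)}\norm{\nu}_\M\quad\subj\quad \ang{\kernel(x_i,\dummy),\nu}=z_i,\quad i=1,\ldots,N.
\end{equation*}
Indeed, any feasible $\nu$ with smaller norm would give $\TOp_\kernel\nu$ a strictly smaller objective. Conversely, any solution of this problem maps under $\TOp_\kernel$ to a minimizer of \cref{eq:I-RKBS-prob}. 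Since $\kernel(x_i,\dummy)\in C_0(\Xi)$, the constraints are finitely many $w^*$-continuous linear functionals on $\M(\Xi)=C_0(\Xi)'$. The classical representer theorem for total-variation minimization~\cite{BoyerRepresenter,BrediesRepresenter,unser2016representer} then supplies a solution $\nu^\star=\sum_{j=1}^K a_j\delta_{\xi_j}$ with $K\le N$. The underlying mechanism is that the solution set is nonempty, convex and $w^*$-compact, its extreme points are such atomic measures, and Krein--Milman guarantees an extreme point exists. Consequently $f^\star=\TOp_\kernel\nu^\star=\sum_j a_j\kernel(\dummy,\xi_j)$ is a minimizer of the required form. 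This is the main step, but it is imported rather than reproved; the only care needed is to check the $w^*$-continuity and compactness hypotheses of the cited results.

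\emph{Step 3 (reduction to the finite problem).} Assumption \cref{eq:interpolation-assump-rkbs} says exactly that $\EOp(\F_\Theta)=\EOp(\U_0)=\R^N$. This gives properness: any $y$ with $G(y)<\infty$ is attained by some element of $\U_0\subset\U$, so the objective is proper.

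It also verifies the surjectivity hypothesis of \cref{cor:parametric-function-space-equivalence}. The remaining hypotheses follow from \cref{prop:parameter-root-properties-transfer} with $q=p=1$. Concatenating parameters gives parameter subadditivity, since $C$ is additive under concatenation. Scaling the coefficients $a_j\mapsto\alpha a_j$ gives $(1,1)$-homogeneity, and the empty (or zero-coefficient) parameter realizes $0$ at cost $0$. The corollary then yields equality of the infimal values of the parametric, model-class, and function-space problems.

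Fixed-width problems with $K\ge N$ need one more observation. Any parameter can be padded with zero-coefficient atoms, so by Step 2 the optimal value over width-$K$ parameters equals the unrestricted parametric infimum, and thus the function-space minimum. Existence of a solution to \cref{eq:finite-dim-I-RKBS} follows because the sparse minimizer $f^\star$ from Step 2 is realized by a width-$K$ parameter with $C(\theta)=\norm{\nu^\star}_\M=\Rcirc(f^\star)=R(f^\star)$. Finally, part (i) of \cref{lem:minimizer-transfer} shows that any minimizer of \cref{eq:finite-dim-I-RKBS} maps to a minimizer of \cref{eq:I-RKBS-prob}.
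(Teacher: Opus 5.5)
Your proposal is correct and follows the paper's proof essentially step for step. The steps are existence via \cref{thm:existence_of_min} in $\Lip(\X)$, reduction of any minimizer to the total-variation problem over $\M(\Xi)$ with an imported sparse representer theorem, and the finite-atom reformulation via \cref{cor:parametric-function-space-equivalence} with $q=p=1$; your zero-padding argument for fixed $K\geq N$, using that the sparse minimizer is realized with $C(\theta^\star)=R(f^\star)$, makes explicit the attainment and fixed-width equality of values that the paper leaves implicit, and together with $C\geq\Rcirc\geq R$ it would give that equality even without invoking the corollary.
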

\begin{proof}
    With $\B = \Lip(\X)$, we have by \cref{rem:w*-point-evaluation} that the evaluation operator $\EOp: \Lip(\X) \to \R^N$ given by
    \begin{equation}
        \EOp f = \big(f(x_1), \ldots, f(x_N)\big) \in \R^N
    \end{equation}
    is $w^*$-continuous. There exists at least one minimizer of \cref{eq:I-RKBS-prob} by \cref{thm:existence_of_min}. Let $\bar{f} \in \U$ be any minimizer of \cref{eq:I-RKBS-prob} and define $\bar{\nu} \coloneqq \TOp_\kernel^{-1} \bar{f} \in \M(\Xi)$. Then,
    \begin{equation}
         \bar{\nu} \in \argmin_{\nu \in \M(\Xi)} \norm{\nu}_\M \quad\subj\quad z_i = \int_\Xi \kernel(x_i, \xi) \dd\nu(\xi), \quad i = 1, \ldots, N, \label{eq:M-norm-prob}
    \end{equation}
    where $z_i = \bar{f}(x_i)$, $i = 1, \ldots, N$. Indeed, if this were not the case, it would contradict the optimality of $\bar{f}$. In particular, any solution $\tilde{\nu} \in \M(\Xi)$ to \cref{eq:M-norm-prob} induces a solution to \cref{eq:I-RKBS-prob} via the map $\tilde{\nu} \mapsto \TOp_\kernel \tilde{\nu}$. Since $\kernel(x_i, \dummy) \in C_0(\Xi)$, $i = 1, \ldots, N$, the problem \cref{eq:M-norm-prob} is a special case of the classical $\M$-norm minimization problem with finitely many weak$^*$ continuous linear constraints. This problem has a representer theorem~\cite{BoyerRepresenter,BrediesRepresenter,bredies2013inverse,FisherJerome,flinth2019exact,UnserSplinesgTV,zuhovickii1948remarks,zuhovickii1962approximation} that implies that there exists a solution of the form $\nu^\star = \sum_{j=1}^K a_j \delta_{\xi_j}$, for some $K \leq N$. Thus, the measure $\nu^\star$ then induces a solution to \cref{eq:I-RKBS-prob} of the form $f^\star = \sum_{j = 1}^K a_j \kernel(\dummy, \xi_j)$. 
    
    The reduction to \cref{eq:finite-dim-I-RKBS} follows from \cref{cor:parametric-function-space-equivalence}, applied to the atomic kernel parameterization $(a_j,\xi_j)_{j=1}^K \mapsto \sum_{j=1}^K a_j\kappa(\dummy,\xi_j)$ with parameter cost $\sum_{j=1}^K\abs{a_j}$, where we note that the hypothesis $\EOp(\F_\Theta) = \R^N$ is satisfied by \cref{eq:interpolation-assump-rkbs}. The hypotheses are verified with $q=p=1$ by \cref{prop:parameter-root-properties-transfer}. Thus, the parameter-space problem and the function-space problem have the same infimal value, and minimizers of the parameter-space problem induce minimizers of the function-space problem.
\end{proof}

\section{Shallow Neural Networks and Banach Spaces} \label{sec:shallow}
We now show that \emph{shallow} neural networks, i.e., networks with \emph{one} hidden layer or, equivalently, \emph{two} layers, can be studied with the same representation-cost machinery as kernel methods and RKHSs/I-RKBSs. The primary building block of a neural network is a \emph{neuron}, which is the composition of an affine map with a nonlinearity. We focus on the ReLU nonlinearity in this section. Then, a neuron is a function of the form
\begin{equation}
    \R^d \ni x \mapsto (w^\T x - b)_+ \in \R, \quad (w, b) \in \R^d \times \R,
\end{equation}
where $(\dummy)_+ \coloneqq \max\curly{0, \dummy}$ is the ReLU. Then, a shallow ReLU neural network is a function of the form
\begin{equation}
    f_\theta(x) = \sum_{j=1}^K v_j (w_j^\T x - b_j)_+ + w_0^\T x - b_0, \quad \theta = \{(v_j, w_j, b_j)\}, \label{eq:ReLU-form}
\end{equation}
where $v_j \in \R$ are the \emph{output weights}, $w_j \in \R^d$ are the \emph{input weights}, $b_j \in \R$ are the \emph{biases}, and $K$ is the \emph{width} of the network. The affine term $x \mapsto w_0^\T x - b_0$ is called a \emph{skip connection} or \emph{residual connection} in neural network parlance. The parameter space is
\begin{equation}
\Theta \coloneqq \{(w_0, b_0), (v_1,w_1,b_1),\ldots,(v_K,w_K,b_K)\st v_j \in \R, w_j \in \R^d, b_j \in \R, K\in \mathbb{N}\}.
\end{equation}
\begin{remark}
    The point of the skip connection in \cref{eq:ReLU-form} is to provide a crisp function-space description. In particular, it allows for the separation of the affine part of the model, so that the representation cost measures only the genuinely non-affine component of the neural network.
\end{remark}

\subsection{Representation Costs and Representer Theorems}
The parameter cost for training shallow ReLU neural networks is the \emph{weight-decay} cost
\begin{equation} \label{eq:shallow-relu-weight-decay}
    C(\theta) = \frac{1}{2}\sum_{j=1}^K \abs{v_j}^2 + \norm{w_j}_2^2,
\end{equation}
which is the squared Euclidean norm of the vectorization of the penalized hidden and output weights~\cite{hanson1988comparing,krogh1991simple}. Let $\Aff(\R^d) \coloneqq \curly{w^\T(\dummy) - b \st w \in \R^d, b \in \R}$ denote the space of affine functions. This is a norm-closed subspace of $\Lip(\R^d)$, where we fix the base point $\e = 0$ in this section. We have the following result.

\begin{lemma} \label{lemma:shallow-rep-cost-coercive-const}
    If we construct the extended representation cost $R$ with $\B = \Lip(\R^d)$, then, for all $f \in \Lip(\R^d)$, we have
    \begin{equation}
        \inf_{g \in \Aff(\R^d)} \norm{f + g}_{\Lip(\R^d)} \leq R(f) \leq \Rcirc(f). \label{eq:shallow-Lip-bound}
    \end{equation}
    In particular, $\Rcirc$ and $R$ are coercive modulo $\Aff(\R^d)$. Furthermore, $\const(\Rcirc) = \const(R) = \Aff(\R^d)$.
\end{lemma}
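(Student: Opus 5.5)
The plan is to prove the parameter-level bound first and then transfer it to $R$. The two tools are \cref{lem:suff_cond_R_coercive} (with $\gamma(t)=t$ and $\Null=\Aff(\R^d)$) and the observation that the affine part of the model is unpenalized.

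\textbf{Step 1: parameter coercivity modulo $\Aff(\R^d)$.} Fix $\theta$ and drop the affine skip term, which lies in $\Aff(\R^d)$. This gives
\begin{equation}
    \inf_{g\in\Aff(\R^d)}\norm{f_\theta+g}_{\Lip(\R^d)}\le \Bigl\|\sum_{j} v_j(w_j^\T\dummy-b_j)_+ - c\Bigr\|_{\Lip(\R^d)},
\end{equation}
where the constant $c\coloneqq\sum_j v_j(-b_j)_+$ is chosen so that the function vanishes at the base point $\e=0$. Each neuron $v_j(w_j^\T x-b_j)_+$ has Lipschitz constant at most $\abs{v_j}\norm{w_j}_2$. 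Hence the Lipschitz norm of the shifted network is at most $\sum_j\abs{v_j}\norm{w_j}_2$. By AM--GM,
\begin{equation}
    \sum_j\abs{v_j}\norm{w_j}_2\le\tfrac12\sum_j\bigl(\abs{v_j}^2+\norm{w_j}_2^2\bigr)=C(\theta).
\end{equation}
Thus $C$ is parameter coercive modulo $\Aff(\R^d)$ with $\gamma(t)=t$. This $\gamma$ is nondecreasing, continuous and unbounded, so it satisfies \cref{defn:param-coercive}.

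\textbf{Step 2: transfer to $R$.} The space $\Aff(\R^d)$ is $(d+1)$-dimensional, hence $w^*$-closed. \Cref{lem:suff_cond_R_coercive} then gives \cref{eq:shallow-Lip-bound} directly. The inequality $R\le\Rcirc$ is the general one. Coercivity modulo $\Aff(\R^d)$ of both $\Rcirc$ and $R$ follows at once.

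\textbf{Step 3: the constancy spaces.} For $\Aff(\R^d)\subset\const(\Rcirc)$, take $g\in\Aff(\R^d)$ and $\alpha\in\R$, and note that $\F_\Theta+\Aff(\R^d)=\F_\Theta$. Any $\theta$ realizing $f$ yields a $\theta'$ realizing $f+\alpha g$: only the skip parameters $(w_0,b_0)$ change, and $C(\theta')=C(\theta)$. Taking infima, and using the symmetric argument, gives $\Rcirc(f+\alpha g)=\Rcirc(f)$. This also holds off $\F_\Theta$, where both sides equal $+\infty$. Translation by $\alpha g$ is a $w^*$-homeomorphism, so the l.s.c.\ regularization commutes with it, giving $\Aff(\R^d)\subset\const(R)$.

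For the reverse inclusions, take $g\in\const(R)$. Then $R(\alpha g)=R(0)=0$ for all $\alpha$. By \cref{eq:shallow-Lip-bound}, $\operatorname{dist}(\alpha g,\Aff(\R^d))=0$ for every $\alpha$. Since $\Aff(\R^d)$ is norm-closed, $g\in\Aff(\R^d)$. The same argument applies to $g\in\const(\Rcirc)$, because $\Rcirc(0)=0$ (take the empty network with zero skip).

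\textbf{Main obstacle.} The step needing most care is Step 1's Lipschitz estimate under the specific norm $\max\{\abs{\dummy}_{\Lip},\abs{f(0)}\}$. Subtracting the constant $c$, which lies in $\Aff(\R^d)$, handles the base-point term, so only the Lipschitz seminorm bound remains. That bound is elementary because $\abs{(s)_+-(t)_+}\le\abs{s-t}$.
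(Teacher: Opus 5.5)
Your proposal is correct and takes essentially the same route as the paper. Both arguments bound the Lipschitz norm neuron by neuron and apply AM--GM to get parameter coercivity modulo $\Aff(\R^d)$, transfer this to $R$ via \cref{lem:suff_cond_R_coercive}, and obtain $\Aff(\R^d)\subset\const(R)$ from translation invariance of the l.s.c.\ regularization (\cref{lem:nesting_of_const_spaces}). The only differences are minor: you subtract the constant $c$ to handle the base-point term explicitly, and for the reverse inclusion you use the linear bound directly ($R(g)=R(0)=0$ forces $\operatorname{dist}(g,\Aff(\R^d))=0$) rather than the paper's general scaling argument in \cref{prop:coercive_mod_determines_constancy}.
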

\begin{proof}
    The space $\Aff(\R^d)$ is a norm-closed $(d+1)$-dimensional subspace of $\Lip(\R^d)$ and is hence $w^*$-closed. For any $\theta \in \Theta$, observe that
    \begin{equation}
        \inf_{g \in \Aff(\R^d)} \norm{f_\theta + g}_{\Lip(\R^d)} \leq \sum_{j=1}^K \abs{v_j} \norm{w_j}_2 \leq \frac{1}{2} \sum_{j=1}^K \abs{v_j}^2 + \norm{w_j}_2^2,
    \end{equation}
    where the first inequality holds since the Lipschitz constant of each term $v_j(w_j^\T(\dummy) - b_j)_+$ is upper bounded by $\abs{v_j}\norm{w_j}_2$, and the second inequality holds since $ab \leq (a^2 + b^2)/2$. Hence, by \cref{lem:suff_cond_R_coercive}, $\Rcirc$ and $R$ are both coercive modulo $\Aff(\R^d)$ and \cref{eq:shallow-Lip-bound} holds. Finally, it is clear that $\Aff(\R^d) \subset \const(\Rcirc)$ since any affine function can be added to any $f \in \F_\Theta$ without changing the parametric representation cost (as the skip connection is not penalized by the parameter cost). By \cref{lem:nesting_of_const_spaces}, $\Aff(\R^d) \subset \const(\Rcirc) \subset \const(R)$. By \cref{prop:coercive_mod_determines_constancy}, $\Aff(\R^d) = \const(\Rcirc) = \const(R)$.
\end{proof}

Observe that the ReLU is \emph{positively homogeneous}, i.e., $(\gamma \dummy)_+ = \gamma (\dummy)_+$ for any $\gamma > 0$. This homogeneity implies that the parametric representation cost of $f \in \F_\Theta$ can be written as
\begin{align}
    \Rcirc(f) &= \inf\curly*{\frac{1}{2}\sum_{j=1}^K \abs{v_j}^2 + \norm{w_j}_2^2 \st f = \sum_{j=1}^K v_j (w_j^\T (\dummy) - b_j)_+ + w_0^\T (\dummy) - b_0} \nonumber \\
    &= \inf\curly*{\sum_{j=1}^K \abs{v_j} \norm{w_j}_2 \st f = \sum_{j=1}^K v_j (w_j^\T (\dummy) - b_j)_+ + w_0^\T (\dummy) - b_0} \nonumber \\
    &= \inf\curly*{\sum_{j=1}^K \abs{v_j} \st f = \sum_{j=1}^K v_j (w_j^\T (\dummy) - b_j)_+ + w_0^\T (\dummy) - b_0, \quad w_j \in \Sph^{d-1}}, \label{eq:weight-to-1-norm}
\end{align}
where $\Sph^{d-1} \coloneqq \curly{x \in \R^d \st \norm{x}_2 = 1}$ denotes the unit sphere and $\Rcirc(f) = +\infty$ for any $f \in \Lip(\R^d) \setminus \F_\Theta$. 
The second equality follows by balancing each pair of input and output weights and applying the equality case of the arithmetic mean--geometric mean inequality. The third equality follows by normalizing each nonzero input weight and absorbing its norm into the corresponding output weight.
For more details about this reduction, see~\cite[\S2]{shenouda2024variation} (see also~\cite{grandvalet1998least,neyshabur2015search,OngieFunctionSpace,parhi2021banach,parhi2023deep,SavareseInfWidth}).

With the reduction in \cref{eq:weight-to-1-norm}, observe that the neurons parameterized by $(w_j, b_j) \in \cyl$ and $(-w_j, -b_j) \in \cyl$ only differ by an affine function. Hence, we can further simplify the parametric representation cost as
\begin{align}
    \Rcirc(f) 
    &= \inf\curly*{\sum_{j=1}^K \abs{v_j} \st f = \sum_{j=1}^K v_j \frac{\abs{w_j^\T (\dummy) - b_j}}{2} + w_0^\T (\dummy) - b_0, \quad \curly{w_j}_{j \geq 1} \subset \Sph^{d-1}} \nonumber \\
    &= \inf\curly*{\sum_{j=1}^K \abs{v_j} \st f = \sum_{j=1}^K v_j \frac{\abs{w_j^\T (\dummy) - b_j}}{2} + w_0^\T (\dummy) - b_0, \quad \curly{(w_j, b_j)}_{j \geq 1} \subset \mathbb{P}^d}, \label{eq:shallow-Rcirc-simplified}
\end{align}
where $\mathbb{P}^d$ denotes the space of unoriented affine hyperplanes in $\R^d$.\footnote{This space can be identified with the quotient $\mathbb{P}^d \cong (\cyl) / \sim$ where $(w, b) \sim (-w, -b)$.} The second equality holds since the absolute value is an even function. This reduction reveals the utility of including an unpenalized skip connection in the architecture and parameter cost.

We can now immediately recover the sharp characterization of the representation cost of shallow ReLU neural networks via the Radon transform due to~\cite{OngieFunctionSpace} using the machinery developed in this paper. Recall that the Radon transform of $f \in L^1(\R^d)$ is given by
\begin{equation}
    \RadonOp\curly{f}(u, t) = \int_{\{x \in \R^d \st u^\T x = t\}} f(x) \dd x, \quad (u, t) \in \mathbb{P}^d,
\end{equation}
where $\dd x$ denotes integration against the $(d-1)$-dimensional Lebesgue measure on the hyperplane $\curly{x \in \R^d \st u^\T x = t}$. In the sequel, the Radon transform is understood in the \emph{sense of distributions}, see, e.g.,~\cite{LudwigRadon,parhi2024distributional,RammRadonDist}. 

\begin{theorem} \label{thm:Radon-BS}
    If we construct the extended representation cost with $\B = \Lip(\R^d)$, we have for any $f \in \Lip(\R^d)$ that
    \begin{equation}
    R(f) = \begin{cases}
        c_d\norm{\RadonOp (-\Delta)^{\frac{d+1}{2}} f}_\M, & \text{if $f \in \F$}, \\
        +\infty, &\text{if $f \in \Lip(\R^d) \setminus \F$},
    \end{cases} \label{eq:Radon-seminorm-rep-cost}
    \end{equation}
    where $c_d \coloneqq 1/(2(2\pi)^{d-1})$, $\Delta = \sum_{i=1}^d \partial_{x_i}^2$ denotes the Laplacian\footnote{Fractional powers of the Laplacian are understood via the Fourier transform.} and
    \begin{equation}
        \F = \curly{f \in \Lip(\R^d) \st c_d\RadonOp (-\Delta)^{\frac{d+1}{2}} f \in \M(\mathbb{P}^d)}
    \end{equation}
    denotes the native space. Furthermore, the native space is a Banach space when endowed with the norm
    \begin{equation}
        \norm{f}_\F = \max\{R(f), \abs{f(0)}, \norm{(f(\e_1) - f(0), \ldots, f(\e_d) - f(0))}_2\}, \label{eq:Radon-norm}
    \end{equation}
    where $\e_i$ denotes the $i$th canonical basis vector.
\end{theorem}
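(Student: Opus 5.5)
We follow the template of \cref{thm:I-RKBS-rep-cost}, with $\Xi=\mathbb{P}^d$ and kernel $\kernel(x,(w,b))=\abs{w^\T x-b}/2$, modulo the unpenalized affine part. By \cref{lemma:shallow-rep-cost-coercive-const}, $\Rcirc$ is coercive modulo $\Aff(\R^d)=\const(\Rcirc)$, which is finite dimensional. Hence \cref{thm:rep_cost_lip_spaces} applies, and $R(f)$ is the minimal limit of $\Rcirc(f_n)$ over sequences $f_n\in\F_\Theta$ with $f_n\to f$ pointwise. Moreover, \cref{prop:parameter-root-properties-transfer} applies with $q=p=1$: splitting weights gives homogeneity, and concatenating neurons gives subadditivity. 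Consequently $R$ is a seminorm, and by \cref{lem:chatacterizing_R_mod_N} it suffices to work on a complement $\M$ of $\Aff(\R^d)$.

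The key analytic identity is the following. For $f_\theta$ written in the form \cref{eq:shallow-Rcirc-simplified}, the distributional Radon-domain computation of~\cite{OngieFunctionSpace} (using the Fourier slice theorem) gives
\begin{equation*}
c_d\RadonOp(-\Delta)^{\frac{d+1}{2}} f_\theta=\sum_j v_j\,\delta_{(w_j,b_j)}
\end{equation*}
as a measure on $\mathbb{P}^d$, with the affine part annihilated. This identity has two consequences. First, since the measure on the right determines the $v_j$ uniquely once the hyperplanes are distinct, $\Rcirc(f_\theta)=\norm{c_d\RadonOp(-\Delta)^{\frac{d+1}{2}}f_\theta}_\M$. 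Second, we need injectivity modulo affine functions: if $f\in\Lip(\R^d)$ and $\RadonOp(-\Delta)^{\frac{d+1}{2}}f=0$, then $f$ is affine. This holds because $\widehat f$ is then supported at the origin, so $f$ is a polynomial, and the Lipschitz condition forces degree at most one.

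\emph{Upper bound.} Take $f\in\F$ and set $\nu=c_d\RadonOp(-\Delta)^{\frac{d+1}{2}}f\in\M(\mathbb{P}^d)$. Approximate $\nu$ by atomic measures $\nu_n$, in the weak$^*$ sense and with $\norm{\nu_n}_\M\to\norm{\nu}_\M$, using \cref{lem:approx_by_discrete_measures}. Since $\mathbb{P}^d$ is not compact, we must handle the tails: the kernel $\abs{w^\T x-b}/2$ is not in $C_0$, but after subtracting a suitable affine correction it becomes uniformly Lipschitz in $x$. Let $f_n$ be the corresponding networks, each corrected by an affine term chosen so that $f_n(0)$ and $\nabla$-data at the basis points match those of $f$. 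The Lipschitz bounds are uniform by \cref{lemma:shallow-rep-cost-coercive-const}, so $f_n\to g$ pointwise along a subsequence with $g-f$ affine, and hence $g=f$ by the normalization. This gives $R(f)\le\norm{\nu}_\M$.

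\emph{Lower bound.} Suppose $R(f)<\infty$ and take a recovery sequence $f_n$. The measures $\nu_n$ are bounded in $\M$, so some subsequence converges weak$^*$ to a measure $\nu$. Pairing the identity above with Schwartz test functions of the form $\RadonOp^*$-images, and passing to the limit using pointwise convergence together with the uniform Lipschitz bounds (dominated convergence for the modified kernels), identifies $\nu=c_d\RadonOp(-\Delta)^{\frac{d+1}{2}}f$. Weak$^*$ lower semicontinuity of the norm then gives $\norm{\nu}_\M\le R(f)$.

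\emph{Banach structure.} The functional
\begin{equation*}
f\mapsto \max\{\abs{f(0)},\norm{(f(\e_i)-f(0))_i}_2\}
\end{equation*}
is $\norm{\P_\Null f}$ for a $w^*$-continuous projector onto $\Null=\Aff(\R^d)$, namely the map sending $f$ to the affine interpolant of $f$ at $0,\e_1,\dots,\e_d$. Therefore \cref{cor:F_is_quasi_banach} with $p=q=1$ yields that $\F$ is a Banach space under the norm \cref{eq:Radon-norm}.

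The main obstacle is the limit passage in the lower bound, and the matching tail control in the upper bound. The issue is that the neuron kernel is unbounded and does not lie in $C_0(\mathbb{P}^d)$, so the I-RKBS argument does not transfer verbatim. I would first try to replace the kernel by its affinely corrected version $\abs{w^\T x-b}-\abs{b}$ to obtain uniform bounds, and fall back on the distributional Radon framework of~\cite{parhi2024distributional} if this correction is insufficient.
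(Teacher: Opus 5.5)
Your route is genuinely different from the paper's, and it can be completed. However, the obstacle you single out comes from your choice of kernel, not from the problem.

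The paper runs no Radon-domain limit argument at all. It uses the $w^*$-continuous projector $\P_{\Aff} f = f(0)+\sum_{i=1}^d (f(\e_i)-f(0))x_i$, which is exactly the normalization you impose in your upper bound. It then observes that the complementary projector sends the function $x\mapsto\tfrac12\abs{w^\T x-b}$ to $x\mapsto\kernel_{\ReLU}(x,(w,b))$, and this lies in $C_0(\mathbb{P}^d)$ for each fixed $x$. After the factorization of \cref{lem:chatacterizing_R_mod_N}, the cost on the complement of $\Aff(\R^d)$ is the atomic $\ell^1$ cost of a Lipschitz I-RKBS kernel. So \cref{thm:I-RKBS-rep-cost} applies verbatim and gives both inequalities at once. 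The only analytic inputs are injectivity of $\TOp_{\ReLU}$ and the inversion formula $\TOp_{\ReLU}^{-1}=-c_d\RadonOp(-\Delta)^{\frac{d+1}{2}}$, both imported from \cite{parhi2025function}.

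Your proposed correction $\widetilde\kernel(x,(w,b))\coloneqq\tfrac12\bigl(\abs{w^\T x-b}-\abs{b}\bigr)$ is bounded but not in $C_0(\mathbb{P}^d)$. It tends to $-\tfrac12 w^\T x$ as $b\to+\infty$. So along a merely weak$^*$-convergent sequence $(\nu_n)$, as in your lower bound, mass escaping to $\abs{b}\to\infty$ leaves a spurious linear term in the limit. The missing correction is precisely $\kernel_{\ReLU}(x,\xi)=\widetilde\kernel(x,\xi)-\sum_{i=1}^d x_i\,\widetilde\kernel(\e_i,\xi)$.

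Your fallback does work, and it needs no kernel at all. Pair $c_d\RadonOp(-\Delta)^{\frac{d+1}{2}}f_n$ with even Lizorkin-type test functions $\psi\in C_0(\mathbb{P}^d)$ and move the operator onto $\psi$. This pairing is your atomic measure $\nu_n$, up to a sign relative to the paper's inversion formula, which is harmless for norms. Then pass to the limit on the $f_n$ side by dominated convergence, using pointwise convergence and the uniform bound $\abs{f_n(x)}\lesssim 1+\norm{x}_2$. Density of such $\psi$ in $C_0(\mathbb{P}^d)$ then identifies the weak$^*$ limit.

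You need this same lemma, together with your null-space claim, to justify the assertion in your upper bound that $g-f$ is affine; as written, that step is not justified.

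What your route buys is a self-contained derivation of both inclusions in the description of $\F$. In particular, it shows that every $f$ with $c_d\RadonOp(-\Delta)^{\frac{d+1}{2}}f\in\M(\mathbb{P}^d)$ is a limit of networks, which on the paper's route is absorbed into the citation. The price is the full distributional Radon machinery of \cite{parhi2024distributional}, and you do not exhibit the I-RKBS structure. Your treatment of the Banach-space claim via \cref{cor:F_is_quasi_banach} matches the paper's.
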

\begin{proof}
From~\cite[Appendix~A]{parhi2022kinds}, consider the projector $\P_\Aff: \Lip(\R^d) \to \Lip(\R^d)$ defined by
\begin{equation}
    \P_\Aff f = f(0) + \sum_{i=1}^d (f(\e_i) - f(0)) x_i, \quad f \in \Lip(\R^d), \label{eq:Aff-proj}
\end{equation}
where $x = (x_1, \ldots, x_d) \in \R^d$. This projector is $w^*$-continuous since it is defined by a finite number of point evaluations. Let $\Aff(\R^d)^\comp$ denote the corresponding topological complement of $\Aff(\R^d)$. The corresponding complementary projector is $\P_{\Aff^\comp} \coloneqq (\Id - \P_\Aff): \Lip(\R^d) \to \Lip(\R^d)$. Let
\begin{equation}
    f \coloneqq \sum_{j=1}^K v_j \frac{\abs{w_j^\T (\dummy) - b_j}}{2} + w_0^\T (\dummy) - b_0, \quad \curly{(w_j, b_j)}_{j \geq 1} \subset \mathbb{P}^d,
\end{equation}
a direct calculation (cf.~\cite[pp.~482--483]{parhi2022kinds}) reveals that
\begin{equation}
    \P_{\Aff^\comp} f = \sum_{j=1}^K v_j \kernel_{\ReLU}(\dummy, (w_j, b_j)), \label{eq:Aff-comp-proj}
\end{equation}
with
\begin{equation} \label{eq:ReLU-kernel}
    \kernel_{\ReLU}(x, (w, b)) = \frac{\abs{w^\T x - b}}{2} - \frac{\abs{b}}{2} \paren*{1 - \sum_{i=1}^d x_i} - \sum_{i=1}^d x_i \frac{\abs{w_i - b}}{2}, \quad (w, b) \in \mathbb{P}^d,
\end{equation}
where, in particular, $\kernel_{\ReLU}(x, \dummy) \in C_0(\mathbb{P}^d)$ for all $x \in \R^d$~\cite[p.~483]{parhi2022kinds}. By \cref{lemma:shallow-rep-cost-coercive-const}, $\Aff(\R^d) \subset \const(\Rcirc)$. Hence, we can apply \cref{lem:chatacterizing_R_mod_N} to find that the extended representation cost admits the factorization
\begin{equation}
    R = \overline{\Rcirc|_{\Aff(\R^d)^\comp}}^{\,w^*_{\Aff^\comp}} \circ \P_{\Aff^\comp},
\end{equation}
where $w^*_{\Aff^\comp}$ is the subspace topology of $w^*$ on $\Aff^\comp(\R^d)$. From \cref{eq:Aff-comp-proj}, we have that
\begin{equation}
    \Rcirc|_{\Aff(\R^d)^\comp}(f) = \inf\curly*{\sum_{j=1}^K \abs{v_j} \st f = \sum_{j=1}^K v_j \kernel_{\ReLU}(\dummy, (w_j, b_j)), \quad \curly{(w_j, b_j)}_{j \geq 1} \subset \mathbb{P}^d}
\end{equation}
for $f \in \spn\{\kernel_{\ReLU}(\dummy, (w, b))\}_{(w, b) \in \mathbb{P}^d}$ and $+\infty$ otherwise.

By \cite[Theorem~3.6]{parhi2025function} with $k = (d-1)$ and $\LOp = \Delta$, the integral operator
\begin{equation}
    \TOp_{\ReLU}: \M(\mathbb{P}^d) \to \R^{\R^d}: \nu \mapsto \int_{\mathbb{P}^d} \kernel_{\ReLU}(\dummy, (w, b)) \dd \nu(w, b)
\end{equation}
is an injection (see also~\cite[Lemma~1]{OngieFunctionSpace}). Hence, by \cref{lemma:I-RKBS}, $\kernel_{\ReLU}:  \R^d \times \mathbb{P}^d \to \R$ is an I-RKBS kernel. Let $(\U, \norm{\dummy}_\U)$ denote the corresponding I-RKBS. Next, the map $x \mapsto \kernel_{\ReLU}(x, \dummy)$ is $(1, \norm{\dummy}_2)$-Lipschitz~\cite[Equation~(C.3)]{parhi2026compositional}. Finally, $\mathbb{P}^d$ is a second-countable locally compact Hausdorff space. Thus, by \cref{thm:I-RKBS-rep-cost},\footnote{Since the span of the kernels and its corresponding I-RKBS are contained in
the weak$^*$-closed space $\Aff(\mathbb R^d)^\comp$, the l.s.c.\
regularizations computed in $\Aff(\mathbb R^d)^\comp$ and in
$\Lip(\mathbb R^d)$ agree on $\Aff(\mathbb R^d)^\comp$.} we have for $f \in \Lip(\R^d)$ that
\begin{equation}
    R(f) = \begin{cases}
        \norm{\P_{\Aff^\comp} f}_\U, &\text{if $\P_{\Aff^\comp} f \in \U$}, \\
        +\infty, &\text{else}.
    \end{cases}
\end{equation}

Observe that
\begin{equation}
    \norm{\P_{\Aff^\comp} f}_\U = \norm{\TOp_{\ReLU}^{-1} \P_{\Aff^\comp} f}_\M.
\end{equation}
From \cite[Theorem~3.6]{parhi2025function} with $k = (d-1)$ and $\LOp = \Delta$, we have that $\TOp_{\ReLU}^{-1} = -c_d \RadonOp (-\Delta)^{\frac{d+1}{2}}$. Thus,
\begin{equation}
    \TOp_{\ReLU}^{-1} \P_{\Aff^\comp} f = -c_d \RadonOp (-\Delta)^{\frac{d+1}{2}} \P_{\Aff^\comp} f = -c_d \RadonOp (-\Delta)^{\frac{d+1}{2}} f, \quad \P_{\Aff^\comp} f \in \U.
\end{equation}
Hence,
\begin{equation}
    R(f) = \begin{cases}
        c_d \norm{\RadonOp (-\Delta)^{\frac{d+1}{2}} f}_\M, & \text{if $f \in \F$}, \\
        +\infty, &\text{if $f \in \Lip(\R^d) \setminus \F$}.
    \end{cases} \label{eq:R-form-proof-Radon-BS}
\end{equation}

Since $\Rcirc$ is subadditive and $1$-homogeneous, as can be seen directly from \cref{eq:shallow-Rcirc-simplified}, we have that $\F$ is a Banach space when endowed with \cref{eq:Radon-norm} by \cref{cor:F_is_quasi_banach} with $\Null = \Aff(\R^d)$ and the projector $\P_\Aff$ from \cref{eq:Aff-proj}.
\end{proof}

This theorem reveals that the inductive bias of shallow ReLU neural networks with the weight-decay parameter cost is exactly captured by the Radon-domain \emph{seminorm} \cref{eq:Radon-seminorm-rep-cost}. By the intertwining properties of the Radon transform, if we define the operator $\KOp \coloneqq c_d (-\partial_t^2)^{\frac{d-1}{2}}$, that acts on the $t$-variable of the Radon domain, we have the equality
\begin{equation}
    R(f) = \norm{\KOp \RadonOp \Delta f}_\M. \label{eq:FBP-version}
\end{equation}
The operator $\KOp$ is the ``backprojection filter'' from the filtered-backprojection algorithm in computed tomography. Due to the equality \cref{eq:FBP-version}, the native space $\F$ for this representation cost is referred to as the space of functions of \emph{second-order Radon bounded variation} and denoted by $\RBV^2(\R^d) \coloneqq \F$. Furthermore, this representation cost is referred to as the second-order Radon total variation of a function, denoted by $\RTV^2(f) \coloneqq R(f)$~\cite{parhi2021banach,parhi2022kinds,parhi2023deep,parhi2022near}. This is because when $d = 1$, $\RTV^2(\dummy) = \TV^2(\dummy)$, which is the classical notion of second-order total variation of a univariate function. Indeed, $f: \R \to \R$ is said to have bounded second-order total variation if $f'' \in \M(\R)$. Furthermore, the second-order total variation is precisely given by $\TV^2(f) = \norm{f''}_\M$. 

\begin{theorem} \label{thm:Radon-BS-rep-thm}
    Let $x_1, \ldots, x_N \in \R^d$ be distinct and suppose that the restriction of the evaluation map
    \begin{equation}
    \EOp: f \mapsto \paren[\big]{f(x_1), \ldots, f(x_N)}
    \end{equation}
    to $\Aff(\R^d)$ is injective. Let $G: \R^N \to [0, +\infty]$ be proper, l.s.c., and coercive. For $\lambda > 0$, consider the problem
    \begin{equation}
        \min_{f \in \RBV^2(\R^d)} G\paren[\big]{f(x_1), \ldots, f(x_N)} + \lambda \RTV^2(f). \label{eq:RTV2-prob}
    \end{equation}
    Then, the solution set is nonempty and there exists a minimizer $f^\star$ of \cref{eq:RTV2-prob} of the form
    \begin{equation}
        f^\star(x) = \sum_{j=1}^K v_j (w_j^\T x - b_j)_+ + w_0^\T x - b_0, \quad x \in \R^d, \quad K \leq N, \label{eq:ReLU-representer}
    \end{equation}
    for some $v_1, \ldots, v_K \in \R$, $w_0, \ldots, w_K \in \R^d$, and $b_0, \ldots, b_K \in \R$. The problem \cref{eq:RTV2-prob} can be recast as the finite-dimensional optimization problem (with nonempty solution set)
    \begin{equation}
        \min_{\theta = \{(v_j, w_j, b_j)\} \cup \{(w_0, b_0)\}} G\paren[\big]{f_\theta(x_1), \ldots, f_\theta(x_N)} + \frac{\lambda}{2} \sum_{j=1}^K \abs{v_j}^2 + \norm{w_j}_2^2, \quad K \geq N, \label{eq:weight-decay-representer}
    \end{equation}
    where $\theta$ and $f_\theta$ are as in \cref{eq:ReLU-form}, in the sense that any solution $\theta^\star$ induces a solution to \cref{eq:RTV2-prob} via the map $\theta^\star \mapsto f_{\theta^\star}$. In particular, although the parametric model class allows for an arbitrary number of neurons, it suffices to use at most $N$ active neurons.
\end{theorem}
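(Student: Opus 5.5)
Following the pattern of the kernel and I-RKBS representer theorems (\cref{thm:rkhs-representer,thm:I-RKBS-rep-thm}), the argument proceeds in three steps: existence, sparsity, and reduction to the finite-width problem.

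\emph{Step 1: existence.} Work in $\B=\Lip(\R^d)$ with $\Null=\Aff(\R^d)$. By \cref{lemma:shallow-rep-cost-coercive-const}, $R=\RTV^2$ is coercive modulo $\Aff(\R^d)$, which is finite-dimensional. By \cref{rem:w*-point-evaluation}, $\EOp$ is $w^*$-continuous, and by hypothesis $\ker(\EOp)\cap\Aff(\R^d)=\{0\}$. For properness of the objective, note that the injectivity of $\EOp|_{\Aff}$ together with distinctness of the data sites lets us interpolate arbitrary data with a finite ReLU network. Indeed, if the $\EOp$-image of the span of neurons were a proper subspace, some nonzero $c\in\R^N$ would satisfy $\sum_i c_i (w^\T x_i-b)_+=0$ for all $(w,b)$. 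Choosing $w$ generic so that the $w^\T x_i$ are distinct, and varying $b$ across those values, forces $c=0$. Hence $\EOp(\F_\Theta)=\R^N$, so the objective is finite at some network. \Cref{thm:existence_of_min} then yields a minimizer $\bar f\in\RBV^2(\R^d)$.

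\emph{Step 2: sparsity.} Fix a minimizer $\bar f$ and set $z=\EOp\bar f$. Then $\bar f$ minimizes $R$ subject to $\EOp f=z$. Use the factorization from the proof of \cref{thm:Radon-BS}: $R(f)=\|\TOp_{\ReLU}^{-1}\P_{\Aff^\comp}f\|_\M$, and every $f\in\F$ decomposes uniquely as $f=\TOp_{\ReLU}\nu+a$ with $\nu\in\M(\mathbb P^d)$ and $a\in\Aff(\R^d)$. The constraint becomes $\int\kernel_{\ReLU}(x_i,\cdot)\,\dd\nu+a(x_i)=z_i$. This is an $\M$-norm minimization with finitely many $w^*$-continuous constraints (since $\kernel_{\ReLU}(x_i,\cdot)\in C_0(\mathbb P^d)$), augmented by an unpenalized finite-dimensional variable $a$.

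This is the \textbf{main obstacle}: the cited abstract representer theorems \cite{BoyerRepresenter,BrediesRepresenter,UnserSplinesgTV} handle such problems with an unpenalized null space. They give an extreme-point solution with at most $N-\dim\Aff(\R^d)\le N$ atoms, or at worst $N$ atoms. One option is to cite these directly. Alternatively, fix the optimal $a$ and apply the pure measure-problem representer theorem used in \cref{thm:I-RKBS-rep-thm} to $\nu$, which gives at most $N$ atoms. The latter route is safer, since it only needs the already-used result.

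This yields $\nu^\star=\sum_{j=1}^K v_j\delta_{(w_j,b_j)}$ with $K\le N$. Then $f^\star=\sum_j v_j\kernel_{\ReLU}(\cdot,(w_j,b_j))+a$. By \cref{eq:ReLU-kernel}, each kernel equals $\tfrac12|w_j^\T x-b_j|$ plus an affine function. Since $\tfrac12|t|=(t)_+-\tfrac12 t$, we can rewrite $f^\star$ as a ReLU network with $K$ neurons plus a skip connection, which gives the form \cref{eq:ReLU-representer}.

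\emph{Step 3: reduction to \cref{eq:weight-decay-representer}.} Apply \cref{thm:equivalence_of_min_quasi_norm_case} with $q=p=1$, where $Q_\Theta=\Rcirc$ is subadditive and $1$-homogeneous by \cref{eq:shallow-Rcirc-simplified}. The surjectivity $\EOp(\F_\Theta)=\R^N$ was shown in Step 1. This gives equality of infima and, via \cref{lem:minimizer-transfer}, that parameter minimizers induce function-space minimizers.

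One subtlety remains: \cref{eq:weight-decay-representer} uses width $K\ge N$ rather than arbitrary width. The sparse minimizer $f^\star$ is realized at width $N$, padding with zero neurons if needed. Balancing the weights via \cref{eq:weight-to-1-norm} attains $C(\theta^\star)=R(f^\star)$, so the restricted-width infimum equals the full one and is attained. Nonemptiness of the solution set follows.
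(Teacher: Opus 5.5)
Your proposal is correct and follows the paper's proof essentially step for step. The shared steps are: existence via \cref{thm:existence_of_min}; sparsity by freezing the affine part $\P_{\Aff}\bar f$ and applying the measure-space representer theorem to the I-RKBS of $\kernel_{\ReLU}$, then rewriting $\frac{1}{2}\abs{t}=(t)_+-\frac{1}{2}t$; and the reduction via \cref{cor:parametric-function-space-equivalence}, where a balanced realization of the sparse minimizer handles the fixed width $K\geq N$. The only difference is minor: you prove $\EOp(\F_\Theta)=\R^N$ directly by the sorting argument in $b$ instead of citing Pinkus's interpolation theorem, and that argument uses only the distinctness of the $x_i$, not the injectivity on $\Aff(\R^d)$ that you invoke.
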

\begin{proof}
    With $\B = \Lip(\R^d)$, we have by \cref{rem:w*-point-evaluation} that the evaluation operator $\EOp: \Lip(\R^d) \to \R^N$ given by
    \begin{equation}
        \EOp f = \big(f(x_1), \ldots, f(x_N)\big) \in \R^N
    \end{equation}
    is $w^*$-continuous. By \cref{lemma:shallow-rep-cost-coercive-const}, $R$ is coercive modulo $\Aff(\R^d)$. The injectivity assumption on $\EOp$ is equivalent to $\ker \EOp \cap \Aff(\R^d) = \{0\}$.  Furthermore, we have $\EOp(\F_\Theta) = \R^N$ by~\cite[Theorem~5.1]{pinkus1999approximation} since $x_1, \ldots, x_N$ are distinct and so the objective in \cref{eq:RTV2-prob} is proper. There exists at least one minimizer of \cref{eq:RTV2-prob} by \cref{thm:existence_of_min}. Let $\bar{f} \in \RBV^2(\R^d)$ be any minimizer of \cref{eq:RTV2-prob} and write $\bar{f} = \bar{g} + \bar{q}$ with $\bar{g} \coloneqq \P_{\Aff^\comp} \bar{f}$ and $\bar{q} \coloneqq \P_{\Aff} \bar{f}$. Then,
    \begin{equation}
        \bar{g} \in \argmin_{g \in \U} \norm{g}_\U \quad\subj\quad g(x_i) = \bar{f}(x_i) - \bar{q}(x_i), \quad i = 1, \ldots, N, \label{eq:ReLU-I-RKBS-prob}
    \end{equation}
    where $\U$ is the I-RKBS of $\kernel_{\ReLU}$ \cref{eq:ReLU-kernel}. Indeed, if this were not the case, it would contradict the optimality of $\bar{f}$. In particular, any solution $\tilde{g} \in \U$ to \cref{eq:ReLU-I-RKBS-prob} induces a solution to \cref{eq:RTV2-prob} via the map $\tilde{g} \mapsto \tilde{g} + \bar{q}$. The form of the solution \cref{eq:ReLU-representer} follows from \cref{thm:I-RKBS-rep-thm} by noting that $\kernel_{\ReLU}(\dummy, (w, b))$ only differs from a ReLU neuron $x \mapsto (w^\T x - b)_+$ by an affine function. 
    
    The reduction to \cref{eq:weight-decay-representer} follows from \cref{cor:parametric-function-space-equivalence}, applied to the shallow ReLU network parameterization \cref{eq:ReLU-form} with the weight-decay cost \cref{eq:shallow-relu-weight-decay}. The hypotheses are verified with $q=p=1$ by \cref{prop:parameter-root-properties-transfer}. Indeed, the subadditivity follows by concatenating neurons, while the homogeneity follows from the positive homogeneity of the ReLU and the standard rescaling of input and output weights. Thus, the parameter-space problem and the function-space problem have the same infimal value, and minimizers of the parameter-space problem induce minimizers of the function-space problem. Conversely, the representer theorem above produces a function-space minimizer in the shallow ReLU model class with at most $N$ active neurons. Choosing a balanced parameterization gives no gap between $R$ and $\Rcirc$ and attains the representation cost. Hence, the corresponding parameters solve the finite-dimensional problem \cref{eq:weight-decay-representer}.
\end{proof}
\begin{remark}
    The assumption that $\EOp|_{\Aff(\R^d)}$ is injective is rather mild. Indeed, it is equivalent to the matrix
    \begin{equation}
        \begin{bmatrix}
        1 & x_1^\T \\ \vdots & \vdots \\ 1 & x_N^\T
        \end{bmatrix} \in \R^{N \times (d+1)}
    \end{equation}
    having full column rank. This happens, in particular, when $N \geq d + 1$ and the $\{x_j\}_{j=1}^N$ are in general position. This occurs, with probability one, for instance, when $N \geq d+1$ and the $\{x_j\}_{j=1}^N$ are drawn i.i.d.\ with respect to a probability measure that is absolutely continuous with respect to the Lebesgue measure on $\R^d$, a common assumption in statistics and machine learning.
\end{remark}
\begin{remark}
    Although this section focused on scalar-valued shallow ReLU networks, analogous results hold in the vector-valued case. In that case, the native space would correspond to the vector-valued space $\RBV^2(\R^d; \R^D)$ when the parameter cost is the weight-decay regularizer. See~\cite[Section~2.2]{parhi2026compositional} for a precise definition of this space. Recent work has also investigated sharper characterizations of the function-space bias specific to the vector-valued scenario such as conditions for uniqueness of minimizers~\cite{nakhleh2024new}.
\end{remark}

\subsection{Functional Analysis of Shallow Neural Networks}
The functional analysis of shallow neural networks has been a fruitful area of research since at least the 1990s, perhaps due to the seminal work of Barron~\cite{BarronApprox,BarronApproxEst}. In the pre-deep-learning era, this led to the line of work studying the \emph{variation spaces} with respect to dictionaries~\cite{BachConvexNN,barron2008approximation,kurkova2001bounds,kurkova2002comparison,MatouvsekZonotope,mhaskar2004tractability}, where the dictionaries were often composed of neurons. The interest in these spaces is that they admit nonlinear approximation rates that do not grow with input dimension. Thus, these spaces are sometimes said to be \emph{immune to the curse of dimensionality}. A key attribute of members of variation spaces is that the only irregular functions are those that vary in a few directions. This led Donoho to coin the term \emph{mixed variation} to describe these kinds of functions~\cite{DonohoMixedVariation}.

In the post-deep-learning era, this perspective was reinvigorated by several works coming from different perspectives such as the mean-field formulation~\cite{mei2018mean}, gradient flows in spaces of measures~\cite{chizat2018global,ma2022barron}, early work on representation costs~\cite{OngieFunctionSpace,SavareseInfWidth}, and nonlinear approximation theory~\cite{devore2025weighted,stephan1,siegel2025optimal,siegel2020approximation,siegel2023characterization,yang2025optimal} for shallow neural networks. This then led to investigations on representer theorems for shallow neural networks. A variant of \cref{thm:Radon-BS-rep-thm} first appeared in~\cite{parhi2021banach} and was then followed up by a series of papers~\cite{bartolucci2024lipschitz,BartolucciRKBS,najaf2026sampling,nakhleh2025global, pieper2022nonconvex,spek2025duality,unser2023ridges,unser2024kernel}, where Banach-space representer theorems were proven for shallow neural networks. The perspectives taken in these works were either building upon the characterization due to~\cite{OngieFunctionSpace} of the native space via the Radon transform~\cite{BartolucciRKBS,parhi2021banach,unser2023ridges,unser2024kernel} or from the perspective of I-RKBSs~\cite{bartolucci2024lipschitz,BartolucciRKBS,korolev2022two,najaf2026sampling,spek2025duality}. \Cref{thm:Radon-BS-rep-thm} reveals that this representer theorem naturally emerges from the representation-cost framework and simultaneously unifies the research program on the functional analysis of shallow neural networks.

\subsubsection{Spectral Barron Spaces}
Depending on the community, the native spaces of shallow neural networks are often discussed under the term ``Barron spaces'' (cf.~\cite{ma2022barron}) to honor Barron's seminal work~\cite{BarronApprox,BarronApproxEst} on shallow neural network approximation theory. Historically, however, it is important to distinguish between native spaces and the original spaces studied by Barron. His original work studied a class of functions defined through weighted integrability of the Fourier transform of the function. In modern terminology, these spaces are called \emph{spectral Barron spaces}~\cite{chen2021representation,siegel2022high} to avoid confusion with referring to the native spaces as Barron spaces.

For $s>0$, define
\begin{equation}
\mathscr{B}^s(\R^d) \coloneqq
\curly{f\in \Sch'(\R^d) \st (1+\norm{\dummy}_2^2)^{s/2} \widehat f(\dummy) \in \M(\R^d)}, \label{eq:Barron-space}
\end{equation}
where we recall that $\Sch'(\R^d)$ denotes the space of tempered distributions on $\R^d$, $\hat{\dummy}$ denotes the (distributional) Fourier transform,\footnote{Under the convention that for $f \in L^1(\R^d)$, the Fourier transform is $\hat{f}(\xi) = \int_{\R^d} f(x) \e^{-\imag2\pi \xi^\T x} \dd x$, $\xi \in \R^d$.} and $\M(\R^d)$ is the space of finite Radon measures on $\R^d$. This is a Banach space when endowed with the norm
\begin{equation} \label{eq:Barron-norm}
\norm{f}_{\mathscr{B}^s} \coloneqq  \norm{(1+\norm{\dummy}_2^2)^{s/2} \widehat f(\dummy)}_{\M}.
\end{equation}
This is the \emph{spectral Barron space of order $s$}. When $s=1$, it is a measure-valued extension of the first-order spectral class used in Barron's original approximation theorem~\cite{BarronApprox}.\footnote{Most works consider an $L^1(\R^d)$ condition and an $L^1$-norm rather than working with $\M(\R^d)$ and the $\M$-norm in \cref{eq:Barron-space,eq:Barron-norm}. On bounded domains, this does not change the definition of the space or norm~\cite{siegel2023characterization}, but on $\R^d$ working with $\M(\R^d)$ yields a space that is strictly larger~\cite{parhi2022continuous}.}

Although the spectral Barron spaces do not neatly correspond to the native spaces of shallow neural networks with common activation functions, they do fit naturally into the representation-cost framework and, in particular, the I-RKBS perspective of \cref{sec:I-RKBS} once one chooses the
right kernel. Indeed, let $\Xi=\R^d$ and consider the complex-valued\footnote{Although \cref{sec:I-RKBS} focused on real I-RKBSs, the complex case is identical.} kernel
\begin{equation}
\kernel_s(x,\xi) \coloneqq \frac{\e^{\imag 2\pi \xi^\T x}}{(1+\|\xi\|_2^2)^{s/2}},
\qquad (x,\xi)\in\R^d \times \R^d.
\end{equation}
Since $s > 0$, we have that $\kernel_s(x, \dummy) \in C_0(\R^d)$ for all $x \in \R^d$. Thus, for a finite complex-valued Radon measure $\nu\in \M(\R^d)$, the operator
\begin{equation}
\TOp_s\nu \coloneqq \int_{\R^d} \kernel_s(\dummy,\xi)\dd\nu(\xi)
\end{equation}
is well defined. Then
\begin{equation}
\hat{\TOp_s\nu} =
(1+\norm{\dummy}_2^2)^{-s/2}\nu,
\end{equation}
so that
\begin{equation}
(1+\norm{\dummy}_2^2)^{s/2} \,\hat{\TOp_s\nu} = \nu.
\end{equation}
Thus, since the Fourier transform is an automorphism of $\Sch'(\R^d)$, we immediately see that the operator $\TOp_s: \M(\R^d) \to \Sch'(\R^d)$ is an injection whose range is precisely $\mathscr{B}^s(\R^d)$. By \cref{lemma:I-RKBS} combined with the fact that
\begin{equation}
\norm{\TOp_s\nu}_{\mathscr{B}^s} = \norm{\nu}_{\M},
\end{equation}
we see that the spectral Barron space $\mathscr{B}^s(\R^d)$ is an I-RKBS whose kernel is a weighted Fourier mode rather than a neuron.

\section{Deep Neural Networks and Quasi-Banach Spaces} \label{sec:deep-nets}

In this section, we apply our abstract construction of representation costs and native spaces to \emph{deep} neural networks. Our main goal is to characterize the function-space geometry induced by depth. Unlike the settings considered previously in \cref{sec:RKHS,sec:I-RKBS,sec:wavelet,sec:shallow}, the representation costs associated with depth-$L$ ReLU networks have homogeneity degree $2/L$. We show that they induce a $p$-normed quasi-Banach structure with
\begin{equation}
p = \frac{2}{L}.
\end{equation}
More precisely, the depth-$L$ representation cost is a power of a $(2/L)$-seminorm and the native space is complete under an associated $(2/L)$-norm. In addition, under suitable hypotheses, the representation cost has a nonconvex unit ball (\cref{prop:deep-unit-ball-nonconvex}). This geometry is analogous to that of $x \mapsto \|x\|_p^p$, $0<p<1$, in finite-dimensional regularization. 

The results of this section should be viewed as \emph{nonlinear} analogs of the well-known \mbox{Schatten-$(2/L)$} geometry associated with depth-$L$ deep linear networks. We begin by reviewing the simpler deep linear network setting before turning to deep ReLU networks.

\subsection{Warm-Up: Deep Linear Neural Networks}

As a warm-up, we recall the representation costs induced by deep linear networks with $\ell^2$-parameter (i.e., weight-decay) regularization. This provides the linear analogue of the depth-dependent regularizers that appear in the sequel. In particular, the deep linear setting illustrates three phenomena that persist in the nonlinear setting: \begin{enumerate*}[label=(\roman*)]
    \item depth changes the induced representation cost,
    \item for $L>2$ the representation cost is nonconvex, and
    \item the cost is a power of the Schatten-$(2/L)$ quasi-norm.
\end{enumerate*}

Let $\Lin(\R^d;\R^D) \subset \Lip(\R^d;\R^D)$ denote the space of linear maps from $\R^d$ to $\R^D$. Every $f \in \Lin(\R^d;\R^D)$ is represented uniquely by a matrix $J_f \in \R^{D \times d}$ such that
\begin{equation}
    f(x) = J_f x, \quad x \in \R^d.
\end{equation}
Let ${\D}f: \R^d \to \R^{D \times d}$ denote the Jacobian of $f$. Observe that ${\D}f(x) = J_f$ for any $x \in \R^d$.

Fix $L \geq 2$. We parameterize $\Lin(\R^d;\R^D)$ by depth-$L$ linear networks. Let $\ThetaL$ denote the collection of all tuples $\theta = (W_1,\ldots,W_L)$ of compatible matrices such that the product $W_L \cdots W_1$ is well-defined, and set
\begin{equation}
    f_\theta(x) = W_L W_{L-1}\cdots W_1 x.
\end{equation}
Clearly, under this parameterization, the parametric model class is $\FThetaL = \Lin(\R^d; \R^D)$. The corresponding parameter cost is
\begin{equation}
    C_L(\theta) = \frac{1}{L} \sum_{\ell=1}^L \norm{W_\ell}_F^2.
\end{equation}
Thus, the induced parametric representation cost is
\begin{equation}
    \RcircL(f) = \inf \curly[\Bigg]{ \frac{1}{L} \sum_{\ell=1}^L \norm{W_\ell}_F^2 \st (W_1,...,W_L) \in \ThetaL, W_L \cdots W_1 = J_f }.
\end{equation}
A result from the matrix factorization literature (see, e.g.,~\cite{dai2021representation,ShangSchatten,MertParallel}) states that
\begin{equation}
    \RcircL(f) = \norm{J_f}_{S^{2/L}}^{2/L} \coloneqq \sum_{i=1}^{\min\{d, D\}} \sigma_i(J_f)^{2/L},
\end{equation}
where $\norm{\dummy}_{S^p}$ denotes the Schatten-$p$ (quasi-)norm, and $\sigma_i(\dummy)$ denotes the $i$th singular value. In particular, when $L=2$, the induced representation cost coincides with the \emph{nuclear norm}, i.e., $R_{\Theta_2}(f) = \norm{J_f}_{S^1} = \norm{J_f}_*$.

The relationship between low-rank matrix factorization, Schatten norms, and convex relaxations of rank minimization has a long history in optimization, inverse problems, and machine learning; see, e.g.,~\cite{arora2019implicit,fazel2001rank,gunasekar2017implicit,recht2010guaranteed,srebro2004maximum,srebro2005rank}. A closely related line of work studies nonconvex low-rank factorizations of matrix and semidefinite optimization problems, beginning with the Burer--Monteiro factorization and its later generalizations~\cite{boumal2020deterministic,burer2003nonlinear,burer2005local}. From the representation-cost perspective, depth-$2$ linear networks induce nuclear-norm regularization, while deeper linear networks induce regularization by the $(2/L)$-th power of the Schatten-$(2/L)$ quasi-norm.

More recently, these induced geometries have been studied through the lens of implicit regularization and optimization bias in deep linear and convolutional networks~\cite{gunasekar2018implicit}. Furthermore,~\cite{dai2021representation} systematically studies representation costs induced by a broad class of linear neural architectures, including fully connected, convolutional, diagonal, and residual networks. In particular, different architectures (i.e., \emph{parameterizations}) induce different \emph{geometries} on function space even when the underlying model class remains linear. For example, convolutional architectures induce sparsity-promoting regularization in the discrete Fourier transform domain, while diagonal architectures induce vector and group quasi-norm regularization.

More generally, when $L>2$, the representation cost is nonconvex. Furthermore, as $L \to \infty$, one recovers the rank penalty since
\begin{equation}
    \lim_{L\to\infty} \RcircL(f) = \lim_{L \to \infty} \norm{J_f}_{S^{2/L}}^{2/L} = \rank(J_f).
\end{equation}
Thus, in the linear setting, increasing depth interpolates between convex low-rank regularization and rank minimization.

Finally, $\RcircL$ already coincides with its $w^*$-l.s.c.\ regularization. Indeed, if $(f_n)_{n\in\N} \subset \Lin(\R^d;\R^D)$ converges pointwise to $f$, then $f \in \Lin(\R^d;\R^D)$ and $J_{f_n} \to J_f$ in operator norm. Since singular values depend continuously on the matrix entries,
\begin{equation}
    \RcircL(f_n) = \norm{J_{f_n}}_{S^{2/L}}^{2/L} \to \norm{J_f}_{S^{2/L}}^{2/L} = \RcircL(f).
\end{equation}
Hence $\RcircL$ is continuous on $\Lin(\R^d;\R^D)$ with respect to the subspace topology of $w^*$. Since $\Lin(\R^d;\R^D)$ is $w^*$-closed (since it is finite-dimensional), the trivial extension of $\RcircL$ to $\Lip(\R^d;\R^D)$ is $w^*$-l.s.c. Consequently, 
\begin{equation}
 R_L = \RcircL \quad\text{and}\quad \F_L = \FThetaL = \Lin(\R^d;\R^D).
\end{equation}
The next subsection shows that the same depth-dependent structure persists for nonlinear ReLU networks. In particular, depth-$L$ ReLU representation costs are absolutely $(2/L)$-homogeneous and subadditive, and the costs induce $(2/L)$-normed quasi-Banach structures.

\subsection{Representation Costs and Representer Theorems}\label{subsec:deep_nets_rep_cost}
Let $\X \subset \R^d$ be any nonempty subset (which could be all of $\R^d$) and fix any basepoint $\e\in\X$. Consider the class of all $L$-layer fully-connected ReLU networks with inputs in $\X \subset \R^d$ and outputs in $\R^D$, having arbitrary intermediate-layer widths. More explicitly, this is the set of all functions having the form
\begin{equation}
f_\theta = a_L \circ \sigma \circ a_{L-1} \circ \sigma \circ \cdots \circ \sigma \circ a_1
\end{equation}
where each $a_\ell:\R^{K_\ell}\rightarrow\R^{K_{\ell+1}}$ is an affine function, i.e., $a_\ell(z) = W_\ell z + b_\ell$ for some
$W_\ell \in \R^{K_{\ell+1}\times K_\ell}$ and some $b_\ell \in \R^{K_{\ell+1}}$, and $\sigma = \max\{0, \dummy\}$ denotes the ReLU activation applied entrywise. Here we assume $K_1 = d$, $K_{L+1} = D$, while otherwise the hidden-layer widths $K_\ell \in \N$ of layers $\ell=2,...,L$ are arbitrary. Note that any such  $f_\theta$ belongs to $\Lip(\X;\R^D)$ since it is the composition of Lipschitz-continuous functions.

The corresponding parameter space $\ThetaL$ consists of all $L$-tuples of weight/bias pairs
\begin{equation}
\theta = \big((W_1,b_1),...,(W_L,b_L)\big),
\end{equation}
and we define the $L$-layer weight-decay parameter cost by 
\begin{equation}
C_L(\theta) = \frac{1}{L}\sum_{\ell=1}^L \|W_\ell\|_F^2,
\end{equation}
i.e., the averaged sum of squares of all \emph{non-bias} parameters in the network. Thus, the induced parametric representation cost is
\begin{equation}
    \RcircL(f) = \inf \curly[\Bigg]{ \frac{1}{L} \sum_{\ell=1}^L \norm{W_\ell}_F^2 \st \theta = \big((W_1,b_1),...,(W_L,b_L)\big) \in \ThetaL,~f=f_\theta}.
\end{equation}

The purpose of this section is to provide characterizations of the extended representation cost $R_L = \overline{\RcircL}$ and the associated native space 
\begin{equation}
\F_L \coloneqq \dom(R_L) = \{ f \in \Lip(\X;\R^D) \st R_L(f) < +\infty \}. 
\end{equation}
Let $\Const(\X;\R^D) \subset \Lip(\X;\R^D)$ denote the norm-closed subspace of constant functions. 

\begin{lemma}\label{lem:deep_relu_coercive_bnd}
Fix $L \geq 2$. For all $f\in\Lip(\X;\R^D)$, we have
\begin{equation}\label{eq:deep_rep_cost_lip_bounds}
\inf_{g \in \Const(\X;\R^D)}\|f+g\|_{\Lip(\X;\R^D)} \leq R_L(f)^{L/2} \leq \RcircL(f)^{L/2}.
\end{equation}
In particular, $\RcircL$ and $R_L$ are coercive modulo $\Const(\X;\R^D)$. Furthermore, $\const(\RcircL) = \const(R_L) = \Const(\X;\R^D)$.
\end{lemma}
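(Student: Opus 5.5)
The plan follows the shallow case (\cref{lemma:shallow-rep-cost-coercive-const}): first prove a parameter-level Lipschitz bound, then pass it through \cref{lem:suff_cond_R_coercive}, and finally pin down the constancy space.

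\textbf{Step 1 (parameter-level bound).} For any $\theta\in\ThetaL$, the map $\sigma$ is $1$-Lipschitz and each affine map $a_\ell$ has Lipschitz constant $\norm{W_\ell}_{\op}\le\norm{W_\ell}_F$. Hence
\begin{equation}
\abs{f_\theta}_{\Lip(\X;\R^D)}\le\prod_{\ell=1}^L\norm{W_\ell}_F\le\paren[\Big]{\frac1L\sum_{\ell=1}^L\norm{W_\ell}_F^2}^{L/2}=C_L(\theta)^{L/2},
\end{equation}
where the second inequality is AM--GM applied to $\norm{W_\ell}_F^2$. Subtracting the constant $f_\theta(\e)$ gives
\begin{equation}
\inf_{g\in\Const}\norm{f_\theta+g}_{\Lip}\le\abs{f_\theta}_{\Lip}\le C_L(\theta)^{L/2}.
\end{equation}
Thus $C_L$ is parameter coercive modulo $\Null=\Const(\X;\R^D)$ in the sense of \cref{defn:param-coercive}, with $\gamma(t)=t^{L/2}$. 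This $\gamma$ is continuous, nondecreasing and unbounded. The space $\Const$ is finite-dimensional, hence $w^*$-closed.

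\textbf{Step 2 (transfer to $R_L$).} Applying \cref{lem:suff_cond_R_coercive} yields
\begin{equation}
\inf_{g}\norm{f+g}\le\gamma(R_L(f))=R_L(f)^{L/2}.
\end{equation}
The inequality $R_L\le\RcircL$ together with monotonicity of $t\mapsto t^{L/2}$ gives the right-hand inequality in \cref{eq:deep_rep_cost_lip_bounds}. Coercivity modulo $\Const$ of both $\RcircL$ and $R_L$ is part of the same proposition.

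\textbf{Step 3 (constancy spaces).} First show $\Const\subset\const(\RcircL)$. Adding a constant $c\in\R^D$ to $f_\theta$ amounts to replacing $b_L$ by $b_L+c$, and biases are unpenalized. Hence $\RcircL(f+c)\le\RcircL(f)$, and by symmetry (using $-c$) equality holds, including for $f\notin\FThetaL$ since $f\in\FThetaL\iff f+c\in\FThetaL$. Then invoke \cref{lem:nesting_of_const_spaces} to get $\Const\subset\const(\RcircL)\subset\const(R_L)$. Finally, \cref{prop:coercive_mod_determines_constancy} with coercivity modulo $\Const$ upgrades these inclusions to equalities, exactly as in the shallow proof.

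The only step needing care is the first inequality in Step 1, namely bounding the Lipschitz seminorm of a composition by the product of Frobenius norms, together with the AM--GM step. Both are routine. A subtle point to double check is that \cref{prop:coercive_mod_determines_constancy} requires $\const$ to contain no direction along which the cost stays bounded except those in $\Null$. A non-constant $g$ satisfies $\inf_c\norm{tg+c}\to\infty$ as $t\to\infty$, which contradicts invariance combined with coercivity, so this requirement holds.
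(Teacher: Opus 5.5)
Your proposal is correct and follows the paper's proof essentially step for step. It uses the same product-of-Lipschitz-constants bound with AM--GM to get parameter coercivity modulo $\Const(\X;\R^D)$ with $\gamma(t)=t^{L/2}$, the same transfer via \cref{lem:suff_cond_R_coercive}, and the same constancy-space identification via the unpenalized final bias, \cref{lem:nesting_of_const_spaces}, and \cref{prop:coercive_mod_determines_constancy}.

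Your closing concern is slightly misplaced: the property you verify is the conclusion of \cref{prop:coercive_mod_determines_constancy}, not a hypothesis. Its actual hypotheses are only that $\RcircL\not\equiv+\infty$, which holds since $\RcircL(0)=0$, and that $\Const(\X;\R^D)$ is norm-closed, which holds by finite dimensionality.
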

\begin{proof}
The ReLU activation function is $1$-Lipschitz and each affine layer $a_\ell(z) = W_\ell z + b_\ell$ is $\|W_\ell\|_{\op}$-Lipschitz. Hence,
\begin{equation}
|f_\theta|_{\Lip(\X;\R^D)} \leq \prod_{\ell=1}^L \|W_\ell\|_{\op} \leq \prod_{\ell=1}^L \|W_\ell\|_F \leq \left(\frac{1}{L}\sum_{\ell=1}^L \|W_\ell\|_F^2\right)^{L/2} \leq C_L(\theta)^{L/2} 
\end{equation}
where the penultimate inequality is by the inequality of arithmetic and geometric means.
Further,
\begin{align}
\inf_{g\in \Const(\X;\R^D)}\|f_\theta+g\|_{\Lip(\X;\R^D)}
 &\leq \|f_\theta-f_\theta(\e)\|_{\Lip(\X;\R^D)} \nonumber \\
 &= |f_\theta-f_\theta(\e)|_{\Lip(\X;\R^D)} \nonumber \\
 &= |f_\theta|_{\Lip(\X;\R^D)} \leq C_L(\theta)^{L/2}.
\end{align}
Therefore, $C_L$ is parameter coercive modulo $\Const(\X;\R^D)$. Applying \cref{lem:suff_cond_R_coercive} (where $\gamma(t) = t^{L/2}$) yields \cref{eq:deep_rep_cost_lip_bounds}. Finally, it is obvious that $\Const(\X;\R^D) \subset \const(\RcircL)$ (since the final-layer bias is not penalized by $C_L$).  By \cref{lem:nesting_of_const_spaces}, $\Const(\X;\R^D) \subset \const(\RcircL) \subset \const(R_L)$. By \cref{prop:coercive_mod_determines_constancy}, $\Const(\X;\R^D) = \const(\RcircL) = \const(R_L)$.
\end{proof}

\begin{lemma}\label{lem:deep_rep_costs_are_quasi_seminorms}
$\RcircL$ and $R_L$ are subadditive and absolutely $2/L$-homogeneous.
\end{lemma}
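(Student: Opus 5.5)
The natural route is \cref{prop:parameter-root-properties-transfer} with $q=1$ and $p=2/L$. Parameter $1$-subadditivity and parameter $(1,2/L)$-homogeneity give subadditivity and absolute $2/L$-homogeneity of $\RcircL$, hence of $R_L=\overline{\RcircL}$. So the whole proof reduces to two parameter-level constructions.

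\textbf{Homogeneity.} The zero function is realized by the all-zero parameter $\theta_0$, and $C_L(\theta_0)=0$. Fix $\theta$ and $\alpha\neq0$, and take $c\coloneqq\abs{\alpha}^{1/L}>0$. Define $\theta_\alpha$ by scaling every $(W_\ell,b_\ell)$ for $\ell<L$ as $(cW_\ell,c^{\ell}b_\ell)$, and taking the last layer to be $(\operatorname{sign}(\alpha)cW_L,\alpha b_L)$. Positive homogeneity of the ReLU gives, by induction on $\ell$, that the $\ell$-th preactivation is multiplied by $c^\ell$. Hence $f_{\theta_\alpha}=\alpha f_\theta$, and
\[
C_L(\theta_\alpha)=c^2C_L(\theta)=\abs{\alpha}^{2/L}C_L(\theta).
\]
The sign is placed on the last layer, which is affine, so no ReLU ever sees it.

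\textbf{Subadditivity.} Given $\theta$ and $\vartheta$ with possibly different widths, I would stack the two networks in parallel. The first layer is $\bigl[\begin{smallmatrix}W_1\\ W_1'\end{smallmatrix}\bigr]$ with the biases stacked. The hidden layers are block diagonal. The last layer is $[W_L\ \ W_L']$ with bias $b_L+b_L'$. Block structure makes the two ReLU streams independent, so $f_\eta=f_\theta+f_\vartheta$. Squared Frobenius norms add across blocks, so $C_L(\eta)=C_L(\theta)+C_L(\vartheta)$.

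For $L=2$ there are no hidden-to-hidden layers, and the stacking reduces to the first and last layers only.

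\textbf{Transfer and obstacles.} These two constructions verify the hypotheses of \cref{prop:parameter-root-properties-transfer}, which yields both claims for $\RcircL$ (equal to $Q_\Theta$ when $q=1$) and for $R_L$. The only care needed is that the proposition's trivial extension also preserves these properties; it does, since $\FThetaL$ is closed under nonzero scaling and under addition.

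I expect no real obstacle. The step needing the most care is the bias bookkeeping in the homogeneity rescaling: the hidden biases must scale as $c^\ell$ so that they track the accumulated scaling of the preactivations, and this keeps $f_{\theta_\alpha}$ exact. Because biases are unpenalized, this rescaling does not affect the cost.
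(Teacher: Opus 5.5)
Your proposal is correct and is essentially the paper's proof. You reduce to \cref{prop:parameter-root-properties-transfer} with $q=1$ and $p=2/L$, verify parameter $1$-subadditivity by running the two networks in parallel (stacked first layer, block-diagonal hidden layers, concatenated last layer with summed bias), and verify parameter $(1,2/L)$-homogeneity by the same rescaling $(|\alpha|^{1/L}W_\ell,\,|\alpha|^{\ell/L}b_\ell)$, with the sign and the bias factor $\alpha$ placed in the final affine layer.
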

\begin{proof}
By \cref{prop:parameter-root-properties-transfer}, it suffices to show that $C_L$ is parameter $1$-subadditive and parameter $(1,2/L)$-homogeneous. We first prove parameter $1$-subadditivity. Let $\theta,\vartheta \in \ThetaL$ be given. Write the corresponding affine layers as
\begin{equation}
    a_{\ell}^{\theta}(z) = W_{\ell}^{\theta}z + b_{\ell}^{\theta}
    \quad\text{and}\quad
    a_{\ell}^{\vartheta}(z) = W_{\ell}^{\vartheta}z + b_{\ell}^{\vartheta}.
\end{equation}
We construct a parameter vector $\eta \in \ThetaL$ such that $f_{\eta} = f_{\theta} + f_{\vartheta}$. The construction runs the two networks in parallel and adds their outputs in the final layer. Define
\begin{equation}
    W_1^\eta =
    \begin{bmatrix}
        W_1^\theta \\
        W_1^\vartheta
    \end{bmatrix}
    \quad\text{and}\quad
    b_1^\eta =
    \begin{bmatrix}
        b_1^\theta \\
        b_1^\vartheta
    \end{bmatrix}.
\end{equation}
For $2 \leq \ell \leq L-1$, define 
\begin{equation}
    W_\ell^\eta =
    \begin{bmatrix}
        W_\ell^\theta & 0 \\
        0 & W_\ell^\vartheta
    \end{bmatrix}
    \quad\text{and}\quad
    b_\ell^\eta =
    \begin{bmatrix}
        b_\ell^\theta \\
        b_\ell^\vartheta
    \end{bmatrix}.
\end{equation}
Finally, define
\begin{equation}
    W_L^\eta =
    \begin{bmatrix}
        W_L^\theta & W_L^\vartheta
    \end{bmatrix}
    \quad\text{and}\quad
    b_L^\eta = b_L^\theta + b_L^\vartheta.
\end{equation}
Since the ReLU is applied entrywise, the first $L-1$ layers of $\eta$ compute the hidden representations of the two networks independently, and the final layer adds their outputs. Hence $f_\eta = f_\theta + f_\vartheta$. Moreover, by construction, $\norm{W_\ell^\eta}_F^2 = \norm{W_\ell^\theta}_F^2 + \norm{W_\ell^\vartheta}_F^2$,  $\ell=1,\ldots,L$. Therefore,
\begin{equation}
    C_L(\eta)
    =
    \frac{1}{L}\sum_{\ell=1}^L \norm{W_\ell^\eta}_F^2
    =
    C_L(\theta)+C_L(\vartheta).
\end{equation}
This proves parameter $1$-subadditivity.

We next prove parameter $(1,2/L)$-homogeneity. Let $\alpha \in \R \setminus \{0\}$ and let
\begin{equation}
    \theta = ((W_1,b_1),\ldots,(W_L,b_L)) \in \ThetaL.
\end{equation}
Define
\begin{equation}
\theta_\alpha \coloneqq \Big((\gamma W_1,\gamma b_1), (\gamma W_2,\gamma^2 b_2), \ldots, (\gamma W_{L-1},\gamma^{L-1} b_{L-1}), (\operatorname{sgn}(\alpha)\gamma W_L,\alpha b_L) \Big), \quad
\gamma \coloneqq |\alpha|^{1/L}.
\end{equation}
Thus, we have that $f_{\theta_\alpha} = \alpha f_\theta$. Furthermore,
\begin{equation}
    C_L(\theta_\alpha)
    =
    \frac{1}{L}\sum_{\ell=1}^L |\alpha|^{2/L}\norm{W_\ell}_F^2
    =
    |\alpha|^{2/L}C_L(\theta).
\end{equation}
Finally, the parameter vector with all weights and biases equal to zero realizes the zero function and has cost zero. Thus $C_L$ is parameter $(1,2/L)$-homogeneous.
\end{proof}

\begin{theorem} \label{thm:quasi-Banach-geometry} 
Let $L\geq 2$. Then $R_L^{L/2}$ is a $(2/L)$-seminorm. Furthermore, $\F_L$ is a $(2/L)$-normed quasi-Banach space under the quasi-norm $\|f\|_{\F_L} \coloneqq \max\{R_L(f)^{L/2}, \|f(\e)\|_2\}$.
\end{theorem}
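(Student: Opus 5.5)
The plan is to deduce the statement directly from the abstract machinery of \cref{sec:quasi}, with $q=1$ and $p=2/L$.

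\textbf{Step 1: $R_L^{L/2}$ is a $(2/L)$-seminorm.} By \cref{lem:deep_rep_costs_are_quasi_seminorms}, $R_L$ is subadditive and absolutely $(2/L)$-homogeneous. Since $L\geq 2$, we have $0<2/L\leq 1$. Applying \cref{prop:homogeneous-subadditive-quasinorm} with $K=R_L$ and $p=2/L$ shows that $R_L^{1/p}=R_L^{L/2}$ is a $(2/L)$-seminorm on $\dom(R_L)=\F_L$. This is the sense in which the first claim should be read.

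\textbf{Step 2: quasi-Banach structure via \cref{cor:F_is_quasi_banach}.} We take $\B=\Lip(\X;\R^D)$, whose predual is separable because $\X\subset\R^d$ is separable and $D<\infty$ (\cref{sec:Lip}). We also take $Q_\Theta=\RcircL$ and $q=1$. By \cref{prop:parameter-root-properties-transfer}, as used in the proof of \cref{lem:deep_rep_costs_are_quasi_seminorms}, $Q_\Theta$ is subadditive and absolutely $(2/L)$-homogeneous on $\B$.

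Let $\Null=\Const(\X;\R^D)$. This space is $D$-dimensional, so it is $w^*$-closed and complemented. Moreover, $\Null\subset\ker(\RcircL)$, since a constant function is realized by zero weights and an unpenalized final bias. By \cref{lem:deep_relu_coercive_bnd}, $\RcircL$ is coercive modulo $\Null$.

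For the projector we choose $\P_\Null f\coloneqq f(\e)$, i.e., the constant function with value $f(\e)$. It is linear, idempotent, and has range $\Null$. It is $w^*$-continuous because it is built from a single point evaluation, which is $w^*$-continuous by \cref{rem:w*-point-evaluation}. Its norm satisfies $\|\P_\Null f\|_{\Lip(\X;\R^D)}=\max\{0,\|f(\e)\|_2\}=\|f(\e)\|_2$.

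\cref{cor:F_is_quasi_banach} then yields that $\F_L$ is quasi-Banach under
\[
\max\{R_L(f)^{1/(pq)},\|\P_\Null f\|_\B\}=\max\{R_L(f)^{L/2},\|f(\e)\|_2\},
\]
and that this is a $(2/L)$-norm.

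\textbf{Main obstacle.} The only step needing care is matching hypotheses. One must confirm that $\Null\subset\ker(\RcircL)$, not merely $\Null\subset\const(\RcircL)$. One must also confirm that the chosen projector has $w^*$-continuous complement $\Id-\P_\Null$ with $w^*$-closed range $\{f\st f(\e)=0\}=\Lip_0(\X;\R^D)$. Both follow from $w^*$-continuity of point evaluation. Everything else is a direct invocation of earlier results.
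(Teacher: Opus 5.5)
Your proposal is correct and follows the paper's proof. Both invoke \cref{cor:F_is_quasi_banach} with $q=1$, $p=2/L$, $\Null=\Const(\X;\R^D)$ and the projector $f\mapsto f(\e)$, using \cref{lem:deep_relu_coercive_bnd,lem:deep_rep_costs_are_quasi_seminorms} for coercivity modulo constants and for subadditivity and homogeneity. Your Step~1 obtains the $(2/L)$-seminorm claim on $\F_L$ directly from \cref{prop:homogeneous-subadditive-quasinorm}; this only makes explicit a step the paper leaves implicit.
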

\begin{proof}
\Cref{lem:deep_relu_coercive_bnd,lem:deep_rep_costs_are_quasi_seminorms} show that $\RcircL$ is subadditive, absolutely $(2/L)$-homogeneous, and coercive modulo $\Const(\X;\R^D)$, which is a $w^*$-complemented subspace in $\Lip(\X;\R^D)$ (since it is finite dimensional). Furthermore, a projector onto $\Const(\X;\R^D)$ that is $w^*$-continuous on $\Lip(\X;\R^D)$ is given by $\P_{\Const}f \equiv f(\e)$, and we have $\|\P_{\Const}f\|_{\Lip(\X;\R^D)} = \|f(\e)\|_2$. Therefore, the result follows by \Cref{cor:F_is_quasi_banach}.
\end{proof}

On its own, \cref{thm:quasi-Banach-geometry} does \emph{not} rule out the possibility that the ``rooted'' representation cost $R_L^{L/2}$ is a \textit{bona fide} seminorm on $\F_L$, in which case the augmented quasi-norm $\norm{\dummy}_{\F_L}$ would be a \textit{bona fide} norm. Under the nondegeneracy hypotheses below, it turns out that this is not the case, which we show in \cref{prop:deep-unit-ball-nonconvex}. In particular, we show that, when $L>2$, $d,D \geq 2$, and $\X$ has nonempty interior, the unit representation cost ball, unit rooted representation cost ball, and unit quasi-norm ball are all \emph{nonconvex} and therefore do not induce a \textit{bona fide} (semi)norm.

The proof of this result relies on the following lemma, which bounds products of singular values of the Jacobian of any function in terms of its representation cost. Let $\operatorname{int}(\X)$ denote the interior of $\X$ and, for $A\in\R^{D\times d}$ we let $\sigma_1(A)\geq\sigma_2(A)\geq\cdots$ denote the singular values of $A$, with the convention that $\sigma_m(A)\coloneqq 0$ whenever $m$ exceeds the rank of $A$. Recall that, by Rademacher's theorem, every $f\in\Lip(\X;\R^D)$ is differentiable at almost every $x\in\operatorname{int}(\X)$ and the Jacobian ${\D} f$ satisfies ${\D} f\in L^\infty_{\mathrm{loc}}(\operatorname{int}(\X);\R^{D\times d})$ (cf.\ \cref{rem:lip-sobolev}).

\begin{lemma}\label{lem:jacobian-minor-bound}
Let $L\geq 2$ and $1\leq k\leq\min\{d,D\}$, and suppose that
$\operatorname{int}(\X)\neq\varnothing$. Then, for every $f\in\Lip(\X;\R^D)$,
\begin{equation}
    \operatorname*{ess\,sup}_{x\in\operatorname{int}(\X)}\;
    \prod_{m=1}^{k}\sigma_m\paren{\D f(x)}
    \;\leq\;
    \paren*{\frac{R_L(f)}{k}}^{kL/2}.
    \label{eq:jacobian-minor-bound}
\end{equation}
\end{lemma}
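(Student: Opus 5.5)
We first prove the bound for functions in the parametric model class, with $\RcircL$ on the right-hand side, and then pass to $R_L$ by a lower semicontinuity argument based on the sequential characterization of \cref{thm:rep_cost_lip_spaces}. If $R_L(f)=+\infty$ there is nothing to prove, so we assume throughout that $R_L(f)<+\infty$.

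\emph{Step 1 (the model class).} Fix $\theta\in\ThetaL$ with $f=f_\theta$. At almost every $x\in\operatorname{int}(\X)$ the chain rule applies, giving
\[
\D f(x)=W_L\,\Delta_{L-1}(x)\,W_{L-1}\cdots\Delta_1(x)\,W_1,
\]
where each $\Delta_\ell(x)$ is a diagonal matrix with entries in $\{0,1\}$. For a matrix $A$ we use $\prod_{m=1}^k\sigma_m(A)=\norm{\wedge^kA}_{\op}$, where $\wedge^k$ denotes the $k$-th compound (exterior power). Three facts about $\wedge^k$ are needed:
\begin{enumerate}[label=(\roman*)]
\item it is multiplicative, $\wedge^k(AB)=\wedge^kA\,\wedge^kB$;
\item $\norm{\wedge^k\Delta_\ell}_{\op}\le 1$;
\item $\prod_{m\le k}\sigma_m(A)\le\big(\tfrac1k\sum_{m\le k}\sigma_m(A)^2\big)^{k/2}\le(\norm{A}_F^2/k)^{k/2}$, by AM--GM.
\end{enumerate}
Combining these gives
\[
\prod_{m\le k}\sigma_m(\D f(x))\le\prod_{\ell=1}^L\big(\norm{W_\ell}_F^2/k\big)^{k/2}.
\]
A second application of AM--GM across layers bounds the right-hand side by $\big(C_L(\theta)/k\big)^{kL/2}$. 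Taking the infimum over all $\theta$ with $f_\theta=f$ yields \cref{eq:jacobian-minor-bound} with $\RcircL(f)$ in place of $R_L(f)$.

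\emph{Step 2 (reduction to a closedness statement).} By \cref{lem:deep_relu_coercive_bnd}, $\RcircL$ is coercive modulo the finite-dimensional space $\Const(\X;\R^D)=\const(\RcircL)$, so \cref{thm:rep_cost_lip_spaces} applies. It provides $f_n\in\FThetaL$ such that $f_n\to f$ pointwise, $\sup_n\norm{f_n}_{\Lip}<\infty$ (we may subtract constants, which changes neither side of the inequality), and $\RcircL(f_n)\to R_L(f)$. It therefore suffices to show the following: if $M_n\to M$ and
\[
\operatorname*{ess\,sup}_{x}\prod_{m\le k}\sigma_m(\D f_n(x))\le M_n \quad\text{for all } n,
\]
then the same bound holds for $f$ with constant $M$. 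To linearize the problem, we use
\[
\prod_{m\le k}\sigma_m(A)=\sup\big\{\abs{\det(P^\T AQ)} \st P\in\R^{D\times k},\ Q\in\R^{d\times k}\text{ with orthonormal columns}\big\}.
\]
Since the supremum may be taken over a countable dense set of pairs $(P,Q)$, it is enough to prove, for each fixed pair, that $\abs{\det(P^\T\D f\,Q)}\le M$ almost everywhere.

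\emph{Step 3 (weak continuity of Jacobian determinants; the main obstacle).} The difficulty is that $\det$ is nonlinear, while we only have weak$^*$ convergence of $\D f_n$. The plan is to exploit the null-Lagrangian structure of Jacobian determinants. Complete $Q$ to an orthonormal basis $[Q\ Q']$ of $\R^d$. For each $y$, consider the $k$-dimensional slice maps
\[
g_n^y(s)\coloneqq P^\T f_n(y+Q's'+Qs),
\]
defined on open subsets of $\R^k$ lying inside $\operatorname{int}(\X)$. These maps are uniformly Lipschitz and, by Arzel\`a--Ascoli, converge locally uniformly to $g^y$, the corresponding slice of $f$. Because $\det Dg$ is a divergence, namely $\det Dg=\operatorname{div}\big(g_1\,\operatorname{cof}(Dg)_{1\cdot}\big)$, induction on $k$ gives the classical result that $\det Dg_n^y\to\det Dg^y$ in distributions. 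Together with the uniform $L^\infty$ bounds, this upgrades to weak$^*$ convergence in $L^\infty$, and weak$^*$ limits preserve the bound $\abs{\cdot}\le M_n\to M$. Hence $\abs{\det Dg^y}\le M$ almost everywhere on each slice. Finally, by Fubini together with the a.e.\ differentiability of $f$ and of its restrictions to almost every slice, $\det Dg^y(s)=\det(P^\T\D f(x)Q)$ for almost every $x$, which gives the claim. The delicate points are the slice-wise identification of derivatives and the justification of the distributional determinant identity for Lipschitz maps; these are where we expect most of the work to lie.
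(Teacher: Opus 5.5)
Your proposal is correct and follows essentially the paper's route: the parametric bound via exterior powers and AM--GM, a pointwise recovery sequence from \cref{thm:rep_cost_lip_spaces}, and weak$^*$ continuity of Jacobian minors (null Lagrangians) to pass to the limit. The only difference is how you pass to the limit. By Cauchy--Binet, $\det(P^\T \D f\,Q)$ is already a fixed linear combination of $k\times k$ minors of $\D f$, so your countable-frame and slicing reduction is unnecessary. The paper instead applies weak$^*$ continuity of all minors directly on balls $B\subset\subset\operatorname{int}(\X)$, obtaining $\exterior{k}\D f_n\to\exterior{k}\D f$ weak$^*$ in $L^\infty(B;E)$, and then uses weak$^*$ lower semicontinuity of that dual norm (with nuclear-norm predual). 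This removes the Fubini and slice-identification bookkeeping you flagged as delicate.
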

The proof of \cref{lem:jacobian-minor-bound} is given in \cref{app:singval-lower-bnd}. This lemma is also later used to prove \cref{prop:rank-lower-bound}, which provides a lower bound on the extended representation cost $R_L(f)$ in the limiting regime as $L \to \infty$.

\begin{proposition}
\label{prop:deep-unit-ball-nonconvex}
Let $L>2$, let $d,D\geq 2$, and suppose that $\operatorname{int}(\X)\neq\varnothing$. Then the two unit balls
\begin{equation}
    \curly{f\in \F_L \st R_L(f)\leq 1}
    \quad\text{and}\quad
    \curly{f\in \F_L \st R_L(f)^{L/2}\leq 1}
\end{equation}
coincide and are nonconvex. Consequently, the unit ball
\begin{equation}
    \curly{f\in\F_L\st \norm{f}_{\F_L}\leq 1}
\end{equation}
is nonconvex.
In particular, $R_L^{L/2}$ does not define a \textit{bona fide} seminorm on $\F_L$ and $\norm{\dummy}_{\F_L}$ does not define a \textit{bona fide} norm on $\F_L$.
\end{proposition}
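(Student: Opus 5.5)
The proof has two parts. First, the two balls coincide trivially. Second, nonconvexity follows from an explicit pair of width-one networks whose midpoint is forced by \cref{lem:jacobian-minor-bound} (with $k=2$) to have cost strictly greater than $1$. Coincidence is immediate: $R_L(f)\in[0,+\infty]$, and $t\le 1 \iff t^{L/2}\le 1$ for $t\ge0$.

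\textbf{Construction of the two functions.} Fix a point $x_0\in\operatorname{int}(\X)$ and a small open ball $B\subset\operatorname{int}(\X)$ around it. Choose thresholds $c_1<(x_0)_1$ and $c_2<(x_0)_2$ so that $x_1>c_1$ and $x_2>c_2$ throughout $B$. Set
\[
f_1(x)=(x_1-c_1)_+\,e_1-\beta_1,\qquad f_2(x)=(x_2-c_2)_+\,e_2-\beta_2,
\]
where $e_i\in\R^D$ are standard basis vectors (this uses $D\ge2$; the coordinates $x_1,x_2$ use $d\ge2$). The constants $\beta_i\in\R^D$ are chosen so that $f_i(\e)=0$. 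Each $f_i$ is realized by a width-one depth-$L$ network:
\begin{itemize}
\item the first layer has weight $e_i^\T$ and bias $-c_i$;
\item the $L-2$ middle layers each have scalar weight $1$ and bias $0$, and the ReLU acts as the identity on the nonnegative signal;
\item the last layer has weight $e_i$ and bias $-\beta_i$.
\end{itemize}
Every weight matrix has Frobenius norm $1$, so $C_L=1$. Since biases are unpenalized, $R_L(f_i)\le R_{\Theta_L}(f_i)\le 1$. Together with $f_i(\e)=0$, this gives $\|f_i\|_{\F_L}\le1$. Thus both $f_i$ lie in all three unit balls.

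\textbf{Lower bound at the midpoint.} Let $g=\tfrac12(f_1+f_2)$. Then $g(\e)=0$, and on $B$ we have $\D g(x)=\tfrac12(e_1e_1^\T+e_2e_2^\T)$. This Jacobian has $\sigma_1=\sigma_2=\tfrac12$. Applying \cref{lem:jacobian-minor-bound} with $k=2$ (valid since $\min\{d,D\}\ge2$) gives
\[
\tfrac14\le\paren*{\tfrac{R_L(g)}{2}}^{L},
\qquad\text{hence}\qquad
R_L(g)\ge 2\cdot4^{-1/L}=2^{1-2/L}.
\]
Since $L>2$, this yields $R_L(g)>1$. Therefore $g$ lies outside both cost balls. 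Because $g(\e)=0$, we also have $\|g\|_{\F_L}=R_L(g)^{L/2}>1$, so $g$ lies outside the quasi-norm ball.

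\textbf{Conclusion.} All three balls are nonconvex. The unit ball of any seminorm or norm is convex. Hence $R_L^{L/2}$ is not a seminorm on $\F_L$, and $\|\cdot\|_{\F_L}$ is not a norm on $\F_L$.

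The substantive input is \cref{lem:jacobian-minor-bound}, which is assumed here. The main care point is choosing the parameterization so that the upper bound $R_L(f_i)\le1$ carries no extra constant. A symmetric linear realization $x\mapsto (v^\T x)_+-(-v^\T x)_+$ would cost a factor $2$ and break the inequality. This is why the construction uses one-sided ReLU functions localized, via unpenalized biases, to an interior region where the Jacobian is a fixed rank-two diagonal matrix.
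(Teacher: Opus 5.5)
Your proof is correct and follows the paper's argument essentially verbatim: two width-one ridge ReLU networks of parameter cost $1$ whose midpoint has a rank-two Jacobian with $\sigma_1\sigma_2 = 1/4$ on a ball, so the $k=2$ case of \cref{lem:jacobian-minor-bound} forces $R_L \geq 2^{1-2/L} > 1$. The differences are cosmetic. You make each $f_i$ vanish at the base point through the last-layer bias from the outset, so one pair serves all three balls, and you use only $R_L(f_i)\leq 1$. The paper instead verifies $R_L(f_k)=1$ and $R_L(h)=2^{1-2/L}$ exactly, and subtracts base-point values only at the end.
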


The proof of \cref{prop:deep-unit-ball-nonconvex} is given in \cref{app:nonconvex}. The proof of \cref{prop:deep-unit-ball-nonconvex} boils down to averaging two specially constructed ridge functions with unit representation cost. Then, the $k=2$ case of \cref{lem:jacobian-minor-bound} forces the representation cost of the average to exceed one. This proposition 
shows that the function-space complexity of deep ReLU neural networks as defined by the representation cost is generally \emph{not} a (semi)norm. However, we do remark that \cref{prop:deep-unit-ball-nonconvex} does not preclude the existence of a \textit{bona fide} norm that is \emph{equivalent} to $\norm{\dummy}_{\F_L}$. Exploring this would be an interesting open problem.

To conclude this section, the next result is a representer theorem for data-fitting problems regularized with the representation cost $R_L$ for all $L\geq 2$. To the best of our knowledge, this is the first deep neural network representer theorem (with a vanilla architecture) ``matched'' to the weight-decay regularizer over a complete function space. Interestingly, it suffices to have the widths of the deep neural network solutions be bounded by the number of training data squared.

\begin{theorem}
    \label{thm:deep-relu-representer}
    Fix $L \geq 2$ and let $x_1,\ldots,x_N \in \X$ be distinct. Let
    $G: (\R^D)^N \to [0,+\infty]$ be proper, l.s.c., and coercive. For
    $\lambda > 0$, consider the function-space problem
    \begin{equation}
        \min_{f \in \F_L}
        G\big(f(x_1),\ldots,f(x_N)\big) + \lambda R_L(f).
        \label{eq:depth-L-native-prob}
    \end{equation}
    Then, the solution set of \cref{eq:depth-L-native-prob} is nonempty and
    there exists a minimizer $f^\star$ of \cref{eq:depth-L-native-prob} that is
    realized by a finite-width depth-$L$ ReLU network. Moreover, $f^\star$ may
    be chosen so that
    \begin{equation}
        f^\star = f_{\theta^\star}
    \end{equation}
    for some $\theta^\star \in \Theta_L(N^2)$, where $\Theta_L(N^2)$ denotes
    the finite-dimensional parameter space of depth-$L$ ReLU networks with
    hidden-layer widths at most $N^2$.

    Moreover, for every $K \geq N^2$, the function-space problem
    \cref{eq:depth-L-native-prob} can be recast as the finite-dimensional
    optimization problem
    \begin{equation}
        \min_{\theta \in \Theta_L(K)}
        G\big(f_\theta(x_1),\ldots,f_\theta(x_N)\big)
        +
        \frac{\lambda}{L}
        \sum_{\ell=1}^L \norm{W_\ell}_F^2,
        \label{eq:depth-L-param}
    \end{equation}
    in the sense that \cref{eq:depth-L-param} has a nonempty solution set and
    any solution $\theta^\star$ of \cref{eq:depth-L-param} induces a solution
    of \cref{eq:depth-L-native-prob} via the map
    $\theta^\star\mapsto f_{\theta^\star}$. In particular, although the
    parametric model class allows for arbitrary finite hidden-layer widths, it
    suffices to use hidden-layer widths at most $N^2$.
\end{theorem}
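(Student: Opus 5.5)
The plan is to combine the abstract machinery of \cref{sec:rep-costs} with a layer-by-layer width-pruning argument. Throughout, take $\B=\Lip(\X;\R^D)$, $\Null=\Const(\X;\R^D)$, and $\EOp f=(f(x_1),\ldots,f(x_N))$, which is $w^*$-continuous by \cref{rem:w*-point-evaluation}.

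\emph{Step 1 (existence and equivalence of problems).} First show that $\EOp(\F_{\Theta_L})=(\R^D)^N$. Depth-$L$ ReLU networks interpolate arbitrary data at distinct points: build a shallow interpolant using \cite[Theorem~5.1]{pinkus1999approximation} coordinatewise, then pad the remaining layers with identity maps written as $z=\sigma(z)-\sigma(-z)$. This makes the objective in \cref{eq:depth-L-native-prob} proper. We also have $\ker\EOp\cap\Null=\{0\}$, since a constant vanishing at $x_1$ is zero. Moreover, $R_L$ is coercive modulo $\Null$ by \cref{lem:deep_relu_coercive_bnd}. Hence \cref{thm:existence_of_min} gives a minimizer $\bar f\in\F_L$. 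Next, \cref{lem:deep_rep_costs_are_quasi_seminorms} together with \cref{prop:parameter-root-properties-transfer} (with $q=1$, $p=2/L$) lets us apply \cref{thm:equivalence_of_min_quasi_norm_case}, so the parameter-space, model-class and function-space problems share the same infimal value. Finally, \cref{lem:fiberwise-equivalence} with $y=z\coloneqq\EOp\bar f$ shows that $R_L(\bar f)=\inf\{C_L(\theta)\st \EOp f_\theta=z\}$.

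\emph{Step 2 (attainment with widths $\le N^2$; main obstacle).} It remains to show that the fiberwise parametric infimum is attained by some $\theta^\star\in\Theta_L(N^2)$. Then $f_{\theta^\star}$ satisfies $\EOp f_{\theta^\star}=z$ and $R_L(f_{\theta^\star})\le\Rcirc(f_{\theta^\star})\le C_L(\theta^\star)=R_L(\bar f)$, so it is a minimizer of \cref{eq:depth-L-native-prob}.

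For the pruning step, take any $\theta$ with $\EOp f_\theta=z$ and fix a hidden layer $\ell$. Only the data activations $H_{\ell}\in\R^{K_\ell\times N}$ matter. Freeze all other layers and rebalance each neuron $j$ of layer $\ell$, so that its contribution to the cost becomes a product of the norm of its incoming weights and the norm of its outgoing weights (via the AM--GM argument of \cref{eq:weight-to-1-norm}). The constraint then becomes that a fixed matrix, namely the next-layer pre-activations at the data, equals a nonnegative combination of rank-one atoms $u_j\varphi_j^\T$, where $\varphi_j\in\R^N$ is the ReLU activation pattern of neuron $j$ on the data. Minimizing the weighted $\ell^1$ sum of the coefficients is a linear program over these atoms.

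By Carathéodory's theorem, or by taking a vertex of the LP, one obtains an equally cheap or cheaper representation using at most as many atoms as the dimension of the constraint space. The dimension count is the delicate point. I would try to reduce the constraint to an $N\times N$ object, for instance by absorbing the outgoing directions into the following layer so that only an $N$-dimensional coordinate space per atom survives. This would give at most $N\cdot N=N^2$ active neurons per layer. Weights must be rebalanced afterwards, and the argument is iterated over $\ell=2,\ldots,L$ in a fixed order. Since each pass never increases the cost and keeps $\EOp f_\theta=z$, the result has cost at most $C_L(\theta)$.

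Taking a minimizing sequence $(\theta_n)$ for the fiberwise infimum, each $\theta_n$ can thus be pruned into $\Theta_L(N^2)$ without increasing cost. These parameters lie in a fixed finite-dimensional space. Their weights are bounded by the cost. The biases can be normalized: each bias is shifted along unpenalized directions, and inactive neurons are dropped, which keeps the biases bounded as well. A convergent subsequence then exists, and continuity of $\theta\mapsto(C_L(\theta),\EOp f_\theta)$ gives attainment.

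\emph{Step 3 (the finite-dimensional problem).} Fix $K\ge N^2$. The problem \cref{eq:depth-L-param} over $\Theta_L(K)$ has infimum at least the common infimal value from Step 1, since $\Theta_L(K)\subset\Theta_L$. It is also at most that value, because $\theta^\star$ embeds into $\Theta_L(K)$ by zero-padding. So $\theta^\star$ solves \cref{eq:depth-L-param}, and the solution set is nonempty. Finally, by \cref{lem:minimizer-transfer}(i), any solution of \cref{eq:depth-L-param} induces a minimizer of \cref{eq:depth-L-native-prob}.

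I expect the dimension count in Step 2 to be the main obstacle: obtaining the bound $N^2$ uniformly in the output width $D$ and across intermediate layers. This is where I would concentrate effort, trying first the formulation in terms of $N\times N$ data-level matrices described above.
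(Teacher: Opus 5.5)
\textbf{Gap: the $N^2$ width bound in Step 2 is never established.} Steps 1 and 3 are sound. Your route also differs legitimately from the paper's. The paper never invokes \cref{thm:existence_of_min} here. Instead it proves existence directly for the parametric problem over $\Theta_L(K)$ by bias normalization and compactness, shows that the infima over $\Theta_L(N^2)$ and $\Theta_L$ coincide, and transfers via \cref{cor:parametric-function-space-equivalence}. Your path through a function-space minimizer $\bar f$ and fiberwise attainment works too.

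The problem is the dimension count, which you leave open. With the outgoing directions $u_j$ frozen, a basic optimal solution of your LP has at most $\dim\operatorname{span}\{u_j\varphi_j^\T\}$ nonzero coefficients. That span can have dimension up to $(\text{width of the following layer})\cdot\operatorname{rank}\Phi$. For the last hidden layer this gives $DN$, not $N^2$. Pruning back to front, it compounds to $DN^2$, $DN^3$, and so on. Pruning front to back gives no uniform bound at all. Your proposed fix, absorbing the outgoing directions into the following layer, is not available: that layer sits behind a ReLU and its weights are penalized, so it cannot absorb a change of basis at no cost.

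\textbf{The missing idea is a projection inside the outgoing weight matrix itself.} Let $\Psi=[W_{\ell+1}h_1^\ell\ \cdots\ W_{\ell+1}h_N^\ell]$ be the bias-free next-layer pre-activation matrix on the data. First absorb neurons with zero incoming weight into $b_{\ell+1}$. Let $P$ be the orthogonal projector onto $\operatorname{col}(\Psi)$, which has dimension $r_\Psi\le N$.

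\begin{itemize}
\item Replacing $W_{\ell+1}$ by $PW_{\ell+1}$ leaves $\Psi=P\Psi$ unchanged, hence leaves all later data activations unchanged, and it does not increase any column norm.
\item After normalizing incoming weights to unit norm, every atom $u_j\varphi_j^\T$ lies in $\{A:\operatorname{col}(A)\subset\operatorname{col}(\Psi),\ \operatorname{row}(A)\subset\operatorname{row}(\Phi)\}$.
\item This space has dimension $r_\Psi r_\Phi\le N^2$, independent of the next width and of $D$.
\item A vertex of your LP therefore has at most $N^2$ active neurons.
\item Rebalancing then gives a layer of width at most $N^2$ with no larger cost.
\end{itemize}

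This is exactly the multi-task-lasso sparsity bound $r_\Phi r_\Psi$ that the paper imports from \cite[Theorem~11]{shenouda2024variation}. Pruning a layer changes only that layer's width and preserves all later data activations, so previously pruned layers stay within $N^2$. With this projection step inserted, your Step 2, and hence the whole argument, goes through.
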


\begin{proof}
Let
\begin{equation}
    \EOp:\Lip(\X;\R^D)\to(\R^D)^N,
    \quad
    \EOp f \coloneqq \big(f(x_1),\ldots,f(x_N)\big)
\end{equation}
be the evaluation operator. By \cref{rem:w*-point-evaluation}, $\EOp$ is $w^*$-continuous. By \cref{lem:deep-relu-finite-sample-surjectivity} we have that $\EOp(\FThetaL) = (\R^D)^N$. By \cref{lem:deep_rep_costs_are_quasi_seminorms}, $\RcircL$ is subadditive and absolutely $(2/L)$-homogeneous. Equivalently, the hypotheses of \cref{prop:parameter-root-properties-transfer} hold with $q=1$ and $p=2/L$.  Thus, we can apply \cref{cor:parametric-function-space-equivalence} to find that
\begin{equation}
    \inf_{\theta\in\ThetaL}
    G\big(f_\theta(x_1),\ldots,f_\theta(x_N)\big)
    +
    \lambda C_L(\theta)
    =
    \inf_{f\in\F_L}
    G\big(f(x_1),\ldots,f(x_N)\big)
    +
    \lambda R_L(f),
    \label{eq:deep-param-native-same-inf}
\end{equation}
and that minimizers of the parameter-space problem induce minimizers of the function-space problem.  By \cref{thm:parametric-deep-existence-width-bound}, the parameter-space problem over $\ThetaL$ admits a minimizer $\theta^\star\in\Theta_L(N^2)$. Hence, by \cref{cor:parametric-function-space-equivalence}, $f_{\theta^\star}$ is a minimizer of \cref{eq:depth-L-native-prob}. This proves that the function-space solution set is nonempty and contains a finite-width depth-$L$ ReLU network with hidden-layer widths at most $N^2$.

Finally, let $K\geq N^2$. By \cref{thm:parametric-deep-existence-width-bound}, the restricted problem \cref{eq:depth-L-param} has a nonempty solution set and has the same minimum value as the parameter-space problem over $\ThetaL$. Therefore any minimizer of \cref{eq:depth-L-param} is also a minimizer of the parameter-space problem over $\ThetaL$. Applying \cref{cor:parametric-function-space-equivalence} once more shows that any such minimizer induces a minimizer of \cref{eq:depth-L-native-prob}. This completes the proof.
\end{proof}

\begin{remark}
    \Cref{thm:deep-relu-representer} resolves an open problem posed by~\cite[Remark~9]{parkinson2024depth} which asked about the existence of minimizers for \cref{eq:depth-L-native-prob}. This theorem resolves this question in the affirmative and provides a finite-width deep-neural-network representation as a solution.
\end{remark}

\subsection{Relationships Between Native Spaces at Different Depths}

Any function realized by a deep ReLU network with finite widths is necessarily a \emph{continuous piecewise linear} (CPwL) function. Describing the exact space of CPwL functions realizable by a fixed-depth ReLU network is an active area of research. We now describe some of the fundamental results in this literature, which motivate analogous questions for the associated native spaces.

Let $\CPWL(\X;\R^D)$ denote the restrictions to $\X$ of continuous piecewise linear functions $\R^d\to\R^D$ with finitely many pieces. Recall that $\FThetaL$ is the space of all functions realizable as an $L$-layer fully-connected ReLU network with arbitrarily large but finite hidden-layer widths. As mentioned above, we have
\begin{equation}
    \FThetaL \subset \CPWL(\X;\R^D)
\end{equation}
for all $L \geq 2$.

Since every function realized by an $L$-layer ReLU network can also be realized by an $(L+1)$-layer ReLU network, we have the \emph{nesting property}
\begin{equation}
    \FThetaL \subset \F_{\Theta_{L+1}},
\end{equation}
for all $L \geq 2$.  In particular, the function $\R^2 \ni x \mapsto \max\{0,x_1,x_2\} \in \R$ belongs to $\F_{\Theta_3}$ but not to $\F_{\Theta_2}$ when $d \geq 2$~\cite[Proposition~2.2]{mukherjee2017lower}. This example is also discussed in the context of the \emph{depth
hierarchy problem} in~\cite{hertrich2021towards}. Therefore, when $\X = \R^d$ and $d \geq 2$, we have that
\begin{equation}
    \F_{\Theta_2} \subsetneq \F_{\Theta_3}.
\end{equation}

The precise number of layers needed to represent arbitrary CPwL functions remains largely open. Let $L_{\CPWL}(d)$ denote the smallest number of layers such that every function in $\CPWL(\R^d;\R^D)$ is representable by an $L_{\CPWL}(d)$-layer ReLU network with arbitrary finite widths. This number is independent of the output dimension $D$ since we can simply apply the scalar constructions coordinatewise and run the resulting networks in parallel. More specifically, we have that
\begin{equation}
    L_{\CPWL}(d) = \min \curly{L \geq 2 \st \F_{\Theta_L} = \CPWL(\R^d;\R^D)}.
\end{equation}

The classical construction based on representing CPwL functions as linear combinations of maxima of $(d+1)$ affine functions and implementing the maximum by a binary tree, gives
\begin{equation}
    L_{\CPWL}(d) \leq \lceil \log_2(d+1)\rceil + 1;
\end{equation}
see, e.g.,~\cite{AroraCPWL,he2020relu,wang2005generalization}.
This bound is sharp in low dimensions in the sense that
\begin{equation}
    L_{\CPWL}(1)=2
    \quad\text{and}\quad
    L_{\CPWL}(2)=L_{\CPWL}(3)=3.
\end{equation}
Indeed, in dimensions $d=2,3$, two hidden layers, equivalently three layers in
our convention, are necessary and sufficient to represent arbitrary CPwL
functions~\cite{he2020relu}. 

The optimal value of $L_{\CPWL}(d)$ is not known for all $d$. In particular, no CPwL function is currently known to require more than three layers to represent. Recent work~\cite{bakaev2025better} improved the general upper bound to
\begin{equation}
    L_{\CPWL}(d) \leq \lceil \log_3(d-1)\rceil + 2,
    \quad d\geq 3.
\end{equation}
This was then improved again in~\cite{ruess2026shallower} to 
\begin{equation}
    L_{\CPWL}(d) \leq \left\lceil \log_6\paren*{\frac{d+1}{2}}\right\rceil + 2,
    \quad d\geq 3.
\end{equation}
This, in particular, reveals that $L_{\CPWL}(d) = 3$ for $2 \leq d \leq 11$. We now investigate analogous properties for the native spaces $\F_L$.
\begin{theorem}
\label{thm:deep-native-space-nesting}
For every $L \geq 2$, we have the nesting property
\begin{equation}
    \F_L \subset \F_{L+1}.
\end{equation}
In particular, for every $f \in \F_L$,
\begin{equation}
    R_{L+1}(f) \leq \frac{2L}{L+1} R_L(f) + \frac{2D}{L+1}.
\end{equation}
\end{theorem}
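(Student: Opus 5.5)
The idea is to realize every depth-$L$ network as a depth-$(L+1)$ network with controlled cost, and then pass this parametric inequality to the l.s.c.\ regularizations. The key tool is the identity $z=\sigma(z)-\sigma(-z)$ for the ReLU, which lets us insert an extra layer without changing the realized function.

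\emph{Step 1 (parametric embedding).} Fix $\theta=((W_1,b_1),\ldots,(W_L,b_L))\in\ThetaL$. Define $\eta\in\Theta_{L+1}$ by keeping the first $L-1$ layers of $\theta$ unchanged. Replace the last layer by the layer with weight $\begin{bmatrix}W_L\\-W_L\end{bmatrix}\in\R^{2D\times K_L}$ and bias $\begin{bmatrix}b_L\\-b_L\end{bmatrix}$. Append a final layer with weight $\begin{bmatrix}I_D & -I_D\end{bmatrix}$ and zero bias. Since $\sigma(z)-\sigma(-z)=z$ entrywise, we get $f_\eta=f_\theta$. The Frobenius norms are
\begin{equation}
\norm[\big]{\begin{bmatrix}W_L\\-W_L\end{bmatrix}}_F^2=2\norm{W_L}_F^2
\quad\text{and}\quad
\norm[\big]{\begin{bmatrix}I_D&-I_D\end{bmatrix}}_F^2=2D.
\end{equation}
Hence
\begin{equation}
C_{L+1}(\eta)=\frac{1}{L+1}\Big(\sum_{\ell<L}\norm{W_\ell}_F^2+2\norm{W_L}_F^2+2D\Big)\leq \frac{2L}{L+1}C_L(\theta)+\frac{2D}{L+1},
\end{equation}
using $\sum_{\ell<L}\norm{W_\ell}_F^2+2\norm{W_L}_F^2\leq 2\sum_{\ell}\norm{W_\ell}_F^2=2L\,C_L(\theta)$. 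Taking the infimum over all $\theta$ with $f_\theta=f$ gives
\begin{equation}
R_{\Theta_{L+1}}\leq h\circ\RcircL
\quad\text{on }\Lip(\X;\R^D),
\qquad h(t)\coloneqq \frac{2L}{L+1}t+\frac{2D}{L+1},
\end{equation}
with $h(+\infty)=+\infty$. This also shows $\FThetaL\subset\F_{\Theta_{L+1}}$.

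\emph{Step 2 (passing to l.s.c.\ regularizations).} The function $R_{L+1}=\overline{R_{\Theta_{L+1}}}$ is $w^*$-l.s.c.\ and satisfies $R_{L+1}\leq R_{\Theta_{L+1}}\leq h\circ\RcircL$. By the defining property of l.s.c.\ regularization, $R_{L+1}\leq\overline{h\circ\RcircL}$. The map $h$ is continuous and nondecreasing on $[0,+\infty]$, so \cref{prop:lscreg_properties}~\ref{item:cont-compose} gives $\overline{h\circ\RcircL}=h\circ R_L$. Therefore
\begin{equation}
R_{L+1}(f)\leq\frac{2L}{L+1}R_L(f)+\frac{2D}{L+1},\qquad f\in\Lip(\X;\R^D).
\end{equation}
In particular, $R_L(f)<+\infty$ implies $R_{L+1}(f)<+\infty$, which is exactly $\F_L\subset\F_{L+1}$.

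The only delicate point is Step 2: the inequality has to be transferred through the l.s.c.\ envelope, and an affine map with a positive constant term has to be handled correctly. This is settled by the cited composition rule, since $h$ is continuous and nondecreasing. The explicit construction in Step 1 itself is routine.
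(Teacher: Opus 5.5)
Your proof is correct. Step 1 is the paper's construction: split the last affine map into $\pm a_L$, apply $\sigma$, and recombine with $[I_D\ \ -I_D]$. This doubles $\norm{W_L}_F^2$ and adds $2D$ to the cost. The genuine difference is how you pass the parametric inequality to the extended costs.

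\emph{The paper's route.} It fixes $f\in\F_L$ and uses the sequential characterization of \cref{thm:rep_cost_lip_spaces} to get a recovery sequence $f_n\in\F_{\Theta_L}$ with $f_n\to f$ pointwise and $R_{\Theta_L}(f_n)\to R_L(f)$. It then picks near-optimal parameters $\theta_n$ and lifts each one to depth $L+1$. Finally, it uses the uniform bound on $R_{\Theta_{L+1}}(f_n)$ to justify $R_{L+1}(f)\le\liminf_n R_{\Theta_{L+1}}(f_n)$ along this pointwise-convergent sequence.

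\emph{Your route.} You prove the inequality $R_{\Theta_{L+1}}\le h\circ R_{\Theta_L}$ on all of $\Lip(\X;\R^D)$. You then close with the defining property of the l.s.c.\ envelope and the identity $\overline{h\circ R_{\Theta_L}}=h\circ R_L$ from \cref{prop:lscreg_properties}~\ref{item:cont-compose}. This identity applies because $h$ extends to a continuous nondecreasing map of $\eR$.

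\emph{What each buys.} Your argument is shorter and purely order-theoretic. It needs no separability of the predual, no coercivity modulo constants, and no recovery sequences. It also gives the bound for every $f\in\Lip(\X;\R^D)$ at once. The paper's argument is more concrete, since it exhibits explicit depth-$(L+1)$ approximants of $f$, but it relies on the Lipschitz-space machinery of \cref{sec:Lip} for the $\liminf$ step.
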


\begin{proof}
Let $f \in \F_L$. By the sequential characterization of $R_L$ from \cref{thm:rep_cost_lip_spaces}, there exists a sequence $(f_n)_{n\in\N} \subset \FThetaL$ such that $f_n \to f$ pointwise and $\lim_{n\to\infty} \RcircL(f_n) = R_L(f) < +\infty$. For each $n \in \N$, let $\theta_n = ((W_{\ell,n},b_{\ell,n}))_{\ell=1}^L \in \ThetaL$ be such that $f_n = f_{\theta_n}$ and $C_L(\theta_n) \leq \RcircL(f_n) + 1/n$. We construct an $(L+1)$-layer network that realizes the same function. Write
\begin{equation}
    f_{\theta_n} = a_{L,n} \circ \sigma \circ a_{L-1,n} \circ \cdots \circ \sigma \circ a_{1,n}.
\end{equation}
Define $\widetilde{\theta}_n \in \Theta_{L+1}$ by keeping the first $L-1$ affine layers unchanged, replacing the final affine map $a_{L,n}$ by
\begin{equation}
    \widetilde a_{L,n}(z)
    =
    \begin{bmatrix}
        a_{L,n}(z) \\
        -a_{L,n}(z)
    \end{bmatrix},
\end{equation}
and adding the final affine map
\begin{equation}
    \widetilde a_{L+1}(u,v) = u - v.
\end{equation}
Using the identity $\sigma(t)-\sigma(-t)=t$, applied entrywise, we have $f_{\widetilde{\theta}_n} = f_{\theta_n} = f_n$. Moreover,
\begin{equation}
    C_{L+1}(\widetilde{\theta}_n) = \frac{1}{L+1} \sq*{\paren*{\sum_{\ell=1}^{L-1} \norm{W_{\ell,n}}_F^2 + 2\norm{W_{L,n}}_F^2} + 2D} \leq \frac{2L}{L+1} C_L(\theta_n) + \frac{2D}{L+1}.
\end{equation}
Therefore,
\begin{equation}
    R_{L+1}(f) \leq \liminf_{n\to\infty} R_{\Theta_{L+1}}(f_n) \leq \liminf_{n\to\infty} C_{L+1}(\widetilde{\theta}_n) \leq \frac{2L}{L+1} R_L(f) + \frac{2D}{L+1} < +\infty,
\end{equation}
where the first inequality follows from \cref{thm:rep_cost_lip_spaces}, since $f_n\to f$
pointwise and the $R_{\Theta_{L+1}}(f_n)$ are uniformly bounded.
Hence $f \in \F_{L+1}$, which proves $\F_L \subset \F_{L+1}$.
\end{proof}

\begin{corollary} \label{cor:deep-native-space-nesting}
For every $L \geq 2$, the inclusion map $\F_L \hookrightarrow \F_{L+1}$ is continuous with respect to the quasi-Banach topologies induced by the quasi-norms $\norm{\dummy}_{\F_L}$, $L \geq 2$.
In particular, there exists a constant $C_{L,D}<+\infty$ such that
\begin{equation}
\norm{f}_{\F_{L+1}}
\leq
C_{L,D}\norm{f}_{\F_L},
\quad f\in \F_L.
\end{equation}
\end{corollary}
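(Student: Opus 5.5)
The plan is to upgrade the affine estimate of \cref{thm:deep-native-space-nesting} to a homogeneous one by using the absolute homogeneity of the quasi-norms $\norm{\dummy}_{\F_L}$ and $\norm{\dummy}_{\F_{L+1}}$. Boundedness of a linear map between quasi-normed spaces then gives continuity.

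\emph{Homogeneity of the quasi-norms.} By \cref{lem:deep_rep_costs_are_quasi_seminorms}, $R_L$ is absolutely $(2/L)$-homogeneous, so $R_L^{L/2}$ is absolutely $1$-homogeneous. The term $\norm{f(\e)}_2$ is clearly $1$-homogeneous. Hence $\norm{\alpha f}_{\F_L}=\abs{\alpha}\norm{f}_{\F_L}$ for all $\alpha$, and likewise for $L+1$.

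\emph{Bound on the unit sphere.} Take $g\in\F_L$ with $\norm{g}_{\F_L}=1$. Then $R_L(g)\leq 1$, and \cref{thm:deep-native-space-nesting} gives
\begin{equation}
R_{L+1}(g)\leq M\coloneqq \frac{2L+2D}{L+1}.
\end{equation}
It follows that $\norm{g}_{\F_{L+1}}\leq \max\{M^{(L+1)/2},1\}\eqqcolon C_{L,D}$.

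\emph{Extension to all of $\F_L$.} Let $f\in\F_L$. If $\norm{f}_{\F_L}=0$, then $f=0$, because $\norm{\dummy}_{\F_L}$ is a quasi-norm by \cref{thm:quasi-Banach-geometry}; in that case the inequality is trivial. Otherwise set $t=\norm{f}_{\F_L}$ and $g=f/t$. Homogeneity on both sides then yields $\norm{f}_{\F_{L+1}}=t\norm{g}_{\F_{L+1}}\leq C_{L,D}\norm{f}_{\F_L}$.

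\emph{Continuity.} The quasi-Banach topology on each space is induced by the metric $d(f,g)=\norm{f-g}^{p}$ with the respective exponent $p$; equivalently, it is the topology of the quasi-norm. The inclusion is linear, so $\norm{f_n-f}_{\F_L}\to0$ implies $\norm{f_n-f}_{\F_{L+1}}\leq C_{L,D}\norm{f_n-f}_{\F_L}\to0$, which is continuity.

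The only delicate point is that the estimate of \cref{thm:deep-native-space-nesting} carries an additive constant $2D/(L+1)$. Applied directly it gives no linear bound; the normalization to the unit sphere is exactly what removes this constant.
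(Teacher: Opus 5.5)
Your proof is correct and follows essentially the same route as the paper: you use absolute homogeneity to rescale $f$ to unit size, which absorbs the additive constant $2D/(L+1)$ from \cref{thm:deep-native-space-nesting}, and you arrive at the same constant $C_{L,D}=\max\{M^{(L+1)/2},1\}$. The only difference is the normalization: the paper rescales by $R_L(f)^{L/2}$ alone, handling $R_L(f)=0$ by letting the scaling parameter tend to $+\infty$, and thereby gets the slightly stronger seminorm-level bound $R_{L+1}(f)^{(L+1)/2}\leq M^{(L+1)/2}R_L(f)^{L/2}$; you instead rescale by the full quasi-norm $\norm{\dummy}_{\F_L}$ and use its definiteness for the zero case.
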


\begin{proof}
Write $A \coloneqq 2L/ (L+1)$ and $B \coloneqq 2D /(L+1)$. By \cref{thm:deep-native-space-nesting}, for every $g\in \F_L$, we have $R_{L+1}(g) \leq A R_L(g)+B$. Applying this inequality to $g=tf$ and using the homogeneity of $R_L$ and $R_{L+1}$ gives for every $t>0$,
\begin{equation}
t^{2/(L+1)}R_{L+1}(f) = R_{L+1}(tf) \leq A t^{2/L}R_L(f)+B. \label{eq:t-ineq}
\end{equation}
If $R_L(f)>0$, choose $t=R_L(f)^{-L/2}$. Then
\begin{equation}
R_{L+1}(f) \leq (A+B)R_L(f)^{L/(L+1)}. \label{eq:same-bound}
\end{equation}
If $R_L(f)=0$, letting $t\to+\infty$ in \cref{eq:t-ineq} gives
$R_{L+1}(f)=0$, so the same bound \cref{eq:same-bound} holds. Thus,
\begin{equation}
R_{L+1}(f)^{(L+1)/2} \leq (A+B)^{(L+1)/2}R_L(f)^{L/2}.
\end{equation}
Finally,
\begin{equation}
\norm{f}_{\F_{L+1}} = \max\{R_{L+1}(f)^{(L+1)/2},\norm{f(\e)}_2\} \leq C_{L,D}\norm{f}_{\F_L},
\end{equation}
where
$C_{L,D} \coloneqq \max\{ (A+B)^{(L+1)/2},1 \}$,
which proves the continuity of the embedding.
\end{proof}

The previous theorem and corollary show that the native spaces are continuously nested with depth. Moreover, under the hypotheses of \cref{prop:deep-unit-ball-nonconvex}, the common unit ball of $R_L$ and $R_L^{L/2}$ is nonconvex. The first nontrivial depth separation occurs already between depths two and three.

\begin{proposition}
\label{prop:F2_F3_depth_separation}
Assume $\X = \R^d$, $d \geq 2$, and $D = 1$. Then $\F_2 \subsetneq \F_3$. Moreover, 
\begin{equation}
    \CPWL(\R^d;\R) \cap \F_2 = \F_{\Theta_2}. \label{eq:cpwl-intersection}
\end{equation}
\end{proposition}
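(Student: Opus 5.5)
Both claims reduce to the identity \cref{eq:cpwl-intersection}, so we prove that first. The inclusion $\F_{\Theta_2}\subset\CPWL(\R^d;\R)\cap\F_2$ is immediate, since every finite-width two-layer ReLU network is CPwL and $R_2\le R_{\Theta_2}<+\infty$ on $\F_{\Theta_2}$.

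For the reverse inclusion, we first identify $\F_2$ with the Radon-domain space of \cref{sec:shallow}. The $L=2$ cost differs from the shallow cost only in two respects: it lacks the unpenalized skip connection $w_0^\T x$, and it uses the normalization $\tfrac12\sum(\|W_1\|_F^2+\|W_2\|_F^2)$. The skip connection can be realized at finite cost via $w^\T x=(w^\T x)_+-(-w^\T x)_+$. Using this and the balancing argument behind \cref{eq:weight-to-1-norm}, we expect to show that
\[
R_2(f)\ \ge\ \RTV^2(f)
\]
for every $f\in\Lip(\R^d)$. To do so, compare the parametric costs ($\Rcirc$ for the shallow net dominates nothing worse than $R_{\Theta_2}$), and then pass to l.s.c.\ regularizations using the defining property. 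Since the shallow $R$ is $w^*$-l.s.c.\ and satisfies $R\le R_{\Theta_2}$, we get $R\le R_2$. Consequently $\F_2\subset\RBV^2(\R^d)$.

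Now let $f\in\CPWL(\R^d;\R)\cap\F_2$, so $f$ is CPwL with $\RTV^2(f)<+\infty$. The key step, and the main obstacle, is to show that a CPwL function of finite Radon total variation is a finite sum of ReLU ridge functions plus an affine function. We would argue via \cref{thm:Radon-BS}. There $f=\P_\Aff f+\TOp_{\ReLU}\nu$ with $\nu=-c_d\RadonOp(-\Delta)^{(d+1)/2}f\in\M(\mathbb P^d)$, and the task is to show that $\nu$ is a finite sum of atoms. The distributional Laplacian $\Delta f$ of a CPwL $f$ is a finite sum of measures supported on the $(d-1)$-dimensional faces (the ``creases'') of the linear pieces, each with a constant density given by the jump of the normal derivative. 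Applying the filtered Radon transform, a crease piece contributes an atom at $(w,b)$ only if it is a full hyperplane; a bounded or half-infinite face instead produces a component of the Radon transform that is spread over a nonzero-dimensional family of directions and is not a finite measure once filtered. This is essentially the result of Ongie et al.~\cite{OngieFunctionSpace}: a CPwL function with finite $\RTV^2$ must have all its ``kinks'' along full hyperplanes with constant jumps. Rigorously, we would localize near a crease endpoint (a $(d-2)$-dimensional face) and show that the $\M$-norm of the filtered Radon transform tested against bumps blows up, contradicting finiteness. Once every crease is known to be a full hyperplane with a constant gradient jump, $f-\sum_j v_j(w_j^\T x-b_j)_+$ has no creases and is therefore affine, so $f\in\F_{\Theta_2}$.

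Finally, for the strict inclusion, take $g(x)=\max\{0,x_1,x_2\}$. By \cite[Proposition~2.2]{mukherjee2017lower}, $g\in\F_{\Theta_3}\setminus\F_{\Theta_2}$. Then $g\in\F_{\Theta_3}\subset\F_3$ because $R_3\le R_{\Theta_3}$. Since $g$ is CPwL and not in $\F_{\Theta_2}$, the identity \cref{eq:cpwl-intersection} gives $g\notin\F_2$. Combined with \cref{thm:deep-native-space-nesting}, this yields $\F_2\subsetneq\F_3$.
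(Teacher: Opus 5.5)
Your overall architecture is sound, and several pieces are right. The inclusion $\F_{\Theta_2}\subset\CPWL(\R^d;\R)\cap\F_2$ is correct. So is the comparison giving $\F_2\subset\RBV^2(\R^d)$: a depth-$2$ parameter is a shallow parameter with $w_0=0$ and the same cost, so the shallow $\Rcirc$ satisfies $\Rcirc\le R_{\Theta_2}$, and the defining property of l.s.c.\ regularization yields $\RTV^2\le R_2$. The absorption of the skip connection via $w^\T x=(w^\T x)_+-(-w^\T x)_+$ is correct, and so is the deduction of strictness from \cref{eq:cpwl-intersection} using $\max\{0,x_1,x_2\}$.

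The genuine gap is the step you yourself flag: that a CPwL function with $\RTV^2(f)<+\infty$ is a finite sum of ReLU ridge functions plus an affine function. This is not ``essentially the result of'' \cite{OngieFunctionSpace}. It was posed there as a conjecture, with special cases such as the pyramid computed, and was proved by McCarty \cite[Theorem~1]{mccarty2023piecewise}; that is exactly what the paper invokes for \cref{eq:cpwl-intersection}.

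Your localization sketch does not close this gap. Finiteness of $\|\nu\|_{\M}$ only controls the \emph{sum} of the contributions of all creases. You would therefore have to exclude cancellation among faces lying in a common hyperplane, on which the jump of the normal derivative is only piecewise constant. Proving that this density must be constant on the whole hyperplane is the actual content of the theorem. Your final sentence (``once every crease is known to be a full hyperplane with a constant gradient jump'') simply assumes it. Moreover, for even $d$ the filter $(-\partial_t^2)^{(d-1)/2}$ is nonlocal in $t$, so it is unclear what ``localizing near a $(d-2)$-dimensional face'' means after the Radon transform. Either cite McCarty's theorem or supply a complete argument.

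As you have structured it, the strict inclusion inherits this dependence. The paper decouples the two claims. For strictness it uses the pyramid $\max\{0,1-\|x\|_1\}$, whose representation cost is shown directly to be infinite in \cite[Example~4]{OngieFunctionSpace}; that cost coincides with $R_2$ by \cref{prop:Rtilde_equals_R}. The pyramid clearly lies in $\F_{\Theta_3}\subset\F_3$, so only \cref{eq:cpwl-intersection} needs McCarty. Once McCarty's theorem is cited, your route via $\max\{0,x_1,x_2\}$ and \cite[Proposition~2.2]{mukherjee2017lower} is equally valid.
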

\begin{proof}
    From~\cite[Example~4]{OngieFunctionSpace}, the pyramid function $f(x) = \max\{0, 1 - \norm{x}_1\}$, $x \in \R^d$, satisfies $f \not \in \F_2$ and $f \in \F_{\Theta_3}$ when $d \geq 2$. Since $\F_{\Theta_3} \subset \F_3$ by construction, we immediately have that $\F_2 \subsetneq \F_3$ for $d \geq 2$. The identity in \cref{eq:cpwl-intersection} follows from~\cite[Theorem~1]{mccarty2023piecewise}.
\end{proof}

Combining the CPwL realizability results stated above with the inclusion $\FThetaL \subset \F_L$ gives the following corollary.

\begin{corollary}
\label{cor:CPWL_subset_FL_subset_Lip}
Let $\X \subset \R^d$ be nonempty. For every $L \geq L_{\CPWL}(d)$ we have that
\begin{equation}
    \CPWL(\X;\R^D) \subset \F_L \subset \Lip(\X;\R^D).
\end{equation}
\end{corollary}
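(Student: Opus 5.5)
The plan is to obtain the middle inclusion from the chain $\CPWL(\X;\R^D) \subset \FThetaL \subset \F_L$, and the right inclusion from the definition $\F_L = \dom(R_L) \subset \Lip(\X;\R^D)$. The right inclusion needs no further argument. For the left one, I would first show $\FThetaL \subset \F_L$. This holds because $R_L \leq \RcircL$, which is the defining property of l.s.c. regularization, and because $\RcircL(f_\theta) \leq C_L(\theta) < +\infty$ for every $\theta \in \ThetaL$.

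It then remains to show $\CPWL(\X;\R^D) \subset \FThetaL$ for $L \geq L_{\CPWL}(d)$. An element $g \in \CPWL(\X;\R^D)$ is, by definition, the restriction to $\X$ of some $\tilde g \in \CPWL(\R^d;\R^D)$. By the definition of $L_{\CPWL}(d)$, $\tilde g$ is realized by an $L_{\CPWL}(d)$-layer ReLU network on $\R^d$, and restricting that network to $\X$ realizes $g$ on $\X$. This settles the case $L = L_{\CPWL}(d)$.

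For larger $L$, I would use the nesting $\FThetaL \subset \F_{\Theta_{L+1}}$ stated earlier, whose explicit construction appears in the proof of \cref{thm:deep-native-space-nesting}. That construction duplicates the last affine map as $(a_L, -a_L)$ and adds the layer $(u,v)\mapsto u-v$, relying on $\sigma(t)-\sigma(-t)=t$. Induction on $L$ then gives $\CPWL(\X;\R^D) \subset \F_{\Theta_{L_{\CPWL}(d)}} \subset \FThetaL$. Alternatively, one could apply \cref{thm:deep-native-space-nesting} directly at the level of the native spaces $\F_L$.

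I do not expect a real obstacle here. The one point needing care is that the parametric model classes are defined on a general nonempty $\X$ while $L_{\CPWL}(d)$ is defined on $\R^d$. Restriction commutes with network realization, so this causes no difficulty, and it is also consistent with the fact that $\CPWL(\X;\R^D)$ is itself defined through restrictions from $\R^d$.
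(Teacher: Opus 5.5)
Your proposal is correct and follows the paper's proof: the right inclusion holds by construction, and the left one comes from realizing a CPwL function on $\R^d$ by a depth-$L$ network, restricting to $\X$, and using $\F_{\Theta_L} \subset \F_L$. The only difference is that you explicitly justify the case $L > L_{\CPWL}(d)$ via the nesting $\F_{\Theta_L} \subset \F_{\Theta_{L+1}}$, a step the paper folds into ``by definition''.
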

\begin{proof}
The inclusion $\F_L \subset \Lip(\X;\R^D)$ holds by construction. On the other hand, by definition, we have for $L \geq L_{\CPWL}(d)$ that $\CPWL(\R^d;\R^D) = \FThetaL$. Restricting functions to $\X$ and using $\FThetaL \subset \F_L$ yields $\CPWL(\X;\R^D) \subset \F_L$.
\end{proof}

Beyond these results, the relationship between the spaces $\F_L$ remains largely open. The nesting results (\cref{thm:deep-native-space-nesting,cor:deep-native-space-nesting}) reveal that depth enlarges the native space in a continuous manner, and the separation $\F_2 \subsetneq \F_3$ shows that this enlargement is nontrivial, at least for low depths. However, it is not known whether every containment
\begin{equation}
    \F_L \subset \F_{L+1}
\end{equation}
is strict, nor is it known whether the spaces eventually stabilize. It is also natural to ask whether the CPwL characterization of the depth-$2$ native space persists at greater depths, namely whether or not
\begin{equation}
    \CPWL(\R^d;\R^D) \cap \F_L = \FThetaL
\end{equation}
for every $L \geq 2$. A related question is whether increasing depth eventually exhausts all Lipschitz functions, either in the strong sense that
\begin{equation}
    \F_L = \Lip(\X;\R^D)
\end{equation}
for all sufficiently large $L$, or in the weaker sense that
\begin{equation}
    \bigcup_{L\geq 2} \F_L = \Lip(\X;\R^D).
\end{equation}
At present, we do not know whether either statement holds. Thus, while the parametric model classes eventually contain all CPwL functions, the corresponding saturation behavior of the native spaces remains an open problem.

A related perspective comes from the behavior of the representation cost as the depth tends to infinity. For deep linear neural networks, we previously saw that increasing depth yields a representation cost that interpolates between nuclear-norm regularization and rank minimization as depth increases. For nonlinear neural networks with one nonlinear layer and many linear layers, the authors of~\cite{parkinson2025relu} showed that the representation cost converges to a nonlinear notion of rank which they term the \emph{index rank} of a function as depth increases. For fully nonlinear networks, other nonlinear notions of rank were proposed by Jacot~\cite{JacotBNRegularity,JacotBNRank}. Let $\X \subset \R^d$ be an open convex set, and for $f \in \CPWL(\X;\R^D)$, define the \emph{Jacobian rank}
\begin{equation} \label{eq:Jac-rank}
    \rank_\mathrm{J}(f) = \esssup_{x\in\X} \rank({\D} f(x)),
\end{equation}
where we note that the Jacobian ${\D}f(x)$ exists for almost every $x \in \X$, and define the \emph{bottleneck rank}
\begin{equation} \label{eq:BN-rank}
    \rank_{\mathrm{BN}}(f) = \min\curly[\Big]{r\in\N \st f = h \circ g,\  g \in \CPWL(\X;\R^r),\ h \in \CPWL(\R^r;\R^D)}.
\end{equation}
In particular, it was shown\footnote{The parametric cost considered in~\cite{JacotBNRank} differs from the one presented in this section in that bias terms are also penalized, leading to a slightly different parametric representation cost. Nevertheless, the bounds given in \cref{eq:jacot-bounds} can be shown to hold both with and without penalizing bias terms.} in~\cite{JacotBNRank} that
\begin{equation}\label{eq:jacot-bounds}
    \rank_\mathrm{J}(f)
    \leq
    \liminf_{L\to\infty} \RcircL(f)
    \leq
    \limsup_{L\to\infty} \RcircL(f)
    \leq
    \rank_{\mathrm{BN}}(f),
    \quad
    f \in \CPWL(\X;\R^D),
\end{equation}
and the author of~\cite{JacotBNRank} conjectures that $\lim_{L\to\infty} \RcircL(f)$ exists and is equal to $\rank_{\mathrm{BN}}(f)$ for all $f \in \CPWL(\X;\R^D)$. If true, this would suggest that, at large depths, the parametric representation cost may favor functions with low-dimensional intermediate representations, in analogy with the low-rank inductive bias of deep linear networks. A related analysis for architectures with residual connections is given in~\cite{boix2025inductive}.

It is not obvious \textit{a priori} whether analogous bounds hold for the \emph{extended} representation cost $R_L$ in the limiting regime as $L \to \infty$. However, as a direct consequence of \cref{lem:jacobian-minor-bound}, the next proposition reveals that the Jacobian-rank lower bound holds for the limiting extended representation cost. 

For $f\in\Lip(\X;\R^D)$ with $\operatorname{int}(\X)\neq\varnothing$, we
extend the definition \cref{eq:Jac-rank} %
of the Jacobian rank to $f$ by defining
\begin{equation}
    \operatorname{rank}_\mathrm{J}(f)
    \coloneqq
    \esssup_{x\in\operatorname{int}(\X)}
    \operatorname{rank}\paren{{\D}f(x)},
    \label{eq:jacobian-rank-lip}
\end{equation}
which is well-defined for any $f\in\Lip(\X;\R^D)$.

\begin{proposition}\label{prop:rank-lower-bound}
Suppose that $\operatorname{int}(\X)\neq\varnothing$. Then, for every
$f\in\Lip(\X;\R^D)$,
\begin{equation}
    \operatorname{rank}_\mathrm{J}(f) \leq \liminf_{L\to\infty}R_L(f)
    .
    \label{eq:rank-lower-bound}
\end{equation}
\end{proposition}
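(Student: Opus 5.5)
The plan is to deduce \cref{eq:rank-lower-bound} directly from \cref{lem:jacobian-minor-bound}, choosing $k$ as the Jacobian rank. If $\operatorname{rank}_\mathrm{J}(f)=0$ there is nothing to prove, since $R_L\geq 0$. Likewise, if $\liminf_{L\to\infty}R_L(f)=+\infty$ the inequality is trivial. So we assume $k\coloneqq\operatorname{rank}_\mathrm{J}(f)\geq1$; note that automatically $k\leq\min\{d,D\}$.

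The first step is to extract a positive lower bound on a product of singular values. By definition of the essential supremum of an integer-valued function, the set
\begin{equation*}
    E\coloneqq\curly{x\in\operatorname{int}(\X)\st \rank\paren{\D f(x)}=k}
\end{equation*}
has positive Lebesgue measure. On $E$ we have $\prod_{m=1}^k\sigma_m(\D f(x))>0$. Since $E$ is a countable union of the sets $\{x\in E\st \prod_{m=1}^k\sigma_m(\D f(x))>1/j\}$, some $j$ gives a set of positive measure. Consequently
\begin{equation*}
    c\coloneqq\operatorname*{ess\,sup}_{x\in\operatorname{int}(\X)}\prod_{m=1}^k\sigma_m\paren{\D f(x)}>0.
\end{equation*}
Measurability of $x\mapsto\sigma_m(\D f(x))$ follows from the measurability of $\D f$ and the continuity of singular values. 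We also have $c<+\infty$ because $\D f\in L^\infty_{\mathrm{loc}}$; in fact $\sigma_m(\D f(x))\leq |f|_{\Lip(\X;\R^D)}$ almost everywhere on $\operatorname{int}(\X)$, so finiteness is not even needed.

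The second step applies \cref{lem:jacobian-minor-bound} for each $L\geq2$, giving $c\leq (R_L(f)/k)^{kL/2}$. Rearranging yields
\begin{equation*}
    R_L(f)\geq k\,c^{2/(kL)}.
\end{equation*}
If $R_L(f)=+\infty$ this holds trivially. As $L\to\infty$, we have $c^{2/(kL)}\to1$ because $0<c<+\infty$. Taking the $\liminf$ therefore gives $\liminf_{L\to\infty}R_L(f)\geq k=\operatorname{rank}_\mathrm{J}(f)$.

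The only step requiring care is the first one: making sure that the essential supremum $c$ is strictly positive rather than merely that the rank is attained. This is handled by the countable-union argument above. The case $c=+\infty$ cannot occur, and even if it did, the bound $R_L(f)\geq k\,c^{2/(kL)}$ would only become stronger. Everything else is a direct limit computation.
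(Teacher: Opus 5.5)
Your proposal is correct and matches the paper's proof: set $k=\operatorname{rank}_\mathrm{J}(f)$, show $0<c\leq\abs{f}_{\Lip(\X;\R^D)}^k<+\infty$, apply \cref{lem:jacobian-minor-bound} to get $R_L(f)\geq k\,c^{2/(kL)}$, and let $L\to\infty$. The countable-union step is a harmless extra detail. Positivity of $c$ already follows directly, since the product of the first $k$ singular values is strictly positive on a set of positive measure.
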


\begin{proof}
Put $k\coloneqq\operatorname{rank}_\mathrm{J}(f)\leq\min\{d,D\}$. If $k=0$, the
claim is trivial, so assume $k\geq 1$. Define
\begin{equation}
    c
    \coloneqq
    \esssup_{x\in\operatorname{int}(\X)}\;
    \prod_{m=1}^{k}\sigma_m\paren{{\D} f(x)}.
\end{equation}
Observe that $\operatorname{rank}({\D} f(x))\geq k$ if and only if $\prod_{m=1}^{k}\sigma_m(\D f(x))>0$. Thus, by definition of $\operatorname{rank}_\mathrm{J}(f)$, the latter holds on a set of positive measure, and so $c>0$. Moreover, $\sigma_1(\D f(x))\leq\abs{f}_{\Lip(\X;\R^D)}$ holds for almost every $x \in \operatorname{int}(\X)$, so $c\leq\abs{f}_{\Lip(\X;\R^D)}^k<+\infty$. By
\cref{lem:jacobian-minor-bound},
\begin{equation}
    R_L(f)
    \geq
    k\,c^{2/(kL)},
    \quad L\geq 2.
\end{equation}
Since $c > 0$, we have $c^{2/(kL)}\to 1$ as $L\to\infty$.
Therefore, $\liminf_{L\to\infty}R_L(f)\geq k=\operatorname{rank}_\mathrm{J}(f)$.
\end{proof}
It remains open whether or not the corresponding upper bound via the bottleneck rank in \cref{eq:jacot-bounds} holds for $\limsup_{L \to \infty} R_L(f)$ for $f\in\Lip(\X;\R^D)$. It also remains open whether or not $\liminf_{L \to \infty} R_L(f) = \limsup_{L \to \infty} R_L(f)$ for $f\in\Lip(\X;\R^D)$.

\subsection{Functional Analysis of Deep Neural Networks}
We now compare the results of this section with prior function-space approaches to studying deep neural networks. There are now several frameworks that associate infinite-dimensional function spaces with deep neural architectures, including Banach spaces for multi-layer ReLU networks~\cite{wojtowytsch2020banach}, flow-induced and Barron-type spaces~\cite{ma2022barron}, approximation spaces~\cite{gribonval2022approximation}, compositional variation spaces~\cite{parhi2022kinds,parhi2026compositional,shenouda2024variation}, neural RKBSs~\cite{bartolucci2024neural}, hypothesis-space RKBSs~\cite{wang2025hypothesis}, neural Hilbert ladders~\cite{chen2023multi,chen2024neural}, and reproducing kernel chains~\cite{heeringa2025deep}. The common theme of these works is that deep neural networks can be studied through the lens of function spaces. However, they differ in their fundamental goals, which can be seen from the choice of architecture, the parameter-space regularizer, the identified function space, and/or the existence of a representer theorem.

These differences clarify which aspects of the present problem are addressed by each framework. Several existing approaches begin by identifying a function space for which the corresponding regularized data-fitting problem admits a minimizer that can be realized by a finite deep neural network. Such a result is a \emph{representer theorem for deep neural networks}. However, these representer theorems do not, by themselves, imply that solutions of the usual weight-decay-regularized parametric training problem are solutions of the associated function-space problem.

By contrast, we start from the vanilla deep ReLU parameterization and the usual weight-decay parameter cost, and identify the native space obtained by taking the $w^*$-l.s.c.\ regularization of the induced parametric representation cost. Thus, the native space is the function space induced by weight decay on the vanilla deep ReLU architecture. Moreover, under an overparameterization condition, the parameter-space and function-space problems have the same optimal value, every parameter-space minimizer induces a function-space minimizer, and at least one function-space minimizer has a finite-width realization; see \cref{thm:deep-relu-representer}.
We highlight in \cref{tab:deep-rep-thm-comparison} the precise sense in which the results of this section differ from prior work in the functional analysis of deep neural networks. 

The works of Jacot and coauthors are closest to our setup because they keep the vanilla deep ReLU architecture and use the usual weight-decay parameter cost. However, their analysis is carried out over the parametric model class $\F_\Theta$, rather than over a completed native space. The hypothesis-space construction of Wang et al.~\cite{wang2025hypothesis} is also related, since it starts from fixed-depth, fixed-width deep neural network atoms and constructs a vector-valued I-RKBS with these atoms. The resulting RKBS admits a representer theorem, but it arises directly from the I-RKBS construction and has no connection to the weight-decay parameter cost. Furthermore, as a byproduct of their construction, the architectures they considered are ``parallel'' neural networks and not given by the vanilla parameterization.
The neural Hilbert ladder (NHL) function spaces identified by Chen are quasi-Banach spaces rather than normed spaces; see~\cite[Theorem~3]{chen2024neural}. Although the NHL spaces and the native spaces $\F_L$ identified here are both quasi-Banach, they arise from different constructions. The NHL spaces are defined as infinite unions of RKHSs generated by hierarchical kernels and are endowed with a corresponding ``ladder'' complexity. In contrast, $\F_L$ is the $w^*$-l.s.c.\ completion of the vanilla depth-$L$ ReLU model class under the usual weight-decay cost. Thus, these constructions should be viewed as related but distinct, and identifying precise embeddings or equivalences between them is an interesting open problem.

\begin{table}[htb!]
\caption{Comparison of function-space results for depth-$L$ neural networks. Here $N$ denotes the number of data samples and $D$ denotes the output dimension. The column ``Vanilla Param.'' indicates whether the framework begins from the standard fully-connected ReLU neural network parameterization (e.g., without adding explicit linear layers, replacing the architecture by a kernelized construction, or considering architectures that have no parameter sharing). The column ``Weight Decay'' indicates whether the regularizer is induced by the usual weight-decay parameter cost. The column ``Function Space'' indicates whether the work identifies an associated ``completed'' function space. The column ``Rep.\ Theorem'' indicates whether a finite-dimensional representer theorem is available. The column ``Norm Based'' records whether the function-space complexity is known to be a \textit{bona fide} norm or seminorm. For this paper, the negative entry refers to \cref{prop:deep-unit-ball-nonconvex}.}

\centering
\footnotesize
\setlength{\tabcolsep}{3pt}
\renewcommand{\arraystretch}{1.20}
\noindent\makebox[\linewidth][c]{%
\resizebox{0.96\linewidth}{!}{%
\begin{tabular}{@{}
L{0.285\textwidth}
C{0.075\textwidth}
C{0.085\textwidth}
C{0.092\textwidth}
C{0.102\textwidth}
C{0.088\textwidth}
C{0.120\textwidth}
@{}}
\toprule
&
\multicolumn{4}{c}{\textbf{Desiderata}}
&
\multicolumn{2}{c}{\textbf{Complexity}}
\\
\cmidrule(lr){2-5}
\cmidrule(l){6-7}
\textbf{Framework}
&
\makecell{\textbf{Vanilla}\\\textbf{Param.}}
&
\makecell{\textbf{Weight}\\\textbf{Decay}}
&
\makecell{\textbf{Function}\\\textbf{Space}}
&
\makecell{\textbf{Rep.}\\\textbf{Theorem}}
&
\makecell{\textbf{Norm}\\\textbf{Based}}
&
\makecell{\textbf{Width}\\\textbf{Bound}}
\\
\midrule

E--Wojtowytsch~\cite{wojtowytsch2020banach}
&
\xmark
&
\xmark
&
\cmark
&
\xmark
&
\cmark
&
---
\\
\addlinespace[2pt]

Jacot et al.~\cite{JacotBNRegularity,JacotBNRank,JacotFeature}
&
\cmark
&
\cmark
&
\xmark
&
\cmark
&
---
&
$N(N+1)$
\\
\addlinespace[2pt]

Shenouda et al.~\cite{shenouda2024variation}
&
\xmark
&
\cmark
&
\cmark
&
\cmark
&
\cmark
&
$N^2$
\\
\addlinespace[2pt]

Bartolucci et al.~\cite{bartolucci2024neural}
&
\xmark
&
\xmark
&
\cmark
&
\cmark
&
\cmark
&
$O(N^L)$
\\
\addlinespace[2pt]

Wang et al.~\cite{wang2025hypothesis}
&
\xmark
&
\xmark
&
\cmark
&
\cmark
&
\cmark
&
$N D$
\\
\addlinespace[2pt]

Chen~\cite{chen2023multi,chen2024neural}
&
\xmark
&
\xmark
&
\cmark
&
\xmark
&
\xmark
&
---
\\
\addlinespace[2pt]

Heeringa et al.~\cite{heeringa2025deep}
&
\xmark
&
\xmark
&
\cmark
&
\cmark
&
\cmark
&
$N$
\\
\addlinespace[2pt]

\rowcolor{thispaper}
\textbf{This paper}
&
\cmark
&
\cmark
&
\cmark
&
\cmark
&
\xmark
&
$\boldsymbol{N^2}$
\\

\bottomrule
\end{tabular}%
}%
}
\label{tab:deep-rep-thm-comparison}
\end{table}

Within the comparison summarized in \cref{tab:deep-rep-thm-comparison}, the present work simultaneously satisfies the four listed desiderata: It keeps the vanilla fully-connected deep ReLU architecture, uses the usual weight-decay parameter cost, identifies the resulting completed native space, and yields a finite-width representer theorem with the minimizer-transfer property stated above. The rooted cost $R_L^{L/2}$ is a quasi-seminorm, and $\F_L$ is complete under the corresponding augmented quasi-norm. Under the hypotheses of \cref{prop:deep-unit-ball-nonconvex}, the unit balls of $R_L$ and $R_L^{L/2}$ coincide and are nonconvex. Consequently, $R_L^{L/2}$ is not a seminorm. This is the precise analogue of the nonconvex unit-ball geometry of $x \mapsto \|x\|_p^p$, $0<p<1$, in finite dimensions. Thus, we have revealed the following novel structural fact:
\begin{center}
    \emph{Weight decay induces a function-space complexity with a nonconvex unit ball.}
\end{center}
However, we remark that the existence of a norm equivalent to the quasi-norm $\norm{\dummy}_{\F_L}$ remains an open problem. 

\subsubsection{Approximation and Estimation with Deep Neural Networks}
There is also a related, but distinct, approximation-theoretic and statistical literature that explains why depth can be advantageous for structured target functions. Classical deep neural network approximation theory shows that ReLU networks can approximate broad smoothness and model classes with quantitative rates depending on smoothness, dimension, and network sparsity~\cite{bolcskei2019optimal,daubechies2022nonlinear,devore2021neural,elbrachter2021deep,yarotsky2017error}. For unstructured smoothness classes, these rates still reflect the curse of dimensionality. A central way around this limitation is to restrict attention to structured target classes. The principle of \emph{compositional sparsity} asserts that many high-dimensional functions can be represented as compositions of relatively few constituent functions, each depending only on a small number of variables~\cite{mhaskarLiaoPoggio2017when,mhaskarPoggio2016deep,poggio2024compositional,poggio2017why}. For such targets, deep neural architectures can achieve approximation and statistical rates governed by the local input dimensions of the constituent functions rather than by the ambient dimension~\cite{bauerKohler2019deep,dahmen2025compositional,schmidt2020nonparametric}.

Recent work has investigated this compositional viewpoint in the overparameterized regime by replacing parameter-count complexity with norm-controlled complexity. In particular,~\cite{huang2026learning} proves approximation and excess-risk bounds for sparse compositional functions represented by deep neural networks with bounded weight-decay-like parameter cost. This is close in spirit to the present paper, since it considers the overparameterized regime and measures complexity through norms of the weights rather than through the number of parameters. Nevertheless, the object of study is different. Those results give rates for approximating and learning prescribed sparse compositional target classes under norm constraints, whereas our goal is to identify the function space induced by the usual weight-decay parameter cost. These results are therefore complementary to the present work. In particular, it would be interesting to understand which function classes based on compositional sparsity are embedded in the native spaces $\F_L$. It would also be interesting to understand how the representation cost $R_L$ behaves on sparse compositional functions.

Finally, our investigation should be contrasted with approximation spaces for neural networks. Approximation spaces measure rates of approximation by model classes of increasing complexity and are often quasi-Banach spaces~\cite{gribonval2022approximation}. The native space $\F_L$, however, is not defined by an approximation rate. Thus, although both viewpoints yield quasi-Banach spaces, they capture different notions of complexity. An interesting research direction would be to understand how the depth-$L$ approximation spaces of deep neural networks compare to the depth-$L$ native spaces.

\section{Conclusion}

This paper developed the representation-cost framework for passing from parameter-space regularization to function-space regularization. This construction gives a unified perspective on several classical and modern methods. The well-known function spaces for kernel methods, wavelet methods, sparsity-promoting methods, and shallow neural networks all arise as special cases of the same abstract mechanism. In these cases, the representation cost recovers familiar Hilbert or Banach spaces and corresponding representer theorems naturally follow. The most distinctive and novel consequence appears for deep neural networks. For depth-$L$ fully connected ReLU networks with weight-decay parameter cost, we showed that $R_L^{L/2}$ is a quasi-seminorm and that the native space is complete under an associated quasi-norm. Under the hypotheses of \cref{prop:deep-unit-ball-nonconvex}, the unit balls of $R_L$, $R_L^{L/2}$, and $\norm{\dummy}_{\F_L}$ are all nonconvex. Thus, function-space complexity is nonconvex and hence not a (semi)norm. This gives an analogue of the nonconvex geometry of finite-dimensional $\ell^p$ regularization for $0 < p < 1$.

A principal open problem is to understand the depth-$L$ native spaces $\F_L$ more explicitly. While the case of shallow neural networks ($L=2$) is now well understood, much less is known for $L>2$. In particular, it remains open whether $\F_L$ admits an equivalent norm. Clarifying the structure of these quasi-Banach spaces, their relationships across depths, and their approximation-theoretic and statistical properties would provide a sharper functional-analytic understanding of deep learning.

More broadly, this paper proposed a common language to study data-fitting methods via representation costs. This allows one to study data-fitting methods at the level of functions rather than parameters. The representation-cost framework not only recovers classical native spaces, but also reveals new phenomena for deep neural networks. Thus, this viewpoint gives a systematic framework for studying the inductive bias of data-fitting methods.

\section*{Acknowledgments}
The authors used various versions of ChatGPT (OpenAI) and Claude (Anthropic) for assistance with exposition, organization, notation, and proof refinement during the preparation of this manuscript. The authors conceived the project, developed the mathematical framework, proved the results, and made all final editorial and mathematical decisions. 
The authors highlight that ChatGPT-5.6 Sol Pro provided an initial proof of \cref{prop:deep-unit-ball-nonconvex} and the interesting connection with null Lagrangians from the calculus of variations. The authors then greatly refined and rewrote the proof with the help of Claude Fable 5. In particular, Fable 5 suggested isolating \cref{lem:jacobian-minor-bound} as a standalone result. Consequently, this lemma was the missing ingredient that led the authors to prove \cref{prop:rank-lower-bound}, which they had tried before using different techniques to no avail.

\appendix

\section{Properties of Lower Semicontinuous Regularizations} \label[appendix]{app:lsc-reg}
This appendix develops the lower-semicontinuity machinery used in the paper in increasing levels of structure: First on general topological spaces, then on topological vector spaces, and finally on dual Banach spaces with their weak$^*$ and weak$^*$-compatible topologies.

\subsection{General Topological Spaces}
Throughout this subsection, $(\B,\tau)$ denotes a topological space.

\begin{definition}
A function $K:\B\to\eR$ is called \emph{$\tau$-inf-compact}\footnote{This property is sometimes called \emph{$\tau$-coercivity}. We use
$\tau$-inf-compactness to distinguish compactness of sublevel sets from
coercivity, which we formulate merely in terms of \emph{boundedness} of sublevel sets in a topological vector
space; see, e.g., \cref{def:coercive} for coercivity in Banach spaces.} if, for every $\alpha\in\R$, there exists a $\tau$-compact set $Q_\alpha\subset\B$ such that $\sls{K}{\alpha} \subset Q_\alpha$.
\end{definition}
Together with lower semicontinuity, inf-compactness yields the following existence result, which is the standard ``direct method'' in the calculus of variations; see, e.g.,~\cite[Proposition~1.2.2]{buttazzo1989semicontinuity}.

\begin{proposition} \label{prop:lsc_infcpt_give_min}
If $K:\B\to\eR$ is proper, $\tau$-l.s.c., and $\tau$-inf-compact, then $K$ admits a minimizer in $\B$, i.e., there exists $f^\star\in\B$ such that $K(f^\star)=\inf_{f\in\B}K(f)$.
\end{proposition}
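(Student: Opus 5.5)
The statement is the direct method: a proper, $\tau$-l.s.c., $\tau$-inf-compact function on a topological space attains its infimum. I will argue through sublevel sets rather than sequences, since $\tau$ is an arbitrary topology and need not be first countable.

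Let $m\coloneqq\inf_{f\in\B}K(f)$. Properness gives some $f_0$ with $K(f_0)<+\infty$, so $m<+\infty$, and $K>-\infty$ everywhere. Fix a real $\alpha_0>m$; for instance $\alpha_0=K(f_0)+1$ works. For every real $\alpha$ with $m<\alpha\leq\alpha_0$, consider the sublevel set $\sls{K}{\alpha}$. It is nonempty by the definition of the infimum. It is $\tau$-closed by lower semicontinuity. It is contained in the $\tau$-compact set $Q_{\alpha_0}$ from inf-compactness.

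Intersections of closed sets with a compact set are compact, so each $\sls{K}{\alpha}\cap Q_{\alpha_0}=\sls{K}{\alpha}$ is a closed subset of the compact space $Q_{\alpha_0}$. These sets are nested in $\alpha$, hence they have the finite intersection property. Compactness of $Q_{\alpha_0}$ therefore gives a point
\[
f^\star\in\bigcap_{m<\alpha\leq\alpha_0}\sls{K}{\alpha}.
\]
This point satisfies $K(f^\star)\leq\alpha$ for all $\alpha>m$ close to $m$, so $K(f^\star)\leq m$. By definition of $m$ the reverse inequality holds, so $K(f^\star)=m$.

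It remains to treat $m=-\infty$. Then every $\alpha\in\R$ gives a nonempty sublevel set, and the same argument over $\alpha\leq\alpha_0$ yields $f^\star$ with $K(f^\star)\leq\alpha$ for all real $\alpha$. This forces $K(f^\star)=-\infty$, which contradicts properness. So $m$ is finite and is attained.

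The only delicate point is to use the finite intersection property of closed subsets of a compact set in place of a subsequence argument. No step is a genuine obstacle beyond that.
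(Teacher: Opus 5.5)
Your proof is correct and is the standard direct-method argument. The sublevel sets $\{K\le\alpha\}$ with $m<\alpha\le\alpha_0$ are nonempty, closed, and nested inside the single compact set $Q_{\alpha_0}$, so the finite intersection property gives a common point, and that point is a minimizer. The paper gives no argument of its own and cites \cite[Proposition~1.2.2]{buttazzo1989semicontinuity}, which is this same standard result.

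One small remark: properness is only used to exclude the degenerate cases. Without it, the same argument still yields a minimizer, possibly with value $-\infty$, and when $K\equiv+\infty$ every point is a minimizer.
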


\begin{proposition}\label{prop:lsc_equivalents}
Let $(\B,\tau)$ be a topological space and let $K:\B\to\eR$. Then the following are equivalent.
\begin{enumerate}[label=(\roman*)]
    \item\label{item:lsc_equivalents_lsc} $K$ is $\tau$-l.s.c.
    \item\label{item:lsc_equivalents_liminf} For every $f\in\B$,
    \begin{equation}
        K(f)
        =
        \liminf_{g\to f} K(g)
        \coloneqq
        \adjustlimits\sup_{U\in\mathfrak{N}_f}\inf_{g\in U} K(g),
    \end{equation}
    where $\mathfrak{N}_f$ denotes the $\tau$-neighborhood filter at $f$.
    \item\label{item:lsc_equivalents_epi} The epigraph
    \begin{equation}
        \epi K
        \coloneqq
        \curly{(f,\alpha)\in\B\times\R \st K(f)\leq \alpha}
    \end{equation}
    is closed in the product topology on $\B\times\R$.
    \item\label{item:lsc_equivalents_nets} For every net $(f_i)_{i\in I}$ in $\B$ such that $f_i \tauconverges f$,
    \begin{equation}
        K(f) \leq \liminf_{i\in I} K(f_i).
    \end{equation}
\end{enumerate}
\end{proposition}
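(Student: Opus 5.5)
Prove the four conditions in the order \ref{item:lsc_equivalents_lsc} $\Rightarrow$ \ref{item:lsc_equivalents_liminf} $\Rightarrow$ \ref{item:lsc_equivalents_nets} $\Rightarrow$ \ref{item:lsc_equivalents_epi} $\Rightarrow$ \ref{item:lsc_equivalents_lsc}. Each step is a direct unwinding of the definitions, using sublevel sets $\sls{K}{\alpha}$ and the neighborhood filter $\mathfrak{N}_f$.

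For \ref{item:lsc_equivalents_lsc} $\Rightarrow$ \ref{item:lsc_equivalents_liminf}, note first that $\liminf_{g\to f}K(g)\le K(f)$ always holds, because $f\in U$ for every $U\in\mathfrak{N}_f$. For the reverse inequality, take any real $\alpha<K(f)$. Then $f\notin\sls{K}{\alpha}$, and this set is $\tau$-closed, so its complement $U$ is an open neighborhood of $f$. Since $\inf_U K\ge\alpha$, we get $\liminf_{g\to f}K(g)\ge\alpha$. Letting $\alpha\uparrow K(f)$ handles all cases, including $K(f)=+\infty$; the case $K(f)=-\infty$ is trivial.

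For \ref{item:lsc_equivalents_liminf} $\Rightarrow$ \ref{item:lsc_equivalents_nets}, let $f_i\tauconverges f$ and fix $U\in\mathfrak{N}_f$. The net is eventually in $U$, so $\liminf_i K(f_i)\ge\inf_U K$. Taking the supremum over $U$ gives $\liminf_i K(f_i)\ge\liminf_{g\to f}K(g)=K(f)$.

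For \ref{item:lsc_equivalents_nets} $\Rightarrow$ \ref{item:lsc_equivalents_epi}, let $(f_i,\alpha_i)\to(f,\alpha)$ be a net in $\epi K$. Then
\[
K(f)\le\liminf_i K(f_i)\le\liminf_i\alpha_i=\alpha,
\]
so $(f,\alpha)\in\epi K$. Closedness follows because closed sets are exactly those stable under limits of nets.

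For \ref{item:lsc_equivalents_epi} $\Rightarrow$ \ref{item:lsc_equivalents_lsc}, observe that $\sls{K}{\alpha}\times\{\alpha\}=\epi K\cap(\B\times\{\alpha\})$ is closed in $\B\times\R$. The map $\B\ni f\mapsto(f,\alpha)$ is continuous, and $\sls{K}{\alpha}$ is the preimage of this closed set, so it is closed.

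There is no real obstacle here. The only points needing care are the extended-real values (treat $\pm\infty$ via the real thresholds $\alpha$) and the step in \ref{item:lsc_equivalents_nets} $\Rightarrow$ \ref{item:lsc_equivalents_epi}, where we must use nets rather than sequences since $\tau$ need not be first countable.
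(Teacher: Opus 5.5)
Your proof is correct. Each implication in the cycle (i) $\Rightarrow$ (ii) $\Rightarrow$ (iv) $\Rightarrow$ (iii) $\Rightarrow$ (i) checks out. The extended values $K(f)=\pm\infty$ are correctly handled through real thresholds. In (iv) $\Rightarrow$ (iii) you correctly use nets rather than sequences, and the limit level $\alpha$ is automatically real because $\epi K\subset\B\times\R$.

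The route differs from the paper's in structure, not in substance. The paper does not argue any implication itself. It proves each of (ii), (iii), (iv) equivalent to (i) separately, by citation:
\begin{itemize}
\item Dal Maso for (i) $\Leftrightarrow$ (ii);
\item Buttazzo for the epigraph characterization (i) $\Leftrightarrow$ (iii);
\item the net characterization of closed sets for (i) $\Leftrightarrow$ (iv).
\end{itemize}
Your single cycle is self-contained and shows that every step is a short unwinding of the definitions. In particular, (ii) $\Rightarrow$ (iv) and (iv) $\Rightarrow$ (iii) never need to return to sublevel sets. The paper's version gains brevity by deferring these elementary details to the standard references.
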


\begin{proof}
The equivalence of \ref{item:lsc_equivalents_lsc} and \ref{item:lsc_equivalents_liminf} is standard; see \cite[Proposition~3.3]{dal2012introduction}. The equivalence of \ref{item:lsc_equivalents_lsc} and \ref{item:lsc_equivalents_epi} follows from \cite[Proposition~1.1.2~(i)]{buttazzo1989semicontinuity}. Finally, the equivalence of \ref{item:lsc_equivalents_lsc} and \ref{item:lsc_equivalents_nets} follows from the net characterization of closed sets; see \cite[Remark~1.1.4~(i)]{buttazzo1989semicontinuity}.
\end{proof}

\begin{proposition}\label{prop:lsc_properties}
Let $(\B,\tau)$ be a topological space. Then the following properties hold.
\begin{enumerate}[label=(\roman*)]
    \item\label{item:sum_lsc} If $K,J:\B\to(-\infty,+\infty]$ are $\tau$-l.s.c., then $K+J$ is $\tau$-l.s.c.
    
    \item\label{item:comp_with_nondec_lsc} If $K:\B\to\eR$ is $\tau$-l.s.c.\ and $\gamma:\eR\to\eR$ is left-continuous and nondecreasing, then $\gamma\circ K$ is $\tau$-l.s.c.
    
    \item\label{item:comp_with_continuous_map_lsc} Let $(\mathcal C,\sigma)$ be another topological space. If $K:\mathcal C\to\eR$ is $\sigma$-l.s.c.\ and $J:\B\to\mathcal C$ is $\tau$-$\sigma$-continuous, then $K\circ J$ is $\tau$-l.s.c.
    
    \item\label{item:sup_lsc} If $(K_i)_{i\in I}$ is a family of $\tau$-l.s.c.\ functions, then
    \begin{equation}
        K(f) \coloneqq \sup_{i\in I} K_i(f)
    \end{equation}
    is $\tau$-l.s.c.
\end{enumerate}
\end{proposition}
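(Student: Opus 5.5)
Each of the four items will be proved with the characterization of lower semicontinuity by closed sublevel sets, or equivalently by the net criterion in \cref{prop:lsc_equivalents}~\ref{item:lsc_equivalents_nets}. I take the items in turn.

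For \ref{item:sum_lsc}, I use nets. Let $f_i\tauconverges f$. Because $K$ and $J$ never take the value $-\infty$, the sum $K+J$ is well defined. I then want the superadditivity of $\liminf$ on $(-\infty,+\infty]$:
\begin{equation}
    \liminf_i\,(K+J)(f_i)\geq\liminf_i K(f_i)+\liminf_i J(f_i)\geq K(f)+J(f).
\end{equation}
The only delicate point is a limit inferior equal to $-\infty$. If $\liminf_i K(f_i)=-\infty$, then lower semicontinuity forces $K(f)=-\infty$, which is excluded. Hence both limits inferior exceed $-\infty$, so the sum on the right is never of the indeterminate form $+\infty-\infty$.

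For \ref{item:comp_with_nondec_lsc}, I use sublevel sets. Fix $\alpha$ and set $S=\{t\in\eR : \gamma(t)\leq\alpha\}$. Since $\gamma$ is nondecreasing, $S$ is an initial segment of $\eR$. Left-continuity of $\gamma$ shows that $S$ contains its supremum $s$: for $t\uparrow s$ with $t\in S$ we get $\gamma(s)=\lim\gamma(t)\leq\alpha$. If $s=-\infty$, we also need $\gamma(-\infty)\leq\alpha$; I read left-continuity at $-\infty$ as vacuous and handle this endpoint separately. So $S=[-\infty,s]$ or $S=\varnothing$, and in either case $\{\gamma\circ K\leq\alpha\}=\{K\leq s\}$ or $\varnothing$.

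The set $\{K\leq s\}$ is closed. For $s\in\R$ this is the hypothesis. For $s=+\infty$ it is all of $\B$. For $s=-\infty$ it is $\bigcap_n\{K\leq -n\}$, an intersection of closed sets. The main, if minor, obstacle in this item is getting these endpoint cases right.

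For \ref{item:comp_with_continuous_map_lsc}, note that $\{K\circ J\leq\alpha\}=J^{-1}(\{K\leq\alpha\})$. This is the preimage of a $\sigma$-closed set under a continuous map, hence $\tau$-closed.

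For \ref{item:sup_lsc}, note that $\{\sup_i K_i\leq\alpha\}=\bigcap_i\{K_i\leq\alpha\}$. This is an intersection of closed sets, hence closed.
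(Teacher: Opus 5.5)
Your proposal is correct and follows the paper's approach. Items (ii) and (iii) are argued exactly as in the paper, via sublevel sets and preimages. For (i) and (iv) the paper only cites standard references, and your net/superadditivity argument and intersection-of-sublevel-sets argument supply those standard proofs (your explicit check that $\{K\le -\infty\}=\bigcap_n\{K\le -n\}$ is closed also covers an endpoint case that the paper's proof of (ii) passes over silently).
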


\begin{proof}\leavevmode

\begin{enumerate}[label=(\roman*)]
    \item
    This is standard; see \cite[Proposition~1.1.2(iii)]{buttazzo1989semicontinuity}.

    \item
    Fix $\alpha\in\R$. Since $\gamma$ is nondecreasing and left-continuous, the sublevel set
    \begin{equation}
        \curly{t\in\eR \st \gamma(t)\leq \alpha}
    \end{equation}
    is a closed lower interval in $\eR$. Hence it is either empty, all of $\eR$, or of the form $[-\infty,\beta]$ for some $\beta\in\eR$. Therefore,
    \begin{equation}
        \curly{f\in\B \st \gamma\circ K(f)\leq \alpha}
    \end{equation}
    is either empty, all of $\B$, or equal to $\curly{f\in\B \st K(f)\leq \beta}$. In all cases it is $\tau$-closed, since $K$ is $\tau$-l.s.c. Thus $\gamma\circ K$ is $\tau$-l.s.c.

    \item
    For any $\alpha\in\R$,
    \begin{equation}
        \curly{f\in\B \st K\circ J(f)\leq \alpha}
        =
        J^{-1}\paren*{\curly{y\in\mathcal C \st K(y)\leq \alpha}}.
    \end{equation}
    The set inside the preimage is $\sigma$-closed since $K$ is $\sigma$-l.s.c., and its preimage under the $\tau$-$\sigma$-continuous map $J$ is $\tau$-closed. Hence $K\circ J$ is $\tau$-l.s.c.

    \item
    This is standard; see \cite[Proposition~1.1.2(ii)]{buttazzo1989semicontinuity}. \qedhere
\end{enumerate}
\end{proof}

\begin{proposition}\label{prop:lscreg_equivalents}
Let $(\B,\tau)$ be a topological space, let $K:\B\to\eR$, and let $\overline K$ denote the $\tau$-l.s.c.\ regularization of $K$. Then the following hold.
\begin{enumerate}[label=(\roman*)]
    \item\label{item:lsc-reg-filter-def} For every $f\in\B$,
    \begin{equation}
        \overline K(f)
        =
        \liminf_{g\to f} K(g).
    \end{equation}

    \item\label{item:lsc-reg-epi-def} We have
    \begin{equation}
        \epi \overline K
        =
        \overline{\epi K},
    \end{equation}
    where the right-hand side denotes the closure of $\epi K$ in the product topology on $\B\times\R$.

    \item\label{item:lsc-reg-net-def} For every $f\in\B$,
    \begin{equation}
        \overline K(f)
        =
        \min\curly[\Big]{
            \liminf_{i\in I} K(f_i)
            \st
            (f_i)_{i\in I} \subset \B \text{ is a net and } f_i \tauconverges f
        }.
    \end{equation}

    \item\label{item:lsc_in_metric_space} If $\tau$ is induced by a metric $d$ on $\B$, then, for every $f\in\B$,
    \begin{equation}
        \overline K(f)
        =
        \seqreg{K}(f)
        =
        \lim_{\varepsilon\to0^+}
        \inf\curly[\Big]{
            K(g)
            \st
            g\in\dom(K),\ d(f,g)\leq \varepsilon
        }.
    \end{equation}
\end{enumerate}
\end{proposition}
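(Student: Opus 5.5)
I would prove the four items of \cref{prop:lscreg_equivalents} in order. Items (ii)--(iv) all reduce to item (i), and item (i) reduces to the defining property of l.s.c.\ regularization together with \cref{prop:lsc_equivalents}.

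For (i), set $\underline K(f)\coloneqq\sup_{U\in\mathfrak N_f}\inf_{g\in U}K(g)$. I will check three things.
\begin{enumerate*}[label=(\alph*)]
\item $\underline K\leq K$, because $f\in U$ for every $U\in\mathfrak N_f$.
\item $\underline K$ is $\tau$-l.s.c. It suffices to show that the strict superlevel sets $\{\underline K>\alpha\}$ are open. If $\underline K(f)>\alpha$, choose an open $U\in\mathfrak N_f$ with $\inf_U K>\alpha$. Then $U\in\mathfrak N_h$ for every $h\in U$, so $\underline K(h)>\alpha$ on $U$.
\item If $J$ is l.s.c.\ and $J\leq K$, then by \cref{prop:lsc_equivalents}~\ref{item:lsc_equivalents_liminf} we get $J(f)=\sup_U\inf_U J\leq\sup_U\inf_U K=\underline K(f)$.
\end{enumerate*}
Together, (a)--(c) say that $\underline K$ is the largest l.s.c.\ minorant of $K$, so $\underline K=\overline K$.

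For (ii), I will show that the function $\Phi$ whose epigraph is $\overline{\epi K}$ is exactly $\overline K$. Define $\Phi(f)\coloneqq\inf\{\alpha\st(f,\alpha)\in\overline{\epi K}\}$. Since $\overline{\epi K}$ is closed and stable under increasing $\alpha$, it equals $\epi\Phi$. By \cref{prop:lsc_equivalents}~\ref{item:lsc_equivalents_epi}, $\Phi$ is l.s.c., and $\Phi\leq K$ because $\epi K\subset\epi\Phi$, so $\Phi\leq\overline K$. Conversely, $\epi\overline K$ is closed and contains $\epi K$, so it contains $\overline{\epi K}=\epi\Phi$, which gives $\overline K\leq\Phi$. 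The step needing care is that $\overline{\epi K}$ is closed upward in $\alpha$ and is the epigraph of its infimum function; this follows from closedness.

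For (iii), the lower bound comes from l.s.c.\ of $\overline K$: by \cref{prop:lsc_equivalents}~\ref{item:lsc_equivalents_nets}, $\overline K(f)\leq\liminf_i\overline K(f_i)\leq\liminf_i K(f_i)$ for any net $f_i\tauconverges f$. For attainment, I use the directed set $I=\mathfrak N_f\times\N$, ordered by reverse inclusion in the first factor and the usual order in the second. For each $(U,n)$, pick $f_{U,n}\in U$ with $K(f_{U,n})\leq\max\{\inf_U K+1/n,-n\}$, reading $\inf_U K+1/n$ as $+\infty$ when that infimum is $+\infty$. Then $f_{U,n}\tauconverges f$, and $\limsup K(f_{U,n})\leq\overline K(f)$ by (i). The extended-real bookkeeping needs care: the case $\overline K(f)=-\infty$ is handled by the $-n$ term, and the case $\overline K(f)=+\infty$ is trivial.

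For (iv), in a metric space the balls $B_\varepsilon(f)$ form a countable base of $\mathfrak N_f$. Formula (i) therefore becomes the monotone limit $\lim_{\varepsilon\to0^+}\inf_{d(g,f)\leq\varepsilon}K(g)$. Restricting to $g\in\dom K$ is harmless, because points with $K=+\infty$ never lower an infimum, and the infimum over an empty set is $+\infty$ on both sides. For the sequential formula, running the construction of (iii) with $\varepsilon=1/n$ yields a sequence, which gives $\seqreg K\leq\overline K$. The reverse inequality $\overline K\leq\seqreg K$ holds in general. I expect the main obstacle to be the careful handling of infinite values in (iii) and (iv); the rest is routine.
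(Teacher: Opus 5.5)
Your proposal is correct, but it is organized differently from the paper's proof. The paper handles (i)--(iii) by citing standard references (Ekeland--Temam, Buttazzo). For (iv) it first invokes first-countability of metric spaces (Buttazzo, Prop.~1.3.3) to get $\overline K=\seqreg K$. It then verifies $\seqreg K(f)=\lim_{\varepsilon\to0^+}\inf\{K(g) : g\in\dom(K),\ d(f,g)\le\varepsilon\}$ by hand, splitting into the cases where this limit is $+\infty$, $-\infty$, or finite.

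You instead prove everything from the definitions and make (i) the hub:
\begin{itemize}
\item you show directly that $\sup_{U\in\mathfrak N_f}\inf_U K$ is the largest l.s.c.\ minorant of $K$;
\item you build the optimal net in (iii) on $\mathfrak N_f\times\N$ using (i);
\item you get the metric formula in (iv) at once by restricting the supremum in (i) to the neighborhood base of closed balls, so the sequential statement only needs a sequence extracted along $\varepsilon=1/n$, together with the general inequality $\overline K\le\seqreg K$.
\end{itemize}
Your route is self-contained and treats infinite values uniformly: the $\max\{\cdot,-n\}$ device replaces the paper's case split. In the metric setting it also reproves the first-countability fact instead of citing it. The paper's route is shorter and keeps the sequential regularization in the foreground, which is the form invoked later in \cref{cor:uniform-envelope-coercive} and \cref{lem:rewrite_R_to_match_Rtilde}.

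One small correction in (ii): upward closedness of $\overline{\epi K}$ in $\alpha$ does not follow from closedness alone. It holds because each shift $(g,\alpha)\mapsto(g,\alpha+t)$ with $t\ge0$ is a homeomorphism of $\B\times\R$ that maps $\epi K$ into itself, hence maps $\overline{\epi K}$ into itself. Closedness of the vertical slices then gives $\overline{\epi K}=\epi\Phi$. Also, (ii) actually rests on the defining property of $\overline K$ and the epigraph characterization of lower semicontinuity, not on (i) as your opening sentence suggests.
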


\begin{proof}\leavevmode
\begin{enumerate}[label=(\roman*)]
    \item
    This is standard; see \cite[Corollary~2.1]{ekeland1999convex}.

    \item
    This is standard; see \cite[Corollary~2.1]{ekeland1999convex}.

    \item
    This follows from the net characterization of the l.s.c.\ regularization; see \cite[Proposition~1.3.1(ii)]{buttazzo1989semicontinuity}.

    \item
    Fix $f\in\B$. Since $\tau$ is metrizable, it is first-countable, and so $\overline K(f)=\seqreg K(f)$ by \cite[Proposition~1.3.3]{buttazzo1989semicontinuity}. Define
    \begin{equation}
        L(f)
        \coloneqq
        \lim_{\varepsilon\to0^+}
        \inf\curly[\Big]{
            K(g)
            \st
            g\in\dom(K),\ d(f,g)\leq \varepsilon
        }.
    \end{equation}
    We now show that $\seqreg K(f)=L(f)$.

    If $L(f)=+\infty$, then $\seqreg K(f)\leq L(f)$ is trivial. If $L(f)=-\infty$, then the neighborhood infimum in the definition of $L(f)$ equals $-\infty$ for every sufficiently small radius. Hence, for each $n\in\N$, one may choose $f_n\in\dom(K)$ such that $d(f_n,f)\leq1/n$ and $K(f_n)\leq-n$. Then $f_n\tauconverges f$ and $\seqreg K(f)=-\infty=L(f)$. Finally, suppose $L(f)\in\R$. For each $n\in\N$, choose $f_n\in\dom(K)$ such that $d(f_n,f)\leq 1/n$ and
    \begin{equation}
        K(f_n)
        \leq
        \inf\curly[\Big]{
            K(g)
            \st
            g\in\dom(K),\ d(f,g)\leq \frac{1}{n}
        }
        +
        \frac{1}{n}.
    \end{equation}
    Then $f_n\tauconverges f$, and taking the $\liminf$ of both sides above gives $\seqreg K(f) \leq L(f)$.

    Conversely, suppose $\seqreg K(f)<+\infty$, since otherwise the inequality $L(f)\leq \seqreg K(f)$ is trivial. Let $(f_n)_{n\in\N}\subset\B$ be any sequence such that $f_n\tauconverges f$ and $\liminf_{n\to\infty}K(f_n)<+\infty$. For any $\varepsilon>0$, we have $d(f_n,f)\leq\varepsilon$ for all sufficiently large $n$. Therefore,
    \begin{equation}
        \inf\curly[\Big]{
            K(g)
            \st
            g\in\dom(K),\ d(f,g)\leq \varepsilon
        }
        \leq
        \liminf_{n\to\infty}K(f_n).
    \end{equation}
    Letting $\varepsilon\to0^+$ and then taking the infimum over all such sequences yields $L(f)\leq \seqreg K(f)$. Hence $\overline K(f)=\seqreg K(f)=L(f)$. \qedhere
\end{enumerate}
\end{proof}

\begin{proposition}\label{prop:lscreg_properties}
Let $(\B,\tau)$ be a Hausdorff topological space, let $K:\B\to\eR$, and let $\overline K$ denote the $\tau$-l.s.c.\ regularization of $K$. Then the following properties hold.
\begin{enumerate}[label=(\roman*)]
    \item\label{item:inf-preserve} We have
    \begin{equation}
        \inf_{f\in\B} K(f) = \inf_{f\in\B} \overline K(f).
    \end{equation}

    \item\label{item:lsc-reg-cont-sum} If $J:\B\to\R$ is $\tau$-continuous, then
    \begin{equation}
        \overline{J+K} = J+\overline K.
    \end{equation}

    \item\label{item:inf-compact-transfer} If $K$ is $\tau$-inf-compact, then $\overline K$ is $\tau$-inf-compact.

    \item\label{item:cont-compose} If $\gamma:\eR\to\eR$ is continuous and nondecreasing, then
    \begin{equation}
        \overline{\gamma\circ K} = \gamma\circ\overline K.
    \end{equation}
\end{enumerate}
\end{proposition}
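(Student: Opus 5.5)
We prove the four items in order, relying on the filter description $\overline K(f)=\liminf_{g\to f}K(g)$ from \cref{prop:lscreg_equivalents}~\ref{item:lsc-reg-filter-def} together with the defining property of l.s.c.\ regularization.

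For \ref{item:inf-preserve}, since $\overline K\le K$, we have $\inf\overline K\le\inf K$. Conversely, the constant function $c\coloneqq\inf_{\B}K$ is $\tau$-l.s.c.\ and satisfies $c\le K$. The defining property then gives $c\le\overline K$, so $\inf\overline K\ge c$.

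For \ref{item:lsc-reg-cont-sum}, we argue in two directions.
\begin{itemize}
\item The function $J+\overline K$ is l.s.c.\ by \cref{prop:lsc_properties}~\ref{item:sum_lsc}. Because $J$ is real-valued, no $\infty-\infty$ issue arises even if $\overline K$ takes the value $-\infty$. Since $J+\overline K\le J+K$, the defining property yields $J+\overline K\le\overline{J+K}$.
\item For the reverse inequality, apply the same reasoning to $-J$ and the function $J+K$. This gives $-J+\overline{J+K}\le\overline{-J+J+K}=\overline K$, which rearranges to the claim.
\end{itemize}

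For \ref{item:inf-compact-transfer}, fix $\alpha\in\R$ and choose $\alpha'>\alpha$. We claim $\sls{\overline K}{\alpha}\subset\overline{\sls{K}{\alpha'}}\subset Q_{\alpha'}$. To see the first inclusion, take $f$ with $\overline K(f)\le\alpha<\alpha'$. By the filter formula, every neighborhood of $f$ contains some $g$ with $K(g)<\alpha'$, so $f$ lies in the closure of $\sls{K}{\alpha'}$. For the second inclusion, $Q_{\alpha'}$ is compact in a Hausdorff space, hence closed, so it contains the closure of $\sls{K}{\alpha'}$. Finally, $\sls{\overline K}{\alpha}$ is closed, being a sublevel set of an l.s.c.\ function, and it sits inside the compact set $Q_{\alpha'}$, so it is compact. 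This is where the Hausdorff hypothesis is used.

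For \ref{item:cont-compose}, we expect this to be the main step. By \cref{prop:lsc_properties}~\ref{item:comp_with_nondec_lsc}, the function $\gamma\circ\overline K$ is l.s.c. Since $\gamma$ is nondecreasing, $\gamma\circ\overline K\le\gamma\circ K$, and the defining property gives $\gamma\circ\overline K\le\overline{\gamma\circ K}$.

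For the reverse inequality, we use the filter formula and aim to show
\[
\sup_U\inf_{g\in U}\gamma(K(g))=\gamma\Big(\sup_U\inf_{g\in U}K(g)\Big).
\]
We establish this in two stages.
\begin{itemize}
\item For a fixed neighborhood $U$, we claim $\inf_{g\in U}\gamma(K(g))=\gamma(\inf_{g\in U}K(g))$. Take a sequence $g_n\in U$ with $K(g_n)$ decreasing to the infimum. By monotonicity and continuity of $\gamma$, the values $\gamma(K(g_n))$ decrease to $\gamma(\inf_U K)$. The inequality $\ge$ holds by monotonicity, and $\le$ follows from this limit.
\item Next, take the supremum over $U$. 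The quantities $\inf_U K$ increase along the neighborhood filter toward $\overline K(f)$. Continuity of $\gamma$ on $\eR$, applied along this directed family, lets us move $\gamma$ outside the supremum, while monotonicity handles the limiting argument.
\end{itemize}

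The only delicate point is the continuity of $\gamma$ at $\pm\infty$, which is exactly what the hypothesis that $\gamma$ is continuous on $\eR$ provides.
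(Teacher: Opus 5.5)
Your proof is correct and takes the paper's route. For (iv) the paper likewise uses the filter formula $\overline K(f)=\liminf_{g\to f}K(g)$ and commutes the continuous nondecreasing $\gamma$ with $\sup_U\inf_{g\in U}$, which your two-stage argument spells out; for (i)--(iii) the paper only cites Buttazzo, and your direct arguments (the constant minorant, the $\pm J$ translation trick, and trapping $\{\overline K\le\alpha\}$ in the closed compact set $Q_{\alpha'}$) are the standard proofs behind that citation. The one step that needs an extra line is the lower semicontinuity of $J+\overline K$, because \cref{prop:lsc_properties} is stated for $(-\infty,+\infty]$-valued functions; it follows at once from $J$ being real-valued and continuous, as you indicate.
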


\begin{proof} \leavevmode
\begin{enumerate}[label=(\roman*)]
    \item
    This is standard; see \cite[Proposition~1.3.1(iii)]{buttazzo1989semicontinuity}.

    \item
    This is standard; see \cite[Proposition~1.3.1(v)]{buttazzo1989semicontinuity}.

    \item
    This is standard; see \cite[Proposition~1.3.1(iv)]{buttazzo1989semicontinuity}.

    \item Fix $f\in\B$. By
\cref{prop:lscreg_equivalents}~\ref{item:lsc-reg-filter-def} and the fact that
continuous nondecreasing functions commute with liminf along filters,
\begin{equation}
    \overline{\gamma\circ K}(f)
    =
    \liminf_{g\to f}\gamma(K(g))
    =
    \gamma\left(\liminf_{g\to f}K(g)\right)
    =
    \gamma(\overline K(f)),
\end{equation}
 which proves the claim. \qedhere
\end{enumerate}
\end{proof}

\subsection{Sequential Lower Semicontinuous Regularization}
We now compare the topological and sequential regularizations in a general topological space. The topology is still denoted by $\tau$. In general we have $\overline{K} \leq \seqreg{K} \leq K$, and the two regularizations need not coincide. However, they do agree under additional assumptions on the topology or the function. The simplest such condition is first-countability, i.e., if $(\B,\tau)$ is first-countable, then $\overline{K} = \seqreg{K}$ for all $K:\B\rightarrow\eR$~\cite[Proposition~1.3.3]{buttazzo1989semicontinuity}.
Since every metrizable space is first-countable, this applies in particular whenever $\tau$ is induced by a metric. However, in the infinite-dimensional settings considered in this paper, the relevant topologies are typically not metrizable and may fail to be first-countable. A condition better suited to our setting is the following, which exploits the metrizability of $\tau$-compact subsets.
\begin{proposition}[{\cite[Proposition~1.3.5]{buttazzo1989semicontinuity}}]\label{prop:lscreg_seqlscreg_coincide}
If every $\tau$-compact subset of $\B$ is metrizable and $K:\B\rightarrow\overline{\R}$ is $\tau$-inf-compact, then $\overline{K}^{\,\tau} = \overline{K}^{\mathrm{seq},\tau}$.
\end{proposition}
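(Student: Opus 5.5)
We prove that $\overline{K}^{\,\tau}=\overline{K}^{\mathrm{seq},\tau}$. The inequality $\overline{K}^{\,\tau}\leq\overline{K}^{\mathrm{seq},\tau}$ holds in general. Indeed, $\overline{K}^{\mathrm{seq},\tau}$ is obtained as an infimum of $\liminf$s along convergent sequences. Sequences are nets, so by \cref{prop:lscreg_equivalents}~\ref{item:lsc-reg-net-def} each such $\liminf$ dominates $\overline{K}^{\,\tau}(f)$. It therefore suffices to prove $\overline{K}^{\mathrm{seq},\tau}(f)\leq\overline{K}^{\,\tau}(f)$ for each $f\in\B$. We may assume $\overline{K}^{\,\tau}(f)<+\infty$, since otherwise there is nothing to prove.

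Fix a real number $\alpha>\overline{K}^{\,\tau}(f)$. The aim is to localize the problem to a compact set. By \cref{prop:lscreg_equivalents}~\ref{item:lsc-reg-filter-def}, every $\tau$-neighborhood $U$ of $f$ contains a point $g$ with $K(g)<\alpha$. Hence $f$ lies in the $\tau$-closure of the sublevel set $\sls{K}{\alpha}$. By inf-compactness there is a $\tau$-compact set $Q_\alpha\supset\sls{K}{\alpha}$.

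The main obstacle is that $Q_\alpha$ need not be closed, since $\tau$ is not assumed Hausdorff. It is therefore not clear that $f\in Q_\alpha$. We handle this as follows. The closure $\overline{Q_\alpha}$ need not be compact either. However, the net characterization gives a net in $\sls{K}{\alpha}\subset Q_\alpha$ converging to $f$. By compactness of $Q_\alpha$, this net has a subnet converging to some $h\in Q_\alpha$. Without Hausdorffness we cannot conclude $h=f$.

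The approach we take is instead to work in the compact metrizable subspace $Q_\alpha\cup\{f\}$. Adding one point to a compact set preserves compactness, so this set is compact, and by hypothesis it is metrizable. Since this subspace is first-countable, the point $f$, which is in the subspace closure of $\sls{K}{\alpha}$, is the limit of a sequence $(f_n)$ taken from $\sls{K}{\alpha}$. Convergence in the subspace topology implies $\tau$-convergence in $\B$. Along this sequence $K(f_n)\leq\alpha$, so $\overline{K}^{\mathrm{seq},\tau}(f)\leq\liminf_n K(f_n)\leq\alpha$. Letting $\alpha\downarrow\overline{K}^{\,\tau}(f)$ gives the claim.

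If $\overline{K}^{\,\tau}(f)=-\infty$, the same argument applied to every real $\alpha$ gives $\overline{K}^{\mathrm{seq},\tau}(f)=-\infty$.
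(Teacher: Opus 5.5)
Your proof is correct. The paper gives no argument of its own for this statement; it simply cites \cite[Proposition~1.3.5]{buttazzo1989semicontinuity}. Your key step, adjoining $f$ to $Q_\alpha$ so that $Q_\alpha\cup\{f\}$ is compact and hence metrizable (which avoids needing $Q_\alpha$ to be closed), is exactly the device the paper uses in its proof of \cref{lem:equiv_of_seq_lsc_reg_defs}, so your route is essentially the paper's own.
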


\begin{remark}\label{rmk:seq_lsc_reg_liminf_to_min}
Under the same assumptions of \cref{prop:lscreg_seqlscreg_coincide}, 
the infimum in \cref{eq:seq_lsc_reg_def} is attained for a sequence belonging to $\dom(K)$ having a \textit{bona fide} limit, i.e., we have
\begin{align}\label{eq:seq_lsc_reg_def_v2}
    \overline{K}^{\,\tau}(f) = \overline{K}^{\mathrm{seq},\tau}(f) = \min\left\{
        \alpha \in \eR
        \st
        (f_n)_{n\in\N}\subset\dom(K),\ f_n \tauconverges f,\ K(f_n) \rightarrow \alpha
    \right\}
\end{align}
whenever the infimum is $< +\infty$. This attainment assertion is proved in the next lemma.
\end{remark}

\begin{lemma}\label{lem:equiv_of_seq_lsc_reg_defs}
Let $(\B,\tau)$ be a topological space and let $K:\B\to\eR$. Suppose that
$K$ is $\tau$-inf-compact and that every $\tau$-compact subset of $\B$ is
metrizable. Then, for every $f\in\B$,
\begin{equation}\label{eq:equiv_of_seq_lsc_reg_defs}
\seqreg{K}(f)
=
\min\left\{
\alpha\in\eR
\st
(f_n)_{n\in\N}\subset\dom(K),\
f_n\tauconverges f,\
K(f_n)\to\alpha
\right\}
\end{equation}
whenever $\seqreg{K}(f) < +\infty$.
\end{lemma}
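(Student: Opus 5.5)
The plan is to show that the value $\seqreg K(f)$ itself is attained by a sequence in $\dom(K)$ converging to $f$ along which $K$ actually converges. Fix $f$ with $\alpha^\star\coloneqq\seqreg K(f)<+\infty$. By definition there are sequences $(f^{(j)}_n)_{n\in\N}$ with $f^{(j)}_n\tauconverges f$ and $\liminf_n K(f^{(j)}_n)<\alpha^\star+1/j$ (or $<-j$ if $\alpha^\star=-\infty$).

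First I will clean up each such sequence. Pass to a subsequence along which $K(f^{(j)}_n)$ converges to its $\liminf$; a subsequence still converges to $f$. Since the limit is $<+\infty$, all but finitely many terms satisfy $K<+\infty$, so after discarding finitely many terms each sequence lies in $\dom(K)$. Note that every candidate $\alpha$ in the minimum in \cref{eq:equiv_of_seq_lsc_reg_defs} is $\geq\seqreg K(f)$. So it suffices to produce one sequence in $\dom(K)$, converging to $f$, with $K(f_n)\to\alpha^\star$.

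The main step is a diagonal argument. Because $\tau$ need not be first-countable, one cannot diagonalize directly, and this is where the hypotheses enter. Choose a level $\beta\in\R$ with $\beta>\alpha^\star+1$ (any real number if $\alpha^\star=-\infty$). By inf-compactness, the sublevel set $\sls{K}{\beta}$ lies in a $\tau$-compact set $Q_\beta$, which is metrizable by hypothesis, with metric $d$.

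After dropping finitely many terms, all the cleaned sequences $f^{(j)}_n$ with $j\geq1$ lie in $\sls{K}{\beta}\subset Q_\beta$. The limit $f$ also lies in $Q_\beta$, since $Q_\beta$ is compact in a space where the relevant limits are unique; I may need a Hausdorff assumption, or instead replace $Q_\beta$ by $Q_\beta\cup\{f\}$, which is still compact. The plan is to check metrizability of this enlarged set, or to argue that convergence within $Q_\beta\cup\{f\}$ is metrizable. This is the point I expect to be the main obstacle: making sure $f$ sits inside a single metrizable compact set together with all the approximants.

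Granting this, for each $j$ pick $n_j$ with $d(f^{(j)}_{n_j},f)<1/j$ and $|K(f^{(j)}_{n_j})-\alpha_j|<1/j$, where $\alpha_j\to\alpha^\star$ are the limits of the cleaned sequences. In the case $\alpha^\star=-\infty$, ask instead that $K(f^{(j)}_{n_j})<-j$. Then $g_j\coloneqq f^{(j)}_{n_j}$ satisfies $g_j\to f$ in $d$, hence in $\tau$ on the compact set, and $K(g_j)\to\alpha^\star$. This exhibits the minimum in \cref{eq:equiv_of_seq_lsc_reg_defs}, which equals $\seqreg K(f)$.
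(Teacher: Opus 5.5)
Your argument is correct and is essentially the paper's proof: you enclose a sublevel set above $\seqreg{K}(f)$ in a $\tau$-compact set, adjoin $f$, metrize, and diagonalize over the approximating sequences. The only difference is that the paper skips your subsequence clean-up; it picks terms with $K<\alpha_j$ and sandwiches $K(g_j)$ between $\seqreg{K}(f)$ and $\lim_j\alpha_j$.

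The step you flag as the main obstacle is in fact immediate. $Q_\beta\cup\{f\}$ is compact as a finite union of compact sets, so it is metrizable directly by the hypothesis that every $\tau$-compact subset of $\B$ is metrizable; this is exactly the paper's step. Also, when $\seqreg{K}(f)=-\infty$, take e.g.\ $\beta=0$ rather than an arbitrary real number, so that every $j\geq1$ yields a sequence eventually lying in $\sls{K}{\beta}$.
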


\begin{proof}
Fix $f\in\B$ and let $S(f)$ denote the set on the right-hand side of \cref{eq:equiv_of_seq_lsc_reg_defs}. If $\alpha\in S(f)$, then there exists $(f_n)_{n\in\N}\subset\dom(K)$ such that $f_n\tauconverges f$ and $K(f_n)\to\alpha$. This sequence is admissible in the definition of $\seqreg{K}(f)$, and therefore
\begin{equation}
\seqreg{K}(f) \leq \liminf_{n\to\infty}K(f_n) = \alpha.
\end{equation}
Taking the infimum over $\alpha\in S(f)$ gives $\seqreg{K}(f)\le \inf S(f)$, with the convention $\inf\varnothing=+\infty$.

It remains to show that this infimum is attained. If $\seqreg{K}(f)=+\infty$, the claim is immediate.
Assume $\seqreg{K}(f)<+\infty$. Choose $(\alpha_j)_{j\in\N}\subset\R$ with $\alpha_j\downarrow \seqreg{K}(f)$. By $\tau$-inf-compactness, there exists a $\tau$-compact set $Q$ such that $\sls{K}{\alpha_1}\subset Q$. Then $Q^\star\coloneqq Q\cup\{f\}$ is $\tau$-compact and hence metrizable. Let $d$ be a metric that induces $\tau_{Q^\star}$ (subspace topology).

For each $j$, since $\seqreg{K}(f)<\alpha_j$, there exists a sequence $(f_n^{(j)})_{n\in\N}\subset\B$ with $f_n^{(j)}\tauconverges f$ and $\liminf_{n\to\infty}K(f_n^{(j)})<\alpha_j$. Thus $K(f_n^{(j)})<\alpha_j$ for infinitely many $n$. Along these indices, $f_n^{(j)}\in\sls{K}{\alpha_1}\subset Q^\star$, and the corresponding subsequence still converges to $f$. Hence we may choose $n(j)$ such that, with $g_j\coloneqq f_{n(j)}^{(j)}$,
\begin{equation}
d(g_j,f)<\frac1j
\quad\text{and}\quad
K(g_j)<\alpha_j .
\end{equation}
Then $(g_j)_{j\in\N}\subset\dom(K)$, $g_j\tauconverges f$, and
\begin{equation}
\seqreg{K}(f)
\le
\liminf_{j\to\infty}K(g_j)
\le
\limsup_{j\to\infty}K(g_j)
\le
\lim_{j\to\infty}\alpha_j
=
\seqreg{K}(f).
\end{equation}
Therefore $K(g_j)\to\seqreg{K}(f)$, so $\seqreg{K}(f)\in S(f)$.
\end{proof}

\subsection{Topological Vector Spaces}
We now assume that $(\B,\tau)$ is a TVS. The algebraic structure allows subadditivity and homogeneity to pass to the l.s.c.\ regularization.

\begin{proposition}\label{prop:lscreg_preserves_properties}
Let $(\B,\tau)$ be a TVS, let $K:\B\to[0,+\infty]$, and let $\overline K$ denote the $\tau$-l.s.c.\ regularization of $K$.
Then the following properties hold.
\begin{enumerate}[label=(\roman*)]
    \item\label{item:subadd-lsc-reg} If $K$ is subadditive, then so is $\overline K$.
    \item\label{item:homo-lsc-reg} If $K$ is absolutely $p$-homogeneous for some $p>0$, then so is $\overline K$.
\end{enumerate}
\end{proposition}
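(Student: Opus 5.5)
The plan is to work with the characterization $\overline K(f)=\liminf_{g\to f}K(g)=\sup_{U\in\mathfrak N_f}\inf_{g\in U}K(g)$ from \cref{prop:lscreg_equivalents}~\ref{item:lsc-reg-filter-def}. Both claims then reduce to continuity of the vector-space operations in $\tau$, since these operations transport neighborhood filters. Because $K\geq0$, every quantity lies in $[0,+\infty]$, so the conventions $a+(+\infty)=+\infty$ and $t\cdot(+\infty)=+\infty$ for $t>0$ cause no trouble.

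\textbf{Subadditivity.} Fix $f,g\in\B$. We may assume $\overline K(f),\overline K(g)<+\infty$, since otherwise there is nothing to prove. Let $W$ be a neighborhood of $f+g$. Because addition $\B\times\B\to\B$ is $\tau$-continuous, there are neighborhoods $U\ni f$ and $V\ni g$ with $U+V\subset W$. Then
\begin{equation}
\inf_{h\in W}K(h)\leq \inf_{u\in U,\,v\in V}K(u+v)\leq \inf_{u\in U}K(u)+\inf_{v\in V}K(v)\leq \overline K(f)+\overline K(g).
\end{equation}
The middle step uses subadditivity of $K$ together with taking separate infima, which is legitimate because all terms are nonnegative. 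Taking the supremum over $W$ gives $\overline K(f+g)\leq\overline K(f)+\overline K(g)$.

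\textbf{Homogeneity.} For $\alpha\neq0$, the map $m_\alpha:h\mapsto\alpha h$ is a homeomorphism of $(\B,\tau)$, so it carries $\mathfrak N_f$ bijectively onto $\mathfrak N_{\alpha f}$. It follows that
\begin{equation}
\overline K(\alpha f)=\sup_{U\in\mathfrak N_f}\inf_{h\in U}K(\alpha h)=|\alpha|^p\sup_{U\in\mathfrak N_f}\inf_{h\in U}K(h)=|\alpha|^p\,\overline K(f),
\end{equation}
where the middle equality uses $K(\alpha h)=|\alpha|^pK(h)$ and the fact that multiplying by the positive constant $|\alpha|^p$ commutes with $\inf$ and $\sup$. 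It remains to show $\overline K(0)=0$. We have $0\leq\overline K(0)\leq K(0)=0$, using $\overline K\leq K$ and $\overline K\geq0$; the latter holds because the constant function $0$ is l.s.c.\ and lies below $K$.

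I do not expect a real obstacle. The only delicate point is the extended-valued arithmetic in the subadditivity inequality, specifically splitting $\inf_{U\times V}(K(u)+K(v))$ into a sum of infima when some values are $+\infty$. Nonnegativity of $K$ makes this splitting valid.
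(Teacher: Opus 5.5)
Your proof is correct, but it takes a different route from the paper.

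\emph{Your route.} You work with the neighborhood-filter formula $\overline K(f)=\sup_{U\in\mathfrak N_f}\inf_{g\in U}K(g)$ from \cref{prop:lscreg_equivalents}~\ref{item:lsc-reg-filter-def} and push neighborhoods through the vector operations. For subadditivity, joint continuity of addition at $(f,g)$ gives $U\in\mathfrak N_f$ and $V\in\mathfrak N_g$ with $U+V\subset W$. This yields the inequality in a single estimate. For homogeneity, the homeomorphism $h\mapsto\alpha h$ carries $\mathfrak N_f$ onto $\mathfrak N_{\alpha f}$, which gives the identity as an equality at once.

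\emph{The paper's route.} The paper never uses the liminf formula. It relies only on the defining property of $\overline K$ as the largest l.s.c.\ minorant of $K$.
\begin{itemize}
\item For homogeneity, it checks that $f\mapsto|\alpha|^{-p}\overline K(\alpha f)$ is an l.s.c.\ minorant of $K$, hence lies below $\overline K$. The reverse inequality follows by replacing $\alpha$ with $\alpha^{-1}$.
\item For subadditivity, it first shows that $f\mapsto\overline K(f+g)-K(g)$ is an l.s.c.\ minorant of $K$ when $K(g)<+\infty$. This gives the intermediate bound $\overline K(f+g)\le\overline K(f)+K(g)$. A second pass with $g\mapsto\overline K(f+g)-\overline K(f)$ then upgrades $K(g)$ to $\overline K(g)$.
\end{itemize}

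\emph{Comparison.} The paper's route is more economical in its hypotheses. It needs only continuity of the individual maps $f\mapsto f+g$ and $f\mapsto\alpha f$, not joint continuity of addition. Your route is more direct and avoids the two-stage bootstrapping for subadditivity. Its cost is reliance on the filter characterization and on the full TVS structure, both of which are available here.
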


\begin{proof} \leavevmode
\begin{enumerate}[label=(\roman*)]
    \item Fix $g\in\B$. We first prove that
\begin{equation}
    \overline K(f+g)\leq \overline K(f)+K(g),
    \quad f\in\B.
\end{equation}
If $K(g)=+\infty$, this is trivial. Hence assume $K(g)<+\infty$. Since
$\overline K\leq K$ and $K$ is subadditive, the function
\begin{equation}
    H(f)\coloneqq \overline K(f+g)-K(g)
\end{equation}
satisfies
\begin{equation}
    H(f)\leq K(f+g)-K(g)\leq K(f),
    \quad f\in\B.
\end{equation}
Moreover, $H$ is $\tau$-l.s.c.\ since translations are $\tau$-$\tau$-continuous. Hence, by the defining property of l.s.c.\ regularization,
$H\leq \overline K$. Therefore,
\begin{equation} \label{eq:djsakl}
    \overline K(f+g)\leq \overline K(f)+K(g),
    \quad f\in\B.
\end{equation}

Now fix $f\in\B$. If $\overline K(f)=+\infty$, the desired inequality is
trivial. Hence assume $\overline K(f)<+\infty$. The function
\begin{equation}
    J(g)\coloneqq \overline K(f+g)-\overline K(f)
\end{equation}
is $\tau$-l.s.c. Moreover, by \cref{eq:djsakl},
\begin{equation}
    J(g)\leq K(g),
    \quad g\in\B.
\end{equation}
Thus, by the defining property of l.s.c.\ regularization, $J\leq \overline K$.
Equivalently,
\begin{equation}
    \overline K(f+g)\leq \overline K(f)+\overline K(g),
    \quad f,g\in\B.
\end{equation}
This proves that $\overline K$ is subadditive.

    \item
    Since $\overline K\leq K$ and $K$ is absolutely $p$-homogeneous,
    \begin{equation}
        0\leq \overline K(0)\leq K(0)=0,
    \end{equation}
    so $\overline K(0)=0$. Now fix $\alpha\in\R\setminus\{0\}$. The function
    \begin{equation}
        H(f)\coloneqq |\alpha|^{-p}\overline K(\alpha f)
    \end{equation}
    is $\tau$-l.s.c.\ and satisfies
    \begin{equation}
        H(f)\leq |\alpha|^{-p}K(\alpha f)=K(f).
    \end{equation}
    Hence, by the defining property of l.s.c.\ regularization, $H\leq\overline K$, i.e.,
    \begin{equation}
        \overline K(\alpha f)\leq |\alpha|^p\overline K(f),
        \quad f\in\B.
    \end{equation}
    Applying this inequality with $\alpha^{-1}$ in place of $\alpha$ gives the reverse inequality. Therefore,
    \begin{equation}
        \overline K(\alpha f)=|\alpha|^p\overline K(f),
        \quad \alpha\neq 0,\ f\in\B.
    \end{equation}
    This proves that $\overline K$ is absolutely $p$-homogeneous. \qedhere
\end{enumerate}
\end{proof}

\subsection{Dual Banach Spaces and \texorpdfstring{Weak$^*$-Compatible}{WeakStar-Compatible} Topologies}
We now specialize the compactness discussion to dual Banach spaces. Let $(\B,\norm{\dummy}_{\B})$ be a Banach space with predual $(\A,\norm{\dummy}_{\A})$, identify $\B$ isometrically with $\A'$, and write $\ang{\mu,f}$ for the dual pairing. Thus,
\begin{equation}
\norm{f}_{\B} = \sup_{\substack{\mu\in\A\\ \norm{\mu}_{\A}\leq 1}} \abs{\ang{\mu,f}}.
\end{equation}
We endow $\B$ with the weak$^*$ topology $w^*=\sigma(\B,\A)$. This is the setting used throughout the paper for the ambient function spaces. The main idea is that weak$^*$ compactness converts boundedness into compactness by the Banach--Alaoglu theorem. Thus, in dual Banach spaces, coercivity (i.e., boundedness of sublevel sets) is equivalent to inf-compactness, which makes the direct method of the calculus of variations applicable. Recall from \cref{def:weakstar_compatible} that a topology is weak$^*$-compatible when it agrees with $w^*$ on every norm-bounded subset.

\begin{proposition}\label{prop:coercive_implies_infcpt}
A function $K:\B\to\eR$ is coercive if and only if it is $w^*$-inf-compact.
\end{proposition}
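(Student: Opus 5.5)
Both implications follow from combining the Banach--Alaoglu theorem with the elementary fact that a $w^*$-compact set is norm-bounded.

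\emph{Coercive $\Rightarrow$ $w^*$-inf-compact.} Suppose $K$ is coercive and fix $\alpha\in\R$. By \cref{def:coercive} there is $\beta>0$ with $\sls{K}{\alpha}\subset\sls{\norm{\dummy}_\B}{\beta}$. The closed ball $\sls{\norm{\dummy}_\B}{\beta}=\beta\,\sls{\norm{\dummy}_\B}{1}$ is $w^*$-compact by Banach--Alaoglu, because scalar multiplication is $w^*$-continuous. Taking $Q_\alpha$ to be this ball gives inf-compactness directly from the definition.

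\emph{$w^*$-inf-compact $\Rightarrow$ coercive.} Fix $\alpha$ and let $Q_\alpha\supset\sls{K}{\alpha}$ be $w^*$-compact. It suffices to show that every $w^*$-compact subset of $\B=\A'$ is norm-bounded. For each $\mu\in\A$, the map $f\mapsto\ang{\mu,f}$ is $w^*$-continuous, so it is bounded on $Q_\alpha$. Thus $\sup_{f\in Q_\alpha}\abs{\ang{\mu,f}}<+\infty$ for every $\mu\in\A$. The family $\{\ang{\dummy,f}\}_{f\in Q_\alpha}$ consists of bounded linear functionals on the Banach space $\A$ and is pointwise bounded. The uniform boundedness principle therefore gives $\sup_{f\in Q_\alpha}\norm{f}_\B<+\infty$, using the isometric identification $\norm{f}_\B=\sup_{\norm{\mu}_\A\le1}\abs{\ang{\mu,f}}$. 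Hence $\sls{K}{\alpha}$ is norm-bounded.

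The argument needs no separability and no lower semicontinuity of $K$, because inf-compactness only requires containment in a compact set and not closedness of the sublevel sets. The only substantive step is the uniform boundedness argument. It needs completeness of the predual $\A$, which is part of the standing assumption that $\A$ is a Banach space.
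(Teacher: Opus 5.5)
Your proof is correct and follows the paper's argument: Banach--Alaoglu for the forward direction, and norm-boundedness of $w^*$-compact sets for the converse. The only difference is that you give the uniform boundedness argument behind that norm-boundedness, which the paper states as a known fact.
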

\begin{proof}
Suppose first that $K$ is coercive. Then, for every $\alpha\in\R$, there exists $\beta>0$ such that $\sls{K}{\alpha} \subset \sls{\norm{\dummy}_\B}{\beta}$. The latter set is $w^*$-compact by the Banach--Alaoglu theorem, so $K$ is $w^*$-inf-compact. Conversely, if $K$ is $w^*$-inf-compact, then every sublevel set is contained in a $w^*$-compact set, and every $w^*$-compact subset of $\B$ is norm-bounded. Therefore $K$ is coercive.
\end{proof}

Since inf-compactness is preserved under l.s.c.\ regularization (see \Cref{prop:lscreg_properties}~\ref{item:inf-compact-transfer}), the previous result implies the same is true of coercivity.
\begin{proposition}\label{prop:lscreg_is_coercive}
If $K:\B\rightarrow\eR$ is coercive then so is its $w^*$-l.s.c.\ regularization $\overline{K}$.
\end{proposition}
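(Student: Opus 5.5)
The plan is to invoke \cref{prop:lscreg_properties}~\ref{item:inf-compact-transfer} together with \cref{prop:coercive_implies_infcpt}. The first preserves inf-compactness under l.s.c.\ regularization; the second identifies coercivity with $w^*$-inf-compactness in the dual Banach space $\B=\A'$. Both directions of \cref{prop:coercive_implies_infcpt} will be used.

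First, assume $K$ is coercive. By the forward direction of \cref{prop:coercive_implies_infcpt}, $K$ is $w^*$-inf-compact. To apply \cref{prop:lscreg_properties}~\ref{item:inf-compact-transfer}, we need $(\B,w^*)$ to be Hausdorff, which holds because the weak$^*$ topology separates points. That result then gives that $\overline K=\overline K^{\,w^*}$ is $w^*$-inf-compact. The converse direction of \cref{prop:coercive_implies_infcpt}, applied to $\overline K$, then yields that $\overline K$ is coercive.

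As a sanity check, and as a self-contained alternative avoiding the cited inf-compactness transfer, I would verify the claim directly. Fix $\alpha\in\R$ and $\varepsilon>0$, and choose $\beta$ with $\sls{K}{\alpha+\varepsilon}\subset\sls{\norm{\dummy}_\B}{\beta}$. The function $J\coloneqq(\alpha+\varepsilon)\,\mathbf 1_{\{\norm{\dummy}_\B>\beta\}}+\inf K\cdot\mathbf 1_{\{\norm{\dummy}_\B\le\beta\}}$ is $w^*$-l.s.c.\ because the closed ball is $w^*$-closed, and it satisfies $J\le K$. Hence $J\le\overline K$. Consequently, if $\overline K(f)\le\alpha<\alpha+\varepsilon$, then $\norm{f}_\B\le\beta$. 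One must handle the edge case $\inf K=-\infty$ or $K\equiv+\infty$ by replacing $\inf K$ with $-\infty$, which remains l.s.c.

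The only delicate point is confirming the Hausdorff hypothesis and that the extended-valued conventions in \cref{prop:lscreg_properties} cover $K:\B\to\eR$. Both are routine, so I expect no real obstacle.
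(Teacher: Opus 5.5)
Your main argument is the paper's argument. You convert coercivity to $w^*$-inf-compactness via \cref{prop:coercive_implies_infcpt}, transfer it to $\overline K$ by \cref{prop:lscreg_properties}~\ref{item:inf-compact-transfer} (your Hausdorff check is the right hypothesis to confirm), and convert back. In your optional direct check, the minorant $J$ is not $w^*$-l.s.c.\ whenever $\inf K>\alpha+\varepsilon$, not only when $K\equiv+\infty$, so simply set $J\equiv-\infty$ on the closed ball in every case to make that argument work uniformly.
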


When the predual $\A$ is separable (i.e., has a countable dense subset), coercivity also allows us to replace the topological l.s.c.\ regularization with its sequential counterpart, which is typically easier to work with. The key ingredient is that separability of $\A$ ensures that the topology $w^*$ on $\B$ is metrizable when restricted to any norm-bounded subset,\footnote{See \cite[Theorem~3.28]{brezis2011functional} for the case of the unit ball in $\B$. The general result follows by a rescaling argument.} and hence on any $w^*$-compact subset.

\begin{proposition}\label{prop:coercive_implies_seqlscreg}
Suppose $\A$ is separable. If $K:\B\rightarrow \eR$ is coercive, then for all $f\in \B$
\begin{equation}
\overline{K}^{\,w^*}(f) = \overline{K}^{\mathrm{seq},w^*}(f) = \min\{\alpha \in \eR \st (f_n)_{n\in\N}\subset \dom(K),\ f_n\starconverges f,\ K(f_n) \rightarrow \alpha\}.
\end{equation}
\end{proposition}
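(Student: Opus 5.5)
The plan is to reduce \cref{prop:coercive_implies_seqlscreg} to the abstract results of the previous subsection, namely \cref{prop:lscreg_seqlscreg_coincide} and \cref{lem:equiv_of_seq_lsc_reg_defs}, applied with $\tau=w^*$. Both results need two hypotheses: $K$ must be $w^*$-inf-compact, and every $w^*$-compact subset of $\B$ must be metrizable. Once these are verified, the first equality $\overline{K}^{\,w^*}=\overline{K}^{\mathrm{seq},w^*}$ follows from \cref{prop:lscreg_seqlscreg_coincide}. The second equality, with the minimum, follows from \cref{lem:equiv_of_seq_lsc_reg_defs} whenever $\overline{K}^{\mathrm{seq},w^*}(f)<+\infty$.

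The first hypothesis is immediate: by \cref{prop:coercive_implies_infcpt}, coercivity of $K$ is equivalent to $w^*$-inf-compactness.

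For the second hypothesis, let $Q\subset\B$ be $w^*$-compact.
\begin{itemize}
\item[(a)] $Q$ is norm-bounded. For each $\mu\in\A$ the map $f\mapsto\ang{\mu,f}$ is $w^*$-continuous, so it is bounded on $Q$. The uniform boundedness principle, applied to $Q$ viewed as functionals on the Banach space $\A$, gives $Q\subset\{\norm{\dummy}_\B\leq r\}$ for some $r>0$.
\item[(b)] Since $\A$ is separable, $w^*$ is metrizable on the closed ball $\{\norm{\dummy}_\B\leq r\}$ (see \cite[Theorem~3.28]{brezis2011functional}, combined with the fact that dilation $f\mapsto rf$ is a $w^*$-homeomorphism). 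Concretely, with $(\mu_k)$ dense in the unit ball of $\A$, one can take the metric $d(f,g)=\sum_k 2^{-k}\abs{\ang{\mu_k,f-g}}$.
\item[(c)] Metrizability passes to subspaces, so $Q$, with the topology induced from $w^*$, is metrizable.
\end{itemize}

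The remaining case is $\overline{K}^{\mathrm{seq},w^*}(f)=+\infty$. Here no sequence from $\dom(K)$ converging to $f$ can have $K(f_n)\to\alpha$ with $\alpha<+\infty$. Indeed, such a sequence would be admissible in \cref{eq:seq_lsc_reg_def} and would force $\overline{K}^{\mathrm{seq},w^*}(f)\leq\alpha$. So the admissible set in the minimum contains at most $+\infty$, and by the convention $\min\varnothing=+\infty$ the minimum equals $+\infty$ in either case. This is a small point of bookkeeping, but it needs to be stated explicitly.

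I expect no serious obstacle. The only step that needs care is (a)–(c): checking that metrizability on norm-bounded sets, which is where separability of $\A$ enters, really covers every $w^*$-compact set, so that the hypothesis of \cref{prop:lscreg_seqlscreg_coincide} holds as stated.
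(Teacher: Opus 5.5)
Your proposal is correct and follows the paper's proof: $w^*$-inf-compactness from \cref{prop:coercive_implies_infcpt}, metrizability of $w^*$-compact sets from separability of $\A$, and then \cref{prop:lscreg_seqlscreg_coincide} together with \cref{lem:equiv_of_seq_lsc_reg_defs}. Your steps (a)--(c) (norm-boundedness of $w^*$-compact sets via uniform boundedness) and the explicit treatment of the case $\overline{K}^{\mathrm{seq},w^*}(f)=+\infty$ simply spell out what the paper's proof leaves implicit.
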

\begin{proof}
Since $\A$ is separable, every $w^*$-compact subset of $\B$ is metrizable. Since $K$ is coercive, \cref{prop:coercive_implies_infcpt} implies that $K$ is $w^*$-inf-compact. The result then follows from \cref{prop:lscreg_seqlscreg_coincide} and \cref{lem:equiv_of_seq_lsc_reg_defs}.
\end{proof}

The following proposition shows that, for sequences, weak$^*$ convergence is
equivalent to convergence in any weak$^*$-compatible topology, provided the sequence is norm-bounded.
\begin{proposition}\label{lem:seq_star_converge_equiv}
Suppose $\tau$ is a weak$^*$-compatible topology on $\B$. Let $(f_n)_{n\in\N}\cup\{f\}\subset \B$. Then $f_n\starconverges f$ if and only if $f_n\altconverges f$ and $\sup_{n\in\N} \|f_n\|_\B < +\infty$.
\end{proposition}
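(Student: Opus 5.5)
The plan is to prove the two implications separately, using \cref{def:weakstar_compatible}, which says that $\tau$ and $w^*$ coincide on every norm-bounded subset of $\B$. The only point needing care is that a convergent sequence is not itself a norm-bounded set containing its limit unless we add the limit. So throughout I will work with the set $S\coloneqq\{f_n\st n\in\N\}\cup\{f\}$, on which the two topologies agree once $S$ is norm-bounded.

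For the forward direction, suppose $f_n\starconverges f$. By the uniform boundedness principle (applied to the $f_n$ viewed as functionals on the Banach space $\A$), weak$^*$-convergent sequences are norm-bounded, so $\sup_n\norm{f_n}_\B<+\infty$. Hence $S$ is norm-bounded, and $\tau|_S=w^*|_S$. Convergence $f_n\to f$ is a statement about the subspace topology on $S$: every $w^*|_S$-neighborhood of $f$ eventually contains the $f_n$. The same is therefore true of every $\tau|_S$-neighborhood. Each $\tau$-neighborhood $V$ of $f$ in $\B$ restricts to the $\tau|_S$-neighborhood $V\cap S$, so $f_n\altconverges f$ in $\B$.

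For the converse, assume $f_n\altconverges f$ and $\sup_n\norm{f_n}_\B<+\infty$. Then $S$ is again norm-bounded, because it only adds the single point $f$. Running the same subspace argument with the roles of $\tau$ and $w^*$ swapped gives $f_n\to f$ in $w^*|_S$, and hence in $w^*$.

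I expect no real obstacle. The one step to state explicitly is that convergence of a sequence to $f$ in a topology on $\B$ is equivalent to convergence in the subspace topology of any subset containing all the $f_n$ and $f$. This holds because neighborhoods in the subspace topology are exactly traces of ambient neighborhoods, and it is what lets us pass between the two topologies through $S$. The forward direction also relies on the Banach–Steinhaus bound, which I take as standard.
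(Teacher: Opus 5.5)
Your proposal is correct and follows the paper's proof essentially step for step. The uniform boundedness principle, applied to the $f_n$ as functionals on $\A$, gives norm-boundedness from weak$^*$ convergence. Weak$^*$-compatibility on the norm-bounded set $\{f_n\}\cup\{f\}$ then transfers convergence in both directions; your explicit subspace-topology remark is a detail the paper leaves implicit.
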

\begin{proof}
If $\sup_{n\in\N} \|f_n\|_{\B} < +\infty$, then  $(f_n)_{n\in\N}\cup \{f\}$ belongs to a norm-bounded set, and so $f_n\altconverges f$ implies $f_n\starconverges f$. Conversely, if $f_n\starconverges f$, then for all $\mu \in \mathcal{A}$ we have $\langle \mu, f_n\rangle \rightarrow \langle \mu, f\rangle$ and hence $\sup_{n\in \N} |\langle \mu, f_n\rangle| < + \infty$. Therefore, identifying each $f_n$ as a continuous linear functional over $\mathcal{A}$, the family $(f_n)_{n\in\N}$ is pointwise uniformly bounded. Thus, by the uniform boundedness principle, the family $(f_n)_{n\in\N}$ is bounded in norm, i.e., $\sup_{n\in\N} \|f_n\|_{\B} < + \infty$. Therefore, the sequence $(f_n)_{n\in\N}$ is contained in a norm-bounded set, and so $f_n\altconverges f$.
\end{proof}

As a consequence of the previous result, the sequential l.s.c.\ regularization of a coercive function is the same under any weak$^*$-compatible topology.

\begin{proposition}\label{prop:seq_lsc_reg_weakstar_compatible} Suppose $\tau$ is a weak$^*$-compatible topology on $\B$. If $K:\B\rightarrow \eR$ is coercive, then $\overline{K}^{\mathrm{seq},w^*} = \overline{K}^{\mathrm{seq},\tau}$.
\end{proposition}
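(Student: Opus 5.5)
To prove \cref{prop:seq_lsc_reg_weakstar_compatible}, I would show that the two sequential regularizations are infima over the same family of admissible sequences, up to sequences that never lower the infimum. Fix $f\in\B$. By definition,
\begin{equation}
    \overline{K}^{\mathrm{seq},w^*}(f)=\inf\Bigl\{\liminf_{n\to\infty}K(f_n)\st f_n\starconverges f\Bigr\},
    \qquad
    \overline{K}^{\mathrm{seq},\tau}(f)=\inf\Bigl\{\liminf_{n\to\infty}K(f_n)\st f_n\altconverges f\Bigr\}.
\end{equation}

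For the inequality $\overline{K}^{\mathrm{seq},\tau}(f)\leq\overline{K}^{\mathrm{seq},w^*}(f)$, take any sequence with $f_n\starconverges f$. By \cref{lem:seq_star_converge_equiv}, this sequence is norm-bounded and satisfies $f_n\altconverges f$. It is therefore admissible for the $\tau$-infimum with the same $\liminf$, and the inequality follows.

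For the reverse inequality, let $(f_n)$ satisfy $f_n\altconverges f$ and set $\alpha\coloneqq\liminf_n K(f_n)$. We may assume $\alpha<+\infty$, since otherwise there is nothing to prove. I would use coercivity to pass to a bounded subsequence.
\begin{itemize}
\item If $\alpha>-\infty$, fix $\varepsilon>0$ and extract a subsequence $(f_{n_j})$ with $K(f_{n_j})\to\alpha$ and $K(f_{n_j})\leq\alpha+\varepsilon$.
\item If $\alpha=-\infty$, extract a subsequence with $K(f_{n_j})\to-\infty$ and $K(f_{n_j})\leq 0$.
\end{itemize}
In either case the subsequence lies in a single sublevel set $\sls{K}{\beta}$, which is norm-bounded because $K$ is coercive. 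The subsequence still $\tau$-converges to $f$, since subsequences of convergent sequences converge in any topology. By \cref{lem:seq_star_converge_equiv} it then converges weak$^*$ to $f$. This gives
\begin{equation}
    \overline{K}^{\mathrm{seq},w^*}(f)\leq\lim_{j\to\infty}K(f_{n_j})=\alpha.
\end{equation}
Taking the infimum over all $\tau$-convergent sequences yields $\overline{K}^{\mathrm{seq},w^*}(f)\leq\overline{K}^{\mathrm{seq},\tau}(f)$.

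The only delicate point is the reverse inequality. A $\tau$-convergent sequence need not be norm-bounded, so it is not directly admissible for the weak$^*$ infimum. The resolution is that only the tail realizing the $\liminf$ matters, and coercivity of $K$ confines that tail to a bounded set, where $\tau$ and $w^*$ agree. No separability of the predual is needed, because \cref{lem:seq_star_converge_equiv} already handles sequences directly.
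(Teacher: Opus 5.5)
Your proof is correct and follows the paper's argument essentially step for step. You get the easy inequality from \cref{lem:seq_star_converge_equiv}. For the reverse inequality you pass to a subsequence realizing the $\liminf$, use coercivity to get norm-boundedness, and apply \cref{lem:seq_star_converge_equiv} again to upgrade $\tau$-convergence to weak$^*$ convergence. Your separate treatment of $\alpha=-\infty$ is a harmless refinement of the paper's step ``after discarding finitely many terms, $\sup_n K(f_n)<+\infty$.''
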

\begin{proof}
By \cref{lem:seq_star_converge_equiv}, every $w^*$-convergent sequence is norm-bounded and therefore $\tau$-convergent (since $\tau$ and
$w^*$ coincide on norm-bounded sets). Hence every sequence admissible in
the definition of $\overline{K}^{\mathrm{seq},w^*}$ is also admissible for
$\overline{K}^{\mathrm{seq},\tau}$, which gives $\overline{K}^{\mathrm{seq},\tau} \leq \overline{K}^{\mathrm{seq},w^*}$. For
the reverse inequality, fix any $f\in \B$. The case $\overline{K}^{\mathrm{seq},\tau}(f) = +\infty$ is trivial, so suppose $\overline{K}^{\mathrm{seq},\tau}(f) < +\infty$ and let
$(f_n)_{n\in\N}$ satisfy $f_n \tauconverges f$ with
$\alpha \coloneqq \liminf_{n\to\infty} K(f_n) < +\infty$. Passing to a
subsequence (that we do not relabel), we may assume $K(f_n) \to \alpha$. After discarding finitely many terms,
$\sup_{n\in\N} K(f_n) < +\infty$, and coercivity gives
$\sup_{n\in\N} \|f_n\|_\B < +\infty$. By
\cref{lem:seq_star_converge_equiv}, $f_n \starconverges f$, so
$\overline{K}^{\mathrm{seq},w^*}(f) \leq \alpha$. Taking the infimum over all such sequences
yields $\overline{K}^{\mathrm{seq},w^*}(f) \leq \overline{K}^{\mathrm{seq},\tau}(f)$.
\end{proof}

\begin{proposition}\label{prop:coercive_implies_seqlscreg_compatible}
Suppose $\A$ is separable, 
and $\tau$ is a weak$^*$-compatible topology on $\B$. If $K:\B\rightarrow \eR$ is coercive, then
\begin{equation}
\overline{K}^{\,w^*}(f) = \overline{K}^{\mathrm{seq},\tau}(f) = \min\{\alpha \in \eR \st (f_n)_{n\in\N}\subset \dom(K),\ f_n\tauconverges f,\ K(f_n) \rightarrow \alpha\}.
\end{equation}
\end{proposition}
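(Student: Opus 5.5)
This is a direct combination of three results already in the appendix. Since $\A$ is separable and $K$ is coercive, \cref{prop:coercive_implies_seqlscreg} gives
\begin{equation}
\overline{K}^{\,w^*}=\overline{K}^{\mathrm{seq},w^*},
\end{equation}
together with the attained-minimum description along $w^*$-convergent sequences in $\dom(K)$. Because $\tau$ is weak$^*$-compatible, \cref{prop:seq_lsc_reg_weakstar_compatible} gives $\overline{K}^{\mathrm{seq},w^*}=\overline{K}^{\mathrm{seq},\tau}$. This establishes the first equality.

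For the min formula in the $\tau$-topology, I will convert between the two kinds of sequences using \cref{lem:seq_star_converge_equiv}.

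Fix $f$ and first suppose $\overline{K}^{\,w^*}(f)<+\infty$. By \cref{prop:coercive_implies_seqlscreg} there is $(f_n)\subset\dom(K)$ with $f_n\starconverges f$ and $K(f_n)\to\overline{K}^{\,w^*}(f)$. By \cref{lem:seq_star_converge_equiv} the same sequence satisfies $f_n\tauconverges f$, so the value $\overline{K}^{\,w^*}(f)$ belongs to the admissible set on the right. Conversely, take any admissible $\alpha$, realized by $(f_n)\subset\dom(K)$ with $f_n\tauconverges f$ and $K(f_n)\to\alpha$. Then by definition $\overline{K}^{\mathrm{seq},\tau}(f)\le\alpha$. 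Hence $\overline{K}^{\,w^*}(f)$ is both admissible and a lower bound for the admissible set, so it is the minimum.

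Now suppose $\overline{K}^{\,w^*}(f)=+\infty$. If some admissible finite $\alpha$ existed, then $\sup_n K(f_n)<+\infty$ after dropping finitely many terms, so coercivity gives norm-boundedness of $(f_n)$. Then $f_n\starconverges f$ by \cref{lem:seq_star_converge_equiv}, which forces $\overline{K}^{\,w^*}(f)\le\alpha<+\infty$, a contradiction. So every admissible value is $+\infty$ (or the set is empty), and under the convention $\min\varnothing=+\infty$ the identity holds.

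The only delicate point is this bookkeeping around the min: whether the infimum is attained, and the case where the value is $+\infty$. I expect no real obstacle there, since the norm bound from coercivity makes $\tau$-convergence and $w^*$-convergence of these sequences interchangeable.
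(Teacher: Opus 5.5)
Your proposal is correct and follows essentially the same route as the paper. You combine \cref{prop:coercive_implies_seqlscreg} and \cref{prop:seq_lsc_reg_weakstar_compatible} to get the equalities, and then note that the $w^*$ recovery sequence is norm-bounded, hence $\tau$-convergent, so it attains the infimum; your separate treatment of the case $\overline{K}^{\,w^*}(f)=+\infty$ is not needed, since the bound $\overline{K}^{\mathrm{seq},\tau}(f)\le\alpha$ you already proved for every admissible $\alpha$ (including $\alpha=-\infty$) settles it directly.
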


\begin{proof}
By \cref{prop:coercive_implies_seqlscreg,prop:seq_lsc_reg_weakstar_compatible}, we have $\overline{K}^{\,w^*} = \overline{K}^{\mathrm{seq},w^*} = \overline{K}^{\mathrm{seq},\tau}$. It remains to justify the minimum representation. Fix $f\in\B$. If $\overline{K}^{\,w^*}(f)=+\infty$, then the admissible set contains no finite value, so its minimum is $+\infty$ under the convention that $\min\varnothing =+\infty$. Hence, suppose that $\overline{K}^{\,w^*}(f)<+\infty$. By \cref{prop:coercive_implies_seqlscreg}, there exists $(f_n)_{n\in\N}\subset\dom(K)$ such that $f_n\starconverges f$ and $K(f_n)\to \overline{K}^{\,w^*}(f)$. Since $w^*$-convergent sequences are norm-bounded, weak$^*$-compatibility of $\tau$ implies $f_n\tauconverges f$. Thus $(f_n)_{n\in\N}$ is admissible for $\overline{K}^{\mathrm{seq},\tau}(f)$, and the equality $\overline{K}^{\,w^*}(f)=\overline{K}^{\mathrm{seq},\tau}(f)$ shows that it attains the infimum.
\end{proof}

\section{Coercivity Modulo Subspaces}
\label[appendix]{app:coercive-mod-subspace-proof}
This appendix supplies the proofs and supporting results for the two structural statements from \cref{sec:background}. We first work in a general topological vector space and prove the factorization lemma. We then pass to dual Banach spaces, develop equivalent formulations of coercivity modulo a subspace, and prove the recovery-sequence proposition.

\subsection{Constancy Spaces and Factorization in Topological Vector Spaces}\label{subsec:constancy-factorization}

\begin{lemma}\label{lem:nesting_of_const_spaces}
Let $(\B,\tau)$ be a TVS and let $K:\B\to\eR$. Then, $\const(K) \subset \const(\overline{K}^{\,\tau})$, where $\overline{K}^{\,\tau}$ denotes the $\tau$-l.s.c.\ regularization of $K$.
\end{lemma}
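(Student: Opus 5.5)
The plan is to take $g\in\const(K)$ and show directly that $g\in\const(\overline{K}^{\,\tau})$. By definition of the constancy space, this means showing that
\begin{equation*}
    \overline{K}^{\,\tau}(f+\alpha g)=\overline{K}^{\,\tau}(f)
\end{equation*}
for every $f\in\B$ and every $\alpha\in\R$.

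Fix $\alpha\in\R$ and let $T_{\alpha}(f)\coloneqq f+\alpha g$. In a TVS, $T_\alpha$ is a homeomorphism of $(\B,\tau)$ whose inverse is $T_{-\alpha}$. Set $H\coloneqq\overline{K}^{\,\tau}\circ T_\alpha$. By \cref{prop:lsc_properties}~\ref{item:comp_with_continuous_map_lsc}, $H$ is $\tau$-l.s.c., since it is an l.s.c.\ function composed with a continuous map. Because $g\in\const(K)$, we also have
\begin{equation*}
    H(f)=\overline{K}^{\,\tau}(f+\alpha g)\leq K(f+\alpha g)=K(f).
\end{equation*}
The defining property of l.s.c.\ regularization then gives $H\leq\overline{K}^{\,\tau}$, that is, $\overline{K}^{\,\tau}(f+\alpha g)\leq\overline{K}^{\,\tau}(f)$ for all $f$.

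Applying the same inequality with $-\alpha$ in place of $\alpha$ and $f+\alpha g$ in place of $f$ gives the reverse inequality $\overline{K}^{\,\tau}(f)\leq\overline{K}^{\,\tau}(f+\alpha g)$. Together these yield equality, so $g\in\const(\overline{K}^{\,\tau})$.

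No step here is a real obstacle. The only point needing care is that translation is $\tau$-$\tau$-continuous, which holds in any TVS.
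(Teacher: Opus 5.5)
Your proposal is correct and matches the paper's proof essentially step for step. You compose the regularization with the continuous translation, bound it above by $K$ using $g\in\const(K)$, invoke the defining property of l.s.c.\ regularization, and then substitute $-\alpha$ and $f+\alpha g$ to obtain the reverse inequality.
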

\begin{proof}
Fix $g\in\const(K)$ and $\alpha\in\R$, and let $\TOp_{\alpha g}:\B\to\B$ denote the translation map $\TOp_{\alpha g}f \coloneqq f+\alpha g$. Since $g\in\const(K)$, we have that $K\circ \TOp_{\alpha g}=K$. Moreover, $\TOp_{\alpha g}: \B \to \B$ is $\tau$-$\tau$-continuous. This shows $\overline{K}^{\,\tau} \circ \TOp_{\alpha g}$ is $\tau$-l.s.c. Since $\overline{K}^{\,\tau}\leq K$, we also have $\overline{K}^{\,\tau} \circ \TOp_{\alpha g} \leq K\circ \TOp_{\alpha g} = K$. Therefore, by the defining property of l.s.c.\ regularization, we have $\overline{K}^{\,\tau}\circ \TOp_{\alpha g}\leq \overline{K}^{\,\tau}$, or equivalently, $\overline{K}^{\,\tau}(f+\alpha g)\leq \overline{K}^{\,\tau}(f)$ for all $f\in\B$. Applying the same argument with $-\alpha$ gives $\overline{K}^{\,\tau}(f-\alpha g)\leq \overline{K}^{\,\tau}(f)$ for all $f\in \B$. Replacing $f$ by $f + \alpha g$ in this inequality yields $\overline{K}^{\,\tau}(f)\leq \overline{K}^{\,\tau}(f+\alpha g)$. Thus, we have that $\overline{K}^{\,\tau}(f+\alpha g)=\overline{K}^{\,\tau}(f)$ for all $f\in\B$ and $\alpha\in\R$, i.e., $g\in\const(\overline{K}^{\,\tau})$.
\end{proof}

\subsubsection{Proof of \cref{lem:chatacterizing_R_mod_N}}\label[appendix]{subsec:factorization-proof}

\begin{proof}
Since $\M$ is a $\tau$-complement of $\Null \subset \const(K)$, we have that $K = K|_\M \circ \P_\M$. Furthermore, by \cref{lem:nesting_of_const_spaces}, we have that $\const(K)\subset \const(\overline{K}^{\,\tau})$. Hence, we also have that $\overline{K}^{\,\tau}=\left.\overline{K}^{\,\tau}\right|_\M \circ \P_\M$. Since $\P_\M$ is $\tau$-$\tau_\M$-continuous and $\overline{(K|_\M)}^{\,\tau_\M}$ is $\tau_\M$-l.s.c.\ by definition, we see that $\overline{(K|_\M)}^{\,\tau_\M}\circ \P_\M$ is $\tau$-l.s.c. Immediately from definitions, we have that $\overline{(K|_\M)}^{\,\tau_\M}\circ \P_\M \leq K|_\M\circ \P_\M = K$. By the defining property of l.s.c.\ regularization, we have that
\begin{equation}
    \overline{(K|_\M)}^{\,\tau_\M}\circ \P_\M \leq \overline{K}^{\,\tau}. \label{eq:factorize-leq}
\end{equation}
Next, since $\overline{K}^{\,\tau}$ is $\tau$-l.s.c., $\left.\overline{K}^{\,\tau}\right|_\M$ is $\tau_\M$-l.s.c. It also holds that $\left.\overline{K}^{\,\tau}\right|_\M \leq K|_\M$ since $\overline{K}^{\,\tau}$ is the $\tau$-l.s.c.\ regularization of $K$. Again, by the defining property of l.s.c.\ regularization, we have $\left.\overline{K}^{\,\tau}\right|_\M \leq \overline{(K|_\M)}^{\,\tau_\M}$. This implies
\begin{equation}
    \overline{K}^{\,\tau} = \left.\overline{K}^{\,\tau}\right|_\M\circ \P_\M \leq \overline{(K|_\M)}^{\,\tau_\M}\circ \P_\M. \label{eq:factorize-geq}
\end{equation}
Combining \cref{eq:factorize-leq,eq:factorize-geq} completes the proof.
\end{proof}

\subsection{Coercivity Modulo Subspaces in Dual Banach Spaces}\label{subsec:coercivity-modulo-subspaces}
Let $\B=\A'$ be a dual Banach space, endowed with the weak$^*$ topology $w^*=\sigma(\B,\A)$ introduced in \cref{sec:background}. The next proposition collects equivalent formulations of coercivity modulo a subspace under progressively stronger assumptions on the subspace.

\begin{proposition}\label{prop:coercive-mod-N-equivalences}
Let $K:\B\to\eR$ and let $\Null\subset\B$ be a subspace. Consider the following statements.
\begin{enumerate}[label=(\roman*)]
    \item\label{item:coercive-mod-N} $K$ is coercive modulo $\Null$.
    
    \item\label{item:coercive-quotient} For every $\alpha\in\R$, there exists $\beta>0$ such that
    \begin{equation}
        K(f)\leq \alpha
        \quad\Rightarrow\quad
        \norm{[f]}_{\B/\Null}\leq \beta,
    \end{equation}
    where $\norm{\dummy}_{\B/\Null}$ denotes the quotient norm on $\B/\Null$.
    
    \item\label{item:coercive-projector} For every $\alpha\in\R$, there exists $\beta>0$ such that
    \begin{equation}
        K(f)\leq \alpha
        \quad\Rightarrow\quad
        \norm{\P_\M f}_\B\leq \beta,
    \end{equation}
    where $\P_\M=\Id-\P_\Null$ is associated with a $w^*$-complement $\M$ of $\Null$.
    
    \item\label{item:coercive-restriction} The restriction $K|_\M:\M\to\eR$ is coercive with respect to $\norm{\dummy}_\M=\norm{\dummy}_\B$.
\end{enumerate}
Then the following hold.
\begin{enumerate}[label=(\alph*)]
    \item\label{item:coerive_in_quotient} If $\Null$ is norm-closed, then \ref{item:coercive-mod-N} $\Leftrightarrow$ \ref{item:coercive-quotient}.
    
    \item\label{item:coercive_projector_equiv} If $\Null$ is $w^*$-closed and admits a $w^*$-complement $\M$, then
    \ref{item:coercive-mod-N} $\Leftrightarrow$ \ref{item:coercive-quotient} $\Leftrightarrow$ \ref{item:coercive-projector}.
    
    \item\label{item:coercive_on_M} If, in addition to the hypotheses of \ref{item:coercive_projector_equiv}, $\Null\subset\const(K)$, then
    \ref{item:coercive-mod-N} $\Leftrightarrow$ \ref{item:coercive-quotient} $\Leftrightarrow$ \ref{item:coercive-projector} $\Leftrightarrow$ \ref{item:coercive-restriction}.
\end{enumerate}
\end{proposition}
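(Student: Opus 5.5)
The statement to prove is \cref{prop:coercive-mod-N-equivalences}. I will organize the proof around the chain \ref{item:coercive-mod-N} $\Leftrightarrow$ \ref{item:coercive-quotient} $\Leftrightarrow$ \ref{item:coercive-projector} $\Leftrightarrow$ \ref{item:coercive-restriction}, adding hypotheses as they are needed.

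\textbf{Part \ref{item:coerive_in_quotient}.} This is essentially definitional. When $\Null$ is norm-closed, $\B/\Null$ is a normed space, and its norm is exactly
\[
\norm{[f]}_{\B/\Null}=\inf_{g\in\Null}\norm{f+g}_\B,
\]
using that $\Null=-\Null$. Hence the implication in \cref{eq:coercive-mod-N} is literally the statement \ref{item:coercive-quotient}, with the same $\beta$.

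\textbf{Part \ref{item:coercive_projector_equiv}.} Assume $\Null$ is $w^*$-closed, so in particular norm-closed, with a $w^*$-complement $\M$ and $w^*$-continuous projectors $\P_\Null,\P_\M$. The key step is to show that $\P_\M$ is norm-bounded. A $w^*$-$w^*$-continuous linear map is weakly continuous, hence norm-bounded; alternatively one can use the closed graph theorem, since $\M$ and $\Null$ are norm-closed. Write $c\coloneqq\norm{\P_\M}_{\op}$.

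For \ref{item:coercive-quotient} $\Rightarrow$ \ref{item:coercive-projector}: $\P_\M$ vanishes on $\Null$, so for every $g\in\Null$ we have $\norm{\P_\M f}_\B=\norm{\P_\M(f+g)}_\B\le c\norm{f+g}_\B$. Taking the infimum over $g\in\Null$ gives
\[
\norm{\P_\M f}_\B\le c\,\norm{[f]}_{\B/\Null}\le c\beta .
\]
For \ref{item:coercive-projector} $\Rightarrow$ \ref{item:coercive-quotient}: take $g=-\P_\Null f\in\Null$. Then
\[
\norm{[f]}_{\B/\Null}\le\norm{f-\P_\Null f}_\B=\norm{\P_\M f}_\B\le\beta .
\]

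\textbf{Part \ref{item:coercive_on_M}.} Now also assume $\Null\subset\const(K)$. Then $K(f)=K(f-\P_\Null f)=K(\P_\M f)$, so $K=K|_\M\circ\P_\M$. For \ref{item:coercive-projector} $\Rightarrow$ \ref{item:coercive-restriction}: if $m\in\M$ satisfies $K(m)\le\alpha$, then $\P_\M m=m$, so $\norm{m}_\B\le\beta$. For \ref{item:coercive-restriction} $\Rightarrow$ \ref{item:coercive-projector}: if $K(f)\le\alpha$, then $K|_\M(\P_\M f)=K(f)\le\alpha$, so coercivity of $K|_\M$ gives $\norm{\P_\M f}_\B\le\beta$.

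The only step that needs care is the boundedness of $\P_\M$ in part \ref{item:coercive_projector_equiv}. I will settle it by noting that each functional $f\mapsto\langle\mu,\P_\M f\rangle$ with $\mu\in\A$ is $w^*$-continuous, hence norm-continuous. The family $\{\P_\M f : \norm{f}_\B\le1\}$ is therefore weakly$^*$ bounded, so it is norm-bounded by the uniform boundedness principle.
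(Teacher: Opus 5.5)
Your proposal is correct and follows the paper's proof step for step. Part (a) is a restatement of the quotient norm. Part (b) compares $\norm{[f]}_{\B/\Null}$ with $\norm{\P_\M f}_\B$, using $g=-\P_\Null f$ in one direction and $\norm{\P_\M}_{\op}<+\infty$ together with $\P_\M|_\Null=0$ in the other. Part (c) uses $\P_\M m=m$ for $m\in\M$ and $K=K|_\M\circ\P_\M$.

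The one difference is that you justify $\norm{\P_\M}_{\op}<+\infty$, which the paper only asserts. Your closed-graph and uniform-boundedness arguments are both sound. Your passing remark that $w^*$-$w^*$-continuity gives weak continuity is not a proof by itself, since that claim is equivalent to the boundedness you want.
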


\begin{proof} \leavevmode
\begin{enumerate}[label=(\alph*)]
    \item
    If $\Null$ is norm-closed, then $\B/\Null$ is a Banach space with quotient norm $\norm{[f]}_{\B/\Null} = \inf_{g\in\Null}\norm{f+g}_\B$. Hence \ref{item:coercive-mod-N} and \ref{item:coercive-quotient} are simply restatements of one another.

    \item
    By \ref{item:coerive_in_quotient}, it remains to compare the quotient norm with $\norm{\P_\M f}_\B$. Since $\P_\M f=f-\P_\Null f$ and $\P_\Null f\in\Null$,
    \begin{equation}
        \inf_{g\in\Null}\norm{f+g}_\B
        \leq
        \norm{f-\P_\Null f}_\B
        =
        \norm{\P_\M f}_\B.
    \end{equation}
    Conversely, since $\P_\M g=0$ for all $g\in\Null$,
    \begin{equation}
        \norm{\P_\M f}_\B
        =
        \inf_{g\in\Null}\norm{\P_\M(f+g)}_\B
        \leq
        \norm{\P_\M}_{\op}
        \inf_{g\in\Null}\norm{f+g}_\B.
    \end{equation}
    Here $\norm{\P_\M}_{\op}<+\infty$ since $w^*$-$w^*$-continuous linear operators are norm-norm continuous. Thus $\norm{[f]}_{\B/\Null}$ and $\norm{\P_\M f}_\B$ are equivalent norms, which proves the equivalence of \ref{item:coercive-quotient} and \ref{item:coercive-projector}.

    \item
    Assume \ref{item:coercive-projector}. If $f\in\M$ and $K(f)\leq\alpha$, then $\P_\M f=f$, so $\norm{f}_\B\leq\beta$. Hence $K|_\M$ is coercive, which proves \ref{item:coercive-restriction}. Conversely, assume \ref{item:coercive-restriction} and let $f\in\B$ satisfy $K(f)\leq\alpha$. Since $\Null\subset\const(K)$,
    \begin{equation}
        K(\P_\M f)
        =
        K(\P_\M f+\P_\Null f)
        =
        K(f)
        \leq
        \alpha.
    \end{equation}
    Since $\P_\M f\in\M$, coercivity of $K|_\M$ gives $\norm{\P_\M f}_\B\leq\beta$. This is \ref{item:coercive-projector}. \qedhere
\end{enumerate}
\end{proof}

The next result shows that coercivity modulo a subspace of the constancy space forces that subspace to be the \emph{entire} constancy space.

\begin{proposition}\label{prop:coercive_mod_determines_constancy}
If $K:\B\rightarrow \overline{\R}$ with $K\not\equiv+\infty$ is coercive modulo a norm-closed subspace $\Null \subset \const(K)$, then $\Null = \const(K)$.
\end{proposition}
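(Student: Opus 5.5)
We argue by contradiction. The inclusion $\Null\subset\const(K)$ is given, so it suffices to show that every $g\in\const(K)$ lies in $\Null$. Fix such a $g$, and fix $f_0\in\B$ with $K(f_0)<+\infty$, which exists since $K\not\equiv+\infty$. Set $\alpha\coloneqq K(f_0)$ (or any real number $\geq K(f_0)$ if $K(f_0)=-\infty$).

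Because $g\in\const(K)$, we have $K(f_0+tg)=K(f_0)\leq\alpha$ for every $t\in\R$. Coercivity modulo $\Null$ then supplies a single $\beta>0$ with
\[
\inf_{h\in\Null}\norm{f_0+tg+h}_\B\leq\beta \quad\text{for all } t\in\R.
\]
In quotient notation, using that $\Null$ is norm-closed so $\norm{\dummy}_{\B/\Null}$ is a genuine norm (\cref{prop:coercive-mod-N-equivalences}~\ref{item:coerive_in_quotient}), this says $\norm{[f_0]+t[g]}_{\B/\Null}\leq\beta$ for all $t$.

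By the triangle inequality in $\B/\Null$,
\[
\abs{t}\,\norm{[g]}_{\B/\Null}\leq \beta+\norm{[f_0]}_{\B/\Null}
\]
for all $t\in\R$. Letting $\abs{t}\to\infty$ forces $\norm{[g]}_{\B/\Null}=0$, i.e. $\inf_{h\in\Null}\norm{g+h}_\B=0$. Since $\Null$ is norm-closed, this gives $-g\in\Null$, hence $g\in\Null$. This proves $\const(K)\subset\Null$.

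The only point needing care is the role of the hypotheses. Norm-closedness of $\Null$ is exactly what turns vanishing quotient distance into membership in $\Null$. Properness of $K$ is needed to have some finite sublevel set to which coercivity can be applied. The case $K(f_0)=-\infty$ is harmless: any real $\alpha$ works, since $K(f_0+tg)=-\infty\leq\alpha$ still holds. No other obstacle is expected; the argument is a direct unboundedness-of-lines contradiction.
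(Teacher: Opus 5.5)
Your proof is correct and is essentially the paper's own argument: pick $f_0$ with $K(f_0)<+\infty$, use invariance along the line $f_0+tg$ to keep it in a single sublevel set, and let $|t|\to\infty$ in the quotient-norm triangle inequality to force $[g]=0$, after which norm-closedness of the subspace puts $g$ in it. The only cosmetic difference is that you announce a contradiction argument but then give a direct proof, which is slightly cleaner.
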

\begin{proof} Since $K\not\equiv+\infty$, there exists $f_0\in\B$ with $K(f_0)<+\infty$. Choose $\alpha\in\R$ such that $K(f_0)\leq\alpha$.
By way of contradiction, suppose there exists $f \in \const(K)$ such that $f\notin\Null$. Then $K(f_0 + \gamma f) = K(f_0)\leq \alpha$ for all $\gamma > 0$. Coercivity modulo $\Null$ implies there exists $\beta > 0$ such that $\|[f_0 + \gamma f]\|_{\B/\Null} \leq \beta$ for all $\gamma > 0$. By the triangle inequality in $\B/\Null$,
\begin{equation}
\gamma\, \|[f]\|_{\B/\Null} \leq \|[f_0 + \gamma f]\|_{\B/\Null} + \|[f_0]\|_{\B/\Null} \leq \beta + \|[f_0]\|_{\B/\Null} < +\infty
\end{equation}
for all $\gamma > 0$, which forces $\|[f]\|_{\B/\Null} = 0$, i.e., $f \in \Null$, which is a contradiction.
\end{proof}

Recall that if $\M\subset\B=\A'$ is a $w^*$-closed subspace, then it is canonically a dual Banach space. Define
\begin{equation}
    \M_\perp
    \coloneqq
    \curly{\mu\in\A\st\langle\mu,m\rangle=0\text{ for all }m\in\M}.
\end{equation}
Then $\M=(\A/\M_\perp)'$ isometrically, with the norm inherited from $\B$, and the corresponding weak$^*$ topology agrees with the subspace topology inherited from $\B$; see, e.g.,~\cite[Chapter~8]{nariciTopological2010}.

We next combine coercivity modulo the constancy space with the factorization lemma to obtain recovery sequences on a topological complement.

\begin{proposition}\label{prop:lscreg_separable_predual}
Suppose $\A$ is separable, $\tau$ is a weak$^*$-compatible topology on $\B = \A'$, and $K:\B\to\eR$ is coercive modulo a subspace $\Null\subset\const(K)$ such that $\Null$ has $w^*$-complement $\M \subset \B$. Let $\P_\M$ denote the associated $w^*$-$w^*$-continuous projector onto $\M$, and let $w^*_\M$ and $\tau_\M$ denote the subspace topologies on $\M$ induced by $w^*$ and $\tau$, respectively. Then, for all $f\in\B$,
\begin{equation}\label{eq:set2}
\overline{K}^{\,w^*}(f)
  = \overline{(K|_{\M})}^{\mathrm{seq},\tau_\M}(\P_\M f)
  = \min\{\alpha\in\eR \st (g_n)_{n\in\N}\subset\dom(K|_\M),\
      g_n\xrightarrow{\tau_\M}\P_\M f,\ K(g_n)\to\alpha\}.
\end{equation}
\end{proposition}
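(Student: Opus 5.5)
The plan is to combine the factorization lemma (\cref{lem:chatacterizing_R_mod_N}) with the coercive recovery-sequence result on the complement (\cref{prop:coercive_implies_seqlscreg_compatible}), applied inside the dual Banach space $\M$.

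First, since $\Null\subset\const(K)$ is $w^*$-closed with $w^*$-complement $\M$, \cref{lem:chatacterizing_R_mod_N} gives $\overline{K}^{\,w^*}=\overline{K|_\M}^{\,w^*_\M}\circ\P_\M$. So it suffices to show, for every $g\in\M$,
\begin{equation}
\overline{K|_\M}^{\,w^*_\M}(g)=\overline{(K|_\M)}^{\mathrm{seq},\tau_\M}(g)=\min\{\alpha\in\eR \st (g_n)_{n\in\N}\subset\dom(K|_\M),\ g_n\xrightarrow{\tau_\M} g,\ K(g_n)\to\alpha\},
\end{equation}
and then to take $g=\P_\M f$.

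Next, I will set up $\M$ as a dual Banach space satisfying the hypotheses of \cref{prop:coercive_implies_seqlscreg_compatible}. Since $\M$ is $w^*$-closed, $\M=(\A/\M_\perp)'$ isometrically, and its weak$^*$ topology is $w^*_\M$. The predual $\A/\M_\perp$ is separable because it is a quotient of a separable space. By \cref{prop:coercive-mod-N-equivalences}~\ref{item:coercive_on_M}, which applies since $\Null\subset\const(K)$, the restriction $K|_\M$ is coercive in $\norm{\dummy}_\B$. The remaining hypothesis is that $\tau_\M$ is weak$^*$-compatible on $\M$. For this, note that a norm-bounded subset $S\subset\M$ is also norm-bounded in $\B$, so $\tau$ and $w^*$ agree on $S$; hence their subspace traces $\tau_\M|_S$ and $w^*_\M|_S$ agree as well. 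In addition, $\tau_\M$ is a Hausdorff vector topology, being the restriction of one. Applying \cref{prop:coercive_implies_seqlscreg_compatible} to $K|_\M$ on $\M$ then yields the displayed identity, including attainment of the minimum.

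The main point needing care is this verification on the complement. One must check that the predual identification really yields $w^*_\M$ as the weak$^*$ topology; this is the cited fact. The restriction argument for weak$^*$-compatibility is routine. I expect no other obstacle, since the conclusion is exactly the composition of the two cited results.
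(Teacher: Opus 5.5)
Your proposal is correct and follows the paper's proof essentially step for step. Like the paper, you first apply the factorization lemma with $\tau=w^*$. You then realize $\M$ as the dual of the separable quotient $\A/\M_\perp$, obtain coercivity of $K|_\M$ from the coercivity-modulo-subspace equivalences, check that $\tau_\M$ is weak$^*$-compatible on $\M$, and finish with the coercive recovery-sequence proposition applied on $\M$.
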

\begin{proof}

Fix $f\in\B$. By \cref{lem:chatacterizing_R_mod_N}, $\overline{K}^{\,w^*}(f)=\overline{(K|_\M)}^{\,w^*_\M}(\P_\M f)$, where $\overline{(K|_\M)}^{\,w^*_\M}$ is the l.s.c.\ regularization of $K|_\M$ in the weak$^*$ topology $w^*_\M$ on $\M$. Since $K$ is coercive modulo $\Null \subset \const(K)$, $K|_\M$ is coercive on $\M$ by \cref{prop:coercive-mod-N-equivalences}~\ref{item:coercive_on_M}. Also, $\M=(\A/\M_\perp)'$ isometrically, and its quotient predual $\A/\M_\perp$ is separable. Finally, since $\tau$ is weak$^*$-compatible on $\B$, and $\M$ carries the same norm as $\B$, the subspace topology $\tau_\M$ is weak$^*$-compatible on $\M$. Applying \cref{prop:coercive_implies_seqlscreg_compatible} to $K|_\M:\M\to\eR$, with the topologies $w^*_\M$ and $\tau_\M$, gives the result.
\end{proof}

\subsubsection{Proof of \cref{prop:coercive_mod_subspace_implies_seqlscreg_compatible}}\label[appendix]{subsec:recovery-sequence-proof}

\begin{proof}
Let $\P_\Null:\B\rightarrow\B$ be a $w^*$-$w^*$-continuous projector onto $\Null$, $\M \coloneqq \ker \P_\Null$, and $\P_\M \coloneqq \Id - \P_{\Null}$. Let $w^*_\M$ denote the subspace topology on $\M$ induced by $w^*$.
Fix $f\in\B$ and let $J(f)$ denote the right-hand side of \cref{eq:set1} with $\inf$ in place of $\min$. Since $w^*$ is itself weak$^*$-compatible, applying
\cref{prop:lscreg_separable_predual} with $\tau = w^*$ gives
\begin{equation}\label{eq:Kbar_intermediate}
\overline{K}^{\,w^*}(f) = \min\{\alpha\in\eR \st (g_n)_{n\in\N}\subset\dom(K|_\M),\
      g_n\xrightarrow{w^*_\M}\P_\M f,\ K(g_n)\to\alpha\}.
\end{equation}

First we show $\overline{K}^{\,w^*}(f) \leq J(f)$. The inequality is trivial if $J(f)=+\infty$, so assume $J(f)<+\infty$. Let $(f_n)_{n\in\N}\subset\dom(K)$ satisfy $f_n\tauconverges f$ and $K(f_n)\to\alpha$ for some $\alpha<+\infty$. Set $g_n \coloneqq \P_\M f_n = f_n-\P_\Null f_n$. Since $\P_\Null f_n\in\Null\subset\const(K)$, we have $K(g_n)=K(f_n)\to\alpha$, so $(g_n)_{n\in\N}\subset\dom(K|_\M)$ with $\sup_n K(g_n)<+\infty$. By \cref{prop:coercive-mod-N-equivalences}~\ref{item:coercive_on_M}, $K|_\M$ is coercive, so $(g_n)_{n\in\N}$ is norm-bounded, i.e., $(g_n)_{n\in\N}\subset\{\norm{\dummy}_\B \leq r\}$ for some $r>0$. Since $\A$ is separable, $\{\norm{\dummy}_\B \leq r\}$ is $w^*$-compact and metrizable, so there is a subsequence with $g_{n_j}\starconverges h$, and $h\in\M$ since $\M$ is $w^*$-closed. Being norm-bounded and $w^*$-convergent, the subsequence also satisfies $g_{n_j}\tauconverges h$ by \cref{lem:seq_star_converge_equiv}. Since $\tau$ is a vector-space topology,
\begin{equation}
\P_\Null f_{n_j} = f_{n_j} - g_{n_j} \tauconverges f - h .
\end{equation}
As $\P_\Null f_{n_j}\in\Null$ and $\Null$ is sequentially $\tau$-closed, we get $f-h\in\Null=\ker\P_\M$, and so $h = \P_\M h = \P_\M f$. Thus $(g_{n_j})_{j\in\N}$ is admissible in \cref{eq:Kbar_intermediate} with $K(g_{n_j})\to\alpha$, and so $\overline{K}^{\,w^*}(f)\leq\alpha$. Taking the infimum over admissible sequences yields $\overline{K}^{\,w^*}(f)\leq J(f)$.

If $\overline{K}^{\,w^*}(f)=+\infty$, the above inequality shows $J(f) = +\infty$. So assume $\overline{K}^{\,w^*}(f)<+\infty$. By \cref{eq:Kbar_intermediate} there exists $(g_n)_{n\in\N}\subset\dom(K|_\M)$ with $g_n\xrightarrow{w^*_\M}\P_\M f$ and $K(g_n)\to\overline{K}^{\,w^*}(f)$. Set $f_n\coloneqq g_n+\P_\Null f$, so that $f_n\starconverges f$, hence $f_n\tauconverges f$ by \cref{lem:seq_star_converge_equiv}. Since $\P_\Null f\in\Null\subset\const(K)$, we have $f_n\in\dom(K)$ and $K(f_n)=K(g_n)\to\overline{K}^{\,w^*}(f)$. Thus $\alpha=\overline{K}^{\,w^*}(f)$ is admissible in \cref{eq:set1}. Combined with the previous inequality, the infimum defining $J(f)$ equals $\overline{K}^{\,w^*}(f)$ and is attained by $(f_n)_{n\in\N}$.
\end{proof}

\section{Equivalence with Representation Cost Definition in Prior Work}
\label[appendix]{app:equivalence}

In this appendix we show that the representation cost introduced in~\cite{OngieFunctionSpace} for shallow ReLU networks with the usual squared Euclidean weight-decay parameter cost coincides with the representation cost defined in the present work. The model considered in~\cite{OngieFunctionSpace} agrees with the setup in \cref{subsec:deep_nets_rep_cost} specialized to $L=2$, $\X=\R^d$, and $D=1$. Here we also assume $\X=\R^d$, but allow arbitrary $L\geq 2$ and $D\geq 1$. Throughout this appendix, set $\B \coloneqq \Lip(\R^d;\R^D)$. To simplify notation, we write $\Theta$ in place of $\ThetaL$, $C$ in place of $C_L$, $\Rcirc$ in place of $\RcircL$, and $R \coloneqq \overline{\Rcirc}$ in place of $R_L$. 

For the weight-decay parameter cost $C$ from \cref{subsec:deep_nets_rep_cost}, the representation cost from~\cite{OngieFunctionSpace} is the function $\tilde R:\Lip(\R^d;\R^D)\to[0,+\infty]$ defined by
\begin{equation}\label{eq:Rtilde_def}
    \tilde R(f)
    \coloneqq
    \lim_{\varepsilon\to 0}
    \inf\left\{
        C(\theta)
        \st
        \theta\in\Theta,\,
        \sup_{\|x\|_2\leq 1/\varepsilon}
        \norm{f_\theta(x)-f(x)}_2
        \leq \varepsilon,\,
        f_\theta(0)=f(0)
    \right\}.
\end{equation}
In this appendix, we prove that $\tilde R=R$ (\cref{prop:Rtilde_equals_R}). We begin by rewriting $\tilde{R}$ in a more convenient form.

\begin{lemma}\label{lem:rewrite_Rtilde_a_little}
Let $f\in\Lip(\R^d;\R^D)$ and set $f_0(x)\coloneqq f(x)-f(0)$.
Then
\begin{equation}\label{eq:Rtilde_equiv}
    \tilde R(f)
    =
    \lim_{\varepsilon\to 0}
    \inf\left\{
        C(\theta)
        \st
        \theta\in\Theta,\,
        \sup_{\|x\|_2\leq 1/\varepsilon}
        \norm{f_\theta(x)-f_0(x)}_2
        \leq \varepsilon,\,
        f_\theta(0)=0
    \right\}.
\end{equation}
\end{lemma}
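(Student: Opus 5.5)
The plan is to exhibit, for each fixed $\varepsilon>0$, a cost-preserving bijection between the admissible parameter sets in \cref{eq:Rtilde_def} and in \cref{eq:Rtilde_equiv}. Equality of the two infima then holds for every $\varepsilon$, and so the limits agree. The key observation is that the final-layer bias $b_L$ is not penalized by $C$. Shifting it by a constant vector therefore changes neither the cost nor the rest of the network.

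Given $\theta\in\Theta$, let $\theta'$ be the parameter obtained from $\theta$ by replacing $b_L$ with $b_L-f(0)$, leaving all other weights and biases unchanged. Then $f_{\theta'}=f_\theta-f(0)$ and $C(\theta')=C(\theta)$, because $C$ only involves the weight matrices $W_\ell$. The map $\theta\mapsto\theta'$ is a bijection on $\Theta$, with inverse given by adding $f(0)$ back to the last bias.

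It remains to check that the constraints correspond. First, $f_\theta(0)=f(0)$ holds if and only if $f_{\theta'}(0)=0$. Second, for every $x$ we have $f_\theta(x)-f(x)=f_{\theta'}(x)-f_0(x)$, so
\[
\sup_{\|x\|_2\le 1/\varepsilon}\|f_\theta(x)-f(x)\|_2=\sup_{\|x\|_2\le 1/\varepsilon}\|f_{\theta'}(x)-f_0(x)\|_2 .
\]
Hence $\theta$ is admissible for \cref{eq:Rtilde_def} exactly when $\theta'$ is admissible for \cref{eq:Rtilde_equiv}. Since the bijection preserves cost, the two infima coincide for each $\varepsilon$.

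For the limit, note that both infima are nonincreasing as $\varepsilon$ grows: as $\varepsilon\to0$ the constraint set shrinks, so the infimum increases monotonically. The limits therefore exist in $[0,+\infty]$ and are equal, which proves the lemma. I do not expect any real obstacle; the only point requiring care is confirming that the final-layer bias is excluded from $C_L$, which is immediate from the definition.
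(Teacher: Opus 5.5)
Your proposal is correct and matches the paper's proof: both shift only the unpenalized outer-layer bias by $f(0)$ to get a cost-preserving correspondence between the admissible sets in \cref{eq:Rtilde_def} and \cref{eq:Rtilde_equiv}, so the infima agree for every $\varepsilon>0$. Your explicit bijection on all of $\Theta$ and the monotonicity remark are only cosmetic additions to the paper's argument.
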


\begin{proof}
Fix $\varepsilon>0$. If $\theta\in\Theta$ is admissible in
\cref{eq:Rtilde_def}, shift only the outer-layer bias to obtain
$\theta'\in\Theta$ such that
\begin{equation}
    f_{\theta'}(x)=f_\theta(x)-f_\theta(0)=f_\theta(x)-f(0).
\end{equation}
Then $C(\theta')=C(\theta)$, $f_{\theta'}(0)=0$, and
\begin{equation}
    \norm{f_{\theta'}(x)-f_0(x)}_2
    =
    \norm{f_\theta(x)-f(x)}_2
    \leq \varepsilon
\end{equation}
for all $\|x\|_2 \leq 1/\varepsilon$. Conversely, shifting the
outer-layer bias by $f(0)$ maps every admissible parameter in
\cref{eq:Rtilde_equiv} to an admissible parameter in \cref{eq:Rtilde_def} with
the same cost. Thus the two infima agree for every $\varepsilon>0$.
\end{proof}

Next we rewrite $R$ in a form more amenable to analysis. Let $\tau$ denote the topology on $\Lip(\R^d;\R^D)$ of uniform convergence on compact subsets. For $n\in\N$, define the family of seminorms
\begin{equation}
    \rho_n(f)
    \coloneqq
    \sup_{\|x\|_2\leq n} \norm{f(x)}_2,
    \quad
    f\in\Lip(\R^d;\R^D),
\end{equation}
and consider the associated metric
\begin{equation}\label{eq:cpt_uniform_metric}
    \delta(f,g)
    \coloneqq
    \sum_{n=1}^{\infty}
    2^{-n}
    \frac{\rho_n(f-g)}{1+\rho_n(f-g)}.
\end{equation}
Then $\delta$ metrizes $\tau$. Moreover, $\tau$ is weak$^*$-compatible on
$\Lip(\R^d;\R^D)$. Indeed, by the Arzel\`a--Ascoli theorem, pointwise convergence and uniform convergence on compact subsets agree on norm-bounded subsets of $\Lip(\R^d;\R^D)$, and pointwise convergence is weak$^*$-compatible on $\Lip(\R^d;\R^D)$.

Let $\Null \coloneqq \Const(\R^d;\R^D)$ and define $\P_\Null:\Lip(\R^d;\R^D)\to\Lip(\R^d;\R^D)$ by $\P_\Null f \equiv f(0)$. Then $\P_\Null$ is $w^*$-$w^*$-continuous and
\begin{equation}
    \ker(\P_\Null)=\Lip_0(\R^d;\R^D)
    \coloneqq
    \{f\in\Lip(\R^d;\R^D)\st f(0)=0\}.
\end{equation}
Finally, by \cref{lem:deep_relu_coercive_bnd}, $C$ is parameter coercive
modulo $\Null$, and hence $\Rcirc$ is coercive modulo $\Null$.

\begin{lemma}\label{lem:rewrite_R_to_match_Rtilde}
Let $f\in\Lip(\R^d;\R^D)$ and set $f_0(x)\coloneqq f(x)-f(0)$. Then
\begin{equation}\label{eq:R_seqreg_Lip0}
    R(f)
    =
    \overline{(\Rcirc|_{\Lip_0(\R^d;\R^D)})}^{\mathrm{seq},\tau}(f_0).
\end{equation}
Moreover,
\begin{equation}\label{eq:R_metric_characterization}
    R(f)
    =
    \lim_{\varepsilon\to 0}
    \inf\left\{
        C(\theta)
        \st
        \theta\in\Theta,\,
        \delta(f_0,f_\theta)\leq\varepsilon,\,
        f_\theta(0)=0
    \right\}.
\end{equation}
\end{lemma}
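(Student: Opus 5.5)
The plan is to invoke \cref{prop:lscreg_separable_predual} with the weak$^*$-compatible topology $\tau$ of uniform convergence on compacts, the subspace $\Null=\Const(\R^d;\R^D)$, and the complement $\M=\Lip_0(\R^d;\R^D)=\ker\P_\Null$. All hypotheses are already recorded just before the statement:
\begin{itemize}
\item the predual of $\Lip(\R^d;\R^D)$ is separable;
\item $\tau$ is weak$^*$-compatible;
\item $\Null\subset\const(\Rcirc)$ is $w^*$-complemented via the $w^*$-continuous projector $\P_\Null f\equiv f(0)$;
\item $\Rcirc$ is coercive modulo $\Null$ by \cref{lem:deep_relu_coercive_bnd}.
\end{itemize}
Since $\P_\M f=f-f(0)=f_0$, the proposition yields $R(f)=\overline{(\Rcirc|_{\M})}^{\mathrm{seq},\tau_\M}(f_0)$, which is \cref{eq:R_seqreg_Lip0}. (Here $\tau_\M$ is the subspace topology, and the sequential regularization on $\M$ coincides with the one written in the statement.)

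For \cref{eq:R_metric_characterization}, note that $\tau_\M$ is metrized by the restriction of $\delta$ to $\M$. In a metric space, \cref{prop:lscreg_equivalents}~\ref{item:lsc_in_metric_space} identifies the sequential regularization with the limit of infima over closed $\delta$-balls. Applying this on the metric space $(\M,\delta)$ to $K=\Rcirc|_\M$ gives
\begin{equation}
R(f)=\lim_{\varepsilon\to0^+}\inf\{\Rcirc(g)\st g\in\dom(\Rcirc|_\M),\ \delta(f_0,g)\le\varepsilon\}.
\end{equation}
One subtlety needs care: \cref{prop:lscreg_equivalents}~\ref{item:lsc_in_metric_space} is stated for the topological regularization in a metric space. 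Its proof shows that the sequential regularization equals the $\varepsilon$-ball formula directly, because first countability is what makes $\overline K=\seqreg K$, so the formula applies verbatim on $(\M,\delta)$.

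The last step unfolds $\Rcirc$ to recover the parametric form. Here $\dom(\Rcirc|_\M)=\F_\Theta\cap\Lip_0$, so
\begin{equation}
\inf\{\Rcirc(g)\st g\in\F_\Theta,\ g(0)=0,\ \delta(f_0,g)\le\varepsilon\}
\end{equation}
equals the infimum of $C(\theta)$ over $\theta\in\Theta$ with $f_\theta(0)=0$ and $\delta(f_0,f_\theta)\le\varepsilon$. This holds simply because $\Rcirc(g)$ is itself an infimum of $C(\theta)$ over realizations $f_\theta=g$, so the two iterated infima combine. I expect no real obstacle. The only point needing justification is that the $\varepsilon$-ball formula holds for the sequential regularization on the subspace $\M$ with the metric $\delta$, rather than for the regularization on all of $\B$; this is handled by working entirely inside $(\M,\delta)$ as above.
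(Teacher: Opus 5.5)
Your proposal is correct and matches the paper's proof: \cref{prop:lscreg_separable_predual} with $\M=\Lip_0(\R^d;\R^D)$ and $\P_\M=\Id-\P_\Null$ gives \cref{eq:R_seqreg_Lip0}. Then \cref{prop:lscreg_equivalents}~\ref{item:lsc_in_metric_space} on the metric space $(\M,\delta)$, followed by merging the two nested infima that define $\Rcirc$, gives \cref{eq:R_metric_characterization}; the subtlety you flag needs no extra argument, since that item already asserts both $\overline K=\seqreg K$ and their equality with the $\varepsilon$-ball formula.
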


\begin{proof}
Since $\tau$ is weak$^*$-compatible, \cref{prop:lscreg_separable_predual} applied with $\M=\Lip_0(\R^d;\R^D)$ and $\P_\M=\Id-\P_\Null$, gives \cref{eq:R_seqreg_Lip0}. Since $\delta$ metrizes $\tau$, \cref{prop:lscreg_equivalents}~\ref{item:lsc_in_metric_space} gives
\begin{align}
    \overline{(\Rcirc|_{\Lip_0(\R^d;\R^D)})}^{\mathrm{seq},\tau}(f_0)
    &=
    \lim_{\varepsilon\to 0}
    \inf\left\{
        \Rcirc(g)
        \st
        g\in\Lip_0(\R^d;\R^D),\,
        \delta(f_0,g)\leq\varepsilon
    \right\} \notag\\
    &=
    \lim_{\varepsilon\to 0}
    \inf\left\{
        C(\theta)
        \st
        \theta\in\Theta,\,
        \delta(f_0,f_\theta)\leq\varepsilon,\,
        f_\theta(0)=0
    \right\}.
\end{align}
Combining the previous two displays proves the result.
\end{proof}

\begin{proposition}\label{prop:Rtilde_equals_R}
For every $f\in\Lip(\R^d;\R^D)$, $R(f)=\tilde R(f)$.
\end{proposition}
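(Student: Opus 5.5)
By \cref{lem:rewrite_Rtilde_a_little,lem:rewrite_R_to_match_Rtilde}, both quantities are limits as $\varepsilon\to0$ of infima of $C(\theta)$ over parameters with $f_\theta(0)=0$. They differ only in the closeness constraint: $\tilde R$ uses $\rho_{1/\varepsilon}(f_\theta-f_0)\le\varepsilon$, while $R$ uses $\delta(f_0,f_\theta)\le\varepsilon$. Write $I_{\mathrm{O}}(\varepsilon)$ and $I_\delta(\varepsilon)$ for the two infima. Both are nonincreasing as $\varepsilon$ increases, so both limits exist in $[0,+\infty]$. The plan is to show that each admissible set is contained in the other after a reparametrization $\varepsilon\mapsto\varepsilon'(\varepsilon)$ with $\varepsilon'\to0$. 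This gives inequalities between the infima, and the limits then agree.

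\emph{Step 1 (Ongie-admissible $\Rightarrow$ $\delta$-admissible).} Suppose $\rho_{1/\varepsilon}(f_\theta-f_0)\le\varepsilon$. Then $\rho_n(f_\theta-f_0)\le\varepsilon$ for every $n\le 1/\varepsilon$. Split the series \cref{eq:cpt_uniform_metric} at $n=\lfloor 1/\varepsilon\rfloor$. The terms with small $n$ contribute at most $\varepsilon\sum 2^{-n}\le\varepsilon$. The tail contributes at most $\sum_{n>1/\varepsilon-1}2^{-n}\le 2^{2-1/\varepsilon}$, since each fraction is at most $1$. Hence $\delta(f_0,f_\theta)\le\varepsilon+2^{2-1/\varepsilon}=:\varepsilon'$, and $\varepsilon'\to0$. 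It follows that $I_\delta(\varepsilon')\le I_{\mathrm{O}}(\varepsilon)$, and letting $\varepsilon\to0$ gives $R(f)\le\tilde R(f)$.

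\emph{Step 2 ($\delta$-admissible $\Rightarrow$ Ongie-admissible).} Fix $\eta>0$ and set $n=\lceil 1/\eta\rceil$. Suppose $\delta(f_0,f_\theta)\le\varepsilon$ with $\varepsilon\le 2^{-n}\eta/(2(1+\eta))$. Then $2^{-n}\rho_n/(1+\rho_n)\le\varepsilon$. Since $t\mapsto t/(1+t)$ is increasing, this forces $\rho_n(f_\theta-f_0)\le\eta$, so $\rho_{1/\eta}(f_\theta-f_0)\le\eta$. Thus every $\delta$-admissible parameter at level $\varepsilon(\eta)$ is Ongie-admissible at level $\eta$. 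This gives $I_{\mathrm{O}}(\eta)\le I_\delta(\varepsilon(\eta))\le R(f)$, where the last inequality holds because $I_\delta$ increases to its limit $R(f)$ as $\varepsilon\downarrow0$. Letting $\eta\to0$ yields $\tilde R(f)\le R(f)$.

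No deep obstacle is expected, since the two lemmas already reduce the problem to an elementary comparison of the two neighborhood bases at $f_0$. The one point needing care is monotonicity: both infima are monotone in $\varepsilon$, which is what lets the reparametrized inequalities pass to the limits, including when the common value is $+\infty$. Combining the two steps gives $R(f)=\tilde R(f)$.
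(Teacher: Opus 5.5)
Your proof is correct and essentially the paper's argument. Your Step~2 matches the paper's inclusion of the $\delta$-ball of radius $2^{-n}/(n+1)$ into the set where $\rho_n(f_\theta-f_0)\le 1/n$. For $R(f)\le\tilde R(f)$ the route differs slightly: the paper takes near-optimal parameters at level $1/n$, notes that they converge to $f_0$ uniformly on compacts, and applies the sequential characterization \cref{eq:R_seqreg_Lip0}, whereas your tail bound $\delta(f_0,f_\theta)\le\varepsilon+2^{2-1/\varepsilon}$ gives the inclusion directly and keeps both directions within \cref{eq:R_metric_characterization}.
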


\begin{proof}
Fix $f\in\Lip(\R^d;\R^D)$ and set $f_0(x)\coloneqq f(x)-f(0)$. We first prove $R(f)\leq \tilde R(f)$. The claim is trivial if $\tilde R(f)=+\infty$. Otherwise, by \cref{lem:rewrite_Rtilde_a_little}, for each $n\in\N$ we may choose $\theta_n\in\Theta$ such that, with $f_n\coloneqq f_{\theta_n}$,
\begin{equation}
    \norm{f_n(x)-f_0(x)}_2
    \leq \frac{1}{n}
    \quad\text{for all } \norm{x}_2\leq n,
    \quad
    f_n(0)=0,
\end{equation}
and
\begin{equation}
    C(\theta_n)
    \leq
    \inf\left\{
        C(\theta)
        \st
        \theta\in\Theta,\,
        \sup_{\norm{x}_2\leq n}\norm{f_\theta(x)-f_0(x)}_2
        \leq \frac{1}{n},\,
        f_\theta(0)=0
    \right\}
    +
    \frac{1}{n}.
\end{equation}
Then $f_n\in\Lip_0(\R^d;\R^D)$ and $f_n\tauconverges f_0$. Therefore, by
\cref{lem:rewrite_R_to_match_Rtilde},
\begin{equation}\label{eq:R_leq_Rtilde}
    R(f)
    \leq
    \liminf_{n\to\infty}\Rcirc(f_n)
    \leq
    \liminf_{n\to\infty}C(\theta_n)
    \leq
    \tilde R(f).
\end{equation}

We now prove the reverse inequality.  Since the quantities inside the limits in \cref{eq:Rtilde_equiv,eq:R_metric_characterization} are monotone as $\varepsilon\downarrow 0$, the limits may be taken along any sequence of positive parameters tending to zero. Define
\begin{equation}
    \Phi_n
    \coloneqq
    \left\{
        \theta\in\Theta
        \st
        \delta(f_0,f_\theta)\leq \frac{2^{-n}}{n+1},\,
        f_\theta(0)=0
    \right\},
    \quad
    A_n
    \coloneqq
    \inf_{\theta\in\Phi_n} C(\theta),
\end{equation}
and
\begin{equation}
    \Psi_n
    \coloneqq
    \left\{
        \theta\in\Theta
        \st
        \rho_n(f_0-f_\theta)\leq \frac{1}{n},\,
        f_\theta(0)=0
    \right\},
    \quad
    B_n
    \coloneqq
    \inf_{\theta\in\Psi_n} C(\theta).
\end{equation}
Then, by \cref{lem:rewrite_R_to_match_Rtilde,lem:rewrite_Rtilde_a_little}, $R(f)=\lim_{n\to\infty}A_n$ and $\tilde R(f)=\lim_{n\to\infty}B_n$. We claim that $\Phi_n\subset\Psi_n$. Indeed, if $\theta\in\Phi_n$, then
\begin{equation}
    2^{-n}
    \frac{\rho_n(f_\theta-f_0)}{1+\rho_n(f_\theta-f_0)}
    \leq
    \delta(f_0,f_\theta)
    \leq
    \frac{2^{-n}}{n+1}.
\end{equation}
Thus
\begin{equation}
    \frac{\rho_n(f_\theta-f_0)}{1+\rho_n(f_\theta-f_0)}
    \leq
    \frac{1}{n+1},
\end{equation}
which implies $\rho_n(f_\theta-f_0)\leq 1/n$. Hence $\theta\in\Psi_n$, so
$\Phi_n\subset\Psi_n$. Therefore $B_n\leq A_n$ for every $n\in\N$, and so
\begin{equation}
    \tilde R(f)
    =
    \lim_{n\to\infty}B_n
    \leq
    \lim_{n\to\infty}A_n
    =
    R(f).
\end{equation}
Combining this with \cref{eq:R_leq_Rtilde} proves the result.
\end{proof}

\section{Proof of \Cref{lem:jacobian-minor-bound}}
\label[appendix]{app:singval-lower-bnd}

The proof of \cref{lem:jacobian-minor-bound} uses some basic facts about matrix exterior powers that we recall here. Given any matrix $A \in \R^{m\times n}$, for index sets $I\subset\{1,\ldots,m\}$ and $J\subset\{1,\ldots,n\}$ with $\abs{I}=\abs{J}=k$, we let $A_{I,J}\in\R^{k\times k}$ denote the corresponding submatrix of $A\in\R^{m\times n}$, so that the $k\times k$ minors of $A$ are the determinants $\det(A_{I,J})$. We also let $\exterior{k} A\in\R^{\binom{m}{k}\times\binom{n}{k}}$ denote the $k$th exterior power of $A$.
When $k>\min\curly{m,n}$ we interpret $\exterior{k}A$ as the zero matrix. Padding singular values by zeros if necessary, we have the standard identity
\begin{equation}
    \norm{\exterior{k} A}_\op = \prod_{m=1}^{k}\sigma_m(A).
    \label{eq:exterior-power-op-norm}
\end{equation}

In this appendix, we also adopt the notation that $\overset{*}{\rightharpoonup}$ denotes weak$^*$ convergence when a canonical predual is clear from context. We still use $\starconverges$ explicitly for weak$^*$ convergence in $\Lip(\X;\R^D)$. We let $\partial_i$ denote the (weak) partial derivative with respect to the $i$th-coordinate.

\begin{proof}
Fix $1 \leq k \leq \min\{d,D\}$. First, we prove that the desired inequality holds for the parametric representation cost $R_{\ThetaL}$ in place of the extended representation cost $R_L$. Fix $f \in \F_{\Theta_L}$, and let $\theta\in\ThetaL$ be such that $f = f_\theta$ on $\X$. At almost every $x\in\operatorname{int}(\X)$, $f$ is differentiable and its Jacobian is given by
\begin{equation}
    {\D} f(x)
    =W_LP_{L-1}(x)W_{L-1}\cdots P_1(x)W_1,
\end{equation}
where each $P_\ell(x)$ is diagonal with diagonal entries in $\{0,1\}$. By \eqref{eq:exterior-power-op-norm}, we have that
\begin{equation}
\norm{\exterior{k}P_\ell(x)}_\op = \prod_{m=1}^{k}\sigma_m(P_\ell(x))\leq 1.
\end{equation}
Again by
\cref{eq:exterior-power-op-norm} and the arithmetic mean--geometric mean (AM--GM) inequality, we have
\begin{equation}
    \norm{\exterior{k} W_\ell}_\op
    =
    \prod_{m=1}^{k}\sigma_m(W_\ell)
    \leq
    \paren*{\frac{1}{k}\sum_{m=1}^{k}\sigma_m(W_\ell)^2}^{k/2}
    \leq
    \paren*{\frac{\norm{W_\ell}_F^2}{k}}^{k/2}.
\end{equation}
Therefore, by the multiplicativity identity of exterior powers and another application of the AM--GM inequality, we have for almost every $x\in\operatorname{int}(\X)$ that
\begin{equation}
    \norm{\exterior{k}\D f(x)}_\op
    \leq
    \prod_{\ell=1}^{L}\norm{\exterior{k} W_\ell}_\op
    \leq
    k^{-kL/2}\paren*{\prod_{\ell=1}^{L}\norm{W_\ell}_F^2}^{k/2}
    \leq
    \paren*{\frac{C_L(\theta)}{k}}^{kL/2}.
    \label{eq:network-exterior-bound}
\end{equation}

Since $R_{\Theta_L}(f)<+\infty$, choose a sequence
$(\theta_n)_{n\in\N}\subset\ThetaL$ such that
\begin{equation}
f=f_{\theta_n}\text{ on }\X
\quad\text{and}\quad
C_L(\theta_n)\to R_{\Theta_L}(f).
\end{equation}
For each $n\in\N$, \cref{eq:network-exterior-bound} holds with
$\theta=\theta_n$ outside a null set $Z_n\subset\operatorname{int}(\X)$.
Consequently, outside the null set
$Z\coloneqq\bigcup_{n\in\N}Z_n$, we have
\begin{equation}
\norm{\exterior{k}{\D} f(x)}_\op
\leq
\paren*{\frac{C_L(\theta_n)}{k}}^{kL/2}
\quad\text{for every }n\in\N.
\end{equation}
Letting $n\to\infty$ gives
\begin{equation}
\norm{\exterior{k}\D f(x)}_\op
\leq
\paren*{\frac{R_{\Theta_L}(f)}{k}}^{kL/2}
\quad\text{for almost every }x\in\operatorname{int}(\X).
\label{eq:parametric-bound}
\end{equation}

Fix an open ball $B\subset\subset\operatorname{int}(\X)$ and write
$E\coloneqq\R^{\binom{D}{k}\times\binom{d}{k}}$, equipped with the operator norm
$\norm{\cdot}_\op$. Write $E^*$ for the same underlying matrix space equipped
with the nuclear norm. Under the Frobenius inner product, $E$ is isometrically the
dual of $E^*$. Since $B$ has finite measure and $E$ is finite dimensional,
$L^\infty(B;E)$ is therefore isometrically the dual of $L^1(B;E^*)$. Moreover,
for $f\in\Lip(\X;\R^D)$ we have
$\exterior{k}{\D} f\in L^\infty(B;E)$. Define
\begin{equation}
    J_{B}(f)
    \coloneqq
    \esssup_{x \in B} \norm{\exterior{k} {\D} f(x)}_\op
    =
    \norm{\exterior{k}{\D} f}_{L^\infty(B;E)}.
\end{equation}
We claim that $J_{B}$ is sequentially $w^*$-l.s.c.\ on $\Lip(\X;\R^D)$, i.e., if
$f_n\starconverges f$ in $\Lip(\X;\R^D)$ then
\begin{equation}
    J_{B}(f)
    \leq
    \liminf_{n\to\infty}J_{B}(f_n).
    \label{eq:minor-seq-lsc}
\end{equation}

By \cref{rem:vv-lip-con}, $f_n\starconverges f$ in $\Lip(\X;\R^D)$ is equivalent to
$\sup_{n\in\N}\norm{f_n}_{\Lip(\X;\R^D)}<+\infty$ together with $f_n\to f$ pointwise
on $\X$. Since $B$ is a bounded convex subset of $\X$, \cref{rem:lip-sobolev} gives
\begin{equation}
\norm{{\D} f_n}_{L^\infty(B;\R^{D\times d})}
=
\abs{f_n}_{\Lip(B;\R^D)}
\leq
\norm{f_n}_{\Lip(\X;\R^D)}.
\end{equation}
The uniform $\Lip(\X;\R^D)$-bound and the boundedness of $B$ also give
\begin{equation}
\sup_{n\in\N}\norm{f_n}_{L^\infty(B;\R^D)}<+\infty.
\end{equation}
Thus, $(f_n)_{n\in\N}$ is bounded in $W^{1,\infty}(B;\R^D)$. Being
equi-Lipschitz, the $f_n$ converge to $f$ uniformly on $\overline{B}$ by the
compactness of $\overline{B}$, and hence in $L^\infty(B;\R^D)$. For the
derivatives, let $\varphi\in C^\infty_c(B)$, $j\in\{1,\ldots,D\}$ and
$r\in\{1,\ldots,d\}$; integrating by parts and using this uniform convergence
yields
\begin{equation}
    \ang{\partial_r f_{n,j},\varphi}
    =-\ang{f_{n,j},\partial_r\varphi}
    \to-\ang{f_j,\partial_r\varphi}
    =\ang{\partial_r f_j,\varphi}.
\end{equation}
Together with the uniform $L^\infty$-bound and the density of $C^\infty_c(B)$ in
$L^1(B)$, this gives ${\D} f_n\overset{*}{\rightharpoonup}{\D} f$ in
$L^\infty(B;\R^{D\times d})$, and therefore
$f_n\overset{*}{\rightharpoonup}f$ in $W^{1,\infty}(B;\R^D)$.

Each component of the map $f \mapsto \exterior{k}\D f$ is a $k\times k$ minor
$\det((\D f)_{I,J})$, hence a null Lagrangian, and thus sequentially weak$^*$
continuous from $W^{1,\infty}(B;\R^D)$ to
$L^\infty(B)$~\cite[Theorem~4.3 and Theorem~4.5]{benesova2017weak}
(see also \cite{ball1981null}). Applying this fact entrywise and using the
finite dimensionality of $E$ gives
\begin{equation}
    \exterior{k} \D f_n
    \overset{*}{\rightharpoonup}
    \exterior{k} \D f
    \quad\text{in }L^{\infty}(B;E).
    \label{eq:ext-weak}
\end{equation}
Finally, the norm of a dual Banach space is weak$^*$-l.s.c. Hence,
under the duality between $L^\infty(B;E)$ and $L^1(B;E^*)$ described above,
\cref{eq:ext-weak} implies \cref{eq:minor-seq-lsc}.

For every open ball $B\subset\subset\operatorname{int}(\X)$ and every
$g\in\Lip(\X;\R^D)$, \cref{eq:parametric-bound} gives
\begin{equation}
J_B(g)
\leq
\paren*{\frac{R_{\Theta_L}(g)}{k}}^{kL/2},
\label{eq:parametric-JB-bound}
\end{equation}
where the inequality is trivial when $g\notin\F_{\Theta_L}$.

Now fix $f\in\Lip(\X;\R^D)$. If $R_L(f)=+\infty$, the desired inequality is
trivial. Suppose that $R_L(f)<+\infty$. The space $\Lip(\X;\R^D)$ has a
separable predual. By \cref{lem:deep_relu_coercive_bnd}, $R_{\Theta_L}$ is coercive modulo $\Const(\X;\R^D) = \const(R_{\Theta_L})$, which is $w^*$-complemented and $w^*$-closed. Therefore,
by \cref{prop:coercive_mod_subspace_implies_seqlscreg_compatible} applied with $\tau=w^*$, there exists a sequence
$(f_n)_{n\in\N}\subset\F_{\Theta_L}$ such that
\begin{equation}
f_n\starconverges f
\text{ in }\Lip(\X;\R^D)
\quad\text{and}\quad
R_{\Theta_L}(f_n)\to R_L(f).
\end{equation}
It follows from \cref{eq:minor-seq-lsc,eq:parametric-JB-bound} that
\begin{equation}
J_B(f)
\leq
\liminf_{n\to\infty}J_B(f_n) \leq
\liminf_{n\to\infty}
\paren*{\frac{R_{\Theta_L}(f_n)}{k}}^{kL/2}
=
\paren*{\frac{R_L(f)}{k}}^{kL/2}.
\end{equation}
Thus, for every $f\in\Lip(\X;\R^D)$,
\begin{equation}
J_B(f)
\leq
\paren*{\frac{R_L(f)}{k}}^{kL/2}.
\end{equation}
By definition of $J_B$ and the identity
\cref{eq:exterior-power-op-norm}, this says exactly that
\begin{equation}
\prod_{m=1}^{k}\sigma_m(\D f(x))
=
\norm{\exterior{k}\D f(x)}_\op
\leq
\paren*{\frac{R_L(f)}{k}}^{kL/2}
\quad\text{for almost every }x\in B.
\end{equation}
Since $\operatorname{int}(\X)$ admits a countable cover by open balls
compactly contained in $\operatorname{int}(\X)$, the bound holds almost
everywhere on $\operatorname{int}(\X)$, which proves
\cref{eq:jacobian-minor-bound}.
\end{proof}

\section{Proof of \Cref{prop:deep-unit-ball-nonconvex}}
\label[appendix]{app:nonconvex}
\begin{proof}
Choose
$x_0\in\operatorname{int}(\X)$ and $\rho>0$ such that $B(x_0,2\rho)$ (open ball of radius $2\rho$ centered at $x_0$) satisfies $B(x_0,2\rho)\subset\X$. Let $u_1,u_2$ be the first two canonical basis vectors of $\R^d$, and let $v_1,v_2$ be the first two canonical basis vectors of $\R^D$. For $k\in\{1,2\}$, define
\begin{equation}
    f_k(x)
    \coloneqq
    v_k\paren{u_k^\T(x-x_0)+\rho}_+,
    \quad x\in\X.
\end{equation}
This function is realized by a depth-$L$ network with scalar hidden
layers and affine maps
\begin{equation}
    a_1(x)=u_k^\T(x-x_0)+\rho,
    \quad
    a_\ell(t)=t
    \quad(2\leq\ell\leq L-1),
    \quad
    a_L(t)=tv_k.
\end{equation}
Every weight matrix in this realization has squared Frobenius norm
one, so $C_L(\theta)=1$ and therefore $R_L(f_k)\leq 1$. Also,
$f_k$ is $1$-Lipschitz. For every $t\in(0,\rho)$, both $x_0$ and
$x_0+tu_k$ lie in $\X$, and
\begin{equation}
    \frac{
    \norm{f_k(x_0+tu_k)-f_k(x_0)}_2
    }{t}
    =1.
\end{equation}
Hence, $\abs{f_k}_{\Lip(\X;\R^D)}=1$. Since adding a constant does not change the Lipschitz seminorm, while
subtracting $f_k(\e)$ makes the base point value vanish,
\begin{equation}
    \inf_{g\in\Const(\X;\R^D)}
    \norm{f_k+g}_{\Lip(\X;\R^D)}
    =
    \abs{f_k}_{\Lip(\X;\R^D)}
    =
    1.
\end{equation}
Applying \cref{lem:deep_relu_coercive_bnd} gives
$1\leq R_L(f_k)^{L/2}$. Thus
\begin{equation}
    R_L(f_1)=R_L(f_2)=1.
    \label{eq:two-unit-cost-ridges}
\end{equation}

Let
\begin{equation}
    h\coloneqq\frac{f_1+f_2}{2}.
\end{equation}
On $B(x_0,\rho/2)$, both ReLU arguments are strictly positive, and
hence
\begin{equation}
    \D h
    =
    \frac{1}{2}
    \paren{v_1u_1^\T+v_2u_2^\T}.
\end{equation}
Since $u_1,u_2$ and $v_1,v_2$ are orthonormal, the nonzero singular values
of $\D h$ are $\sigma_1(\D h)=\sigma_2(\D h)=1/2$, so
\begin{equation}
    \sigma_1(\D h)\,\sigma_2(\D h)
    =
    \frac{1}{4}
    \quad\text{on }B(x_0,\rho/2),
\end{equation}
a subset of $\operatorname{int}(\X)$ of positive Lebesgue measure.
Applying \cref{lem:jacobian-minor-bound} with $k=2$ gives
\begin{equation}
    \frac{1}{4}
    \leq
    \paren*{\frac{R_L(h)}{2}}^{L},
\end{equation}
so $R_L(h)\geq 2^{1-2/L}$. Conversely, by the absolute $(2/L)$-homogeneity and subadditivity of $R_L$,
\begin{align}
    R_L(h)
    =
    2^{-2/L}R_L(f_1+f_2)
    \leq
    2^{-2/L}\paren{R_L(f_1)+R_L(f_2)}
    =
    2^{1-2/L},
\end{align}
where we used \cref{eq:two-unit-cost-ridges}. Hence
\begin{equation}
    R_L\paren*{\frac{f_1+f_2}{2}}
    =
    2^{1-2/L}
    >
    1.
\end{equation}
Thus the unit ball $\{R_L\leq 1\}$ is nonconvex.

Finally, since $t\mapsto t^{L/2}$ is increasing on
$[0,+\infty]$,
\begin{equation}
    \curly{R_L\leq 1}
    =
    \curly{R_L^{L/2}\leq 1},
\end{equation}
so the unit ball of the rooted representation cost is the same
nonconvex set. By \cref{thm:quasi-Banach-geometry},
$R_L^{L/2}$ is absolutely $1$-homogeneous on $\F_L$. 
Set $\widetilde f_k \coloneqq f_k-f_k(\e)$, $k=1,2$. Since $R_L$ is
invariant under addition of constants,
\begin{equation}
R_L(\widetilde f_k)=R_L(f_k)=1
\quad\text{and}\quad
\widetilde f_k(\e)=0,
\end{equation}
so $\norm{\widetilde f_k}_{\F_L}=1$. On the other hand,
\begin{equation}
\norm*{\frac{\widetilde f_1+\widetilde f_2}{2}}_{\F_L} = R_L\paren*{\frac{\widetilde f_1+\widetilde f_2}{2}}^{L/2}
=
R_L(h-h(\e))^{L/2}
=
R_L(h)^{L/2} = (2^{1 - 2/L})^{L/2}>1.
\end{equation}
Thus, $\sls{\norm{\dummy}_{\F_L}}{1}$ is also nonconvex. Since the unit ball of a \textit{bona fide} (semi)norm must be convex, the final assertions follow.
\end{proof}

\section{Existence of Minimizers for Deep Neural Networks}

\begin{lemma}
\label{lem:deep-relu-finite-sample-surjectivity}
Fix $L\geq2$ and let $x_1,\ldots,x_N\in\X\subset\R^d$ be distinct. For every $K\geq N$, we have $\EOp(\F_{\Theta_L(K)})=(\R^D)^N$, where
\begin{equation}
    \EOp f\coloneqq \big(f(x_1),\ldots,f(x_N)\big).
\end{equation}
In particular,
\begin{equation}
    \EOp(\FThetaL)=(\R^D)^N.
\end{equation}
\end{lemma}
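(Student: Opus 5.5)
The plan is to reduce to the scalar depth-$2$ interpolation problem and then propagate through the remaining layers by identity-type maps, exploiting the fact that the final-layer bias and the ReLU identity $\sigma(t)-\sigma(-t)=t$ let us pass values through without loss. Here $\Theta_L(K)$ denotes depth-$L$ networks with all hidden widths at most $K$, and widths can be padded with zero neurons, so it suffices to construct a network with hidden widths at most $N$ in every hidden layer.

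\textbf{Step 1 (separating direction).} Since the $x_i$ are distinct and finitely many, choose $w\in\R^d$ such that the scalars $t_i\coloneqq w^\T x_i$ are pairwise distinct. This is possible because the union of the finitely many hyperplanes $\{w: w^\T(x_i-x_j)=0\}$, $i\neq j$, is a null set. Relabel so that $t_1<t_2<\cdots<t_N$.

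\textbf{Step 2 (first hidden layer).} Use the $N$ neurons $z_j(x)\coloneqq\sigma(w^\T x-t_j+\delta)$, $j=1,\ldots,N$, with $0<\delta<\min_j(t_{j+1}-t_j)$. The matrix $Z_{ij}\coloneqq z_j(x_i)=(t_i-t_j+\delta)_+$ is lower triangular with diagonal entries $\delta>0$, hence invertible. Consequently, for any target $Y\in\R^{N\times D}$ there is $V\in\R^{N\times D}$ with $ZV=Y$. The hidden layer therefore produces vectors $z(x_i)\in\R^N$ that are linearly independent, and all of their entries are nonnegative.

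\textbf{Step 3 (remaining layers).} For $L=2$, set the output layer to $W_2=V^\T$ and $b_2=0$, which gives $f(x_i)=y_i$ with width $N$. For $L>2$, insert $L-2$ additional hidden layers with $W_\ell=I_N$ and $b_\ell=0$. Because $z(x)\geq0$ entrywise, $\sigma$ acts as the identity on these activations, so every hidden width stays equal to $N\leq K$. This avoids the width doubling that the $\pm$ trick would otherwise require. The final layer is again $V^\T$.

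I expect the main subtlety to be ensuring that the activations passed through the intermediate ReLUs remain nonnegative, which the first-layer ReLU guarantees. The "in particular" statement then follows from $\F_{\Theta_L(K)}\subset\FThetaL$.
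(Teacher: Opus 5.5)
Your proof is correct and follows essentially the same route as the paper: project onto a generic direction so the $t_i$ are distinct, build a width-$N$ two-layer ReLU interpolant in that one variable, and reach depth $L>2$ by inserting identity layers, which works because the hidden activations are nonnegative. The only difference is how the two-layer interpolant is built. You invert the lower-triangular matrix $Z_{ij}=(t_i-t_j+\delta)_+$, whereas the paper writes an explicit linear spline and uses $\sigma(t)-\sigma(-t)=t$ for its affine part; this is a minor variation, and your opening remark about the final bias and the $\pm$ identity is never actually used.
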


\begin{proof}
Fix arbitrary targets $z_1,\ldots,z_N\in\R^D$. If $N=1$, the constant map $x\mapsto z_1$ gives the result. Assume $N\geq2$. Since the points $x_i$ are distinct, there exists $u\in\R^d$ such that the scalars $t_i\coloneqq u^\T x_i$ are pairwise distinct. Indeed, it suffices to choose $u$ outside the finite union of hyperplanes
\begin{equation}
    \bigcup_{1\leq i<j\leq N}\curly{u\in\R^d\st u^\T(x_i-x_j)=0}.
\end{equation}
Suppose $t_1<\cdots<t_N$, and define
\begin{equation}
    s_i\coloneqq\frac{z_{i+1}-z_i}{t_{i+1}-t_i}\in\R^D,
    \quad i=1,\ldots,N-1.
\end{equation}
Consider the vector-valued linear spline
\begin{equation}
    g(t)
    \coloneqq
    z_1+s_1(t-t_1)
    +\sum_{j=2}^{N-1}(s_j-s_{j-1})(t-t_j)_+.
\end{equation}
We have $g(t_1)=z_1$, and on each interval $(t_i,t_{i+1})$ its derivative is
\begin{equation}
    s_1+\sum_{j=2}^{i}(s_j-s_{j-1})=s_i.
\end{equation}
It follows immediately that $g(t_i)=z_i$ for every $i$. Moreover,
\begin{equation}
    s_1(t-t_1)=s_1(t-t_1)_+-s_1(t_1-t)_+,
\end{equation}
so $x\mapsto g(u^\T x)$ is realized by a two-layer ReLU network with at most $2+(N-2)=N$ hidden neurons and satisfies $g(u^\T x_i)=z_i$.

For $L>2$, the first hidden activation vector is componentwise nonnegative. Insert $L-2$ identity affine maps with zero bias and identity weight matrix before the final affine map. Thus, each ReLU leaves that vector unchanged. This gives a depth-$L$ network with hidden widths at most $N$. For $K\geq N$, zero-padding gives a parameter in $\Theta_L(K)$. Hence $\EOp(\F_{\Theta_L(K)})=(\R^D)^N$, and the final assertion follows from $\F_{\Theta_L(K)}\subset\FThetaL$.
\end{proof}

\begin{remark}
    This proof provides a vector-valued generalization of the famous universal interpolation theorem for neural networks~\cite[Theorem~5.1]{pinkus1999approximation}.
\end{remark}

\begin{theorem}
\label{thm:parametric-deep-existence-width-bound}
Fix $L\geq2$ and let $x_1,\ldots,x_N\in\X\subset\R^d$ be distinct.
Let $\ThetaL$ denote the parameter space of all finite-width depth-$L$ fully
connected ReLU networks
\begin{equation}
    f_\theta
    =
    a_L\circ\sigma\circ a_{L-1}\circ\sigma\circ\cdots\circ\sigma\circ a_1,
\end{equation}
where $a_\ell(z)=W_\ell z+b_\ell$, the hidden-layer widths are arbitrary and
finite, and
\begin{equation}
    C_L(\theta)
    \coloneqq
    \frac{1}{L}\sum_{\ell=1}^L\norm{W_\ell}_F^2.
\end{equation}
For $K\in\N$, let $\Theta_L(K)\subset\ThetaL$ denote the parameter space of
networks whose hidden-layer widths are at most $K$. Let
$G:(\R^D)^N\to[0,+\infty]$ be proper, l.s.c., and coercive, and let
$\lambda>0$. Then the unrestricted parameter-space problem
\begin{equation}
    \inf_{\theta\in\ThetaL}
    G\big(f_\theta(x_1),\ldots,f_\theta(x_N)\big)
    +
    \lambda C_L(\theta)
    \label{eq:appendix-unrestricted-deep-param}
\end{equation}
admits a minimizer. Moreover, it admits a minimizer
$\theta^\star\in\Theta_L(N^2)$. In fact, for every $K\geq N^2$, the
finite-dimensional problem
\begin{equation}
    \min_{\theta\in\Theta_L(K)}
    G\big(f_\theta(x_1),\ldots,f_\theta(x_N)\big)
    +
    \lambda C_L(\theta)
    \label{eq:appendix-fixed-width-deep-param}
\end{equation}
has a nonempty solution set and has the same minimum value as
\cref{eq:appendix-unrestricted-deep-param}. Consequently, every minimizer of
\cref{eq:appendix-fixed-width-deep-param} is a minimizer of
\cref{eq:appendix-unrestricted-deep-param}.
\end{theorem}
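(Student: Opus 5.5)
The plan has two parts. First, show that every parameter can be replaced by one of width at most $N(N+1)/2\le N^2$ without changing the data values $f_\theta(x_i)$ and without increasing $C_L$. Second, show that the problem \cref{eq:appendix-fixed-width-deep-param} restricted to $\Theta_L(K)$, $K\geq N^2$, attains its infimum. The first part gives equality of the infimal values of \cref{eq:appendix-unrestricted-deep-param} and \cref{eq:appendix-fixed-width-deep-param}. Together with the second part, this gives every claimed statement, including that minimizers of \cref{eq:appendix-fixed-width-deep-param} minimize \cref{eq:appendix-unrestricted-deep-param}. Note that \cref{lem:deep-relu-finite-sample-surjectivity} makes both problems proper, since $G$ is proper.

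\textbf{Width reduction.} Everything is encoded by the data matrices. Let $H_0=[x_1,\ldots,x_N]$ and $H_\ell=\sigma(W_\ell H_{\ell-1}+b_\ell\mathbf 1^\T)\in\R^{K_{\ell+1}\times N}$ for $1\le \ell\le L-1$. The only thing layer $\ell+1$ sees of $H_\ell$ is the product $W_{\ell+1}H_\ell$. If $H'_\ell$ has the same Gram matrix $H_\ell'^\T H'_\ell=H_\ell^\T H_\ell$, then $H'_\ell=UH_\ell$ for a partial isometry $U$ on the ranges. Setting $W'_{\ell+1}=W_{\ell+1}U^\T$ then reproduces the same preactivations at the data with $\norm{W'_{\ell+1}}_F\le\norm{W_{\ell+1}}_F$.

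I process the hidden layers in order $\ell=1,\ldots,L-1$. Each neuron $j$ of layer $\ell$ has weight row $w_j$, bias $b_j$ and data-activation row $h_j\in\R^N$. By positive homogeneity of the ReLU, rescaling $(w_j,b_j)$ by $\sqrt{c_j}$ with $c_j\geq0$ rescales $h_j$ by $\sqrt{c_j}$. Consider the linear program in $c\ge 0$
\begin{equation*}
\min \sum_j c_j\norm{w_j}_2^2 \quad\text{subject to}\quad \sum_j c_j h_j^\T h_j = H_\ell^\T H_\ell .
\end{equation*}
Here $h_j^\T h_j$ denotes the $N\times N$ outer product of the row $h_j$, so the constraints are $N(N+1)/2$ equalities. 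The point $c=\mathbf 1$ is feasible, and the objective is bounded below by $0$. Hence the program has a basic optimal solution with at most $N(N+1)/2$ nonzero entries.

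Keeping only the rescaled neurons in the support preserves the Gram matrix and does not increase $\norm{W_\ell}_F^2$. The next layer is then repaired with the partial isometry $U$ described above, which does not increase $\norm{W_{\ell+1}}_F^2$. One must check that these later modifications do not disturb the widths already reduced. This holds because rows of $W_{\ell+1}$ are modified only by column operations, and layer $\ell+1$ is reduced in the next step anyway.

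\textbf{Attainment in $\Theta_L(K)$.} This is the main obstacle, because the biases are unpenalized and so the parameter set is not compact. Take a minimizing sequence $\theta_n$. Coercivity of $G$ bounds the data outputs $f_{\theta_n}(x_i)$, and the bound on the objective bounds all the $W_\ell$. After passing to subsequences, the activation pattern at the data of every neuron is eventually constant.

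I then plan to normalize the biases layer by layer. Consider a neuron whose preactivations are positive at all data points. On the data it acts affinely, so its constant part $b_j$ can be moved into the bias of the next layer. Consider a neuron whose preactivations are nonpositive at all data points. It is identically zero on the data, so it can be deleted. Every remaining neuron changes sign across the data. Its bias is therefore bounded by $\norm{w_j}_2\max_i\norm{h_{\ell-1,i}}_2$, and inductively all hidden data matrices $H_\ell$ are bounded.

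The final-layer bias is then bounded, because the outputs are bounded and $W_LH_{L-1}$ is bounded. Moving constants into later biases keeps the realized data values fixed and leaves $C_L$ unchanged, so the normalized sequence is still minimizing.

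Within the fixed activation pattern the constraint set is closed, and the map $\theta\mapsto f_\theta(x_i)$ is continuous. Hence a limit point exists. Lower semicontinuity of $G$ and continuity of $C_L$ then show that this limit point is a minimizer, which proves attainment in $\Theta_L(K)$ for every $K\ge N^2$.
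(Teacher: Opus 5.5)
The width reduction is correct, but one step of your attainment argument fails as written. For a neuron whose preactivations $w_j^\T h_{\ell-1,i}+b_j$ are positive at every data point, you propose moving its whole bias $b_j$ into the next layer. After that the neuron computes $\sigma(w_j^\T h_{\ell-1,i})$, not $w_j^\T h_{\ell-1,i}$. If $w_j^\T h_{\ell-1,i}<0$ for some $i$ (for example, $b_j$ large and $w_j^\T h_{\ell-1,i}$ of mixed sign), the ReLU clips and the realized data values change. Being affine on the data does not let you drop the bias: a ReLU neuron reproduces those affine values only while they stay nonnegative.

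The correct move is to subtract only $m_j\coloneqq\min_i\,(w_j^\T h_{\ell-1,i}+b_j)>0$ from $b_j$, and add $m_j$ times the $j$th column of $W_{\ell+1}$ to $b_{\ell+1}$. The preactivations then stay nonnegative, and the data values and $C_L$ are unchanged. The new bias $-\min_i w_j^\T h_{\ell-1,i}$ obeys the same bound $\norm{w_j}_2\max_i\norm{h_{\ell-1,i}}_2$ as your sign-changing neurons. This is exactly the paper's normalization, which makes every hidden coordinate have minimum zero over the data.

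With this repair your argument is slightly cleaner than the paper's. Because you delete the always-inactive neurons, every bias is bounded directly and Bolzano--Weierstrass finishes. The paper instead bounds only the activations and must separately construct finite biases for coordinates whose biases tend to $-\infty$.

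Your width reduction takes a genuinely different route from the paper. The paper normalizes incoming weights and applies a multi-task lasso sparsity result \cite[Theorem~11]{shenouda2024variation} to the outgoing weights, giving at most $r_\Phi r_\Psi\leq N^2$ neurons per layer. You observe that the next layer only needs the Gram matrix $H_\ell^\T H_\ell$. You then solve a linear program over rescaling factors, whose basic optimal solutions have at most $N(N+1)/2$ nonzeros, and repair $W_{\ell+1}$ with a partial isometry, which cannot increase $\norm{W_{\ell+1}}_F$. This is self-contained, using only the fundamental theorem of linear programming, and gives the slightly better uniform width $N(N+1)/2\leq N^2$. The paper's bound has the advantage of being rank-dependent; for instance, it gives at most $N\min\{N,D\}$ neurons in the last hidden layer.
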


\begin{proof}
Fix $K\geq N^2$. We first prove existence over $\Theta_L(K)$. By zero-padding,
identify $\Theta_L(K)$ with the parameter space of networks whose hidden-layer
widths are all exactly $K$. By
\cref{lem:deep-relu-finite-sample-surjectivity}, the objective is proper on
$\Theta_L(K)$. Let $(\theta_n)_{n\in\N}\subset\Theta_L(K)$ be a minimizing
sequence and write
\begin{equation}
    \theta_n
    =
    \big((W_{1,n},b_{1,n}),\ldots,(W_{L,n},b_{L,n})\big).
\end{equation}
After discarding finitely many terms, there exists $\alpha<+\infty$ such that
\begin{equation}
    G\big(f_{\theta_n}(x_1),\ldots,f_{\theta_n}(x_N)\big)
    +
    \lambda C_L(\theta_n)
    \leq \alpha.
\end{equation}
Since $\lambda>0$, the weight matrices $W_{\ell,n}$ are uniformly bounded.
Since $C_L\geq0$,
\begin{equation}
    G\big(f_{\theta_n}(x_1),\ldots,f_{\theta_n}(x_N)\big)
    \leq \alpha.
\end{equation}
By coercivity of $G$, the output matrices
\begin{equation}
    Y_n
    \coloneqq
    \begin{bmatrix}
        f_{\theta_n}(x_1) & \cdots & f_{\theta_n}(x_N)
    \end{bmatrix}
    \in\R^{D\times N}
\end{equation}
are uniformly bounded.

We next replace $\theta_n$ by an equivalent representative, with the same
weights and the same outputs on the training data, whose hidden activations
are bounded. Set $h_{i,n}^0=x_i$ and
\begin{equation}
    h_{i,n}^\ell
    =
    \sigma(W_{\ell,n}h_{i,n}^{\ell-1}+b_{\ell,n}),
    \quad
    \ell=1,\ldots,L-1.
\end{equation}
Fix a hidden layer $\ell$ and define
\begin{equation}
    m_n^\ell
    \coloneqq
    \paren*{
        \min_{1\leq i\leq N}(h_{i,n}^\ell)_1,
        \ldots,
        \min_{1\leq i\leq N}(h_{i,n}^\ell)_K
    }
    \in[0,+\infty)^K.
\end{equation}
For every coordinate with positive minimum, all corresponding preactivations
are positive on the training data. Therefore, decreasing the corresponding
bias in layer $\ell$ by that minimum subtracts the same quantity from the
corresponding hidden activation. Doing this in all coordinates replaces
$h_{i,n}^\ell$ by $h_{i,n}^\ell-m_n^\ell$. This change is exactly canceled in
the next preactivation by replacing
\begin{equation}
    b_{\ell+1,n}
    \quad\text{with}\quad
    b_{\ell+1,n}+W_{\ell+1,n}m_n^\ell,
\end{equation}
with the same interpretation for $\ell=L-1$, where the correction is made to
the final bias $b_{L,n}$. Hence this operation leaves all weights unchanged,
leaves the network outputs on the training data unchanged, and normalizes the
hidden activations so that
\begin{equation}
    \min_{1\leq i\leq N}(h_{i,n}^\ell)_k=0
    \quad
    \text{for every hidden layer $\ell$ and coordinate $k$.}
    \label{eq:hidden-min-zero}
\end{equation}

We claim that the normalized hidden activations are uniformly bounded. This is
proved by induction on $\ell$. The input activations are fixed. Suppose
$\{h_{i,n}^{\ell-1}:1\leq i\leq N,\ n\in\N\}$ is bounded. Fix a coordinate
$k$ in layer $\ell$, and choose $j$ such that
$(h_{j,n}^\ell)_k=0$. Let $w_{k,n}^{\ell,\T}$ be the $k$th row of
$W_{\ell,n}$. For every $i$,
\begin{equation}
    0
    \leq
    (h_{i,n}^\ell)_k
    \leq
    \abs{
        w_{k,n}^{\ell,\T}h_{i,n}^{\ell-1}
        -
        w_{k,n}^{\ell,\T}h_{j,n}^{\ell-1}
    }
    \leq
    \norm{w_{k,n}^{\ell}}_2
    \norm{h_{i,n}^{\ell-1}-h_{j,n}^{\ell-1}}_2.
\end{equation}
The right-hand side is uniformly bounded, so the $\ell$th hidden activations
are uniformly bounded. This proves the claim.

Passing to a subsequence, we may assume that all weights, all normalized hidden
activation vectors, and the output matrices converge:
\begin{equation}
    W_{\ell,n}\to W_\ell,
    \quad
    h_{i,n}^\ell\to h_i^\ell,
    \quad\text{and}\quad
    Y_n\to Y.
\end{equation}
We show that the limiting hidden activations are realized by finite biases.
Fix a hidden layer $\ell$ and coordinate $k$. Set
\begin{equation}
    a_{i,n}\coloneqq w_{k,n}^{\ell,\T}h_{i,n}^{\ell-1}
    \quad\text{and}\quad
    a_i\coloneqq w_k^{\ell,\T}h_i^{\ell-1}.
\end{equation}
Then $a_{i,n}\to a_i$ and
\begin{equation}
    (h_{i,n}^\ell)_k=\sigma(a_{i,n}+b_{k,n}^\ell).
\end{equation}
After passing to a further subsequence, either $b_{k,n}^\ell$ converges to a
finite number $b_k^\ell$, or $b_{k,n}^\ell\to-\infty$. The case
$b_{k,n}^\ell\to+\infty$ is impossible because the activations are bounded.
In the finite case,
\begin{equation}
    (h_i^\ell)_k=\sigma(a_i+b_k^\ell).
\end{equation}
In the case $b_{k,n}^\ell\to-\infty$, the limiting activation vector is zero,
and it is realized by any finite $b_k^\ell$ satisfying
\begin{equation}
    b_k^\ell<-\max_{1\leq i\leq N} a_i.
\end{equation}
Since there are only finitely many hidden coordinates, this constructs finite
biases $b_1,\ldots,b_{L-1}$ that realize all limiting hidden activations on
the training data.

Let $y_i$ be the $i$th column of the limiting output matrix $Y$. Since
\begin{equation}
    y_{i,n}=W_{L,n}h_{i,n}^{L-1}+b_{L,n},
\end{equation}
passing to differences gives
\begin{equation}
    y_i-y_1
    =
    W_L(h_i^{L-1}-h_1^{L-1}).
\end{equation}
Thus, with
\begin{equation}
    b_L\coloneqq y_1-W_Lh_1^{L-1},
\end{equation}
we have
\begin{equation}
    y_i=W_Lh_i^{L-1}+b_L,
    \quad i=1,\ldots,N.
\end{equation}
Hence there exists $\theta\in\Theta_L(K)$ whose training-data outputs are the
limits of those of $\theta_n$ and whose weights are the limits of the weights
of $\theta_n$. By lower semicontinuity of $G$ and continuity of the Frobenius
norm,
\begin{equation}
    G\big(f_\theta(x_1),\ldots,f_\theta(x_N)\big)+\lambda C_L(\theta) \leq
    \liminf_{n\to\infty}
    \left[
        G\big(f_{\theta_n}(x_1),\ldots,f_{\theta_n}(x_N)\big)
        +
        \lambda C_L(\theta_n)
    \right].
\end{equation}
Therefore the minimum over $\Theta_L(K)$ is attained for every $K\geq N^2$.

We now prove the width reduction. Let $\theta\in\ThetaL$ be arbitrary. Fix a
hidden layer $\ell\in\{1,\ldots,L-1\}$ of width $M$. On the training data,
write
\begin{equation}
    h_i^0=x_i
    \quad\text{and}\quad
    h_i^\ell=\sigma(W_\ell h_i^{\ell-1}+b_\ell).
\end{equation}
Let $w_k^\T$ be the $k$th row of $W_\ell$, let $\beta_k$ be the $k$th entry of
$b_\ell$, and let $v_k$ be the $k$th column of $W_{\ell+1}$. Neurons with
$v_k=0$ may be deleted. Neurons with $w_k=0$ are constant on the training data
and may be absorbed into the next bias. Thus it suffices to consider active
neurons with $w_k\neq0$.

Set
\begin{equation}
    \rho_k\coloneqq\norm{w_k}_2,
    \quad
    \widetilde w_k\coloneqq\frac{w_k}{\rho_k},
    \quad
    \widetilde\beta_k\coloneqq\frac{\beta_k}{\rho_k},
    \quad\text{and}\quad
    \widetilde v_k\coloneqq\rho_k v_k.
\end{equation}
By positive homogeneity,
\begin{equation}
    v_k\sigma(w_k^\T h_i^{\ell-1}+\beta_k)
    =
    \widetilde v_k
    \sigma(\widetilde w_k^\T h_i^{\ell-1}+\widetilde\beta_k).
\end{equation}
Define
\begin{equation}
    \Phi_{k,i}
    \coloneqq
    \sigma(\widetilde w_k^\T h_i^{\ell-1}+\widetilde\beta_k),
    \quad
    V\coloneqq
    \begin{bmatrix}
        \widetilde v_1 & \cdots & \widetilde v_M
    \end{bmatrix},
\end{equation}
and
\begin{equation}
    \Psi
    \coloneqq
    \begin{bmatrix}
        W_{\ell+1}h_1^\ell & \cdots & W_{\ell+1}h_N^\ell
    \end{bmatrix}.
\end{equation}
Then $\Psi=V\Phi$. Consider the multi-task lasso problem
\begin{equation}
    \min_{\widehat V=[\widehat v_1,\ldots,\widehat v_M]}
    \sum_{k=1}^M\norm{\widehat v_k}_2
    \quad
    \subj
    \quad
    \Psi=\widehat V\Phi.
    \label{eq:appendix-layer-mtl}
\end{equation}
By \cite[Theorem~11]{shenouda2024variation}, there exists a solution
$V^\star=[v_1^\star,\ldots,v_M^\star]$ with at most
$r_\Phi r_\Psi$ nonzero columns, where
\begin{equation}
    r_\Phi\coloneqq\rank(\Phi)
    \quad\text{and}\quad
    r_\Psi\coloneqq\rank(\Psi).
\end{equation}
Since $\Phi$ and $\Psi$ both have $N$ columns, $r_\Phi r_\Psi\leq N^2$.

For every nonzero column $v_k^\star$, define
\begin{equation}
    w_k^\star
    \coloneqq
    \norm{v_k^\star}_2^{1/2}\widetilde w_k,
    \quad
    \beta_k^\star
    \coloneqq
    \norm{v_k^\star}_2^{1/2}\widetilde\beta_k,
    \quad\text{and}\quad
    u_k^\star
    \coloneqq
    \norm{v_k^\star}_2^{-1/2}v_k^\star.
\end{equation}
Delete the zero columns. Since $V^\star\Phi=\Psi$, this replacement preserves
$W_{\ell+1}h_i^\ell$ for every $i=1,\ldots,N$, and therefore preserves all
subsequent activations and all network outputs on the training data. Moreover,
the new contribution of this layer pair to the squared Frobenius cost is
\begin{equation}
    2\sum_{k=1}^M\norm{v_k^\star}_2,
\end{equation}
whereas the original contribution is at least
\begin{equation}
    2\sum_{k=1}^M\norm{\widetilde v_k}_2.
\end{equation}
Since $V^\star$ minimizes \cref{eq:appendix-layer-mtl} and $V$ is feasible,
the replacement does not increase the parameter cost.

Applying this compression successively to each hidden layer produces
$\widetilde\theta\in\Theta_L(N^2)$ such that
\begin{equation}
    f_{\widetilde\theta}(x_i)=f_\theta(x_i),
    \quad i=1,\ldots,N,
\end{equation}
and
\begin{equation}
    C_L(\widetilde\theta)\leq C_L(\theta).
\end{equation}
Consequently,
\begin{equation}
    \inf_{\theta\in\ThetaL}
    G\big(f_\theta(x_1),\ldots,f_\theta(x_N)\big)+\lambda C_L(\theta)
    =
    \inf_{\theta\in\Theta_L(N^2)}
    G\big(f_\theta(x_1),\ldots,f_\theta(x_N)\big)+\lambda C_L(\theta).
    \label{eq:appendix-inf-equals-N2}
\end{equation}

By the fixed-width existence result, the minimum over $\Theta_L(N^2)$ is
attained. By \cref{eq:appendix-inf-equals-N2}, any minimizer over
$\Theta_L(N^2)$ is an unrestricted minimizer. Finally, if $K\geq N^2$, then
$\Theta_L(N^2)\subset\Theta_L(K)\subset\ThetaL$ by zero-padding, and the same
compression argument gives equality of the infima over $\Theta_L(K)$ and
$\ThetaL$. The fixed-width existence result gives a minimizer over
$\Theta_L(K)$, and equality of infima implies that any such minimizer is an
unrestricted minimizer. This completes the proof.
\end{proof}

\bibliographystyle{plain} 
\bibliography{ref}

\end{document}